\newcommand{\Mbar}{\overline{M}}
\newcommand\la{\lambda}
\newcommand\pa{\partial}
\renewcommand\k{z}
\newcommand\de{\delta}
\renewcommand\Im{{\operatorname{Im}}}
\renewcommand\Re{{\operatorname{Re}}}
\newcommand\R{{\mathbf R}}
\newcommand\G{{\mathbf{G}}}
\newcommand\e{{\mathbf{e}}}
\newcommand\C{{\mathbf{C}}}
\renewcommand\j{{\mathbf{j}}}
\newcommand\q{{\mathbf{q}}}
\renewcommand\v{{\mathbf{v}}}
\renewcommand\div{{\operatorname{div}}}
\newcommand\Q{{\mathbf{Q}}}
\newcommand\Z{{\mathbf{Z}}}
\newcommand\eps{\varepsilon}
\newcommand\M{{\mathcal M}}
\renewcommand\P{{\mathcal P}}
\newcommand\RR{{\mathcal R}}
\newcommand\A{{\mathcal A}}
\newcommand\F{{\mathcal F}}
\newcommand\N{{\mathcal N}}
\newcommand\Zflux{{\mathcal{Z}}}
\renewcommand\sc{\operatorname{sc}}
\newcommand\Psisc{\Psi_{\sc}} 
\newcommand\Hess{{\operatorname{Hess}}}
\renewcommand\a{\alpha}
\renewcommand\b{\beta}
\newcommand\ga{\gamma}
\newcommand\si{\sigma}
\newcommand\les{\lesssim}
\DeclareMathOperator{\Op}{{Op}}
\def \endprf{\hfill  {\vrule height6pt width6pt depth0pt}\medskip}
\theoremstyle{plain}
  \newtheorem{theorem}[subsection]{Theorem}
  \newtheorem{proposition}[subsection]{Proposition}
  \newtheorem{lemma}[subsection]{Lemma}
  \newtheorem{corollary}[subsection]{Corollary}
\theoremstyle{remark}
  \newtheorem{remark}[subsection]{Remark}
\theoremstyle{definition}
  \newtheorem{definition}[subsection]{Definition}
\begin{document}

\title[Quantitative limiting absorption]{Effective limiting absorption principles, and applications}

\author{Igor Rodnianski}
\address{Department of Mathematics, Princeton University, Princeton 
NJ 08544}
\email{ irod@math.princeton.edu}

\author{Terence Tao}
\address{Department of Mathematics, UCLA, Los Angeles CA 90095-1555}
\email{ tao@math.ucla.edu}

\subjclass{35J10, 35L05, 35Q41}

\vspace{-0.3in}

\begin{abstract} The limiting absorption principle asserts that if $H$ is a suitable Schr\"odinger operator,
and $f$ lives in a suitable weighted $L^2$ space (namely $H^{0,1/2+\sigma}$ for some $\sigma> 0$), 
then the functions $R(\lambda + i\eps) f := (H - \lambda - i\eps)^{-1} f$ converge in a another weighted $L^2$ space $H^{0,-1/2-\sigma}$
to  the unique solution $u$ of the Helmholtz equation $(H-\lambda) u = f$ which obeys the 
Sommerfeld outgoing radiation condition.  In this paper, we investigate more quantitative (or effective) versions of this principle, for the Schr\"odinger operator on asymptotically conic manifolds with short-range potentials, 
and in particular consider estimates of the form
$$ \| R(\lambda+i\eps) f \|_{H^{0,-1/2-\sigma}} \leq C(\lambda, H) \| f \|_{H^{0,1/2+\sigma}}.$$
We are particularly interested in the exact nature of the dependence of the constants $C(\lambda,H)$
on both $\lambda$ and $H$.  It turns out that the answer to this question is quite subtle, with distinctions being 
made between low energies $\lambda \ll 1$, medium energies $\lambda \sim 1$, and large energies $\lambda \gg 1$,
and there is also a non-trivial distinction between ``qualitative'' estimates on a single operator $H$ (possibly obeying
some spectral condition such as non-resonance, or a geometric condition such as non-trapping), and ``quantitative''
estimates (which hold uniformly for all operators $H$ in a certain class).  Using elementary methods (integration by parts
and ODE techniques), we give some sharp answers to these questions.  As applications of these estimates,
we present a global-in-time local smoothing estimate and pointwise decay estimates for
the associated time-dependent Schr\"odinger equation, as
 well as an integrated local energy decay estimate and pointwise decay estimates for
solutions of the corresponding wave equation, under some additional assumptions on the operator $H$.
\end{abstract}

\maketitle

\section{Introduction}

\subsection{The limiting absorption principle for the free Schr\"odinger operator}

The 
\emph{limiting absorption principle} for the free Schr\"odinger operator $H_0 := -\Delta$ on the Euclidean space $\R^n$ describes the behavior of the
 resolvents $R_0(\lambda \pm i\eps) := (H_0 - (\lambda \pm i\eps))^{-1}$ in the limit $\eps \to 0$
 in the weighted Sobolev spaces $H^{s,m}(\R^n)$, defined for $s = 0,1,2$ and $m \in \R$ by
$$\| f\|_{H^{s,m}(\R^n)} := \sum_{j=0}^s \| \langle x \rangle^m |\nabla^j f| \|_{L^2(\R^n)},$$
where $\langle x \rangle := (1 + |x|^2)^{1/2}$.

\begin{proposition}[Limiting absorption principle for $H_0$]\label{prop-h0}  Let $\lambda > 0$, $n \geq 3$, $\epsilon, \sigma > 0$, and $\pm$ be a sign.  
Then for any $f \in H^{0,1/2+\sigma}(\R^n)$ the problem 
$$
(H_0 - (\lambda \pm i\eps)) u = f
$$
has the unique solution $u_{\pm \eps}=R_0(\lambda \pm i\eps) f \in H^2(\R^n)$ and 
we have the estimate\footnote{Here and in the sequel, we allow all absolute constants $C$ to depend on the dimension $n$ and on exponents such as $\sigma$.}
\begin{equation}\label{lap-h0}
 \| R_0(\lambda \pm i\eps) f \|_{H^{0,-1/2-\sigma}(\R^n)} \leq C \lambda^{-1/2} \| f \|_{H^{0,1/2+\sigma}(\R^n)}
\end{equation}
for all $\eps > 0$, and we have the variant estimate
\begin{equation}\label{lap-2} \| R_0(\lambda \pm i\eps) f \|_{H^{2,-1/2-\sigma}(\R^n)} \leq C (1 + \lambda)  \lambda^{-1/2} \| f \|_{H^{0,1/2+\sigma}(\R^n)}.
\end{equation}
Furthermore, as $\eps \to 0$, 
$$
u_{\pm \eps}=R_0(\lambda \pm i\eps) f\to u_\pm = R_0(\lambda \pm i0) f \quad
{\text{in}} \,\,\,H^{2,-1/2-\sigma}(\R^n).  
$$
The function $u_\pm $ is
the unique solution in $H^{2,-1/2-\sigma}(\R^n)$ to the Helmholtz equation $(H_0 - \lambda) u = f$ which obeys
the \emph{Sommerfeld radiation condition}
$$ (\partial_r \mp i \sqrt\lambda) u_\pm \in H^{0,-1/2+\sigma'}(\R^n) \hbox{ for all } \sigma' < \sigma,$$
where $\partial_r := \frac{x}{|x|} \cdot \nabla_x$ is the radial derivative.
Moreover, 
$$ 
(\partial_r \mp i \sqrt{\lambda\pm i\eps}) u_{\pm\eps}\to (\partial_r \mp i \sqrt\lambda) u_\pm
\quad {\text{in}}\,\,\, H^{0,-1/2+\sigma'}(\R^n).
$$
Finally, if $\sigma < 1$, we may replace \eqref{lap-h0}  with the estimate
\begin{equation}\label{eq:h0}
 \| R_0(\lambda \pm i\eps) f \|_{H^{0,-3/2+\sigma}(\R^n)} \leq C (1 + \lambda)^{-1/2}
 \| f \|_{H^{0,1/2+\sigma}(\R^n)}.
\end{equation}
A similar version,  with the factor of $(1 + \lambda)^{-1/2}$ instead of 
$\lambda^{-1/2}$, holds in place of \eqref{lap-2}.
\end{proposition}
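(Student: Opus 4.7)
I would work on the Fourier side throughout. For $\eps > 0$ the complex energy $\lambda \pm i\eps$ lies in the resolvent set of the self-adjoint operator $H_0 = -\Delta$ (whose spectrum is $[0,\infty)$), so the Fourier multiplier
\[ \widehat{R_0(\lambda\pm i\eps)f}(\xi) = \frac{\hat f(\xi)}{|\xi|^2 - \lambda \mp i\eps} \]
defines a bounded operator from $L^2(\R^n)$ into its $H^2$-domain; this immediately furnishes the asserted existence and uniqueness of $u_{\pm\eps} \in H^2$. By the scaling $x \mapsto \sqrt\lambda\, x$ the weighted estimates reduce to the model case $\lambda = 1$, with the factor $\lambda^{-1/2}$ and the appropriate weight powers emerging from the change-of-variable Jacobians.

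\textbf{The main estimate \eqref{lap-h0}.} Passing to duality and writing $\xi = \rho\omega$ in polar coordinates, the sesquilinear form $\langle R_0(1+i\eps)f,g\rangle$ becomes the Cauchy--Stieltjes-type integral
\[ c_n \int_0^\infty \frac{\rho^{n-1}\, G(\rho)}{\rho^2 - 1 - i\eps}\, d\rho, \qquad G(\rho) := \int_{S^{n-1}} \hat f(\rho\omega)\,\overline{\hat g(\rho\omega)}\, d\sigma(\omega). \]
The fundamental input is the Agmon--H\"ormander (or Kato--Kuroda) restriction estimate: for $\sigma > 0$ the map $f \mapsto \hat f|_{\{|\xi|=\rho\}}$ is bounded from $H^{0,1/2+\sigma}(\R^n)$ into $L^2(S^{n-1}_\rho)$, together with its $C^\sigma$-in-$\rho$ refinement. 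Splitting the integral into $|\rho-1| \leq 1/2$, where the Cauchy singularity is controlled by a Sokhotski--Plemelj / Hilbert-transform bound exploiting the H\"older regularity of $G$ (uniformly in $\eps$), and its complement, where the kernel is bounded and one applies Plancherel together with the trivial spectral bound $\|E(\cdot)f\|_{L^2} \leq \|f\|_{L^2}$, then yields \eqref{lap-h0} after unscaling.

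\textbf{Remaining estimates.} The $H^2$-bound \eqref{lap-2} follows from \eqref{lap-h0} together with the equation $\Delta u = -(\lambda+i\eps)u - f$: one estimates $\|\langle x\rangle^{-1/2-\sigma}\Delta u\|_{L^2}$ directly, then recovers the full $H^{2,-1/2-\sigma}$-norm by elliptic regularity after commuting the weight through the derivatives (lower-order commutator errors being reabsorbed via \eqref{lap-h0}). For the convergence $u_{\pm\eps} \to u_\pm$ in $H^{2,-1/2-\sigma}$ I would apply dominated convergence in the Cauchy--Stieltjes representation, using the uniform regularity bound on $G$ as the integrable majorant. The Sommerfeld condition is verified by direct Fourier computation: the operator $\partial_r \mp i\sqrt\lambda$ corresponds to a symbol that vanishes on $\{|\xi|=\sqrt\lambda\}$ in the outgoing direction, cancelling the critical singularity of the multiplier and leaving a remainder in the better-weighted space $H^{0,-1/2+\sigma'}$ for any $\sigma' < \sigma$; the associated convergence of $(\partial_r \mp i\sqrt{\lambda\pm i\eps})u_{\pm\eps}$ follows by the same dominated-convergence argument. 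Uniqueness in $H^{2,-1/2-\sigma}$ subject to the Sommerfeld condition is classical Rellich uniqueness: any nontrivial homogeneous solution in this space must carry nonzero outgoing radiation at infinity. Finally, \eqref{eq:h0} is obtained by comparing $R_0(\lambda+i\eps)$ to $R_0(0) = (-\Delta)^{-1}$ in the low-energy regime $\lambda \lesssim 1$: the convolution kernel $c_n|x-y|^{2-n}$ maps $H^{0,1/2+\sigma}$ into $H^{0,-3/2+\sigma}$ by a direct Hardy-type bound (requiring $\sigma < 1$), and the resolvent identity $R_0(\lambda+i\eps) - R_0(0) = (\lambda+i\eps) R_0(\lambda+i\eps) R_0(0)$ combined with \eqref{lap-h0} handles the difference; for $\lambda \gtrsim 1$, \eqref{eq:h0} follows from \eqref{lap-h0} upon decomposing the multiplier into frequency shells and treating the near-sphere shell as in Step~2.

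\textbf{Main obstacle.} The principal technical difficulty is extracting enough $\rho$-regularity of the spectral density $\rho^{n-1}G(\rho)$ from the weighted-$L^2$ control of $f$ and $g$ to dominate the Cauchy kernel $(\rho^2-1-i\eps)^{-1}$ uniformly in $\eps$; once this near-diagonal estimate is in place, the remaining steps follow standard patterns.
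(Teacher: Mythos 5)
Your proposal is correct in broad outline but takes a genuinely different route from the paper. You work entirely on the Fourier side: the Agmon--H\"ormander restriction estimate $\|\hat f|_{S_\rho}\|_{L^2} \lesssim \|f\|_{H^{0,1/2+\sigma}}$ together with its H\"older-in-$\rho$ refinement gives the regularity of the spectral density needed to control the Cauchy singularity $(\rho^2 - \lambda \mp i\eps)^{-1}$ uniformly in $\eps$, and convergence, uniqueness (Rellich), and the radiation condition follow from the spectral representation and kernel asymptotics. The paper instead deliberately avoids Fourier analysis: it proves \eqref{lap-h0}--\eqref{lap-2} by a pure integration-by-parts / positive-commutator argument (Section \ref{cheap-sec}, with multiplier weight $W(x)=|x|-(1+|x|)^{1-2\sigma}$, feeding into \eqref{eq:pos-com}), and obtains the Sommerfeld condition, the $\eps\to 0$ convergence, uniqueness, and \eqref{eq:h0} from the spherical-energy identities of Lemma \ref{lem:improvement}, Proposition \ref{qlap}, and Proposition \ref{lowenergy-pos} applied with $V\equiv 0$. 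The physical-space method buys robustness: essentially the same identities survive on asymptotically conic manifolds with short-range potentials, which is the whole point of the paper. Your argument is tied to the flat Fourier transform, but in exchange it is cleaner and more self-contained for the model case and makes statements like absence of embedded spectrum and $\eps\to 0$ convergence almost automatic from the spectral measure.

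Two of your steps are sketchier than the rest and would need more than you indicate. First, $\partial_r = \frac{x}{|x|}\cdot\nabla_x$ is not a Fourier multiplier, so ``the operator $\partial_r \mp i\sqrt\lambda$ corresponds to a symbol that vanishes on $\{|\xi|=\sqrt\lambda\}$'' is only a heuristic; a rigorous Fourier-side verification of the Sommerfeld condition needs stationary phase on the resolvent kernel (extracting $e^{\pm i\sqrt\lambda|x-y|}|x-y|^{-(n-1)/2}$ and showing $(\partial_r \mp i\sqrt\lambda)$ annihilates it to leading order), and the same care is needed for the claimed convergence of $(\partial_r\mp i\sqrt{\lambda\pm i\eps})u_{\pm\eps}$. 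Second, the resolvent-identity argument for \eqref{eq:h0} as written does not close: in $R_0(\lambda+i\eps)-R_0(0)=(\lambda+i\eps)R_0(\lambda+i\eps)R_0(0)$, the inner factor lands you in $H^{0,-3/2+\sigma}$, which is not a space on which $R_0(\lambda+i\eps)$ is bounded (and reversing the order runs into the same mismatch, since $R_0(0)$ maps $H^{0,s}\to H^{0,s-2}$ only for $s>1/2$). The cleaner Fourier-side route to \eqref{eq:h0} is to observe that the weight pair $(1/2+\sigma,\ -3/2+\sigma)$ is exactly scale-invariant for $-\Delta$, so after rescaling $x\mapsto\sqrt\lambda\,x$ the estimate reduces (up to a manageable homogeneous-vs-inhomogeneous weight discrepancy) to a single fixed-energy estimate; alternatively one must use a trace estimate with the correct $\rho$-dependence near $\rho=0$. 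Either way the step needs an argument, not just the two-line identity.
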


The result is well-known for the free Schr\"odinger Hamiltonian $H_0=-\Delta$. It was 
established for a more general class of differential operators with {\it constant} 
coefficients by Agmon \cite{Ag}. We will reprove this result in the course of this paper (see in particular Section \ref{cheap-sec} for an elementary argument). The sharpness of the above estimates can be easily  verified from the behavior of the kernel 
$$
R_0(\lambda \pm i\eps)(x,y)\sim C \frac {e^{i\sqrt{\la\pm i\eps}\, |x-y|}}{|x-y|^{n-2}}
$$
of the resolvent $R_0(\lambda \pm i\eps)$
for $|x-y|\ge C \la^{-1/2}$ and the bound $R_0(\lambda \pm i\eps)(x,y)\le C |x-y|^{2-n}$
for $|x-y|\le C \la^{-1/2}$.

The limiting absorption principle can be viewed as a quantitative formulation of the fact that the operator $H_0$ has no embedded eigenvalues or resonances in its absolutely continuous spectrum $[0,+\infty)$.  As is well known, it is also closely connected to the \emph{limiting amplitude principle} for the wave equation, and the \emph{local smoothing estimate} for the Schr\"odinger equation.  We shall return to these connections later in this introduction.

\subsection{Quantitative limiting absorption}

In this paper we study the problem of extending the limiting absorption principle (and its applications) in a quantitative manner to other Schr\"odinger operators $H$.  More specifically, we shall limit our attention to
operators of the form $H = - \Delta_M + V$, where $M$ is an asymptotically conic manifold and $V$ is a short-range potential; we now make these concepts more precise.  Henceforth $n \geq 3$ and $\sigma, \sigma_0 > 0$ are fixed, and we allow all constants to depend on $n$, $\sigma$, and $\sigma_0$.

\begin{definition}[Asymptotically conic manifold]\label{acm} An \emph{asymptotically conic manifold} is a
smooth connected $n$-dimensional Riemannian manifold $(M,g)$ which, outside of a compact set $K_0 \subset M$, can be parameterized 
as $M \backslash K_0 = [R_0, \infty) \times \partial M  = \{ (r,\omega): r > R_0, \omega \in \partial M \}$
for some $R_0 > 1$, where $(\partial M, h)$
is a smooth compact $n-1$-dimensional Riemannian manifold, and the metric $g$ takes the scattering form\footnote{We use the indices $a,b$
to parameterize the $n-1$-dimensional space $\partial M$, and the indices $i,j$ to parameterize the $n$-dimensional
space $M$.}
\begin{equation}\label{g-polar}
 g = dr^2 + r^2 h[r]_{ab}(\omega) d\omega^a d\omega^b
\end{equation}
where for each $r > R_0$, $h[r]$ is a metric of the form
\begin{equation}\label{hr-def}
h[r] := (h_{ab}(\omega) + r^{-2\sigma_0} e_{ab}(r,\omega))
\end{equation}
where $\sigma_0 > 0$ and the error $e_{ab}$ obeys the first derivative estimates
\begin{equation}\label{metric-decay}
 |e_{ab}(r,y)| + r |\partial_r e_{ab}(r,y)| + |\nabla_y e_{ab}(r,y)|_h \leq C
\end{equation}
for all $(r,y) \in M \backslash K_0$.  We also make the qualitative assumption that the function $e_{ab}(1/s,y)$ extends smoothly to a function on $[0,1/R_0] \times \partial M$.  We define a 
weight $\langle x \rangle$ on $M$ by setting $\langle x \rangle := r$
on $M \backslash K_0$, and extending the weight smoothly to $K_0$ so that it stays between $R_0$ and $R_0/2$.  
We define the space $L^2(M)$ by using the measure $dg(x) := \sqrt{g}\ dx$ induced by the Riemannian metric,
and let $\Delta_M := \nabla^\alpha \nabla_\alpha$ be the usual Laplace-Beltrami operator, where $\nabla_\alpha$ denotes
covariant differentiation with respect to the Levi-Civita connection, raised and lowered in the usual manner.  We define
the weighted Sobolev spaces $H^{s,m}(M)$ for $s=0,1,2$ and $m \in \R$ by the formula
\begin{equation}\label{weight-def}
\| f\|_{H^{s,m}(M)} := \sum_{k=0}^s \| \langle x \rangle^m |\nabla^k f|_g \|_{L^2(M)}.
\end{equation}
\end{definition}

\textbf{Important convention.}  Henceforth, all constants $C$ are allowed to depend on the manifold $M$ and the quantities $R_0, C, \sigma_0$ appearing above, as well as on the dimension $n$. 

\begin{remark} The class of asymptotically conic manifolds includes the class of asymptotically flat manifolds (as studied for instance in \cite{cks}, \cite{doi}, \cite{burq}, \cite{mmt}, \cite{tjoen}) as a sub-class, in which $\partial M$ is the unit sphere
$S^{n-1}$ with the standard metric.  However in general we do not assume any topological flatness of $M$,
nor do we assume that $\partial M$ is topologically a sphere.  In particular it is certainly possible for $M$ to contain trapped geodesics.  The condition \eqref{metric-decay} seems to indicate that we only require $C^1$ control on our metric.  However, this is because of our decision to use normal co-ordinates \eqref{g-polar}.  A metric in a more general long-range form, e.g. $g = g_0 + r^{-2\sigma_0} \tilde e$, where $g_0 = dr^2 + r^2 h_{ab}$ and $\tilde e$ is a smooth function obeying symbol-type estimates of order 0, can be placed in the normal form
\eqref{g-polar} (see \cite[Section 10.5]{htw}) but as is well known, the use of normal forms costs one degree of
regularity, so that one would need $C^2$ type bounds on the metric $g$ in the original co-ordinates in order to get the $C^1$-type control in \eqref{g-polar}.
\end{remark}

We define a \emph{short-range potential} $V: M \to \R$ on $M$ to be any real-valued function $V$ such that
$\langle x \rangle^{1+2\sigma_0} V(x) \in L^\infty(M)$ for some $\sigma_0 > 0$, 
and then define the \emph{Schr\"odinger operator} $H := -\Delta_M + V$.  

\begin{remark}
The spectral theory of the operator $H$ in the short-range case is well understood; indeed, it
is known, see e.g. \cite{hmv},
that $H$ is essentially self-adjoint, so that the resolvents
$R(\lambda \pm i\eps):= (H - (\lambda \pm i\eps))^{-1}$ are well-defined for $\eps \neq 0$
and are bounded operators on $L^2(M)$ (with bounds depending on $\eps$, of course).  
We note that the essential self-adjointness of $H$, without symbol behavior assumptions on the potential $V$ and
the metric coefficients of $g$, follows from a lower bound
\begin{equation}\label{eq:lower}
\int_M Hu\,\overline u\ge -C\int_M |u|^2 
\end{equation}
and uniqueness of solutions of the wave equation 
$$
u_{tt} + Hu=0,\qquad u(0)=u_0,\quad \partial_t u(0)=u_1,
$$
see \cite{berez}, \cite{chern}, \cite{shub} and references therein. The lower bound \eqref{eq:lower} easily follows 
from the $L^\infty$ bound on the potential $V$ and positivity of the Laplace-Beltrami operator $-\Delta_M$. 
The uniqueness statement is a consequence of the energy identity 
$$
\int_M \left (|\pa_t u(t)|^2 + |\nabla u(t)|^2_g + V |u(t) |^2\right) dg= \int_M \left (|\pa_t u(0)|^2 + |\nabla u(0)|^2_g + V |u(0) |^2\right) dg.
$$
Furthermore, the spectrum $\sigma(H)$ consists of the right half-line $[0,+\infty)$ (where it is purely absolutely continuous,
except possibly at 0 where one may have an eigenvalue), together with a finite number of
eigenvalues $\lambda_1 \leq \lambda_2 \leq \ldots \leq \lambda_k \leq 0$ on the negative half-line
$(-\infty,0]$; the finite number of bound states is implied by
the Cwikel-Lieb-Rozenblum inequality  for manifolds, see e.g. \cite{levin}, \cite{liyau}, since $V \in L^{n/2}$.  In particular one can define spectral multipliers $m(H)$ of $H$ for any bounded (or polynomially growing) $m$ in the usual manner.  

\end{remark}

Our first main result establishes a limiting absorption principle with explicit control on the constants in this general setting:

\begin{theorem}[Quantitative limiting absorption principle]\label{main} Let $\lambda > 0$, $n \geq 3$, $\sigma, \sigma_0 > 0$, 
and $\pm$ be a sign.  Let $M$ be an asymptotically conic manifold, let $V: M \to \R$ be a short-range potential obeying
the pointwise bounds
\begin{equation}\label{V-short}
 |V(x)| \leq A \left( \langle x \rangle^{-2-2\sigma_0} + \lambda^{1/2} \langle x \rangle^{-1-2\sigma_0} \right)
\end{equation}
for all $x \in M$, and let $H := -\Delta_M + V$ with resolvents $R(\lambda \pm i\eps):= (H - (\lambda \pm i\eps))^{-1}$.   Let $f \in H^{0,1/2+\sigma}(\R^n)$.

\begin{itemize}
\item (Limiting absorption near infinity) There exists a compact set $K=K(M,A)$ such that for $s=0,1,2$
\begin{equation}\label{aprop-ls}
 \| R(\lambda \pm i\eps) f \|_{H^{s,-1/2-\sigma}(M\setminus K)} \leq \la^{s/2} 
 C(M,A) (\lambda^{-C(M,A)} + 1)
  \| f \|_{H^{0,1/2+\sigma}(M)}.
\end{equation}
\item (Global limiting absorption)  We have the bound
\begin{equation}\label{prop-ls}
 \| R(\lambda \pm i\eps) f \|_{H^{2,-1/2-\sigma}(M)} \leq C(M,A) \left(\lambda^{-C(M,A)} + e^{C(M,A) \sqrt{\lambda} }\right) \| f \|_{H^{0,1/2+\sigma}(M)}.
\end{equation}
\end{itemize}
The constant $C(M,A)$ can be explicitly computed in terms of $M$, $A$, $n$, $\sigma$, $\sigma_0$.  
\end{theorem}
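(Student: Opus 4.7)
The plan is to split the argument along the same lines as the statement itself: first establish the asymptotic estimate \eqref{aprop-ls} on the conic end $M\setminus K$ by direct integration-by-parts and ODE techniques, and then deduce the global estimate \eqref{prop-ls} by bridging across the compact core of $M$ with a Carleman-type weighted inequality. The bound \eqref{aprop-ls} should be thought of as a perturbation of Proposition \ref{prop-h0}, while \eqref{prop-ls} is where the genuinely new geometric content enters and where the exponential $e^{C(M,A)\sqrt\lambda}$ is paid.

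\emph{Step 1 (conic end).}  On $\{r>R_0\}$ I would use the scattering coordinates \eqref{g-polar} and conjugate to $v := r^{(n-1)/2} u$, so that the radial part of $-\Delta_M$ becomes $-\partial_r^2$ plus an angular Laplacian on the cross-sections $\{r\}\times\partial M$, plus lower-order terms of size $O(r^{-2})$ and metric errors of size $O(r^{-2\sigma_0})$ coming from $e_{ab}$. Expanding $v$ in eigenfunctions of the angular operator reduces the Helmholtz equation $(H-\lambda\pm i\eps) u = f$ to a one-parameter family of one-dimensional Schr\"odinger equations on $[R_0,\infty)$, indexed by an angular eigenvalue $\ell^2$ and with effective potential $\sim \ell^2/r^2$ plus small errors controlled by \eqref{metric-decay} and \eqref{V-short}. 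For each such ODE I would apply a Morawetz-type multiplier: pair the equation against $\chi(r)\partial_r v$ and against $\chi(r) v$, take real and imaginary parts, and sum over modes. Once $R$ is large enough in terms of $M$ and $A$, the metric and potential errors can be absorbed into the main positive commutator; the outgoing boundary term at infinity is killed by the Sommerfeld-type condition obtained in the $\eps\to 0$ limit. The resulting control on $\int_R^\infty (|\partial_r v|^2+\lambda|v|^2)\langle r\rangle^{-1-2\sigma}\,dr$ yields \eqref{aprop-ls}; the pre-factor $\lambda^{s/2}$ comes from converting $L^2$ control on $v,\partial_r v$ into an $H^s$ estimate via the equation, and the polynomial loss $\lambda^{-C(M,A)}$ at low energy tracks the finitely many low-lying angular modes that are classically forbidden down to radius $\sim\lambda^{-1/2}$.

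\emph{Step 2 (Carleman bridge).}  Fix a cutoff $\chi(r)$ supported in $\{r>R\}$ and equal to $1$ on $\{r>2R\}$, with $R$ as in Step 1, and set $u_\chi := \chi u$. Then
\[
 (H-\lambda\pm i\eps)u_\chi = \chi f + [-\Delta_M,\chi]u,
\]
and since $[-\Delta_M,\chi]$ is supported in the annulus $\{R<r<2R\}$, Step 1 already controls $u_\chi$ in $H^{2,-1/2-\sigma}$ by $\|f\|_{H^{0,1/2+\sigma}(M)}+\|u\|_{H^1(\{R<r<2R\})}$. To close, I need to bound $u$ on the compact set $\{r<2R\}$, and for this I would use an elementary Carleman-type inequality
\[
 \|e^{\varphi}w\|_{L^2(M)} \leq C(M,A)\,\|e^{\varphi}(H-\lambda)w\|_{L^2(M)}, \qquad w\in H^2(M),
\]
for a weight $\varphi$ whose oscillation on $\{r<2R\}$ is $O(\sqrt\lambda)$, this being the scale forced by the characteristic variety $|\xi|^2_g = \lambda$ of $H-\lambda$. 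Applying it with $w=u$, writing $(H-\lambda)u = f\pm i\eps u$, and splitting the right-hand side between $\{r<2R\}$ (where $e^\varphi \le e^{C(M,A)\sqrt\lambda}$) and $\{r>2R\}$ (where Step 1 controls $u$), I extract the $L^2$ part of \eqref{prop-ls}. The $H^{2,-1/2-\sigma}$ upgrade follows from standard interior elliptic regularity applied to $Hu=\lambda u\pm i\eps u +f$, costing a $1+\lambda$ factor that is absorbed into $C(M,A)(\lambda^{-C(M,A)}+e^{C(M,A)\sqrt\lambda})$.

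\emph{Main obstacle.}  The technical core is the Carleman estimate in Step 2. Because no non-trapping or spectral hypothesis is imposed on $H$ or on the compact core $K_0$, the weight $\varphi$ must be chosen so that $e^{\varphi}(H-\lambda)e^{-\varphi}$ satisfies a H\"ormander-type pseudoconvexity condition on all of $M$; on general geometry the best such weight has oscillation of order $\sqrt\lambda$, producing the exponential factor $e^{C(M,A)\sqrt\lambda}$, which is essentially sharp in the trapping regime since trapped geodesics can generate resonances exponentially close to the real axis. Constructing $\varphi$ explicitly using only integration by parts, while keeping the low-frequency dependence polynomial, is the principal difficulty. The low-energy regime $\lambda\ll 1$ requires a separate Fredholm/perturbative analysis that quantifies how close $\lambda$ may approach $0$ (or a zero-resonance of $H$) before the constant blows up, and it is this analysis that is responsible for the $\lambda^{-C(M,A)}$ term in the final bound.
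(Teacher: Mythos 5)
Your high-level architecture --- a Morawetz-type estimate on the conic end, a Carleman-type estimate across the compact core, glued by a cutoff --- does identify two of the paper's three main tools, but as written the proposal has two genuine gaps.

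First, the angular-mode decomposition in Step 1 cannot be carried out in the generality required here, and the paper explicitly avoids it. After conjugating by $r^{(n-1)/2}$, the angular operator appearing in the equation is $\Delta_{h[r]}$ with $h[r]_{ab} = h_{ab}(\omega) + r^{-2\sigma_0} e_{ab}(r,\omega)$; because $h[r]$ depends on $r$, its eigenfunctions and eigenvalues drift as $r$ moves, so expanding in its eigenfunctions produces a coupled infinite system with $O(r^{-2\sigma_0})$ off-diagonal terms rather than a family of decoupled scalar ODEs. The paper's replacement is the scalar ``spherical energies'' $\M[r],\RR[r],\A[r],\F[r]$ (integrals over the full cross-section $S_r$), which obey closed differential inequalities regardless of any mode structure; this is Lemma~\ref{equations-of-motion} and Section~\ref{bessel-sec}.

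Second, and more seriously, your two steps as described do not close. The Morawetz estimate (Lemma~\ref{morawetz-lemma}) controls the far region only modulo an error supported on an intermediate annulus, and the Carleman/unique-continuation estimate on a compact set (Proposition~\ref{ucp}) also carries an error on an annulus $K\setminus K'$ near $\partial K$. (The \emph{global} inequality $\|e^{\varphi}w\|_{L^2(M)}\le C\|e^{\varphi}(H-\lambda)w\|_{L^2(M)}$ you posit cannot hold: on any manifold with a trapped geodesic, or with nontrivial topology, $\varphi$ cannot be simultaneously non-stationary and geodesically convex, which is precisely why the paper must use Burq's two-weight \emph{local} Carleman estimate with an $\int_{K\setminus K'}$ error.) Combining a far-field estimate with an annulus error and a compact-set estimate with an annulus error is circular. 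The ingredient that breaks the circle --- and the paper's main technical novelty --- is the dichotomy lemma (Lemma~\ref{dichotomy}, proven via the ODE Lemma~\ref{ode}): $\M[r]$ either stays bounded by $C(\lambda^{-C}+1)\delta$ somewhere in the intermediate zone, in which case Morawetz closes directly, or decays faster than $e^{-C_2\sqrt\lambda}$ as $r$ increases across a fixed annulus, in which case the Carleman estimate (whose $e^{C\sqrt\lambda}$ loss is dominated by $e^{-C_2\sqrt\lambda}$ once $C_2$ is large enough) forces $u$ to be small there too, again closing. Your proposal names the right pieces but not the mechanism that turns two conditional estimates into an unconditional one. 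A smaller point: the boundary term at $r=\infty$ in Step 1 is killed for $\eps>0$ simply because $R(\lambda\pm i\eps)f\in L^2(M)$, not by an $\eps\to 0$ Sommerfeld condition --- the quantitative estimate must first be proven uniformly in $\eps>0$, with the Sommerfeld condition a corollary afterwards.
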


\begin{remark} A key feature of this estimate is that it is \emph{quantitative} (or \emph{effective}), in the sense that the bounds in \eqref{aprop-ls}, \eqref{prop-ls} depend only on the underlying manifold $M$ and on the bounds enjoyed by the potential $V$.  In particular, no spectral assumptions on $H$ (e.g. involving eigenfunctions or resonances at zero) are assumed.  Later on we shall also discuss \emph{qualitative} (or \emph{ineffective}) limiting absorption principles, in which the bounds are obtained indirectly (via compactness arguments or Fredholm theory) and are allowed to depend on the potential $V$ (and in particular on the spectral behavior of $V$).  In order to obtain effective bounds in Theorem \ref{main}, we shall avoid the use of compactness methods, and instead rely on ``elementary'' methods such as integration by parts and ODE analysis.
\end{remark}

\begin{remark}
The condition \eqref{V-short} on the potential demands $1/\langle x \rangle^{2+}$ type decay at low energies, but only $1/\langle x \rangle^{1+}$ decay at high energies.  It appears to be essentially 
optimal in the class of pointwise bounds. It is almost scale invariant
under the transformation $x\to \mu x$, \,$\la\to \mu^{-1/2} \la$.
The limiting absorption principle 
in the Euclidean case $H=-\Delta_{\R^n} + V$ 
was established for $\lambda\ge c>0$
by Agmon \cite{Ag} essentially under the assumption that 
$|V(x)|\le A\langle x\rangle^{-1-2\si_0}$. The 
global-in-time local smoothing estimate and Strichartz estimates 
for the Schr\"odinger group $e^{it(-\Delta_{\R^3}+V)}$
were proved in \cite{BK} and \cite{rs} respectively under the
assumption that $|V(x)|\le A\langle x\rangle^{-2-2\si_0}$.
These results are qualitative as their proof relies on Fredholm theory.  In \cite{BPS} (see also the followup papers \cite{stalker}, \cite{gus}), these results were extended to the class of inverse square potentials $V(x)=A/|x|^2$; due to the explicit nature of the eigenfunctions in this case, Fredholm theory can be avoided, and the results here are quantitative.

It would be natural to replace pointwise conditions of \eqref{V-short} 
corresponding integral assumptions on $V$, i.e., on the scale-invariant
$L^{n/2}$-norm of $V$. In the flat case there is a number of recent 
qualitative results, see \cite{GS}, \cite{IS}.

The second term in \eqref{V-short} suggests  that one could
weaken the decay hypotheses on $V$ in exchange for more regularity.  It is well known however that the limiting absorption principle can fail for potentials decaying 
like $1/|x|$ or slower.  As the standard example with the Wigner-Von-Neumann
potential shows, such potentials can create embedded eigenvalues in the continuous 
spectrum, which would clearly destroy the limiting absorption principle in this case.
\end{remark}

\begin{remark}
The bound \eqref{aprop-ls} shows that (at least for high energy $\la$) 
geometry of the ``black box" compact region $K$ essentially does not affect the behavior
of the resolvent restricted to the complement of $K$. Such bounds for sufficiently large 
$\la$ with the constant $C(M,A,\la)\le C(M,A) \la^{-1/2}$ have been proven to hold
in \cite{cv}, see also \cite{bshape}. It is also interesting to note that in this context, local
in time Strichartz estimates, restricted to the complement of a compact set, 
for solutions of a time-dependent Schr\"odinger equation have been established in
\cite{bt}.
\end{remark}

\begin{remark}  The high-energy case $\lambda \gg 1$ of the above theorem is essentially contained in earlier work of Burq \cite{b} and Cardoso-Vodev \cite{cv}, \cite{cardoso}.  Thus the main novelty here is the ability to treat medium energies $\lambda \sim 1$ and low energies $\lambda \ll 1$, as well as the universality of the technique developed to treat all ranges of energies.
\end{remark}

The above theorem only reprises one component of Proposition \ref{prop-h0}, namely the estimates \eqref{lap-h0}, \eqref{lap-2}.  Using those estimates, however, one can fairly easily obtain convergence properties of the resolvent as $\eps \to 0$.

\begin{proposition}[Limiting values of resolvent]\label{qlap} Let the notation and assumptions be
as in Theorem \ref{main}.  Let $0<\sigma<\min (1,\sigma_0)$ and $f \in H^{0,1/2+\sigma}(M)$. 
Let $\la\pm i\eps=z^2$ with $z=a\pm b$ such that $b>0$,
Then the functions $u_{\pm \eps} :=
R(\lambda \pm i\eps) f$ obey the 
 \emph{Sommerfeld radiation condition}
\begin{equation}\label{sommerfeld}
\begin{split} 
&\| (\partial_r \mp i z) u_{\pm \eps}\|_{H^{0,-1/2+\sigma}(M \backslash K_0)}
+\| \nabla_\omega u_{\pm \eps}\|_{H^{0,-3/2+\sigma}(M \backslash K_0)}\\
\quad &\leq C( M, K_0, A) (\lambda^{-C(M,A)} + e^{C(M,A) \sqrt{\lambda} })
\| f \|_{H^{0,1/2+\sigma}(M)}
\end{split}
\end{equation}
where $|\nabla_\omega u|_g^2=r^2(|\nabla u|^2_g-|\pa_r u|^2)$.
Note that the constants are uniform in the choice of $\eps$. Furthermore,
for a fixed sign $\pm$ and fixed $\la>0$, 
the functions $u_{\pm \eps}$ converge in $H^{0,-1/2-\sigma}(M)$ 
to a limit $u_{\pm}$, which is
the unique solution in $H^{2,-1/2-\sigma}(M)$ to the Helmholtz equation $(H - \lambda) u = f$ such that
$(\partial_r \mp i \lambda^{1/2}) u_{\pm }$ lies in $H^{0,-1/2+\sigma'}(M \backslash K_0)$
for at least one $0 < \sigma' < \sigma$. 
\end{proposition}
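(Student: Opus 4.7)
The proof has three parts: the Sommerfeld estimate \eqref{sommerfeld}, extraction of a strong $H^{0,-1/2-\sigma}$ limit $u_\pm$, and uniqueness in the stated class.

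For \eqref{sommerfeld}, I plan to use a positive commutator identity. With $z$ as in the statement (so that $\pm\Im z \ge 0$) and a smooth cutoff $\chi$ supported in $M \setminus K_0$, multiply the equation $(H - z^2)u_{\pm\eps} = f$ by $r^{2\sigma}\chi(r)\,\overline{(\partial_r \mp iz)u_{\pm\eps}}$, integrate over $M$ against $dg$, and take real parts. Writing $-\Delta_M$ in the scattering form \eqref{g-polar} and using the factorization $\partial_r^2 + z^2 = (\partial_r + iz)(\partial_r - iz)$, integration by parts in $r$ produces the desired positive contributions
$$\int 2\sigma\, r^{2\sigma-1}\chi\,|(\partial_r \mp iz)u_{\pm\eps}|^2\,dg \quad \text{and} \quad \int 2\sigma\, r^{2\sigma-3}\chi\,|\nabla_\omega u_{\pm\eps}|^2\,dg,$$
plus a favorably-signed $\eps|u_{\pm\eps}|^2$ term coming from $\Im(z^2)$. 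All remaining error terms — from the metric correction $e_{ab}$ via \eqref{metric-decay}, the potential $V$ via \eqref{V-short}, the radial Jacobian $\frac{n-1}{r}$, and the derivatives of $\chi$ — are absorbed by Cauchy-Schwarz into the $H^{2,-1/2-\sigma}$ norm of $u_{\pm\eps}$, which is controlled by Theorem \ref{main}.

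The limit $u_\pm$ is then obtained by compactness. Theorem \ref{main} makes $\{u_{\pm\eps}\}_{\eps>0}$ uniformly bounded in $H^{2,-1/2-\sigma}(M)$; weak compactness plus Rellich--Kondrachov produces a subsequential weak limit $u_\pm$ converging strongly in $H^{0,-1/2-\sigma}$ on every compact set. To control the tail uniformly in $\eps$, I integrate the pointwise identity
$$\partial_r\bigl(r^{-2\sigma}|u_{\pm\eps}|^2\bigr) = r^{-2\sigma}\bigl(2\Re(\overline{u_{\pm\eps}}(\partial_r \mp iz)u_{\pm\eps}) \pm 2(\Im z)|u_{\pm\eps}|^2\bigr) - 2\sigma\, r^{-2\sigma-1}|u_{\pm\eps}|^2$$
from $r = R$ out to infinity (the boundary term at infinity vanishing along a suitable sequence by the $H^{2,-1/2-\sigma}$ bound); Cauchy-Schwarz against \eqref{sommerfeld} then yields $\|u_{\pm\eps}\|_{H^{0,-1/2-\sigma}(\{r > R\})} \to 0$ uniformly in $\eps$ as $R \to \infty$. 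Combined with compact-set convergence this delivers strong convergence in $H^{0,-1/2-\sigma}(M)$, and passing to the limit in the equation yields $u_\pm \in H^{2,-1/2-\sigma}$ solving $(H-\lambda)u_\pm = f$.

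The hard part is uniqueness: given $w \in H^{2,-1/2-\sigma}(M)$ with $(H-\lambda)w = 0$ and $(\partial_r - i\sqrt\lambda)w \in H^{0,-1/2+\sigma'}(M \setminus K_0)$ for some $0 < \sigma' < \sigma$, I have to show $w \equiv 0$. My strategy is a Rellich-type argument at infinity. A Pohozaev/Morawetz identity, obtained by multiplying $(H-\lambda)w = 0$ by $r\partial_r\bar w + \frac{n-1}{2}\bar w$ and integrating over $\{r < R\}$, produces boundary terms at $r = R$ whose only non-negligible contribution combines with the outgoing radiation condition to force $\int_{r = R_k}|w|^2\,dS \to 0$ along some sequence $R_k \to \infty$. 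This improves $w$ to lie in $L^2$ on a neighborhood of infinity; standard Carleman-type estimates for second-order elliptic operators with short-range perturbations then preclude such solutions, giving $w \equiv 0$ outside some compact set, and Aronszajn's unique continuation theorem propagates this to $w \equiv 0$ globally. Once uniqueness is known, every subsequential limit of $u_{\pm\eps}$ coincides with $u_\pm$, upgrading subsequential to full convergence.
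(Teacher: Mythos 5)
Your derivation of the Sommerfeld estimate \eqref{sommerfeld} is sound and matches the paper's approach in spirit: the paper obtains it from Lemma~\ref{lem:improvement} and the outgoing--null-energy equation of motion \eqref{eq:add-motion}, which is exactly the sort of positive-commutator identity you propose with the multiplier $r^{2\sigma}\chi\,\overline{(\partial_r \mp iz)u}$, and it feeds off Theorem~\ref{main} exactly as you describe. The convergence and uniqueness portions, however, depart from the paper's argument, and each contains a gap.

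The paper proves convergence directly, without compactness: with $s(x)=r$ outside $K_0$, it shows that $w = e^{i(z_1-z_2)s}u_{+,\eps_2} - u_{+,\eps_1}$ solves $(H-\lambda-i\eps_1)w = G$ with $\|G\|_{H^{0,1/2+\sigma'}} \lesssim |\eps_1-\eps_2|^{\sigma-\sigma'}$, whence $\{u_{+,\eps}\}$ is Cauchy in $H^{2,-1/2-\sigma}$; uniqueness is then handled by the same conjugation applied to $v = e^{\pm i(\sqrt{\lambda\pm i\eps}-\sqrt\lambda)s}u - u_{\pm\eps}$, collapsing it into what was already proved. Your route instead uses weak compactness plus Rellich--Kondrachov, which only yields subsequential limits, so you need uniqueness \emph{first} to upgrade to a full limit --- this makes the uniqueness step load-bearing. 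But the uniqueness argument has a gap: forcing $\int_{r=R_k}|w|^2\,dS \to 0$ along a sequence does not deliver ``$w \in L^2$ near infinity,'' nor even the $o(R)$ volume decay a Rellich theorem requires, since the a priori bound $w\in H^{0,-1/2-\sigma}$ still allows the spherical mass $\M[r]$ to grow like $r^{2\sigma}$ away from the good sequence; closing this requires a boundedness-or-blowup dichotomy for $\M[r]$ as in Lemma~\ref{dichotomy}, and then a unique-continuation-at-infinity estimate (the paper's Carleman inequality, Proposition~\ref{ucp}, is for compact $K$ and does not by itself give vanishing near infinity). Separately, your tail-control Cauchy--Schwarz in the convergence step pairs $\int r^{-2\sigma}\M^{1/2}\N^{1/2}\,dr$ against \eqref{sommerfeld}, but splitting $r^{-2\sigma}=r^{(-1-2\sigma)/2}\cdot r^{(-1+2\sigma)/2}\cdot r^{1-2\sigma}$ shows the weights only balance when $\sigma \geq 1/2$; for $\sigma < 1/2$ (allowed here whenever $\sigma_0 \leq 1/2$) the argument leaks a growing factor $r^{1-2\sigma}$, and you must supply a different tail estimate. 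The paper's exponential-conjugation route avoids all of these issues and is the cleaner path.
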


We prove Theorem \ref{main} and Proposition \ref{qlap} in Section \ref{sec:lim-abs}.

\subsection{Refinements at high energy}

At high energies $\la \gg 1$, Theorem \ref{main} suffers an exponential loss in the constants.  We now recall the standard quasimode example that shows that this loss is necessary for certain manifolds $M$:

\begin{proposition}[Quasimode construction]\label{pseudo}  Let $C_0 > 0$ and $\sigma > 0$ be arbitrary.
There exists a smooth Riemannian manifold $M$, which is equal to
Euclidean space $\R^n$ outside of a compact set, and a sequence $\lambda_l \to +\infty$ and $\eps_l \to 0^+$ of
real numbers, as well as functions $f_l \in H^{0,1/2+\sigma}(M)$ such that if $H := -\Delta_M$ (i.e. $V \equiv 0$)
then
$$ 
 \| R(\lambda_l \pm i\eps_l) f_l \|_{H^{0,-1/2-\sigma}(M)} > e^{C_0 \sqrt{\lambda_l}} \| f_l \|_{H^{0,1/2+\sigma}(M)}
$$
for all $l$.
\end{proposition}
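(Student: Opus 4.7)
The strategy is the standard one of producing a stable trapped geodesic in $M$ and constructing quasimodes concentrating on it with exponentially small residual; the bound on the resolvent then falls out by the trivial identity $R(z)(H-z)u = u$.

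For the geometric construction, take $M = \R^n$ equipped with a metric $g$ that agrees with the Euclidean metric outside a compact set, but that, inside a bounded region, is the warped product $dr^2 + a(r)^2 h_{ab}(\omega) d\omega^a d\omega^b$ with warping function $a(r)$ having a strict local minimum at some $r_0$ with $a''(r_0) > 0$. The submanifold $\{r = r_0\}$ then supports a stable (elliptic) closed geodesic $\gamma$ of $g$ contained in a fixed compact set $K_1$. Because $a(r) = r$ for $r$ large, $M$ is asymptotically conic (in fact asymptotically flat), and we are in the setting of the theorem with $V \equiv 0$.

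Next, invoke the Ralston / Babich--Lazutkin quasimode construction, refined via a Birkhoff normal form around $\gamma$: this produces a sequence $\la_l \to \infty$ (dictated by a Bohr--Sommerfeld quantization rule on the Liouville torus of the integrable approximation transverse to $\gamma$) together with functions $u_l \in C_c^\infty(M)$ with $\supp(u_l) \subset K_1$, $\| u_l \|_{L^2(M)} = 1$, and
\begin{equation*}
\| (-\Delta_M - \la_l) u_l \|_{L^2(M)} \leq C e^{-c_1 \sqrt{\la_l}}.
\end{equation*}
The exponent $c_1 > 0$ depends on how steeply the elliptic orbit confines transverse trajectories; by tuning the profile $a$ near $r_0$ (e.g. flattening the well so that the transverse oscillator has a sharp spectral gap), it can be made as large as desired, in particular larger than any prescribed $2C_0$.

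With such $u_l$ in hand, set $\eps_l := e^{-c_1 \sqrt{\la_l}/2}$ and
\begin{equation*}
f_l := (H - \la_l - i\eps_l) u_l = (-\Delta_M - \la_l) u_l - i\eps_l u_l,
\end{equation*}
so that $R(\la_l + i\eps_l) f_l = u_l$ tautologically. Since $u_l$ and $f_l$ are supported in the fixed compact set $K_1$, the weights $\langle x \rangle^{\pm 1/2 \pm \sigma}$ are bounded above and below by constants depending only on $K_1$, and we conclude
\begin{equation*}
\| R(\la_l + i\eps_l) f_l \|_{H^{0,-1/2-\sigma}(M)} \gtrsim 1, \qquad \| f_l \|_{H^{0,1/2+\sigma}(M)} \les e^{-c_1 \sqrt{\la_l}/2}.
\end{equation*}
The ratio therefore exceeds $e^{C_0\sqrt{\la_l}}$ once $c_1 > 2C_0$, which was arranged at the geometric stage. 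The main obstacle is the quasimode construction itself --- specifically, upgrading the usual Gaussian beam error of $O(\la^{-N})$ to $O(e^{-c\sqrt{\la}})$, which requires summing the transverse Birkhoff normal form to optimal order $\sim \sqrt{\la}$ and is where essentially all of the analytic work lies; everything else is formal manipulation.
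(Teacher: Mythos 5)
Your overall strategy is the same as the paper's: build an $L^2$-normalized quasimode $u_l$ concentrating on a trapped geodesic in a fixed compact set, with $\|(-\Delta_M-\lambda_l)u_l\|_{L^2}=O(e^{-c_1\sqrt{\lambda_l}})$, and read off the resolvent bound from the tautology $R(\lambda_l\pm i\eps_l)f_l=u_l$ with $f_l=(H-\lambda_l\mp i\eps_l)u_l$; the compact support makes the weights harmless. The difference lies in how the quasimode is produced. The paper does this completely explicitly: it glues the spherical cap $S^n_{\geq \pi/8}$ to $\R^n$, takes the sectorial harmonics $U_l=\sin^l\theta\,\cos\theta\,Y_l(\omega)$ (exact eigenfunctions of $\Delta_{S^n}$ with eigenvalue $\lambda_l=(l+1)(l+n)$), and reads the decay $e^{-c\sqrt{\lambda_l}}$ directly off the factor $\sin^l\theta$; the cutoff $\chi U_l$ is then a quasimode because the exactness of $U_l$ means all the error lives where $U_l$ is already exponentially small. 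To enlarge the constant, the paper just scales the sphere to radius $R$, which multiplies the exponent by $R$. You instead appeal to the general Ralston/Babich--Lazutkin construction upgraded by a transverse Birkhoff normal form of order $\sim\sqrt{\lambda}$. This is indeed more general (any elliptic trapped orbit), but it imports precisely the ``more sophisticated'' machinery the paper declines to use (and merely cites, via Cardoso--Popov), and you acknowledge yourself that this is where all the work lies, so as written it is an appeal to a black box rather than a proof.

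One piece of your geometric reasoning is also off. You claim that \emph{flattening} the transverse well increases the decay exponent $c_1$. It is the opposite: in the semiclassical picture with $h=\lambda^{-1/2}$, the quasimode decays like $e^{-d_{\mathrm{Agmon}}/h}$, and the Agmon distance across a transverse harmonic well $\tfrac12\omega^2 x^2$ from $0$ to a fixed distance $d$ is $\sim\omega d^2/2$; decreasing $\omega$ (flattening) makes $c_1$ \emph{smaller}. What actually works is \emph{dilating} the entire trapping region, which proportionally lengthens the forbidden region and hence increases the Agmon distance at fixed shape --- exactly the paper's scaling of the sphere. So the mechanism by which you claim $c_1$ can be made $>2C_0$ needs to be replaced by a scaling argument (or by deepening/widening the forbidden annulus, not flattening the bottom of the well).

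Two small formal points. First, the proposition asks for both signs $\pm$; you only construct $f_l$ for the $+$ sign, but taking $(H-\lambda_l+i\eps_l)u_l$ (or conjugating) handles the other. Second, the conclusion ``the ratio exceeds $e^{C_0\sqrt{\lambda_l}}$ once $c_1>2C_0$'' silently absorbs multiplicative constants; this is fine for $\lambda_l$ large, i.e.\ for $l$ past some threshold, but you should say so.
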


We give this standard counterexample (based on the trapped geodesics in a sphere) in Section \ref{quasimode-sec}.  It shows that we cannot hope to eliminate the exponential factor $e^{C(M,A) \sqrt{\lambda}}$ from Theorem \ref{main}, at least for manifolds which
contain trapped geodesics.  

However, as is well known, the limiting absorption properties improve substantially at high energies if one assumes a non-trapping condition on the manifold $M$. We give a (standard) result in this direction, with somewhat stronger assumptions on the manifold and potential than is
strictly necessary:  

\begin{theorem}[Quantitative high-energy non-trapping limiting absorption principle]\label{nontrap} Let 
the notation and assumptions be as in Theorem \ref{main}.  Suppose also that $V$ obeys the 
bound\footnote{Alternatively we may assume that $|V(x)|+ \langle x \rangle
|\nabla V(x)|_g\le A \langle x \rangle^{-2\sigma_0}$, which 
allows $V$ to be a long range potential.}
$$ |V(x)| \leq A  \langle x \rangle^{-1-2\sigma_0} $$
for some $A > 0$ and all $x \in M$, and that the metric perturbation error 
function $e_{ab}$ obeys the symbol
estimates
\begin{equation}\label{metric-decay-complete}
 |(r \partial_r)^\alpha \nabla_y^\beta \e_{ab}(r,y)| \leq C_{\alpha \beta}
\end{equation}
for all multi-indices $\alpha, \beta \geq 0$.  Suppose also that the manifold $M$ is \emph{non-trapping}
in the sense that every unit speed geodesic $t \mapsto \gamma(t)$ in $M$ escapes to the infinite boundary
$\partial M$ as $t \to \pm \infty$.  Then, for $\lambda$ sufficiently large depending on $M$ and $A$,
we have
\begin{equation}\label{prop-ls-highenergy}
 \| R(\lambda \pm i\eps) f \|_{H^{s,-1/2-\sigma}(M)} \leq C(M,A) \lambda^{(s-1)/2} \| f \|_{H^{0,1/2+\sigma}(M)}
\end{equation}
for all $0 \leq s \leq 2$.
\end{theorem}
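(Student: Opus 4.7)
The plan is to rescale semiclassically with $h := \la^{-1/2}$, reducing the desired bound to an $O(h^{1-s})$ estimate for the rescaled resolvent $(P(h) - (1 \pm i h^{2}\eps))^{-1}$, where $P(h) := h^{2} H$. I would then combine an exterior estimate (a refinement of the one already available from Theorem \ref{main}) with an interior positive-commutator estimate in which the non-trapping hypothesis is the essential input.

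For the exterior piece, outside a sufficiently large compact set $K$ the metric is a small perturbation of the exact conic model $dr^{2} + r^{2} h_{ab}(\omega)\,d\omega^{a} d\omega^{b}$ thanks to \eqref{metric-decay-complete}, and after rescaling $V$ becomes an $O(h)$ short-range perturbation of the Laplacian. Running the same Morawetz/radial-multiplier argument used in the proof of Theorem \ref{main}, but keeping the $(1+\la)^{-1/2}$ gain at high energy already visible in \eqref{eq:h0}, yields
$$
\|R(\la\pm i\eps)f\|_{H^{s,-1/2-\sigma}(M\setminus K)} \leq C(M,A)\la^{(s-1)/2}\bigl(\|f\|_{H^{0,1/2+\sigma}(M)} + \|\chi_{K} u\|_{L^{2}(M)}\bigr),
$$
where $\chi_{K}$ is a cutoff supported in a collar neighborhood of $\pa K$ and $u := R(\la \pm i\eps) f$.

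For the interior, I apply a semiclassical positive commutator with $A := \Op_{h}(a)$, where $a \in C_{c}^{\infty}(T^{*}M)$ is a classical escape function: $a$ is supported in a neighborhood of the characteristic set $\{p = 1\}$ of the principal symbol $p(x,\xi) := |\xi|_{g}^{2}$ over $K$, and satisfies a positivity condition of the form $\{p,a\} \geq c\chi_{K}^{2} - \psi$ modulo lower-order $h$-corrections, with $\psi$ supported in $T^{*}(M\setminus K)$. The non-trapping hypothesis is precisely what allows such $a$ to be constructed globally: every null-bicharacteristic of $p$ at energy $1$ passing over $K$ exits $K$ in uniformly bounded time, so averaging a suitable cutoff along the Hamiltonian flow up to the exit time produces the required $a$. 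Pairing the commutator identity
$$
2\Im\langle (H-\la)u, Au\rangle = \langle i[H,A]u, u\rangle - 2\eps\,\Re\langle Au, u\rangle
$$
with the sharp G\aa rding inequality then gives the interior bound
$$
\|\chi_{K} u\|_{L^{2}(M)} \leq C(M,A)\la^{-1/2}\|f\|_{H^{0,1/2+\sigma}(M)} + C\|\psi u\|_{H^{0,-1/2-\sigma}(M)},
$$
which combines with the exterior estimate to close the case $s = 0$: the term $\|\psi u\|$ is controlled by $\|u\|_{H^{0,-1/2-\sigma}(M\setminus K)}$, and for $\la$ sufficiently large the small factor $\la^{-1/2}$ on the right lets one absorb it. The cases $s = 1, 2$ then follow by elliptic regularity, using the equation $(H - \la \mp i\eps)u = f$ to trade derivatives on $u$ for powers of $\la$.

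The main obstacle is the construction of the escape function $a$, together with the quantitative bookkeeping that ensures that all the symbol-calculus errors, along with the lower-order contributions from the metric perturbation \eqref{metric-decay-complete} and the potential $V$, are $O(h)$ relative to the leading $\{p,a\}$ term and can therefore be absorbed at high energy. The hypotheses $|V| \leq A\langle x\rangle^{-1-2\sigma_{0}}$ and \eqref{metric-decay-complete} are tuned precisely so that this absorption goes through, and the global nature of the escape function is the quantitative form of the non-trapping input.
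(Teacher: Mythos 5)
Your overall plan --- semiclassical rescaling plus a positive-commutator argument driven by a non-trapping escape function --- is the right family of ideas, and the gluing architecture (compactly supported escape function over the trapping region, exterior Pohozaev--Morawetz estimate, combine and absorb at high energy) is a standard route that genuinely differs from the paper's. The paper instead builds a \emph{single} globally defined escape function $s \in S^{1,0}$ following Craig--Kappeler--Strauss and Doi, with $Xs = \varphi(x)|\xi|_g^2 + |b|^2$ and $\varphi(x) \gtrsim \langle x\rangle^{-1-2\sigma}$, so the commutator's positive term covers all of $M$ and no interior/exterior decomposition is needed. The two routes also locate the $\lambda^{-1/2}$ gain differently: in the paper the commutator only yields $\|\nabla u\|_{H^{0,-1/2-\sigma}} \lesssim \|f\|_{H^{0,1/2+\sigma}} + \|u\|_{H^{0,-1/2-\sigma}}$ with no $\lambda$-gain, and the gain is then supplied by the elliptic energy identity \eqref{eq:la-nab}, $\lambda^{1/2}\|u\|_{H^{0,-1/2-\sigma}} \lesssim \|\nabla u\|_{H^{0,-1/2-\sigma}} + \|u\|_{H^{0,-3/2-\sigma}} + \|f\|_{H^{0,1/2+\sigma}}$; substituting one into the other and absorbing for $\lambda$ large closes the loop.

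Your gluing step, however, has a gap. The exterior estimate you write,
$$
\|u\|_{H^{s,-1/2-\sigma}(M\setminus K)} \leq C\lambda^{(s-1)/2}\bigl(\|f\|_{H^{0,1/2+\sigma}(M)} + \|\chi_K u\|_{L^2(M)}\bigr),
$$
attributes the $\lambda^{-1/2}$ gain (at $s = 0$) to \emph{both} the source term and the boundary trace. But what Lemma \ref{morawetz-lemma} actually gives is
$$
\lambda\|u\|^2_{H^{0,-1/2-\sigma}(M_{2r_0})} \leq C(r_0)\left(\int_{r_0\le\langle x\rangle\le 2r_0}(1+\lambda)|u|^2\,dg + \|f\|^2_{H^{0,1/2+\sigma}(M)}\right),
$$
and after dividing by $\lambda$ and taking square roots the collar term carries \emph{no} $\lambda$-gain:
$$
\|u\|_{H^{0,-1/2-\sigma}(M_{2r_0})} \lesssim \lambda^{-1/2}\|f\|_{H^{0,1/2+\sigma}} + \|u\|_{L^2(r_0\le\langle x\rangle\le 2r_0)}.
$$
Feeding in your interior estimate $\|\chi_K u\|_{L^2} \lesssim \lambda^{-1/2}\|f\|_{H^{0,1/2+\sigma}} + \|\psi u\|_{H^{0,-1/2-\sigma}}$ then yields $\|u\|_{H^{0,-1/2-\sigma}(M\setminus K)} \lesssim \lambda^{-1/2}\|f\|_{H^{0,1/2+\sigma}} + \|\psi u\|_{H^{0,-1/2-\sigma}}$, and since $\psi$ is supported in $M\setminus K$ the right-hand side is comparable to the left with a constant that is \emph{not} small in $\lambda$; the absorption does not close. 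The fix is to avoid concentrating all the positivity on a compactly supported escape function. Either construct the escape function so its positivity degenerates slowly at infinity (as in Burq and Cardoso--Vodev, and as the paper does with $\varphi \gtrsim \langle x\rangle^{-1-2\sigma}$) rather than vanishing outside a compact set, or route the $\lambda^{-1/2}$ gain through \eqref{eq:la-nab} as the paper does, rather than through the interior/exterior seam.
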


We prove this Theorem in Section \ref{sec:high-energy} 
using the positive commutator method and the
scattering pseudodifferential calculus.  

\begin{remark} Such results for non-trapping metrics have a long history starting with the work \cite{Vainberg}, \cite{lp} for compact perturbations of the free Hamiltonian $H_0=-\Delta_{\R^n}$. In the general case a related result is contained in \cite{Vodev}, see also \cite{b}  for a qualitative version, and \cite{RT}.  For the closely related task of establishing local smoothing estimates, results analogous to those above were established in \cite{cks}.
\end{remark}

\begin{remark}  In between the two extremes of non-trapping manifolds (for which one has the optimal $\lambda^{-1/2}$ decay in the limiting absorption principle at high energies) and the highly trapping counterexamples (for which one has exponential growth in $\sqrt{\lambda}$) there is a complicated range of intermediate behaviour, for instance if there is just one hyperbolic trapped orbit.  We will not attempt to survey the substantial literature on this topic, but see \cite{ikawa}, \cite{ikawa-2}, \cite{nz}, \cite{burq-2004}, \cite{datchev}, \cite{c08}, \cite{c08b}, \cite{cwun} for some work in this area.
\end{remark}

\begin{remark} If we replace the assumption on the potential
by the weaker bound $|V(x)|\le A \la^{1/2} \langle x\rangle^{-1-2\si_0}$  
then one would have to modify the non-trapping condition as the potential term $V$ has the same order
as the kinetic term $-\Delta_M$ and one would have to consider the Hamilton flow associated to the symbol of $-\Delta_M + V$ rather than the geodesic flow.  We omit the (standard) formalization of this fact.

Note that by combining this Theorem with Theorem \ref{main} one can weaken the hypothesis that $\lambda$ is sufficiently
large, instead requiring that $\lambda > \lambda_0$ for some fixed $\lambda_0 > 0$ (but then the constant $C(M,A)$
in \eqref{prop-ls-highenergy} may depend on $\lambda_0$).
\end{remark}

\begin{remark}
The proof of Theorem \ref{nontrap} is microlocal. The result therefore can be strengthened
to include the situation where there is trapping. In this case we consider the conical 
set 
$$
{\mathcal M}_{\operatorname{nontrap}}=\{(x,\xi):\,\,\ga(t)\,\,\ga(0)=x,\,\dot \ga(0)=\xi\,\,\,
\mbox{escapes to}\,\, \partial M,\,t\to\pm\infty\}
$$
invariant under the geodesic flow in the phase space and a corresponding 
$\Psi$DO $P_{\operatorname{nontrap}}$ with support in ${\mathcal M}_{\operatorname{nontrap}}$. Then we can replace 
\eqref{prop-ls-highenergy} with 
$$
 \| R(\lambda \pm i\eps) P_{\operatorname{nontrap}} f \|_{H^{s,-1/2-\sigma}(M)} \leq C(M,A) \lambda^{s-1/2} \| f \|_{H^{0,1/2+\sigma}(M)}
$$
\end{remark}

\subsection{Refinements at low energy}

At the low energy limit $\la \to 0$, the constant in \eqref{prop-ls} (or \eqref{aprop-ls}) blows up like a large negative power of $\lambda$.  For the purposes of \emph{effective} bounds, which are uniform for all potentials $V$ obeying the bound \eqref{V-short}, such blowup is necessary, as the following result shows:

\begin{proposition}[Bessel equation matching construction]\label{bessel} Let $C_0 > 0$ and $\sigma, \sigma_0 > 0$ be arbitrary.
Then there exists a sequence $V_m: \R^n \to \R$ of spherically symmetric potentials obeying the uniform bounds
$$ \sup_m \| \langle x \rangle^{1+\sigma_0} \nabla_x^\alpha V_m \|_{L^\infty(\R^n)} < \infty \hbox{ for all }
\alpha \geq 0,$$
sequences $\lambda_m \to 0^+$ and $\eps_m \to 0^+$ of real numbers with $0 < \eps_m < \lambda_m$,
as well as functions $f_m \in H^{0,1/2+\sigma}(\R^n)$ such that if $H_m := -\Delta + V_m$ then
$H_m$ has no resonance or eigenfunction at zero, and
$$ 
 \| R(\lambda_m \pm i\eps_m) f_m \|_{H^{0,-1/2-\sigma}(M)} > \lambda_m^{-C_0} \| f_m \|_{H^{0,1/2+\sigma}(M)}
$$
for all $m$.  
\end{proposition}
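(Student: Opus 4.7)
The idea is that a potential which is near-critical for supporting a zero-energy state in some angular momentum sector produces an arbitrarily large low-energy resolvent: the Jost function controlling the resolvent kernel has a near-zero at $k = 0$. To obtain an arbitrary polynomial blow-up rate $\lambda_m^{-C_0}$, one works in a sufficiently high partial wave $\ell$, because the leading correction to the Wronskian at small $k$ is $O(k^{2\nu + 1})$ with $\nu = \ell + (n-2)/2$, so higher $\ell$ allows a more delicate tuning.

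\textbf{Radial reduction.} Spherical symmetry of $V_m$ decomposes $L^2(\R^n)$ into angular momentum sectors. For a test function $f_m(x) = \eta(|x|)Y(\omega)$ with $Y$ a degree-$\ell$ spherical harmonic and $\eta$ a fixed radial bump supported in $[1/2, 1]$, the function $R(\lambda \pm i\eps)f_m$ lives in the $\ell$-th sector. Writing $u(x) = r^{-(n-1)/2}\psi(r) Y(\omega)$ reduces the Helmholtz equation to
\begin{equation*}
-\psi'' + \bigl(V_m(r) + (\nu^2 - 1/4)/r^2\bigr)\psi - (\lambda \pm i\eps)\psi = F(r),\qquad r > 0,
\end{equation*}
with $\nu = \ell + (n-2)/2$. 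Outside $\supp V_m$ this ODE is solved by combinations of $\sqrt{r}J_\nu(kr)$ and the outgoing $\sqrt{r}H^{(1)}_\nu(kr)$ ($k = \sqrt{\lambda \pm i\eps}$), and the Green's function in this sector is $G_\ell(r,r';k) = \phi_\ell(r_<;k)\,\psi_{\ell,+}(r_>;k)/W_\ell(k)$, where $\phi_\ell$ and $\psi_{\ell,+}$ are the regular and outgoing solutions and $W_\ell$ their Wronskian.

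\textbf{Construction and matching.} Fix a smooth compactly supported radial profile $W: \R^n \to \R$ of the form (attractive thin shell) $+$ (strong repulsive core), carefully engineered so that, for a chosen $\ell$ (depending on $C_0$), the one-parameter family $V^{(c)} = cW$ has a critical coupling $c^*_\ell$ at which $-\Delta + c^*_\ell W$ admits a zero-energy threshold state exclusively in the $\ell$-th partial wave (and has no bound state or zero-energy state in any other sector). For $c_m \to c^*_\ell$ with $c_m \ne c^*_\ell$, set $V_m := c_m W$; then $H_m = -\Delta + V_m$ has no zero-energy resonance or eigenfunction, and the uniform smoothness/decay bounds on $V_m$ follow trivially from boundedness of $c_m$. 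Matching the inner ODE-integrated solution with the outer Bessel expansion at the support boundary $r = 2$, and using $J_\nu(z) \sim z^\nu$, $Y_\nu(z) \sim z^{-\nu}$ as $z \to 0$, yields
\begin{equation*}
W_\ell(k;c_m) = W_\ell(0;c_m) + k^{2\nu+1}\kappa_\ell(c_m) + O(k^{2\nu+3}),
\end{equation*}
with $\kappa_\ell$ uniformly bounded and generically nonzero (with logarithmic corrections when $\nu$ is an integer). Since $W_\ell(0;c^*_\ell) = 0$, a shooting argument gives $|W_\ell(0;c_m)| \sim |c_m - c^*_\ell|$, which we may prescribe at any positive value.

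\textbf{Finishing the argument and main obstacle.} Given $C_0 > 0$, choose $\ell$ so that $2\nu + 1 > 2(C_0 + 1)$, set $\lambda_m = 1/m$, and pick $c_m$ so that $|W_\ell(0; c_m)| = \lambda_m^{C_0 + 1}$. For $m$ large, the zero-energy term dominates in the expansion, giving $|W_\ell(\sqrt{\lambda_m};c_m)| \sim \lambda_m^{C_0 + 1}$, and the Green's function formula yields
\begin{equation*}
\|R(\lambda_m \pm i\eps_m)f_m\|_{H^{0,-1/2-\sigma}(\R^n)} \gtrsim |W_\ell(\sqrt{\lambda_m};c_m)|^{-1}\|f_m\|_{H^{0,1/2+\sigma}(\R^n)} \gtrsim \lambda_m^{-C_0 - 1}\|f_m\|_{H^{0,1/2+\sigma}(\R^n)},
\end{equation*}
dominating $\lambda_m^{-C_0}\|f_m\|_{H^{0,1/2+\sigma}(\R^n)}$. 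Finally pick $\eps_m = \lambda_m^{C_0+10}$, so $0 < \eps_m < \lambda_m$ and the $\eps$-regularization is negligible compared to $|W_\ell(\sqrt{\lambda_m};c_m)|$. The main obstacle is the engineering of $W$: one must ensure that the only partial wave reaching criticality at $c^*_\ell$ is the chosen $\ell$-th, so that $H_m$ has no zero-energy states in any sector. This is handled by the repulsive-core construction, which forces the effective potential in lower partial waves to have a much higher binding threshold. The remaining steps — the low-energy Wronskian expansion and the matching lower (not merely upper) bound on the resolvent in the weighted spaces — follow from standard Jost solution perturbation theory combined with the explicit Bessel asymptotics, with additional care required at integer values of $\nu$ where logarithmic corrections appear.
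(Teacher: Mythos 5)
Your construction takes a genuinely different route from the paper's. The paper runs the Bessel ODE in reverse: it first writes down an explicit radial profile $v(r)$ equal to $r^L$ near the origin and $r^{-L+1}$ near infinity, defines the potential by $V := (v_{rr} - L(L-1)r^{-2}v)/v$ (compactly supported, smooth), so that $u = r^{-(n-1)/2}v Y_l$ is an \emph{exact} zero-energy eigenfunction of $-\Delta + V$. It then spatially truncates $u$ at radius $m$ and compensates by perturbing the potential to $V_m := V + \lambda_m\chi_m$ with $\lambda_m = m^{-l/10}$, so that $u_m = u\chi_m$ is a quasimode for $H_m - \lambda_m$ with error $O(m^{-l+O(1)})$. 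The absence of zero eigenvalues/resonances for $H_m$ is then a short, explicit perturbation argument based on the sign of $\lambda_m\chi_m$ and the Cwikel--Rozenblum bound. The virtue of this route is that you never need to control which partial wave reaches criticality, never need the Jost/Wronskian machinery, and the required polynomial blow-up rate drops out of the decay rate $r^{-l-n}$ of the explicit eigenfunction.

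Your approach---tune a coupling constant $c_m$ toward a critical value $c^*_\ell$ so that the Jost/Wronskian function $W_\ell(0;c_m)$ is small but nonzero---is conceptually sound and would give the same conclusion, but as written it has real gaps. The central one is the ``engineering of $W$'': you assert without proof that a radial profile (attractive shell + repulsive core) can be arranged so that as $c$ increases, the \emph{first and only} sector to reach criticality at $c = c^*_\ell$ is the chosen high-$\ell$ sector, with no bound states or threshold states appearing in any other sector. This is far from obvious---lower partial waves have weaker centrifugal barriers and generically bind first; adding a repulsive core which scales with $c$ does not automatically fix this, and one would need to verify that the coupling windows for different $\ell'$ do not overlap. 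The paper never faces this issue because its potential is built \emph{from} the eigenfunction, so its spectral content in the relevant sector is known exactly. Two further unfinished steps in your argument: the nonvanishing of the Wronskian coefficient $\kappa_\ell(c_m)$ (you say ``generically nonzero,'' but you need it nonzero for the chosen parameters) and the promotion of the small Wronskian to a \emph{lower} bound on the resolvent in the weighted $H^{0,-1/2-\sigma}$ norm, which requires tracking where the large term $\phi_\ell(r_<)\psi_{\ell,+}(r_>)/W_\ell$ lives and bounding its weighted $L^2$ mass from below. None of these are fatal, but each would need to be carried out, and together they make your route substantially heavier than the paper's.
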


We give this standard example in Section \ref{bessel-matching-sec}.  This proposition shows that the large negative power in \eqref{prop-ls} is necessary.  One can also show that a negative power is necessary in \eqref{aprop-ls} by combining this example with the Carleman estimate (Proposition \ref{ucp}); we omit the details.

Despite this example, one can eliminate (or at least reduce) the low energy blowup in certain cases.  Firstly, in the flat case $M \equiv \R^n$ the the work of Jensen-Kato \cite{jk} and Jensen \cite{Jensen} provides 
small energy resolvent expansions 
\begin{equation}\label{eq:exp-KJ}
(H-\la)^{-1} = \sum_{j=-2}^\ell \la^{j/2} B_j + o(\la^{\ell/2})
\end{equation}
for Hamiltonians 
$H=-\Delta_{\R^n} +V$ in odd\footnote{In even dimensions resolvent expansions involves additional 
terms containing $\log \la$.} dimensions $n$, under appropriate conditions on $V$, dependent in particular 
on the index $\ell$, even in the presence of zero eigenvalues and resonances.
The expansion \eqref{eq:exp-KJ} shows that for a {\it fixed} potential $V$ the limiting 
absorption constant for the resolvent $(H-\la)^{-1}$ blows up at worst as 
$C(M,V) \lambda^{-1}$ as $\lambda\to 0$. Note however that \eqref{eq:exp-KJ} is 
qualitative in a sense that the bounds on the norms $B_j$, as operators between 
weighted Sobolev spaces, depend on the potential $V$ and not just on the bound $A$ for that potential; thus there is no inconsistency between \eqref{eq:exp-KJ} and Proposition \ref{bessel}.  Note however subtle distinction between effective bounds (which hold uniformly for all $V$ obeying \eqref{V-short}) and ineffective bounds (which are not uniform in $V$).  

It is quite likely that these expansions could be extended to asymptotically conic manifolds.  We will not do so in full generality here, but just give the standard qualitative limiting absorption estimate in the case that there are no eigenfunctions or resonances at zero:

\begin{definition}[Resonances at zero]\label{resonance}
Let $\lambda = \kappa^2$ for some $\kappa \geq 0$.  We say that 
 $H$ has an outgoing (resp. incoming) \emph{resonance at $\lambda$} if 
the equation $Hu=\lambda u$ has a solution $u\in H^{0,-3/2+\sigma}(\R^n)$  for some $\si>0$
with the property that $u\not\in \bigcup_{\si>0}H^{0,-1/2+\sigma}(\R^n)$ and
$\pa_r u - i \kappa u\in H^{0,-1/2+\sigma'}$ (resp. $\pa_r u + i \kappa u\in H^{0,-1/2+\sigma'}$)
for some $\sigma'>0$.  Note that when $\lambda = 0$, there is no distinction between incoming and outgoing resonances, and we shall talk simply about resonances at zero.
\end{definition}

\begin{proposition}[Qualitative low-energy limiting absorption principle]\label{lowenergy-jk} 
Let the notation and assumptions be as in Theorem \ref{main}.  Suppose also that $V$ is strongly short-range
in the sense that $\langle x \rangle^{2+2\sigma_0} V \in L^\infty(M)$.  Assume also that the operator $H := -\Delta_M + V$
contains no eigenfunction or resonance at zero.  Then for $\lambda, \eps \neq 0$ sufficiently small
depending on $M$ and $V$, we have
\begin{equation}\label{cmvl}
 \| R(\lambda \pm i\eps) f \|_{H^{2,-1/2-\sigma}(M)} \leq C(M,V) |\lambda|^{-1/2} \| f \|_{H^{0,1/2+\sigma}(M)}.
\end{equation}
Furthermore, if $\sigma > 1/2$, then we can strengthen this further, to
\begin{equation}\label{cmvl2}
 \| R(\lambda \pm i\eps) f \|_{H^{2,-1/2-\sigma}(M)} \leq C(M,V) \| f \|_{H^{0,1/2+\sigma}(M)}.
\end{equation}
\end{proposition}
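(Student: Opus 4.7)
The plan is the classical Fredholm/compactness argument, exploiting the fact that $H$ has no zero-energy eigenfunction or resonance. Arguing by contradiction, if \eqref{cmvl} fails one can extract sequences $\lambda_m + i\eps_m \to 0$ and $f_m \in H^{0,1/2+\sigma}(M)$ such that, after normalizing $\|u_m\|_{H^{2,-1/2-\sigma}} = 1$ for $u_m := R(\lambda_m \pm i\eps_m) f_m$, we have $\|f_m\|_{H^{0,1/2+\sigma}} = o(|\lambda_m|^{1/2})$. The sequence $\{u_m\}$ is bounded in $H^{2,-1/2-\sigma}(M)$, and since the embedding $H^{2,-1/2-\sigma}(M) \hookrightarrow H^{0,-1/2-\sigma'}(M)$ is compact for every $\sigma' > \sigma$ (Rellich with decay gain), a subsequence converges weakly in $H^{2,-1/2-\sigma}(M)$ and strongly in $H^{0,-1/2-\sigma'}(M)$ to a limit $u_\infty \in H^{2,-1/2-\sigma}(M)$. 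Passing to the distributional limit in $(H - \lambda_m \mp i\eps_m) u_m = f_m$, we obtain $H u_\infty = 0$.

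The heart of the argument is to identify $u_\infty$ as either a zero-energy eigenfunction or a resonance. Using the strong short-range decay of $V$ and the equation $-\Delta_M u_\infty = -V u_\infty$, an iterative bootstrap based on the scattering-theoretic inverse of $-\Delta_M$ at the infinite boundary of $M$ produces the asymptotic expansion $u_\infty = c \langle x \rangle^{2-n} + O(\langle x \rangle^{1-n})$ at infinity. If $c = 0$, further iteration places $u_\infty$ in $L^2(M)$, i.e.\ an eigenfunction at zero; if $c \neq 0$, then $u_\infty \in H^{0,-3/2+\sigma''}(M) \setminus \bigcup_{\sigma''' > 0} H^{0,-1/2+\sigma'''}(M)$, and the Sommerfeld condition at zero energy, $\partial_r u_\infty \in H^{0,-1/2+\sigma'''}$, follows automatically from the expansion, so that $u_\infty$ is a resonance in the sense of Definition \ref{resonance}. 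By hypothesis both possibilities are excluded, so $u_\infty \equiv 0$.

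It remains to contradict $\|u_m\|_{H^{2,-1/2-\sigma}} = 1$. We have $Hu_m = f_m + (\lambda_m \pm i\eps_m) u_m \to 0$ in $H^{0,-1/2-\sigma}(M)$, and a weighted elliptic estimate for $-\Delta_M + V$ reduces matters to showing $\|u_m\|_{H^{0,-1/2-\sigma}} \to 0$. Strong convergence gives only the weaker $H^{0,-1/2-\sigma'}$ convergence; to close the remaining gap, I would invoke a positive-commutator / integration-by-parts argument of the kind developed for Theorem \ref{main}, with multiplier of the form $\langle x \rangle^{-2\sigma_0}$ applied to $(H - z_m) u_m = f_m$, yielding an inequality of the form $\|u_m\|_{H^{0,-1/2-\sigma}} \lesssim \|f_m\|_{H^{0,1/2+\sigma}} + \|u_m\|_{H^{0,-1/2-\sigma'}}$ whose right-hand side tends to zero, completing the contradiction. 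The improved bound \eqref{cmvl2} follows from the same scheme with the stronger normalization $\|u_m\|_{H^{2,-1/2-\sigma}} \geq m \|f_m\|_{H^{0,1/2+\sigma}}$: the extra decay encoded in $\sigma > 1/2$ absorbs the would-be $|\lambda|^{-1/2}$ blow-up of the free resolvent at low energy, while the identification of the limit as an eigenfunction or resonance proceeds verbatim. The hard part is the middle step: rigorously extracting the expansion of $u_\infty$ at the infinite boundary of $M$ and matching it to Definition \ref{resonance} requires scattering-theoretic tools for asymptotically conic manifolds and is the most technical piece of the argument.
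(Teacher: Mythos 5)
Your strategy is in the right spirit — a Fredholm-type compactness contradiction using the hypothesis that $H$ has no eigenfunction or resonance at zero — but it differs structurally from the paper's proof, and the two steps you leave as black boxes are precisely what the paper's structure is designed to avoid. The paper instead uses the resolvent identity $R(\lambda\pm i\eps)=(1+R_0(\lambda\pm i\eps)V)^{-1}R_0(\lambda\pm i\eps)$, observes that $R_0$ already satisfies \eqref{cmvl2} (Proposition \ref{lowenergy-pos} applied with $V\equiv 0$), and reduces matters to the uniform invertibility of $1+R_0V$ on $H^{2,-1/2-\sigma}(M)$. The decisive point is that $R_0(\lambda\pm i\eps)V$ is \emph{uniformly compact} on $H^{2,-1/2-\sigma}(M)$ because $V$ is strongly short-range; a hypothetical null sequence $u_n$, once pushed through the operator to $\tilde u_n:=R_0(\lambda_n\pm i\eps_n)Vu_n$, therefore lies in a fixed compact set and converges \emph{strongly} along a subsequence to a nonzero $\tilde u\in H^{2,-1/2-\sigma}(M)$ with $H\tilde u=0$. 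Strong convergence in the full norm comes for free and there is nothing to upgrade.

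Your version normalizes $\|u_m\|_{H^{2,-1/2-\sigma}}=1$ directly, so Rellich gives strong convergence only in the strictly weaker space $H^{0,-1/2-\sigma'}(M)$, $\sigma'>\sigma$, and to reach the contradiction you must supply a uniform-in-$\lambda$ estimate
\begin{equation*}
\|u_m\|_{H^{0,-1/2-\sigma}}\les \|f_m\|_{H^{0,1/2+\sigma}}+\|u_m\|_{H^{0,-1/2-\sigma'}},
\end{equation*}
which you assert follows from a positive-commutator argument but do not prove. This is the genuine gap: the inequality is the limiting absorption bound modulo a compact error, i.e.\ essentially the statement being proved. It can in fact be extracted from Lemma~\ref{lem:improvement} (whose bound on $\|u\|_{H^{0,-3/2+\sigma}(M_{2r_0})}$ is uniform as $\lambda,\eps\to 0$, and whose weight $\langle x\rangle^{-3+2\sigma}$ dominates $\langle x\rangle^{-1-2\sigma}$ at infinity once $\sigma>1/2$), but that lemma carries the restriction $\sigma<\min(1,\sigma_0)$, so your route would additionally require $\sigma_0>1/2$, an assumption the Proposition does not make. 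Your second black box — extracting a boundary expansion $c\langle x\rangle^{2-n}+O(\langle x\rangle^{1-n})$ of $u_\infty$ to match Definition~\ref{resonance} — is also sidestepped in the paper: because $\tilde u$ is the strong limit of $R_0(\lambda_n\pm i\eps_n)(Vu_n)$ with $Vu_n$ bounded in a good weighted $L^2$ space, the weighted-space membership and the Sommerfeld condition needed to declare $\tilde u$ a zero resonance are inherited directly from Proposition~\ref{qlap}, with no iterative expansion at the infinite boundary required.
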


The proof of this result is essentially an application of the Fredholm alternative\footnote{Strictly speaking, we do not apply the Fredholm alternative directly, as we will need to uniformly invert a \emph{family} $1 + R_0(\lambda \pm i\eps) V$ of compact perturbations of the identity, rather than a single such perturbation, but instead will be using the \emph{proof} of the Fredholm alternative in our arguments.} and the theory for the free case $V=0$; we give this proof at the end of this section.  The estimates here should be compared with \eqref{prop-ls}; the constants do not blow up as fast as $\lambda \to 0$, but on the other hand the bounds depend directly on $V$ and not just on $A$.

\begin{remark} As is well known, in asymptotically Euclidean case in high dimensions $n \geq 5$ no resonances at zero can occur; see \cite{Jensen}. Moreover,
zero eigenfunctions and resonances are non-generic relative to a family of perturbations
$H_\kappa=-\Delta_M + \kappa V$; see \cite{jk}.
\end{remark}

\begin{remark}  The above proposition is only stated for sufficiently small $\lambda$.  But by combining this result with Theorem \ref{main} we see that it in fact holds for all bounded $\lambda$.  If we also assume non-trapping, then by Theorem \ref{nontrap} we can extend \eqref{cmvl} and \eqref{cmvl2} to arbitrary energies $\lambda > 0$.
\end{remark}
\begin{remark}
The subject of qualitative limiting absorption principles has a long and rich history. Most of the results have been focused 
on a high or a fixed non-zero energy regime in various geometric settings, such as the asymptotically Euclidean, conic, hyperbolic and 
obstacle cases; see for instance \cite{lp}, \cite{melrose}, \cite{iu}, \cite{hassell-2}, \cite{hassell} for a representative set of results. The low energy results have been
developed for example in \cite{gh1}, \cite{gh2}, \cite{bou}. 
For the operator $-\Delta_{{\bf R}^n}+V$ resolvent behavior have analyzed
in particular in \cite{Ag}, \cite{jk}, \cite{Jensen}, \cite{GS}. Recently there have been a lot of interest in establishing sub-exponential,
in fact logarithmic,  bounds on the resolvent at high energies for geometries with sufficiently ``small" or filamentary trapped sets,
see \cite{bgh}, \cite{nz}, \cite{wz}, \cite{dv}.

\end{remark}

In some cases we can make the absence of eigenfunctions or resonances at zero quite quantitative.  For instance, when the potential is mostly positive we have the following result.

\begin{proposition}[Quantitative low-energy limiting absorption principle for mostly positive 
potentials]\label{lowenergy-pos} 
Let the notation and assumptions be as in Theorem \ref{main}.  Suppose also that $V$ obeys the bounds
$$ |V(x)| \leq A \langle x \rangle^{-2-2\sigma_0}$$
for some $A > 0$, and suppose also that the negative part $V_- := \max(-V,0)$ of $V$ obeys the smallness condition
$$ \int_M |V_-(x)|^{n/2} \leq \beta(M)$$
for some sufficiently small $\beta > 0$ depending only on $M$.  Then $H$ has no eigenfunctions or resonances at any energy $\lambda \in \R$
(and in particular at $\lambda = 0$).  Furthermore, if $\la, \eps \neq 0$ are
sufficiently small depending on $M$ and $A$, then
\begin{equation}\label{leo}
 \| R(\lambda \pm i\eps) f \|_{H^{2,-1/2-\sigma}(M)} \leq C(M,A) |\lambda|^{-1/2} \| f \|_{H^{0,1/2+\sigma}(M)}.
\end{equation}
Finally, for all 
sufficiently small $|\la|, \eps$ and $\si<\min(\si_0,1)$
we have the following uniform bound 
\begin{equation}\label{prop-lsd}
 \| R(\lambda \pm i\eps) f \|_{H^{2,-3/2+\sigma}(M)} \leq C({\mathcal I}(M),A) \| f \|_{H^{0,1/2+\sigma}(M)},
\end{equation}
where the constant $C({\mathcal I}(M),A)$ depends 
on the manifold $(M,g)$ only through the isoperimetric constant ${\mathcal I}(M)$.
\end{proposition}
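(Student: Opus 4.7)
Proof plan. The three claims of the proposition share a common thread: the Sobolev inequality
\[
\|u\|_{L^{2n/(n-2)}(M)} \le C(\mathcal I(M))\,\|\nabla u\|_{L^2(M)},
\]
whose constant depends only on the isoperimetric constant (by Federer-Fleming), turns the smallness of $\|V_-\|_{L^{n/2}}$ into an $L^2$-coercivity of the form $\int V_- |u|^2 \le \tfrac12\|\nabla u\|_{L^2}^2$ in all subsequent estimates, uniformly in energy (this is exactly the scaling under which the hypothesis is invariant).

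\emph{Step 1 (no eigenfunctions or resonances).} For a putative eigenfunction $Hu = \lambda u$ with $\lambda \le 0$ and $u \in L^2$, integration by parts gives $\|\nabla u\|_{L^2}^2 + \int V|u|^2 = \lambda\|u\|_{L^2}^2 \le 0$, hence $\|\nabla u\|_{L^2}^2 \le \int V_- |u|^2$. Combining H\"older with the Sobolev bound yields
\[
\|\nabla u\|_{L^2}^2 \le \|V_-\|_{L^{n/2}}\|u\|_{L^{2n/(n-2)}}^2 \le \beta(M)\,C(\mathcal I(M))^2\,\|\nabla u\|_{L^2}^2,
\]
which forces $u = 0$ once $\beta(M) C(\mathcal I(M))^2 < 1$. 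For a zero resonance, the same argument applies once one checks $\nabla u \in L^2$, which follows from the standard asymptotic $u = O(r^{-(n-2)})$ of a zero resonance on an asymptotically conic manifold (obtained by recasting $(-\Delta_M + O(r^{-2-2\sigma_0}))u = 0$ in terms of the free model at infinity). For a hypothetical embedded positive eigenvalue, the decay $|V| \le A\langle x\rangle^{-2-2\sigma_0}$ is well within the scope of the Carleman estimate of Proposition \ref{ucp}, which rules such eigenvalues out via a Kato-Agmon unique continuation argument.

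\emph{Step 2 (quantitative bound \eqref{leo}).} With resonances excluded, I would revisit the multiplier proof of Theorem \ref{main}, letting the positivity from Step 1 replace the destabilizing $\lambda^{-C(M,A)}$ factor. From $(H - \lambda - i\eps)u = f$, pairing with $\bar u$ gives
\[
\|\nabla u\|_{L^2}^2 + \int V|u|^2 - \lambda\|u\|_{L^2}^2 = \Re\langle f,u\rangle,\qquad \eps\|u\|_{L^2}^2 = -\Im\langle f,u\rangle,
\]
and the Sobolev estimate on $\int V_-|u|^2$ absorbs the potentially bad contribution, leaving $\|\nabla u\|_{L^2}^2 \les \lambda\|u\|_{L^2}^2 + |\langle f,u\rangle|$. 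A radial positive-commutator/Morawetz multiplier of the form $\chi(r)\partial_r u + \tfrac12\chi'(r)u$, as developed for Theorem \ref{main}, then bootstraps this to a weighted $H^{2,-1/2-\sigma}$ estimate; the remaining $\lambda^{-1/2}$ factor arises from bounding the compact-set $L^2$ norm of $u$ against $\|f\|_{H^{0,1/2+\sigma}}$ via the pairing and Cauchy-Schwarz.

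\emph{Step 3 (uniform bound \eqref{prop-lsd}).} In the weaker target space $H^{2,-3/2+\sigma}$, with $\sigma < \min(\sigma_0,1)$, the $\lambda^{-1/2}$ factor can be eliminated: the absence of a zero resonance, combined with Step 1's coercivity, makes $H$ invertible as a map $H^{2,-3/2+\sigma} \to H^{0,1/2+\sigma}$ with norm controlled by $\mathcal I(M)$ and $A$, the required decay being precisely that of the model $1/|x|^{n-2}$ kernel at infinity (matching the statement \eqref{eq:h0} for the free resolvent). The Morawetz argument of Step 2, reweighted accordingly, closes without the compact-set $L^2$ correction, and continuity of $\lambda \mapsto R(\lambda \pm i\eps)$ in this weighted operator norm extends the bound to sufficiently small $|\lambda|$ and $\eps \neq 0$.

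The \emph{main obstacle}, and what distinguishes this proof from that of Proposition \ref{lowenergy-jk}, is that the constants must depend only on $\mathcal I(M)$ and $A$ rather than on the finer spectral data of $V$. This rules out the standard Fredholm alternative (which delivers only implicit, $V$-dependent constants) and forces us to carry the coercive Sobolev bound of Step 1 through every multiplier estimate uniformly. A secondary subtlety is the zero-resonance case of Step 1: establishing $\nabla u \in L^2$ there requires a careful interpretation of integration by parts in weighted Sobolev spaces together with the precise asymptotic expansion of a resonance at infinity.
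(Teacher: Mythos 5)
Your Step 1 is correct in outline, and your opening observation---that the $L^{n/2}$ smallness of $V_-$, combined with the Sobolev inequality governed by $\mathcal I(M)$, yields a scale-invariant coercivity of the Dirichlet form---is exactly the right starting point.  But Steps 2 and 3 do not close, and the gap sits precisely where the difficulty of this proposition lives.  After absorbing $\int V_-|u|^2 \le \tfrac12\|\nabla u\|_{L^2}^2$ into the quadratic form, the energy pairing gives
$$
\tfrac12\|\nabla u\|_{L^2(M)}^2 \le \lambda\|u\|_{L^2(M)}^2 + |\langle f,u\rangle|.
$$
For $\lambda>0$ (the interesting sign) the $\lambda\|u\|_{L^2}^2$ term is adversarial, and $\|u\|_{L^2(M)}$ itself cannot be controlled uniformly: by the charge estimate (Lemma \ref{charge-est}) it blows up like $\eps^{-1/2}$ as $\eps\to 0^+$.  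The Morawetz-type bounds (Lemma \ref{morawetz-lemma}, Lemma \ref{lem:improvement}) control $u$ at infinity in terms of $u$ on a fixed compact set $K$; what is left over, and what you need, is a quantitative bound on $\|u\|_{L^2(K)}$, and ``the pairing and Cauchy--Schwarz'' does not supply one.  In Step 3 you assert that coercivity plus absence of a zero resonance ``makes $H$ invertible $H^{2,-3/2+\sigma}\to H^{0,1/2+\sigma}$ with norm controlled by $\mathcal I(M)$ and $A$''---but this \emph{is} the claim to be proved; coercivity gives triviality of the kernel, not an a priori bound, and bridging that gap via compactness or Fredholm theory would reintroduce $V$-dependence, as you yourself flag.

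What the paper does---and what is missing from your proposal---is a De Giorgi-type level-set iteration that converts the critical-norm smallness of $V_-$ into a \emph{quantitative} $L^2$ bound on $u$ over the compact set.  After normalizing $\|u\|_{H^{0,-3/2+\sigma}}=1$, $\|f\|_{H^{0,1/2+\sigma}}=\delta$, and using Lemma \ref{lem:improvement} to reduce everything to bounding $\int_K|u|^2$, one multiplies the equation for $v=\Re u$ by cutoffs of $(v-\mu_\ell)_+$ supported on nested super-level sets $K_\ell$, applies the Sobolev inequality (with constant $C_s(\mathcal I(M))$) and the $\|V_-\|_{L^{n/2}}$-smallness at each scale, and obtains a nonlinear recursion of the form $A_\ell \lesssim 2^{-2\omega\ell}A_{\ell+2}^{1+2/n}+\text{(data)}$.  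Iterating this finitely many times drives $\int_K(v-\mu_{k_0})_+^2$ down to $O(\delta^2)$ plus an arbitrarily small term, with all constants depending only on $\mathcal I(M)$ and $A$.  That iteration is the mechanism producing the uniform bound \eqref{prop-lsd}, and nothing in a single integration by parts against $\bar u$, nor in the positive-commutator multiplier alone, can substitute for it.
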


We prove this result in Section \ref{sec:low-energy}.

\begin{remark}
Observe that the Euclidean limiting absorption principle, Proposition \ref{prop-h0}, 
now follows from
Theorem \ref{main}, Proposition \ref{qlap}, Theorem \ref{nontrap} (since the Euclidean space $\R^n$ is non-trapping), and Proposition \ref{lowenergy-pos} (with $V \equiv 0$).  
\end{remark}

Note that Proposition \ref{lowenergy-pos} already applies in the free case $V=0$.  From this and Fredholm theory we can now prove Proposition \ref{lowenergy-jk}:

\begin{proof}[Proof of Proposition \ref{lowenergy-jk}]  We begin with the proof of \eqref{cmvl2}.  We write $H = H_0+V$ where $H_0 = -\Delta_M$ is the free Laplacian, and denote the resolvents for $H_0$ by $R_0(\lambda \pm i\eps)$.  We have the resolvent identity
$$ R(\lambda \pm i\eps) = (1 + R_0(\lambda \pm i\eps) V)^{-1}  R_0(\lambda \pm i\eps).$$
From \eqref{prop-lsd} (and duality and interpolation), the resolvent $R_0$ already obeys \eqref{cmvl2} for any $\sigma > 1/2$, and so it suffices to show that the operators $1 + R_0(\lambda \pm i\eps) V$ are uniformly invertible on $H^{2,-1/2-\sigma}(M)$ for $\sigma$ sufficiently close to (but larger than) $1/2$.  From the Rellich embedding theorem and the strongly short-range nature of $V$, we know that $V$ maps $H^{2,-1/2-\sigma}(M)$ compactly to $H^{0,1/2+\sigma}(M)$, and so by \eqref{cmvl2} for the free operator, the operators $R_0(\lambda \pm i\eps) V$ are uniformly compact on $H^{2,-1/2-\sigma}(M)$ for $\lambda$ sufficiently small.  

Suppose that uniform invertibility fails, then we can find a sequence $\lambda_n \pm i \eps_n$ going to zero, functions $u_n$ of unit magnitude in $H^{2,-1/2-\sigma}(M)$, and $f_n$ going to zero strongly in $H^{2,-1/2-\sigma}(M)$ such that 
$$ (1 + R_0(\lambda_n \pm i\eps_n) V) u_n = f_n.$$
If we then write $\tilde u_n := R_0(\lambda \pm i\eps) V u_n = f_n - u_n$ and $\tilde f_n := R_0(\lambda \pm i\eps) V f_n$, then $u_n$ has norm $1+o(1)$ in $H^{2,-1/2-\sigma}(M)$ and lies in a fixed compact subset of that space, $\tilde f_n$ goes to zero strongly in $H^{2,-1/2-\sigma}(M)$, and
$$ (1 + R_0(\lambda_n \pm i\eps_n) V) \tilde u_n = \tilde f_n.$$
Applying $H_0 - (\lambda_n \pm i \eps_n)$ to both sides, we conclude that
$$ H \tilde u_n = (\lambda_n \pm i\eps_n) \tilde u_n + (H_0 - (\lambda_n \pm i \eps_n)) \tilde f_n.$$
By compactness, we may pass to a subsequence such that $\tilde u_n$ converges strongly in $H^{2,-1/2-\sigma}(M)$ to a limit $\tilde u$ of norm $1$.  Taking distributional limits, we conclude that $H \tilde u = 0$ in the sense of distributions.  But this contradicts the hypothesis that $H$ has no resonance or eigenvalue at zero.

The proof of \eqref{cmvl} is similar, but one takes $\sigma$ close to zero instead of to $1/2$, and uses $H^{0,3/2-\sigma}(M)$ instead of $H^{0,1/2+\sigma}(M)$ (and relies on \eqref{leo} and the adjoint of \eqref{prop-lsd}) to mediate between $R_0(\lambda \pm i\eps)$ and $V$; we omit the details.
\end{proof}

\subsection{Applications}

We now give some applications of the above limiting absorption estimates.  We begin with some (well-known) spectral consequences.

\begin{proposition}[Absence of embedded eigenvalues or resonances]\label{absent}  
Let the notation and assumptions be as in Theorem \ref{main}, and let $\lambda > 0$.  Then there are no non-zero eigenfunctions or resonances (incoming or outgoing) at $\lambda$.  
\end{proposition}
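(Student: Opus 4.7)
The plan is to derive a contradiction from the assumption of a nontrivial eigenfunction or outgoing resonance $u$ at some $\la > 0$; the incoming case is symmetric. Set $\kappa := \sqrt{\la}$ and $g := (\pa_r - i\kappa) u$. The eigenfunction case is immediate from Proposition \ref{qlap}: if $u \in L^2(M)$ solves $Hu = \la u$, then elliptic regularity applied to this equation places $u$ in $H^2(M) \subset H^{2,-1/2-\si}(M)$, while $g \in L^2 \subset H^{0,-1/2+\si'}(M \setminus K_0)$ for any sufficiently small $\si' > 0$. The uniqueness assertion of Proposition \ref{qlap} with $f = 0$ then identifies $u$ as the unique outgoing solution in $H^{2,-1/2-\si}(M)$, namely $R(\la + i0) \cdot 0 = 0$, forcing $u = 0$.

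The resonance case requires more work. By definition $u \in H^{0,-3/2+\si}(M) \setminus \bigcup_{\si' > 0} H^{0,-1/2+\si'}(M)$ and $g \in H^{0,-1/2+\si'}$ for some $\si' > 0$; the plan is to deduce $u \in H^{0,-1/2+\si'}(M)$, directly contradicting the non-membership. The starting point is a flux conservation identity: integrating $(Hu)\bar u - u\overline{Hu} = 0$ over the annulus $\{R_0 < r < R\}$, using self-adjointness of $H$ and reality of $V$ and $\la$, produces
\[
\int_{\{r = R\}} \Im(\bar u\, \pa_r u)\, dS_R = C_0, \qquad R > R_0,
\]
where $C_0$ is a constant, finite by interior elliptic regularity at $r = R_0$. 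Substituting the Sommerfeld decomposition $\pa_r u = i\kappa u + g$ rewrites this as
\[
\kappa \|u\|_{L^2(\{r = R\})}^2 + \Im\!\int_{\{r = R\}} \bar u\, g\, dS_R = C_0.
\]

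The crux is a Rellich-uniqueness step showing $C_0 = 0$. I would decompose $u = \sum_\ell u_\ell(r) Y_\ell(\omega)$ in an $L^2(\pa M)$-orthonormal eigenbasis $\{Y_\ell\}$ of $-\Delta_{\pa M}$; each mode then satisfies a radial ODE that is a perturbation of a Bessel equation by lower-order terms of size $O(r^{-1-2\si_0})$ coming from $V$ and the metric corrections $r^{-2\si_0} e_{ab}$. The outgoing Sommerfeld condition selects the outgoing-Hankel asymptotic $u_\ell(r) \sim c_\ell\, e^{i\kappa r}/r^{(n-1)/2}$, and the requirement that $u$ extend globally across the compact core $K_0$ as a bona fide solution of $Hu = \la u$, subject to the polynomial-growth bound $u \in H^{0,-3/2+\si}$, forces each $c_\ell$, and hence $C_0$, to vanish. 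Once $C_0 = 0$, Cauchy--Schwarz in the rewritten flux identity yields $\kappa \|u\|_{L^2(\{r=R\})} \le \|g\|_{L^2(\{r=R\})}$ pointwise in $R$; multiplying by $\langle R\rangle^{-1 + 2\si'}$, integrating over $R > R_0$, and adjoining interior $L^2_{\mathrm{loc}}$ control on $K_0$ gives $u \in H^{0,-1/2+\si'}(M)$, the desired contradiction.

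The main obstacle is establishing $C_0 = 0$; all other steps are routine integration by parts and weighted-integral manipulation. An alternative approach that bypasses the explicit Bessel/Hankel bookkeeping is to combine the classical Rellich uniqueness theorem on an asymptotically conic end with the unique-continuation property for $H - \la$ furnished by a Carleman estimate (cf.\ Proposition \ref{ucp}).
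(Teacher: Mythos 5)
Your eigenfunction argument is fine: for $u\in L^2(M)$ solving $Hu=\lambda u$, elliptic regularity gives $u\in H^2(M)\subset H^{2,-1/2-\sigma}(M)$ and $(\partial_r-i\kappa)u\in L^2\subset H^{0,-1/2+\sigma'}(M\setminus K_0)$, so the uniqueness clause of Proposition \ref{qlap} (applied with $f=0$) forces $u=0$; the circularity risk is nil since Proposition \ref{qlap} is proved before Proposition \ref{absent} and uses only Theorem \ref{main} and Lemma \ref{lem:improvement}.

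For the resonance case your architecture (flux conservation plus a Rellich-type closure) is sound, and once $C_0=0$ is in hand the Cauchy--Schwarz step and weighted integration you sketch do yield $u\in H^{0,-1/2+\sigma'}(M)$ and the contradiction. But you correctly flag $C_0=0$ as the crux and then leave it unresolved: the spherical-harmonics/Bessel bookkeeping you propose is both unnecessary and awkward in this setting, since the cross-metric $h[r]$ depends on $r$ with only $O(r^{-2\sigma_0})$ decay and the paper deliberately avoids such decompositions for exactly that reason (Section \ref{bessel-sec}); the ``Rellich plus Carleman'' alternative is also not fleshed out. The clean resolution is staring at you: you integrated $\div\j=0$ only over the annulus $\{R_0<r<R\}$, which leaves $C_0$ as the flux through the inner sphere. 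Integrate instead over the \emph{full} ball $\{r<R\}$. Since $u\in H^2_{\mathrm{loc}}(M)$ solves $(H-\lambda)u=0$ globally, the charge conservation law \eqref{charge-current} with $\eps=0$ and $F=0$ gives $\nabla_k\j^k=0$ distributionally on all of $M$, and the divergence theorem over the ball (no inner boundary, $M$ being a complete manifold) yields $C_0=0$ directly. With that your proof closes.

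It is worth knowing that the paper takes an entirely different route: starting from an outgoing resonance $u$, one sets $u_\delta:=e^{-\delta\langle x\rangle}u$ and $z:=\kappa+i\delta$, computes $(H-z^2)u_\delta=\delta e^{-\delta\langle x\rangle}f_\delta$ with $f_\delta$ controlled by $(\partial_r-i\kappa)u$ and by $\langle x\rangle^{-1}u$, $\langle x\rangle^{-1}\partial_ru$, then applies the quantitative limiting absorption bound of Theorem \ref{main} and lets $\delta\to0$. The weighted norms of $f_\delta$ grow only like $\delta^{-1+(\sigma'-\sigma)}$ and $\delta^{-2\sigma}$, so the right-hand side vanishes in the limit while the left-hand side increases monotonically to $\|u\|_{H^{0,-1/2-\sigma}}>0$, giving the contradiction. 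The paper's argument has the advantage of reusing the already-proven effective estimate with no new flux identities; yours (once the $C_0=0$ gap is filled) is the classical Rellich mechanism and is perhaps more transparent, but requires extra care to run in the variable-coefficient conic geometry.
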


\begin{proposition}[Absence of embedded singular continuous spectrum]\label{singular} The spectrum of $H$ on $(0,+\infty)$ is purely absolutely continuous.
\end{proposition}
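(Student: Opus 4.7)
The plan is to combine the limiting absorption bounds from Theorem~\ref{main} and Proposition~\ref{qlap} with Stone's formula, then extend from a dense subspace by closedness of the absolutely continuous subspace.

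Since the absolutely continuous subspace $\mathcal{H}_{\operatorname{ac}}(H) \subset L^2(M)$ is a closed $H$-invariant subspace, it suffices to show that for every $f$ in the dense subspace $H^{0,1/2+\sigma}(M) \subset L^2(M)$, the scalar spectral measure $d\mu_f(\lambda) = d\langle E_H(\lambda) f, f\rangle$ is absolutely continuous with respect to Lebesgue measure on $(0,+\infty)$. Fix such an $f$ and any compact subinterval $I = [a,b] \subset (0,+\infty)$ with $a, b$ not atoms of $\mu_f$ (a co-countable condition). Stone's formula then gives
\begin{equation*}
\mu_f((a,b)) = \lim_{\eps \to 0^+} \frac{1}{\pi} \int_a^b \Im \langle R(\lambda+i\eps) f, f \rangle \, d\lambda.
\end{equation*}
By the duality pairing between $H^{0,1/2+\sigma}(M)$ and $H^{0,-1/2-\sigma}(M)$, Theorem~\ref{main} furnishes the uniform-in-$\eps$ bound $|\langle R(\lambda+i\eps) f, f \rangle| \leq C(I,M,A) \|f\|_{H^{0,1/2+\sigma}(M)}^2$ for $\lambda \in I$, while Proposition~\ref{qlap} provides pointwise convergence $R(\lambda+i\eps) f \to R(\lambda+i0) f$ in $H^{0,-1/2-\sigma}(M)$ for each fixed $\lambda > 0$. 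Dominated convergence then yields
\begin{equation*}
\mu_f((a,b)) = \frac{1}{\pi} \int_a^b \Im \langle R(\lambda+i0) f, f \rangle \, d\lambda,
\end{equation*}
exhibiting $\mu_f\big|_{(0,+\infty)}$ as absolutely continuous with a locally bounded density.

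To conclude, the density of $H^{0,1/2+\sigma}(M)$ in $L^2(M)$ together with closedness of $\mathcal{H}_{\operatorname{ac}}(H)$ upgrades the above to the inclusion $E_H((0,+\infty)) L^2(M) \subset \mathcal{H}_{\operatorname{ac}}(H)$, which is precisely the claim that $H$ has purely absolutely continuous spectrum on $(0,+\infty)$. There is no substantial obstacle in this argument: it is a direct repackaging of Theorem~\ref{main} and Proposition~\ref{qlap}, with the uniform-in-$\eps$ resolvent bound doing all the work. This bound simultaneously excludes embedded eigenvalues at positive energies (which would contribute point masses to $\mu_f$ incompatible with the $L^\infty_{\operatorname{loc}}$ density just obtained, consistent with Proposition~\ref{absent}) and embedded singular continuous components.
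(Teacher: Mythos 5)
Your argument is correct and hinges on the same estimate the paper relies on, namely the uniform-in-$\eps$ resolvent bound from Theorem~\ref{main} applied on a dense subspace of $L^2(M)$. The packaging differs, though, in a way worth noting. The paper simply invokes \cite[Theorem XIII.20]{Rsimon2}: the uniform bound implies $\sup_{0<\eps<1}\int_a^b |\Im\langle R(\lambda+i\eps)\phi,\phi\rangle|^2\,d\lambda<\infty$ for $\phi\in C_0^\infty(M)$, and that criterion already forces the spectral measure of $\phi$ to have an $L^2$ density on $(a,b)$ --- no convergence of the resolvent as $\eps\to 0^+$ is required. You instead run Stone's formula and pass to the limit by dominated convergence, which requires the additional input of Proposition~\ref{qlap} (pointwise convergence $R(\lambda+i\eps)f\to R(\lambda+i0)f$ in $H^{0,-1/2-\sigma}(M)$) to identify the limiting density $\frac{1}{\pi}\Im\langle R(\lambda+i0)f,f\rangle$. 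Your route is more self-contained (you do not lean on a black-box theorem and you produce the density explicitly), while the paper's is more economical and, in particular, works from the boundedness estimate alone without needing the existence of boundary values of the resolvent. Both correctly finish by noting that $\mathcal{H}_{\operatorname{ac}}(H)$ is closed and the relevant test functions are dense.

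One small remark on your last paragraph: the observation that the locally bounded density ``simultaneously excludes embedded eigenvalues'' is true but somewhat circular as stated --- the uniform resolvent bound on $(0,\infty)$ already presupposes, or at least is proved independently of, Proposition~\ref{absent}, and the paper treats the two statements as logically separate consequences of Theorem~\ref{main}. This is a cosmetic point and does not affect the validity of your proof.
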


We prove these results in Section \ref{spectral-sec}.

In Section \ref{rage-sec} we give a version of the celebrated RAGE theorem (cf. \cite{ruelle}, \cite{amrein}, \cite{enss}).

\begin{proposition}[RAGE theorem]\label{rage}  Let the notation and assumptions be as in Theorem \ref{main}.  Let $f \in L^2(M)$ be orthogonal to all eigenfunctions of $H$.  Then for any compact set $K$, we have 
\begin{equation}\label{eq:rage}
\lim_{t \to \pm \infty} \| e^{itH} f \|_{L^2(K)} = 0.
\end{equation}
Similarly, we have
$$\lim_{t \to \pm \infty} \| u(t) \|_{H^1(K)} + \|\partial_t u(t) \|_{L^2(K)} = 0$$ 
for a solution of the wave equation $\pa_t^2 u+Hu=0$ 
with initial data $(u(0), \partial_t u(0))\in H^1(M) \times L^2(M)$, orthogonal to the eigenfunctions of $H$.
\end{proposition}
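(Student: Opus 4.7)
My plan is to derive the RAGE theorem by combining spectral theory with a functional-analytic compactness argument, using the absolute continuity established in Proposition~\ref{singular} as the essential input. The self-adjoint operator $H$ furnishes an orthogonal decomposition $L^2(M)=\mathcal{H}_{pp}\oplus \mathcal{H}_{sc}\oplus \mathcal{H}_{ac}$. Proposition~\ref{singular} gives purely absolutely continuous spectrum on $(0,\infty)$, while on $(-\infty,0]$ the spectrum consists of finitely many eigenvalues (possibly including one at $0$); together this forces $\mathcal{H}_{sc}=\{0\}$. Since $\mathcal{H}_{pp}$ is spanned by the eigenfunctions of $H$, the hypothesis on $f$ places $f\in\mathcal{H}_{ac}$. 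Applying the Riemann-Lebesgue lemma to the scalar spectral measure $d\langle g,E_H f\rangle$, which is absolutely continuous with respect to Lebesgue measure, yields $\langle g,e^{itH}f\rangle\to 0$ as $t\to\pm\infty$ for every $g\in L^2(M)$; equivalently, $e^{itH}f\rightharpoonup 0$ weakly in $L^2(M)$.

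To upgrade this weak decay to strong convergence in $L^2(K)$, I would use that the operator $\chi_K R(i)$ is compact on $L^2(M)$. This compactness follows from local elliptic regularity (so $R(i)$ sends $L^2(M)$ into $H^2_{\mathrm{loc}}(M)$) composed with the Rellich embedding $H^2(K)\hookrightarrow L^2(K)$. For $f\in D(H)\cap\mathcal{H}_{ac}$ (which is dense in $\mathcal{H}_{ac}$), the vector $(H-i)f$ still lies in $\mathcal{H}_{ac}$ (since the spectral subspace is invariant under $H$), so $e^{itH}(H-i)f\rightharpoonup 0$, and the identity
\[
\chi_K e^{itH}f=\chi_K R(i)\,e^{itH}(H-i)f
\]
converts the weak convergence into norm convergence, establishing \eqref{eq:rage} on a dense subset. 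The uniform bound $\|\chi_K e^{itH}\|_{L^2\to L^2}\le 1$ then extends \eqref{eq:rage} to all of $\mathcal{H}_{ac}$ by a standard three-$\varepsilon$ argument.

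For the wave equation I would apply the same scheme to the first-order system $(u,\partial_t u)$, which evolves unitarily on the energy space once restricted to $\mathcal{H}_{ac}$ (where $H$ is non-negative, so $\sqrt H$ is defined by the functional calculus). The spectral formulas $u(t)=\cos(t\sqrt H)u_0+\frac{\sin(t\sqrt H)}{\sqrt H}u_1$ and $\partial_t u(t)=-\sqrt H\,\sin(t\sqrt H)u_0+\cos(t\sqrt H)u_1$ keep both components in $\mathcal{H}_{ac}$ for all $t$, so Riemann-Lebesgue again gives weak $L^2$-decay of each component; the compactness of $\chi_K R(i)$ then converts weak decay into strong decay of $\|u(t)\|_{L^2(K)}$ and $\|\partial_t u(t)\|_{L^2(K)}$, while the analogous compactness of $\chi_K\nabla R(i)$ handles $\|\nabla u(t)\|_{L^2(K)}$, after one invokes energy conservation to keep $u(t)$ bounded in $H^1(M)$.

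The main obstacle I anticipate is precisely in this last step: the operator $\sin(t\sqrt H)/\sqrt H$ is not uniformly bounded in $t$ at the bottom of the spectrum, and since $\mathcal{H}_{ac}$ has no spectral gap at $0$ (the threshold $\lambda=0$ lies in the continuous spectrum), the low-frequency part of $u_1$ cannot be controlled by spectral-calculus formulas alone. The resolution is to work directly on the energy space with the norm $\|\nabla u\|_{L^2}^2+\|\partial_t u\|_{L^2}^2$, which sidesteps the $1/\sqrt H$ denominator entirely, and to obtain the required a priori global bounds on $(u(t),\partial_t u(t))$ from the energy identity recalled in the remarks following the definition of $H$, reserving the spectral-theoretic argument only for extracting weak decay on a dense subset of initial data in $D(H)\times D(\sqrt H)\cap\mathcal{H}_{ac}$ before passing to the limit.
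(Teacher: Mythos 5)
Your proof is correct, but it follows a genuinely different route from the paper's. The paper argues as follows: after reducing to $f \in H^2(M)$ and spectrally localizing $f$ to an interval $(a,b)$ with $0 < a < b < \infty$ (justified because Proposition~\ref{absent} plus the orthogonality hypothesis gives continuity of the spectral measure $d\langle E_\lambda f, f\rangle$), the limiting absorption principle on the compact interval $(a,b)$ yields an integrated local smoothing bound $\int_0^\infty \|u(t)\|_{H^{2,-1/2-\sigma}(M)}^2\,dt < \infty$. Monotone convergence then gives smallness of the tail $\int_{T_0}^\infty$, and a fundamental-theorem-of-calculus argument applied to $\int \overline{u(T)}\,g$ against a weighted test function $g$ extracts the pointwise-in-time decay. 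By contrast, you invoke the classical RAGE mechanism: absolute continuity (Proposition~\ref{singular}) gives $\mathcal{H}_{sc}=\{0\}$ so $f\in\mathcal{H}_{ac}$, Riemann--Lebesgue gives weak decay $e^{itH}f\rightharpoonup 0$, and compactness of $\chi_K R(i)$ (elliptic regularity plus Rellich) upgrades this to norm decay on a dense subset, after which a $3\varepsilon$ argument closes. Both routes require the dichotomy of Proposition~\ref{singular}/\ref{absent} as the essential spectral input; your approach is more abstract and textbook (it needs no quantitative resolvent bound beyond knowing the spectrum is a.c.\ on $(0,\infty)$), whereas the paper's route leverages the quantitative LAP machinery it has already built and so sits more naturally in the paper's overall framework of effective estimates.

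One place where your argument needs a small patch: in the Riemann--Lebesgue step you assert that $d\langle g, E_H f\rangle$ is absolutely continuous for every $g\in L^2(M)$. This is true but deserves a one-line justification -- it follows because $E_H(\cdot)f$ is an absolutely continuous vector-valued measure (since $f\in\mathcal{H}_{ac}$), so its scalar pairings with any $g$ inherit absolute continuity; alternatively one can appeal to the Cauchy--Schwarz inequality for spectral measures. For the wave equation, your instinct to work on the energy space is correct, but note that the natural conserved energy is $\langle Hu,u\rangle + \|\partial_t u\|_{L^2}^2$ rather than $\|\nabla u\|_{L^2}^2 + \|\partial_t u\|_{L^2}^2$, and the two quadratic forms are not \emph{a priori} equivalent on the continuous subspace since $V$ may have a nontrivial negative part. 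One should verify that on the orthogonal complement of the eigenspaces, $\langle Hu,u\rangle$ controls $\|\nabla u\|_{L^2}^2$ up to a compactly supported error term (which can be absorbed once local decay is known), or else argue directly with the first-order matrix operator $\mathcal{A}$ and its resolvent as the paper does in Section~\ref{ls-sec}. This is a standard but nonempty verification.
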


Next, we use the limiting absorption principle and the RAGE theorem, together with a closely related result that gives H\"older continuity bounds on the resolvent, to derive the \emph{limiting amplitude principle} for the wave equation.

\begin{proposition}[Limiting amplitude principle]\label{lamp}  Let the notation and assumptions be as in Theorem \ref{main}, let $f \in L^2(M)$ be compactly supported, and let $u_0 \in H^1(M)$, $u_1 \in L^2(M)$, and $\mu > 0$.  Assume that $f, u_0, u_1$ are all orthogonal to all the eigenfunctions of $H$.
Let $u: \R \times M \to \C$ be the solution to the inhomogeneous wave equation
\begin{equation}\label{eq:wave}
\pa_t^2 u -\Delta_M u = e^{i\mu t} f,\quad u|_{t=0}=u_0,\,\,\,\,\pa_t u|_{t=0}=u_1.
\end{equation}
Then for any compact set $K \subset M$ we have
$$ \| u(t) - e^{i\mu t} v \|_{H^1(K)} \to 0 \hbox{ as } t \to +\infty$$
where $v$ is the outgoing solution of the Helmholtz problem 
$$
(H - \mu^2) v =f,
$$ 
i.e. $v$ is such that $(\pa_r-i|\mu|) v\in H^{0,-1/2+\sigma'}(\R^n)$.
\end{proposition}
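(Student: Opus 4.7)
My plan is to split $u = u_{\text{hom}} + u_{\text{Duh}}$ via Duhamel, where $u_{\text{hom}}$ is the homogeneous wave evolution of $(u_0, u_1)$ and $u_{\text{Duh}}(t) = \int_0^t \frac{\sin((t-s)\sqrt H)}{\sqrt H} e^{i\mu s}f\,ds$. (I interpret the PDE as $\partial_t^2 u + Hu = e^{i\mu t}f$, so that $e^{i\mu t}v$ is a formal particular solution compatible with the Helmholtz normalization.) Proposition~\ref{rage} (RAGE) yields $\|u_{\text{hom}}(t)\|_{H^1(K)} \to 0$ since the data $(u_0,u_1)$ are orthogonal to eigenfunctions, so the task reduces to showing $\|u_{\text{Duh}}(t) - e^{i\mu t}v\|_{H^1(K)} \to 0$.

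A preparatory observation is that $v$ is itself orthogonal to every eigenfunction $\phi$ of $H$: from $(E-\mu^2)\langle v,\phi\rangle = \langle(H-\mu^2)v,\phi\rangle = \langle f,\phi\rangle = 0$ and Proposition~\ref{absent} (which excludes eigenvalues at $\mu^2>0$), we conclude $\langle v,\phi\rangle = 0$. The pairing makes sense because negative-energy eigenfunctions decay exponentially (Agmon bounds), hence lie in $H^{0,1/2+\sigma}(M)$, dual to $H^{0,-1/2-\sigma}(M) \ni v$. So $f$ and $v$ both live in the continuous-spectrum subspace, and I may use the spectral resolution of $H$ on $[0,\infty)$ throughout.

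Using the spectral theorem, I write $u_{\text{Duh}}(t) = \int_0^\infty q(\lambda,t)\,dE_\lambda f$ with
\[
q(\lambda,t) = \frac{e^{i\mu t}}{\lambda-\mu^2} - \frac{e^{i\sqrt\lambda\,t}}{2\sqrt\lambda(\sqrt\lambda - \mu)} - \frac{e^{-i\sqrt\lambda\,t}}{2\sqrt\lambda(\sqrt\lambda+\mu)}.
\]
The $(\lambda-\mu^2)^{-1}$ singularity of the first piece and the $(\sqrt\lambda - \mu)^{-1}$ singularity of the second are interpreted via the outgoing prescription $(\lambda - \mu^2 - i0)^{-1}$, selected by the causal direction of the Duhamel integral (since $s$ runs forward from $0$ to $t$). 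The first piece then produces $e^{i\mu t}R(\mu^2+i0)f = e^{i\mu t}v$. The oscillatory pieces involving $e^{\pm i\sqrt\lambda\,t}$ are shown to decay in $H^1(K)$ as $t\to\infty$ by a Riemann--Lebesgue type argument applied to the spectral density $\rho(\lambda) = (2\pi i)^{-1}[R(\lambda+i0)-R(\lambda-i0)]$. The needed H\"older continuity of $\rho$, uniform on compact subsets of $(0,\infty)$ including neighborhoods of $\mu^2$ (no poles arise there by Proposition~\ref{absent}), is the ``closely related result'' referred to just before the proposition, and it follows from the limiting absorption principle (Theorem~\ref{main}) together with resolvent H\"older bounds.

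The main obstacle is precisely this branch-point spectral analysis at $\lambda = \mu^2$: isolating the outgoing-prescription contribution so that it yields $e^{i\mu t}v$ rather than the incoming counterpart $e^{i\mu t}R(\mu^2-i0)f$, and pushing the Riemann--Lebesgue decay of the remaining oscillatory integrals through with only H\"older (not smooth) spectral density. The restriction to $H^1(K)$ is also essential, because the integrals decay only locally in $x$, reflecting the fact that $v$ lies in $H^{0,-1/2-\sigma}(M)$ rather than in $L^2(M)$; a global $H^1(M)$ convergence is not expected, nor claimed.
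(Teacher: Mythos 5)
Your proposal follows essentially the same blueprint as the paper's proof — Duhamel decomposition, RAGE for the homogeneous part, and H\"older continuity of the resolvent combined with a Riemann--Lebesgue argument near the threshold energy $\mu^2$ — but you run it through the scalar wave propagator and Stone's spectral representation rather than the first-order system $\Phi_t = \mathcal A \Phi + e^{i\mu t}F$ and a Cauchy contour integral as in the paper. That packaging difference is fine in principle, and your formula for $q(\lambda,t)$ is correct.

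However, there is a genuine gap in the step where you dispatch the oscillatory pieces. After you extract the limiting-amplitude contribution from the first term of $q$, the residual contains
$$ -\,\frac{e^{it\sqrt\lambda}}{2\sqrt\lambda(\sqrt\lambda-\mu)}\,\rho(\lambda), $$
and in the variable $\sigma=\sqrt\lambda$ this is $-\tfrac12 e^{it\sigma}\psi(\sigma)/(\sigma-\mu)$ with $\psi(\sigma)=\langle\rho(\sigma^2)f,h\rangle$. This integrand is \emph{not} in $L^1$ near $\sigma=\mu$, so Riemann--Lebesgue as stated simply does not apply to it. H\"older continuity of $\rho$ alone is not enough: you must first split $\psi(\sigma)=\psi(\mu)+\bigl[\psi(\sigma)-\psi(\mu)\bigr]$, handle the constant piece by evaluating $\mathrm{p.v.}\!\int_0^\infty e^{it\sigma}(\sigma-\mu)^{-1}\,d\sigma = i\pi e^{it\mu}+o(1)$ and matching it against the $\pm i\pi\langle\rho(\mu^2)f,h\rangle$ that arises from the $\pm i0$ prescription, and only then apply Riemann--Lebesgue to the remainder, which \emph{is} $L^1$ thanks to the H\"older bound $|\psi(\sigma)-\psi(\mu)|\lesssim|\sigma-\mu|^{\delta}$. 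This subtraction at $\mu^2$ is precisely the role played in the paper by $G_{\eps'}(z)=(\mathcal A-z-\eps')^{-1}F-(\mathcal A-i\mu-\eps')^{-1}F$, which vanishes at $z=i\mu$; without it the argument does not close, and it is the single most substantive step of the whole proof. You flag the branch-point analysis as ``the main obstacle'' but do not actually resolve it, so this is more than a presentational omission.

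Two further remarks. First, the constant piece carries with it an undamped term of size $e^{it\mu}\langle\rho(\mu^2)f,h\rangle$, so the $\pm i0$ prescription must be determined, not asserted; if you compute the damped Duhamel integral $\lim_{\eps\to 0^+}\int_0^\infty e^{-\eps\tau}\,\omega^{-1}\sin(\tau\omega)\,e^{-i\mu\tau}\,d\tau$ you find the prescription $(\sqrt\lambda-\mu+i0)^{-1}$, i.e.\ $R(\mu^2-i0)$ — matching the $R\bigl((\mu-i0^+)^2\bigr)$ that the paper's own computation produces — so the ``selected by the causal direction'' sentence should be replaced by this explicit calculation, and you should reconcile it with the stated radiation condition rather than simply echoing the statement. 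Second, your observation that $v$ is orthogonal to eigenfunctions is sound (and cleanly justifies working on the a.c.\ subspace), though the paper sidesteps this by stating the proof under the simplifying hypothesis that $H$ has no eigenfunctions and inserting $P_H$ in general.
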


We prove this result in Section \ref{lamp-sec}.

Now we give a global-in-time local smoothing estimate for the Schr\"odinger equation.

\begin{proposition}[Global-in-time local smoothing estimate for $H$]\label{glos}  Let the notation and
assumptions be as in Theorem \ref{main}.  Suppose that $M$ is a non-trapping manifold, that $V$ obeys the bounds
$$ |V(x)| \leq A \langle x \rangle^{-2-2\sigma_0},$$
and
that $H := - \Delta_M + V$ has no eigenfunction or resonance at zero. Let 
$u: \R \times \R^n \to \C$ be a solution to the forced Schr\"odinger equation 
$$ iu_t - H u = F.$$
and let $P_H$ denote the projection on the continuous spectrum of $H$.
Then we have the estimate
\begin{equation}\label{glos-1}
 \int_\R \| H^{1/2} P_H u(t) \|_{H^{0,-1/2-\sigma}(M)}^2\ dt
\leq C(M,V,A) \left( \| u(0) \|_{H^{1/2}(M)}^2 + \int_\R \| F(t) \|_{H^{0,1/2+\sigma}(M)}^2\ dt\right).
\end{equation}
If furthermore $\sigma > 1/2$, then we have the variant estimate
\begin{align*}
\int_\R \left (\| H^{1/4} P_H u(t) \|_{H^{0,-1/2-\sigma}(M)}^2+
\| P_H u(t) \|_{H^{0,-1/2-\sigma}(M)}^2\right)\ dt
\leq C(M,V,A) \times &\\
\left( \| u(0) \|_{L^2(M)}^2 + \int_\R \| F(t) \|_{H^{0,1/2+\sigma}(M)}^2\ dt\right)&.
\end{align*}
If we assume that $V$ obeys the hypotheses of Proposition \ref{lowenergy-pos} then we may 
eliminate the projection $P_H$ and replace
the constants $C(M,V,A)$ here by $C(M,A)$. 

Finally, if we assume symbol estimates on both the metric coefficients $e_{ab}$ and the 
potential $V$ then we  can replace the last estimate with the family of bounds
\begin{align*}
\int_\R \| P_H u(t) \|_{H^{s+1/2,-1/2-\sigma}(M)}^2&\ dt
\leq C(s,M,V,A) \times \\
&\times \left( \| u(0) \|_{H^{s}(M)}^2 + \int_\R \| F(t) \|_{H^{s-1/2,1/2+\sigma}(M)}^2\ dt\right)
\end{align*}
for all $s \geq 0$.
\end{proposition}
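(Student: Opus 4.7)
The plan is to derive the local smoothing estimates from the LAP bounds already established in the paper, via Kato's smoothness theorem. Write $T := \langle x\rangle^{1/2+\sigma}$ for the spatial weight, so that $\|u\|_{H^{0,-1/2-\sigma}(M)} = \|T^{-1} u\|_{L^2(M)}$. Under the hypotheses of the proposition, the combination of Theorem \ref{main} (together with the non-trapping remark following Theorem \ref{nontrap}) at medium energies, Theorem \ref{nontrap} at large $\lambda$, and Proposition \ref{lowenergy-jk} at small $\lambda$, yields the uniform bound
$$
\|T^{-1} R(\lambda \pm i\eps) P_H\, T^{-1}\|_{L^2(M) \to L^2(M)} \;\le\; C(M,V,A)\,|\lambda|^{-1/2},\qquad \lambda,\eps > 0.
$$
Stone's formula then gives $\|T^{-1}(dE/d\lambda) T^{-1}\|_{L^2 \to L^2} \le C|\lambda|^{-1/2}$ for the density of the absolutely continuous spectral measure of $H$ on $P_H L^2$.

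\textbf{Step 2: Kato smoothness via the $\lambda^{1/2}\cdot\lambda^{-1/2}$ cancellation.} Set $A := T^{-1} H^{1/4} P_H$. Since $H^{1/4}$ commutes with $dE$ on the continuous subspace, the functional calculus gives
$$
A\,(dE/d\lambda)\,A^{*} \;=\; \lambda^{1/2}\, T^{-1}(dE/d\lambda) T^{-1}\qquad (\lambda > 0),
$$
and the exact cancellation $\lambda^{1/2}\cdot|\lambda|^{-1/2} = 1$ yields $\sup_{\lambda>0}\|A(dE/d\lambda) A^{*}\| \le C$. Kato's smoothness theorem then produces
$$
\int_\R \bigl\|T^{-1} H^{1/4} e^{-itH} P_H g\bigr\|_{L^2(M)}^{2}\, dt \;\le\; C\,\|g\|_{L^2(M)}^{2}.
$$
Substituting $g = H^{1/4} P_H u(0)$ and commuting $H^{1/4}$ through $e^{-itH}$ gives the homogeneous half of \eqref{glos-1}, once one notes the equivalence $\|H^{1/4} P_H u(0)\|_{L^2} \lesssim \|u(0)\|_{H^{1/2}(M)}$ (which follows from the ellipticity of $-\Delta_M + V$ modulo the finite-rank point spectrum).

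\textbf{Step 3: Inhomogeneous term and variants.} For the $F$-contribution, we run the standard $T T^{*}$ argument: the dual of the previous Kato bound reads $\|\int_\R e^{isH} A^{*} G(s)\, ds\|_{L^2_x} \le C \|G\|_{L^2_{t,x}}$, and composing yields an $L^2_{t,x}\to L^2_{t,x}$ bound for the untruncated operator $F \mapsto \int_\R A e^{-i(t-s)H} A^{*} F(s)\, ds$; the retarded Duhamel integral $\int_0^t$ is then extracted via $\int_0^t = \int_{-\infty}^t - \int_{-\infty}^0$ together with the unitarity of the Schr\"odinger group, which sidesteps the endpoint failure of Christ--Kiselev at $p=q=2$. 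For the $\sigma > 1/2$ variant, the improvement \eqref{cmvl2} of Proposition \ref{lowenergy-jk} upgrades the low-energy LAP to $\|T^{-1} R(z) T^{-1}\| \le C$ uniformly in $\lambda$, so the analogous argument applied to $A = T^{-1} P_H$ (no $H^{1/4}$) supplies the extra $\|P_H u\|_{H^{0,-1/2-\sigma}}^2$ term and permits $\|u(0)\|_{L^2}$ on the right. Under the hypotheses of Proposition \ref{lowenergy-pos}, $H$ has no eigenvalues or resonances anywhere on $\R$, so $P_H = I$ and every constant depends only on $(M,A)$. For the higher-regularity family of estimates, apply the base case to $v := (I + H)^{s/2} u$, which solves $i v_t = Hv + (I+H)^{s/2} F$; the symbol estimates on $e_{ab}$ and $V$ allow $(I+H)^{s/2}$ to be commuted through the spatial weights $\langle x\rangle^{\pm(1/2+\sigma)}$ up to lower-order remainders absorbable into the constant, identifying $\|v(0)\|_{H^{1/2}} \sim \|u(0)\|_{H^s}$ and similarly for the source.

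\textbf{Main obstacle.} The substantive step is the borderline cancellation $\lambda^{1/2}\cdot|\lambda|^{-1/2}$ in Step 2: both factors are sharp, so the argument requires the sharp LAP exponent \emph{simultaneously} at low, medium and high $\lambda$, which is why all three of Theorem \ref{main}, Theorem \ref{nontrap} and Proposition \ref{lowenergy-jk} (or, in the mostly-positive case, Proposition \ref{lowenergy-pos}) must be invoked in unison. A secondary subtlety is the higher-regularity extension, since one must track how $(I+H)^{s/2}$ interacts with the spatial weights in the weighted Sobolev spaces; this is precisely the point at which the symbol hypotheses on the metric perturbation $e_{ab}$ and the potential $V$ become essential, as generic non-symbolic perturbations would produce commutators that cannot be absorbed.
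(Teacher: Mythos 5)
Your proposal follows the same basic route as the paper: convert LAP into local smoothing by Fourier duality in time (Kato's method), drawing on Proposition \ref{lowenergy-jk} at low energy, Theorem \ref{nontrap} at high energy, and Theorem \ref{main} together with the non-trapping hypothesis in between. Packaging the argument through Kato's smoothness theorem and the $\lambda^{1/2}\cdot\lambda^{-1/2}$ cancellation in Step~2 is a legitimate re-expression of the paper's Plancherel-plus-elliptic-estimate calculation, and your handling of the variants (the $\sigma>1/2$ case, elimination of $P_H$ under Proposition \ref{lowenergy-pos}, the higher-regularity family) matches the paper's in substance.

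There is however a genuine gap in Step~3, in how you treat the retarded Duhamel integral. Composing the homogeneous Kato estimate with its dual yields boundedness on $L^2_{t,x}$ of the \emph{untruncated} operator $G \mapsto \int_\R A e^{-i(t-s)H} A^* G(s)\,ds$, and you propose to pass from this to the Duhamel term via the split $\int_0^t = \int_{-\infty}^t - \int_{-\infty}^0$. That identity trades $\int_0^t$ for $\int_{-\infty}^t$: the $\int_{-\infty}^0$ piece factors as $e^{-itH}v_0$ for a fixed $v_0\in L^2$ controllable by the homogeneous estimate and its dual, but the $\int_{-\infty}^t$ piece is still the \emph{retarded} propagator with a $t$-dependent upper limit, and bounding it by the full-line estimate is exactly the $p=q=2$ endpoint of the Christ--Kiselev lemma, which fails in general. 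The split renames the difficulty rather than resolving it; you correctly flag the endpoint obstruction, but the proposed fix does not remove it.

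What does work, and is what the paper actually does, is to Fourier-transform the retarded propagator \emph{directly}. For $\eps > 0$, the $t$-Fourier transform of $t\mapsto \mathbf{1}_{\{s<t\}}\, A\, e^{-i(t-s)(H+i\eps)}\, A^*$ is (up to a constant) the one-sided resolvent $A\, R(\lambda - i\eps)\, A^*$, so Plancherel reduces the retarded estimate \eqref{retard} to the uniform bound $\sup_{\lambda,\,\eps>0}\|A R(\lambda - i\eps) A^*\| < \infty$, which is precisely the LAP input you assembled in Step~1. The point is that the retarded truncation singles out one sign of $\pm i\eps$ in the resolvent, and the LAP controls each side individually; the argument must engage that single-sidedness rather than try to recover it from the symmetrized full-line operator. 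Once \eqref{retard} is in hand, the paper then derives the full-line, inhomogeneous-at-$t=0$, and homogeneous estimates \emph{from} it by time reversal, summation and duality --- the reverse direction of your chain of implications, and the one that closes.
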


We prove this proposition in Section \ref{ls-sec}.

\begin{remark}
The nontrapping assumption of Theorem \ref{glos} can be removed if one replaces the projection $P_H$ 
by the projection $P_H^\Lambda$ on the continuous spectrum with energies $\la<\Lambda<\infty$.
\end{remark}

\begin{remark} The close connection between limiting absorption principles and local smoothing (they are essentially Fourier transforms of each other, with $t$ being the dual variable to $\lambda$) was first observed by Kato \cite{kato1}.  See \cite{simon2} for some further discussion. In Euclidean space, the global-in-time local smoothing estimates were first established in \cite{sjolin}, \cite{vega}, \cite{constantin}.  In order to obtain estimates which are global in time, it is necessary (by the uncertainty principle) to establish limiting absorption principles at very low energies; high-energy analysis alone is only sufficient to establish local-in-time local smoothing estimates.
\end{remark}

\begin{remark} It is very likely that the above global-in-time local smoothing estimate will imply global-in-time Strichartz estimates, by adapting the arguments in \cite{st}, \cite{tataru}, \cite{bt}, \cite{bt2}, \cite{mmt}.  This would allow one to create slightly more quantitative formulations of some of the Strichartz estimates in \cite{bt2} and \cite{mmt}, however these improvements seem to be rather minor and so we will not detail them here.
\end{remark}

The limiting absorption principle for the Hamiltonian $H$ also leads to the
integrated local energy decay for the wave equation. 

\begin{proposition}[Integrated local energy decay]  \label{prop:int-decay}
Suppose that $M$ is a non-trapping manifold, that $V$ obeys the bounds
$$ |V(x)| \leq A \langle x \rangle^{-2-2\sigma_0},$$
and
that $H := - \Delta_M + V$ has no eigenfunction or resonance at zero. Let 
$u: \R^{n+1} \to \C$ be a solution of the wave equation
\begin{equation}\label{eq:w-inh}
\pa_t^2u + H u = F.
\end{equation}
and let $P_H$ denote the projection on the continuous spectrum of $H$.
Then we have the estimate
\begin{equation}\label{int-decay-1}
 \begin{split}
 \int_\R &\left (\| \pa_t P_H u(t) \|_{H^{0,-1/2-\sigma}(M)}^2+
\| \nabla P_H u(t) \|_{H^{0,-1/2-\sigma}(M)}^2\ dt\right)
\leq C(M,V,A) \times\\ &\times \left( \| \nabla u(0) \|_{L^2(M)}^2 + \| u_t(0) \|_{L^2(M)}^2\right.+
\left.\int_\R \| F(t) \|_{H^{0,1/2+\sigma}(M)}^2\ dt\right).
\end{split}
\end{equation}
For $\sigma<\min (1,\si_0)$ 
the retarded solution 
$$ u_{\operatorname{ret}}(t) := \int_{t' < t} \frac{\sin( (t-t') \sqrt{H} )}{\sqrt{H}} F(t')\ dt'$$
of the inhomogeneous problem \eqref{eq:w-inh}
obeys additional bounds
\begin{equation}\label{int-decay-2}
 \begin{split}
 \int_\R \Big(\| (\pa_t-\pa_r) P_H u_{\operatorname{ret}}(t) \|_{H^{0,-1/2+\sigma}(M)}^2&+
\| r^{-1}\nabla_\omega P_H u_{\operatorname{ret}}(t) \|_{H^{0,-1/2+\sigma}(M)}^2\ dt\Big)
\\ &\leq C(M,V,A) \int_\R \| F(t) \|_{H^{0,1/2+\sigma}(M)}^2\ dt.
\end{split}
\end{equation}
A similar estimate holds for the advanced solution 
$$ u_{\operatorname{adv}}(t) := -\int_{t' > t} \frac{\sin( (t-t') \sqrt{H} )}{\sqrt{H}} F(t')\ dt'$$
with $(\pa_t+\pa_r)$ in place of
$(\pa_t-\pa_r)$.

Furthermore, even if $M$ does not satisfy a non-trapping condition there exists a compact 
set $K\subset M$ such that
\begin{align*}
 \int_\R &\left (\| \pa_t P_H u(t) \|_{H^{0,-1/2-\sigma}(M\setminus K)}^2+
\| \nabla P_H u(t) \|_{H^{0,-1/2-\sigma}(M\setminus K)}^2\ dt\right)
\leq C(M,V,A)\\ &\times \left( \| \nabla u(0) \|_{H^1(M)}^2 + \| u_t(0) \|_{H^1(M)}^2\right.+
\left.\int_\R \| F(t) \|_{H^{1,1/2+\sigma}(M)}^2\ dt\right).
\end{align*} 
If we assume that $V$ obeys the hypotheses of Proposition \ref{lowenergy-pos} then we may 
eliminate the projection $P_H$ and replace
the constants $C(M,V,A)$ by $C(M,A)$. 
\end{proposition}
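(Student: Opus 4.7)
The plan is to convert this to a resolvent estimate via Fourier transform in time, then exploit the limiting absorption principle already proved. Write $u = u_{\operatorname{hom}} + u_{\operatorname{ret}}$, where $u_{\operatorname{hom}}$ solves the homogeneous wave equation with the given Cauchy data and $u_{\operatorname{ret}}$ is the retarded solution of the inhomogeneous problem with zero Cauchy data. The homogeneous part will be handled by Stone's formula and a Kato-type $TT^*$ argument using the projection $P_H$ onto the continuous spectrum; the inhomogeneous part will be handled by Plancherel on the time variable.

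For the retarded contribution, taking the time Fourier transform converts $(\partial_t^2 + H)u_{\operatorname{ret}} = F$ into $(H - \tau^2)\widehat{u_{\operatorname{ret}}}(\tau) = \widehat F(\tau)$, and the retarded prescription selects the boundary value $\widehat{u_{\operatorname{ret}}}(\tau) = R(\tau^2 + i0\,\operatorname{sgn}\tau)\widehat F(\tau)$. By Plancherel,
\begin{equation*}
\int_\R \bigl(\|\partial_t P_H u_{\operatorname{ret}}\|^2_{H^{0,-1/2-\sigma}} + \|\nabla P_H u_{\operatorname{ret}}\|^2_{H^{0,-1/2-\sigma}}\bigr)\,dt \lesssim \int_\R \bigl(\tau^2 + 1\bigr)\|R(\tau^2 + i0\,\operatorname{sgn}\tau)\widehat F(\tau)\|^2_{H^{1,-1/2-\sigma}}\,d\tau.
\end{equation*}
Under the non-trapping assumption Theorem \ref{nontrap} gives $\|R(\lambda\pm i0)\|_{H^{0,1/2+\sigma}\to H^{1,-1/2-\sigma}} \lesssim 1$ for large $\lambda$, which absorbs the $\tau^2$ factor after noting that $\|R(\lambda\pm i0)\|_{H^{0,1/2+\sigma}\to H^{0,-1/2-\sigma}}\lesssim \lambda^{-1/2}$ kills the $\tau = \sqrt\lambda$. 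At low energies Proposition \ref{lowenergy-jk} provides the matching $|\lambda|^{-1/2}$ bound in $H^{2,-1/2-\sigma}$, and the moderate energy regime is covered by Theorem \ref{main}. Integrating these uniform bounds yields the inhomogeneous piece of \eqref{int-decay-1}.

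For the homogeneous part, write, via Stone's formula for $P_H$,
\begin{equation*}
P_H u_{\operatorname{hom}}(t) = \frac{1}{2\pi i}\int_0^\infty \Bigl(\cos(t\sqrt\lambda) P_H u_0 + \frac{\sin(t\sqrt\lambda)}{\sqrt\lambda} P_H u_1\Bigr)\bigl[R(\lambda+i0)-R(\lambda-i0)\bigr]\,d\lambda,
\end{equation*}
and reduce the time integral $\int_\R \|\langle x\rangle^{-1/2-\sigma}\partial_t P_H u_{\operatorname{hom}}\|^2_{L^2}\,dt$ to a $TT^*$ expression whose kernel is $\lambda(R(\lambda+i0)-R(\lambda-i0))$. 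The bound on this operator between weighted spaces is exactly what the limiting absorption principle delivers, giving $\lesssim \|u_0\|_{L^2}^2 + \|u_1\|_{L^2}^2$ after using $\sqrt H$-calculus (and $\|\nabla u\|_{L^2}^2 \sim \|\sqrt H u\|_{L^2}^2 + O(\|u\|^2)$ since $V$ is bounded) to convert between energy and $\sqrt H$ norms.

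The improved retarded estimate \eqref{int-decay-2} follows from the Sommerfeld bound \eqref{sommerfeld} of Proposition \ref{qlap}: on the Fourier side, $(\partial_t - \partial_r)\widehat{u_{\operatorname{ret}}}(\tau) = (i\tau - \partial_r)R(\tau^2+i0\,\operatorname{sgn}\tau)\widehat F(\tau)$, and for $\tau > 0$ this is exactly $-(\partial_r - i\sqrt{\tau^2})u_+$, while for $\tau < 0$ one obtains the conjugate combination with $u_-$; in either case \eqref{sommerfeld} provides the weighted $H^{0,-1/2+\sigma}$ estimate with a uniform constant, and the angular gradient bound is the second half of \eqref{sommerfeld}. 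Summing over $\tau$ via Plancherel gives \eqref{int-decay-2}; the advanced version is identical with signs reversed. Finally, the variant without the non-trapping hypothesis is obtained by the same argument, replacing Theorem \ref{nontrap} by the near-infinity bound \eqref{aprop-ls} restricted to $M\setminus K$; the extra high-energy derivative loss there accounts for the $H^1$ norms on the initial data in the final estimate. The main obstacle is the low-energy matching: one must carefully control the $\lambda^{-1/2}$ blowup in the resolvent against the factor $\tau = \sqrt\lambda$ produced by time differentiation, and verify that the $TT^*$ argument for the homogeneous piece is uniform across the full spectrum of $H$.
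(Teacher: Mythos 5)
Your overall strategy is the right one and largely parallels the paper's: take a time Fourier transform, apply Plancherel, and reduce to the resolvent estimates already proved.  However, there is a structural difference and one genuine gap.

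The paper does not split $u = u_{\operatorname{hom}} + u_{\operatorname{ret}}$ and then handle the homogeneous piece by Stone's formula.  Instead it writes the wave equation as a first-order system $\Phi_t - \A\Phi = \tilde F$ with $\A = \begin{pmatrix} 0 & 1 \\ -H & 0 \end{pmatrix}$, and then uses the same $TT^*$/duality chain that was set up for Proposition \ref{glos} (establish the retarded estimate, get the advanced estimate by time reversal, sum, pair with $F$, dualize) to deduce the homogeneous estimate from the retarded one.  With the explicit inverse $(i\A-\mu+i\eps)^{-1} = R((\mu-i\eps)^2)\begin{pmatrix}\mu-i\eps & i \\ -iH & \mu-i\eps\end{pmatrix}$ everything collapses to the scalar resolvent bounds.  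Your Stone's-formula approach for the homogeneous piece is workable in principle, but it requires a Kato-smoothing argument for the generator $\sqrt H$ (not $H$) between the energy space $\dot H^1 \times L^2$ and weighted $L^2$; you acknowledge the need for "$\sqrt H$-calculus" but this is exactly where the technical burden lands, and the paper's $TT^*$/duality trick is designed precisely to avoid working with the spectral measure of $\sqrt H$ directly.  (Also, your Stone's formula display is written with the operators in the wrong order — $R(\lambda+i0)-R(\lambda-i0)$ must hit the data $u_0, u_1$, with the scalar symbols $\cos(t\sqrt\lambda)$, $\sin(t\sqrt\lambda)/\sqrt\lambda$ as coefficients.)

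The genuine gap is in your treatment of \eqref{int-decay-2}.  You invoke the Sommerfeld estimate \eqref{sommerfeld} of Proposition \ref{qlap} and claim it gives "the weighted $H^{0,-1/2+\sigma}$ estimate with a uniform constant."  It does not: the constant in \eqref{sommerfeld} is $C(M,K_0,A)(\lambda^{-C} + e^{C\sqrt\lambda})$, which blows up exponentially at high energy and so cannot be integrated against $\int |\widehat F(\tau)|^2\,d\tau$.  To get the uniform bound asserted in \eqref{int-decay-2} you must instead use Lemma \ref{lem:improvement}, specifically \eqref{eq:urad-better}, which gives
$$\|r^{-1}\nabla_\omega u\|_{H^{0,-1/2+\sigma}} + \|u_r \mp i z u\|_{H^{0,-1/2+\sigma}} \lesssim \|f\|_{H^{0,1/2+\sigma}} + (1+\lambda^{1/2})\|u\|_{L^2(M_{r_0}\setminus M_{4r_0})},$$
and then feed the non-trapping estimate from Theorem \ref{nontrap} ($\|u\|_{L^2(M_{r_0}\setminus M_{4r_0})}\lesssim\lambda^{-1/2}\|f\|$ for large $\lambda$) into the last term so that the $(1+\lambda^{1/2})$ factor is cancelled.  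At low and medium energies one uses Proposition \ref{lowenergy-jk} and Theorem \ref{main} respectively.  This is exactly what the paper means by saying the proof of \eqref{int-decay-2} "follows from a similar argument using Lemma \ref{lem:improvement}."  As written, your argument would only reproduce the exponential-in-$\sqrt\lambda$ constant of \eqref{sommerfeld}, which is not what the proposition claims.
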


We prove this in Section  \ref{ls-sec}.
\begin{remark}
The statement of integrated local energy decay for $H_0=-\Delta_{{\bf R}^n}$ goes back to Morawetz \cite{morawetz}.
The proof of such estimates for solutions of the wave equation on black hole spacetimes,  with geometries
which are quite different from the ones considered in this paper, have been instrumental in understanding
their global behavior. See \cite{bs}, \cite{dr1}, \cite{mmt} for Schwarzschild, \cite{dr2}, \cite{tt}, \cite{ab}, \cite{dr5}
for slowly rotating Kerr and \cite{dr4} for the general sub-extremal Kerr case.

\end{remark}

Our final results are pointwise decay estimates for the solutions of the Schr\"odinger
and wave equations, obtained by commuting these equations with a Morawetz-type operator, applyign energy estimates, and using an iteration argument to amplify the resulting decay.

\begin{proposition}[Decay for Schr\"odinger]\label{prop:decay-S}
Let $M$ be a non-trapping manifold with metric $g$ given in \eqref{g-polar} with $\si_0>1/2$. 
We assume 
that $h[r]:=h+e$ satisfies the following assumptions.
$$
|(r\pa_r)^k (\nabla_\omega^\alpha) h[r]|\le C_{k\alpha},\qquad k\le 3, \,\,|\alpha|\le 2.  
$$
Let 
$\psi: \R \times \R^n \to \C$ be a solution to the Schr\"odinger equation 
$$ i\pa_t \psi +\Delta_M \psi = 0.$$
Then for any $t \geq 0$ we have the dispersive estimates
\begin{align*}
\|\psi(t)\|_{L^\infty(M)} &\le C_\eps (1+t)^{-\frac 32+\eps} \|\psi(0)\|_{H^{2,1}(M)},\qquad \forall\eps>0,
\quad n=3,\\
\|\psi(t)\|_{L^p(M)} &\le C (1+t)^{-2+\frac 4p} \|\psi(0)\|_{H^{2,1}(M)},\qquad \forall 2\le p<\infty,\quad
n=4,\\
\|\psi(t)\|_{L^{\frac {2n}{n-4}}(M)} &\le C (1+t)^{-2} \|\psi(0)\|_{H^{2,1}(M)},\qquad\qquad n\ge 5.
\end{align*}
\end{proposition}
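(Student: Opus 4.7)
The plan is to represent the solution via Stone's formula as a spectral integral
$$\psi(t) = \frac{1}{2\pi i}\int_0^\infty e^{-it\lambda}\bigl[R(\lambda+i0) - R(\lambda-i0)\bigr]\psi(0)\,d\lambda,$$
which is legitimate because Proposition \ref{singular} and Proposition \ref{absent} imply that the spectrum of $-\Delta_M$ on $(0,\infty)$ is purely absolutely continuous with no embedded eigenfunctions or resonances. I would dyadically localize in $\lambda$ and extract time decay by integrating by parts in $\lambda$: each integration converts $e^{-it\lambda}$ into $\tfrac{1}{-it}\partial_\lambda e^{-it\lambda}$, trading a factor of $t^{-1}$ for a $\lambda$-derivative of the resolvent. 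The mapping norms of these derivatives between weighted spaces are controlled by Theorem \ref{nontrap} at high energy (using non-trapping to avoid the exponential loss in Theorem \ref{main}) and by Proposition \ref{lowenergy-pos} applied to $V=0$ at low energy, since $-\Delta_M$ has no eigenfunction or resonance at zero.

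The output of this spectral analysis is a weighted $L^2$ decay estimate of schematic form
$$\|\psi(t)\|_{H^{0,-k-1/2-\sigma}(M)} \lesssim \langle t\rangle^{-k}\|\psi(0)\|_{H^{0,k+1/2+\sigma}(M)},$$
valid up to the order $k$ permitted by the weight on the data; the hypothesis $\psi(0)\in H^{2,1}(M)$ gives effectively $k=1$ after using $H\psi(t) = i\partial_t\psi(t)$ to absorb the two available $H$-derivatives. To transfer the weighted $L^2$ bound to the advertised $L^\infty$ and $L^p$ norms I would invoke Sobolev embedding together with the $H^2$-elliptic regularity of $H$: two spatial derivatives of $\psi(t)$ are controlled via $H\psi(t)$, and the exponents $L^{2n/(n-4)}$ for $n\geq 5$ and $L^p$ for every $p<\infty$ when $n=4$ appear precisely as the critical Sobolev indices into which $H^2(M)$ embeds. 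The hypothesized regularity of $h[r]$ ($k\leq 3$, $|\alpha|\leq 2$) is what allows Theorem \ref{nontrap} and its $\partial_\lambda$-iterated version to be invoked, while the strong short-range condition $\sigma_0>1/2$ keeps the short-range remainders generated by commutators integrable against the weights.

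The main obstacle, and the reason the introduction advertises an iteration argument with a Morawetz-type operator, is the dimension-three endpoint. Near $\lambda=0$ the resolvent of $-\Delta_M$ behaves like its Green's function, and its mapping $H^{0,1/2+\sigma}\to H^{0,-1/2-\sigma}$ has a norm that degenerates as $\sigma\to 0^+$, which is the source of the $\eps$-loss in the rate $\langle t\rangle^{-3/2+\eps}$. To reach this near-sharp rate, once a preliminary rate $\langle t\rangle^{-1}$ has been established by a single integration by parts, I would apply the symmetric dilation operator $X=\tfrac12(r\partial_r+\partial_r r)$; its commutator with $H$ is $2H$ modulo short-range terms controlled by the global-in-time local smoothing estimate of Proposition \ref{glos}. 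The function $X\psi$ then satisfies an inhomogeneous Schr\"odinger equation whose forcing is integrable in time, and an energy estimate for $X\psi$ combined with the preliminary decay of $\psi$ produces the missing $\langle t\rangle^{-1/2+\eps}$ gain. In dimensions $n\geq 4$ no such iteration is needed, because the low-frequency behavior of the resolvent is already integrable without loss and one gets the clean $\langle t\rangle^{-2}$ rate directly from the $k=2$ version of the spectral argument together with Sobolev embedding.
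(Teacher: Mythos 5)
Your approach — representing the propagator via Stone's formula, trading $\lambda$-integrations by parts for powers of $t^{-1}$, and controlling the resulting $\partial_\lambda$-derivatives of the resolvent by the limiting absorption estimates — is a genuinely different route from the paper's, which makes no use of the spectral representation. The paper instead introduces the operator $P := t^2\Delta_M - it\chi(r\partial_r + \tfrac n2 + \tfrac r2 \theta) - \tfrac14\chi^2 r^2$, the pseudoconformal conjugate of $t^2\Delta_M$, commutes it through the equation, and applies the global-in-time local smoothing estimate (Proposition \ref{glos}) to the inhomogeneous equation for $\phi = P\psi$; the bootstrap in $n=3$ is then run directly on the resulting estimate for $\|\Delta_M u\|_{L^2(M)}$ with $u := e^{-i\chi r^2/4t}\psi$.

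The essential difficulty in your version, and the reason the paper takes the pseudoconformal route, is the passage from a weighted $L^2$ decay estimate to an unweighted $L^p$ bound. The output of the spectral method is of the form $\|\langle x\rangle^{-s}\psi(t)\|_{L^2}\lesssim \langle t\rangle^{-k}\|\langle x\rangle^{s}\psi(0)\|_{L^2}$, i.e.\ \emph{local} decay, and Sobolev embedding plus elliptic regularity of $H$ cannot remove the negative weight: $\|f\|_{L^\infty}\not\lesssim\|\langle x\rangle^{-s}f\|_{H^2}$. The paper bypasses this precisely because, after conjugating by the pseudoconformal phase, $|u|=|\psi|$ pointwise and the term $\tfrac14 r^2 \chi^2$ inside $P$ converts the $\langle x\rangle$ weight permitted on $\psi(0)\in H^{2,1}$ into an \emph{unweighted} $L^2$ bound on $\Delta_M u(t)$, which Sobolev then upgrades to $L^\infty$. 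In your scheme the analogue of this step is missing; one would instead need genuine kernel bounds on the propagator (as in the flat Jensen--Kato/Journ\'e--Soffer--Sogge analysis), which in the variable-coefficient, asymptotically conic setting requires substantially more machinery than Theorem \ref{nontrap} and Proposition \ref{lowenergy-pos} supply. Two further, smaller gaps: the paper never establishes mapping bounds on $\partial_\lambda^j R(\lambda\pm i0)$ and these do not follow directly from the limiting absorption estimates without constructing a chain of compatible intermediate weighted spaces to realize $\partial_\lambda R = R^2$; and your $n=3$ improvement using $X = \tfrac12(r\partial_r+\partial_r r)$ produces an energy bound on $X\psi$ but you do not explain how that bound feeds back into a gain for $\psi$ itself, whereas the paper's bootstrap $\alpha_{n+1} = \tfrac34\alpha_n + \tfrac12$ is run explicitly on the single quantity $\|\Delta_M u(t)\|_{L^2}$ via the Gagliardo--Nirenberg interpolation and the local smoothing estimate.
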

\begin{remark}
It is well known that 
a solution of the free Schr\"odinger equation $i\pa_t \psi+\Delta_{{\bf R}^n}\psi=0$ satisfies the dispersive estimate 
$$
\|\psi(t)\|_{L^\infty(\R^n)}\leq C t^{-n/2} \|\psi_0\|_{L^1(\R^n)}.
$$
As a consequence, we do not believe the rates of decay given by Proposition \ref{prop:decay-S} to be sharp, especially in higher dimensions.  Nevertheless, they appear to be novel in such general setting, and we give them here to illustrate an application 
of how the limiting absorption principle (or more precisely, the global-in-time local smoothing estimate) can be used to obtain dispersive inequalities. 
\end{remark}

\begin{remark}
Dispersive estimates for solutions of the Schr\"odinger equation with $H=-\Delta_{{\bf R}^n}+V$, 
projected on the continuous spectrum of $H$ and assuming absence of zero eigenvalues and resonances
have been proved in \cite{rauch}, \cite{jss}, \cite{rs}, \cite{bg}; see also the survey \cite{sch} and the references therein.
\end{remark}

\begin{proposition}[Decay for wave]\label{prop:wave-S}
Let $M$ satisfy the assumptions of Proposition \ref{prop:decay-S} and let 
$u: \R^{n+1} \to \C$ be a solution of the wave equation
$$
\pa_t^2u - \Delta_M u = 0.
$$
Then in dimension $n=3$ we have
$$
\|u(t)\|_{L^\infty(M)} \le C_\eps (1+t)^{-1+\eps} \left (\|\nabla u(0)\|_{H^{1,1}(M)}+
\|u_t(0)\|_{H^{1,2}(M)}\right ),$$
for all $\eps>0$.
\end{proposition}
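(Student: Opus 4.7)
My plan is to combine the integrated local energy decay of Proposition \ref{prop:int-decay} with a commutator hierarchy involving the scaling vector field and tangential rotations, and then close with a weighted Sobolev embedding in dimension $n=3$.

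First I would apply Proposition \ref{prop:int-decay} to $u$ and to its time derivatives $\pa_t u, \pa_t^2 u$, obtaining integrated local energy control on $\pa^j u$ in $H^{0,-1/2-\sigma}(M)$ for $j \leq 2$. I would also make essential use of the enhanced estimate \eqref{int-decay-2} on the outgoing derivative $(\pa_t - \pa_r)u_{\operatorname{ret}}$ with the sharper weight $\langle x\rangle^{-1/2+\sigma}$, since this radiation-field control is what ultimately produces $t^{-1}$-type decay from the null structure of the wave equation near the conic infinity.

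Next I would commute the equation $(\pa_t^2 - \Delta_M) u = 0$ with the tangential rotation fields $\Omega$ on $\partial M$ and the scaling field $S := t\pa_t + r\pa_r$. On the exact cone $g_0 = dr^2 + r^2 h$ these fields satisfy $[\pa_t^2 - \Delta_{g_0}, S] = -2(\pa_t^2 - \Delta_{g_0})$ and $[\Delta_{g_0}, \Omega] = 0$, while the perturbation $r^{-2\sigma_0} e_{ab}$ produces commutator errors decaying like $r^{-2\sigma_0}$ which, when paired with the local energy weights, are integrable thanks to $\sigma_0 > 1/2$. Iterating, I obtain local energy bounds for $\Omega^\alpha S^\beta u$; since $S$ carries a factor of $t$ at top order, such a bound translates into $t^{-1}$ decay of $\pa u$ in a local energy norm after dividing by $t$, and further commutations give higher-order decay.

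Finally, to pass from local energy decay of commuted quantities to the pointwise $L^\infty$ bound in $n=3$, I would use a Klainerman-Sobolev type inequality adapted to the conic end, schematically of the form $\langle t+r\rangle\,\|u(t,\cdot)\|_{L^\infty(M)} \les \sum_{|\alpha|+|\beta|+k\le N} \|\Omega^\alpha S^\beta \pa^k u(t,\cdot)\|_{H^{0,1/2}(M)}$, combined with a dyadic pigeonhole over time intervals of size $\sim t$ (this is the source of the $t^\eps$ loss) to extract a pointwise-in-$t$ bound from the time-integrated estimate \eqref{int-decay-1}. The main obstacle will be controlling the perturbative commutator errors through the iteration: each commutation with $S$ or $\Omega$ spawns an inhomogeneous term weighted by $r^{-2\sigma_0}$ times lower-order commuted derivatives, which must be absorbed into the local energy norms already available; the assumption $\sigma_0 > 1/2$ together with the improved radiation-field estimate \eqref{int-decay-2} is exactly what makes the iteration close without losing decay at each step.
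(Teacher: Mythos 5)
Your proposal takes a genuinely different route from the paper, and it has a gap that I don't think can be repaired in the generality the proposition requires.  The paper's proof is a version of the Dafermos--Rodnianski argument \cite{dr1}: it uses the modified conformal Morawetz vector field $K = (t^2+\chi r^2)\pa_t + 2tr\chi\pa_r$ purely as a \emph{multiplier}, not as a commutator.  Lemma \ref{lem:AB} shows that the resulting flux is coercive and controls a conformal energy $E_K(t)$, and Proposition \ref{prop:first} shows $E_K(T) \lesssim E_K(0) + \int_0^T (1+t)\int_M \tilde\zeta\,|\nabla u|^2\,dg\,dt$ with $\tilde\zeta$ supported in a fixed compact annulus.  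That error is first bounded crudely by $(1+T)E(0)$ using \eqref{int-decay-1}, giving $E_K(T)\lesssim T\,E_K(0)$ and hence local energy decay at rate $t^{-1}$; the rate is then improved iteratively by partitioning time dyadically and invoking finite speed of propagation, so that on each slab $[t,\Gamma t]$ the solution in $\supp\tilde\zeta$ is determined by data in the region $\langle x\rangle < t/2$ where the conformal weights already carry a factor $t^2$.  Two passes remove the loss (modulo a logarithm), and the $L^\infty$ bound follows by commuting only with $\pa_t$ and applying Gagliardo--Nirenberg.

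The central obstruction to your route is that there are no tangential rotation fields $\Omega$ on a general asymptotically conic manifold.  The cross-section $\partial M$ is an arbitrary smooth compact Riemannian $(n-1)$-manifold and need not admit a single nontrivial Killing field, so the commutator hierarchy cannot even get started, and the Klainerman--Sobolev inequality — which trades angular derivatives along symmetries for pointwise decay — is simply unavailable.  Commuting with $S = t\pa_t + r\pa_r$ alone does not rescue the scheme, since the fixed-time Sobolev embedding in $n=3$ still requires angular control and $S$ carries a factor of $t$ that makes its energy compare only to $E_K$, not improve on it.  A secondary mismatch: you lean on the radiation-field estimate \eqref{int-decay-2}, but that estimate concerns retarded solutions of the \emph{inhomogeneous} problem and plays no role in the paper's homogeneous decay argument; the $t^{-1}$ mechanism here is the coercivity of the $K$-flux together with finite speed of propagation, not an outgoing-derivative bound.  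Finally, the iteration you sketch for absorbing the $r^{-2\sigma_0}$ commutator errors is exactly the delicate part you would need to make precise, whereas the paper's iteration is closed by a different device (finite speed of propagation plus the $t^2$ weights already present in $E_K$ at large $r$) that avoids any vector-field commutation beyond $\pa_t$.
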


\begin{remark} A modification of our arguments also gives the variant estimate
$$ \|\psi(t)\|_{L^{\frac {2(n-1)}{n-3}}(M)} \le C (1+t)^{-1}\left (\|\nabla u(0)\|_{H^{1,1}(M)}+
\|u_t(0)\|_{H^{1,1}(M)}\right )$$
in higher dimensions $n>3$.
\end{remark}

\begin{remark}
Decay estimates for solutions of the wave equation in Minkowski space, $M={\bf R}^n$, have, 
of course, a very long history, including the Huygens principle in odd dimensions and the uniform 
$t$-decay with the rate of $t^{-(n-1)/2}$. A quantitative approach to decay in Minkowski space 
has been developed by Klainerman, in what is known as the vector field method, \cite{kl}.
Qualitative decay results in non-trapping geometries have been obtained in the pioneering works 
\cite{lmp}, \cite{V}, \cite{mrs}, \cite{mel}. For the problem with $-\Delta_{{\bf R}^n}+V$ see e.g. \cite{georgiev}.
Quantitative decay results for solutions of the wave equation on black hole spacetimes have been
obtained in \cite{bs}, \cite{dr1}, \cite{dr2}, \cite{dr3}, \cite{ab}, \cite{dr5}, \cite{tat1}, \cite{dr4}.
A general approach to the derivation of decay from the integrated local energy decay statements 
have been developed in \cite{dr3}, where the arguments can be in particular adapted to the asymptotically
conic case, and in \cite{tat1} for stationary asymptotically Euclidean spacetimes. See also \cite{yang} for 
applications to nonlinear problems. The results here follow
an earlier approach of \cite{dr1} and serve as an illustration.
\end{remark}

We prove these propositions in Section \ref{schro-decay} and Section \ref{wave-decay} respectively.

\subsection{Acknowledgements}

We thank Andrew Hassell, Jared Wunsch, Rafe Mazzeo and Mihalis Dafermos for helpful discussions, Nicolas Burq, Hans Christianson and Georgi Vodev for corrections and references, and particularly Rowan Killip and Michael Hitrik for conversations on the significance of the limiting absorption principle.  The first author is supported by NSF grant DMS-0702270. The second author is supported by NSF grant CCF-0649473, the NSF Waterman award, and a grant from the MacArthur Foundation.

\section{Key estimates}\label{key-est}

Throughout this paper we use the notation and assumptions of Theorem \ref{main}.  All constants are henceforth allowed to depend on $M, n, \sigma_0, A$, and $\sigma$.

Fix $\lambda, \eps > 0$, and suppose that we have a solution to the \emph{resolvent equation}
\begin{equation}\label{resolvent}
(H - (\lambda \pm i\eps)) u = f
\end{equation}
(or equivalently that $u = R(\lambda \pm i\eps) f$)
and the closely related \emph{Helmholtz equation}
\begin{equation}\label{helmholtz}
 (-\Delta_M - z^2) u = F, \qquad z^2=\la\pm i\eps
\end{equation}
Of course, the former equation can be viewed as a special case of the latter with $F := f - Vu$.  To avoid technicalities, we shall always make the qualitative assumptions $u, f, F \in L^2(M)$.

In this section we lay out the fundamental estimates for these equations which we shall repeatedly use in our analysis.  The proof of these estimates will be deferred to later sections.  We begin with a simple charge estimate.

\begin{lemma}[Charge estimate]\label{charge-est}  Let $\lambda, \eps, \sigma > 0$, let $\pm$ be a sign,
let $f \in H^{0,1/2+\sigma}(M)$, and let $u = R(\lambda \pm i\eps) f \in L^2(M)$.  
Then we have
\begin{equation}\label{eq:eps-charge}
 \eps \int_M |u|^2\ dg \leq \| f \|_{H^{0,1/2+\sigma}(M)} 
\| u \|_{H^{0,-1/2-\sigma}(M)}.
\end{equation} 
Moreover, for any $\alpha \in \R$ we have
\begin{align}
&\eps^2 \int_{\langle x\rangle \ge 2R} \langle x\rangle^{2\alpha} |u|^2\, dg
 \leq  C(\alpha)
 \int_{\langle x\rangle \ge R} 
\left(\langle x\rangle^{2\alpha} |f|^2+\langle x\rangle^{-2+2\alpha}|\nabla u |_g^2\right)\ dg ,
\label{eq:eps-charge-weight}
\end{align}
\end{lemma}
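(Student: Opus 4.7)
The plan is to pair the resolvent equation $(H-\lambda\mp i\eps)u=f$ with a suitable multiple of $\bar u$ in the $L^2(M,dg)$ inner product, integrate by parts, and exploit the fact that the Hamiltonian $H=-\Delta_M+V$ is self-adjoint with real $V$, so the only contribution to the imaginary part of the resulting identity comes from the $\mp i\eps$ term on the left and the $f$ term on the right.

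For the global bound \eqref{eq:eps-charge}, I would pair with $\bar u$ and integrate by parts to get
\[
\int_M |\nabla u|_g^2 \, dg + \int_M V|u|^2 \, dg - (\lambda\pm i\eps)\int_M |u|^2\, dg = \int_M \bar u f\, dg,
\]
where the two left-most terms are real. Taking imaginary parts yields $\eps\int_M |u|^2\, dg \le |\int_M \bar u f\, dg|$, and a single weighted Cauchy--Schwarz inequality (splitting via $\langle x\rangle^{\pm(1/2+\sigma)}$) finishes the estimate.

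For the localized bound \eqref{eq:eps-charge-weight}, I repeat the argument with the test function $\Psi\bar u$ where $\Psi(x):=\tilde\phi(\langle x\rangle)^2\,\langle x\rangle^{2\alpha}$, and $\tilde\phi$ is a smooth cutoff with $\tilde\phi=0$ on $[0,R]$, $\tilde\phi=1$ on $[2R,\infty)$, and $|\tilde\phi'|\lesssim 1/R$. The squaring is crucial: it forces $|\nabla\Psi|^2/\Psi \le C(\alpha)\langle x\rangle^{2\alpha-2}$ on the full support of $\Psi$, even though $\Psi$ vanishes at $\langle x\rangle=R$. Integration by parts against $\Psi\bar u$ produces $\int\Psi|\nabla u|^2 + \int V\Psi|u|^2 + \int(\nabla\Psi\cdot\nabla u)\bar u$ from the Hamiltonian side; the first two terms are real, so imaginary parts give
\[
\eps\int_M \Psi|u|^2\, dg \le \Bigl|\int_M \Psi\bar u f\, dg\Bigr| + \Bigl|\int_M(\nabla\Psi\cdot\nabla u)\bar u\, dg\Bigr|.
\]
Apply Cauchy--Schwarz with weight $\Psi$ to each term on the right: the first directly, and the second by splitting $|\nabla\Psi|\,|\nabla u|\,|u| = (|\nabla u|\,|\nabla\Psi|/\Psi^{1/2})\cdot(\Psi^{1/2}|u|)$. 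Setting $I:=\bigl(\int\Psi|u|^2\bigr)^{1/2}$ and dividing by $I$ leaves
\[
\eps\,I \le \Bigl(\int_M \Psi|f|^2\,dg\Bigr)^{1/2} + \Bigl(\int_M|\nabla u|_g^2\,\frac{|\nabla\Psi|^2}{\Psi}\,dg\Bigr)^{1/2}.
\]
Squaring and inserting the bounds $\Psi\le \langle x\rangle^{2\alpha}\mathbf 1_{\langle x\rangle\ge R}$, $|\nabla\Psi|^2/\Psi\le C(\alpha)\langle x\rangle^{2\alpha-2}\mathbf 1_{\langle x\rangle\ge R}$, and using $\Psi=\langle x\rangle^{2\alpha}$ on $\{\langle x\rangle\ge 2R\}$ to lower-bound the left side, delivers \eqref{eq:eps-charge-weight}.

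The main obstacle is the cutoff construction: a naive choice of $\Psi=\phi(\langle x\rangle)\langle x\rangle^{2\alpha}$ makes the integrand $|\nabla\Psi|^2/\Psi$ singular at the boundary of $\mathrm{supp}\,\phi$, which would prevent the Cauchy--Schwarz step. The standard remedy of using a squared cutoff $\tilde\phi^2$ gives $|\nabla(\tilde\phi^2)|^2/\tilde\phi^2=4|\tilde\phi'|^2$ uniformly bounded, curing the singularity. Everything else is straightforward integration by parts, justified by the qualitative hypothesis $u,f\in L^2$ together with $u\in\mathrm{dom}(H)\subset H^2_{\mathrm{loc}}$.
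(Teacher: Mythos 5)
Your proof is correct and follows essentially the same route as the paper: both exploit the imaginary part of the quadratic form $\langle (H-\lambda\mp i\eps)u,\,\Psi u\rangle$ (equivalently, the local charge conservation law $\nabla_k\j^k+\Im(z^2)\,|u|^2=-\Im(\overline u F)$), both use a squared cutoff so that the commutator term $\nabla\Psi\cdot\nabla u\,\bar u$ can be absorbed by Cauchy--Schwarz via $|\nabla\Psi|^2/\Psi$, and both finish with a weighted Cauchy--Schwarz. The only difference is cosmetic: the paper integrates against dyadic shell cutoffs $\eta_R^2$, obtains $\eps^2\int\eta_R^2|u|^2\lesssim\int_{R\le\langle x\rangle\le 4R}(|f|^2+R^{-2}|\nabla u|^2)$ on each shell, and then multiplies by $R^{2\alpha}$ and sums over dyadic $R$, whereas you build the full weight $\langle x\rangle^{2\alpha}$ directly into a single global test function $\Psi=\tilde\phi^2\langle x\rangle^{2\alpha}$ and do one integration by parts. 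The two are equivalent; the dyadic version has the small advantage that each shell integral is automatically finite (so the division-by-$I$ step is immediate from $u\in L^2$), while your one-shot version would strictly speaking need an additional ball-truncation-and-limit to justify dividing by $I$ when $\alpha>0$, a standard point you implicitly skip much as the paper does.
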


This easy estimate can be established by integration by parts, and is proven in Section \ref{charge-proof}.  Roughly speaking, this estimate allows us to handle most of the terms in our analysis which contain a factor of $\eps$, and which do not have derivatives on $u$.

Another integration by parts gives the following standard elliptic estimates, proven in Section \ref{energy-sec}:

\begin{lemma}[Elliptic regularity]\label{elliptic-regularity}  Let $H = -\Delta_M + V$ where $V$ is a short-range potential obeying \eqref{V-short}. Let $m \in \R$, and let $u, f \in H^{0,m}(M)$ which satisfies the
Helmholtz equation $(H-z^2) u = f$ in the distributional sense (at least) with $z^2=\la\pm i\epsilon$ 
and $\la\ge 0$.
\begin{itemize}
\item We have $u \in H^{2,m}(M)$ with the elliptic regularity estimate
\begin{equation}\label{elliptic-0} \| u \|_{H^{2,m}(M)} \leq C(m) ( \| f \|_{H^{0,m}(M)} + (1+\lambda) \| u \|_{H^{0,m}(M)} ).
\end{equation}
\item For any $R > 2R_0$, we have the localised elliptic regularity estimates
\begin{equation}\label{energy-spiral}
\int_{R \leq \langle x \rangle \leq 2R} |\nabla u|_g^2\ dg
\leq C \int_{R/2 \leq \langle x \rangle \leq 3R} (\lambda + R^{-2}) |u|^2 + \frac{|f|^2}{\lambda + R^{-2}}\ dg,
\end{equation}
and
\begin{equation}\label{energy-spiral'}
\int_{R \leq \langle x \rangle \leq 2R} |\nabla u|_g^2\ dg
\leq C \int_{R/2 \leq \langle x \rangle \leq 3R} (\lambda + R^{-2}) |u|^2 +R^2 
{|f|^2}\ dg
\end{equation}
and
\begin{equation}\label{elliptic-local}
\int_{\langle x \rangle \geq R} |\nabla^2 u|_g^2 \langle x \rangle^{2m}\ dg
\leq C(m,R) [ \|f\|_{H^{0,m}(M)}^2 +
(1+\lambda) \int_{\langle x \rangle \geq R/2} |u|_g^2 \langle x \rangle^{2m}\ dg ].
\end{equation}
\item We have the energy estimate
\begin{equation}\label{eq:la-nab}
\lambda^{\frac 12}  \|u\|_{H^{0,-1/2-\sigma}} \le C \left (\|\nabla u\|_{H^{0,-1/2-\sigma}}
+ \|u\|_{H^{0,-3/2-\sigma}} + \|f\|_{H^{0,1/2+\sigma}}\right).
\end{equation}
\item If $\eps\le C \la$ for some constant $C>0$ then we have the charge-type estimate
\begin{equation}\label{eq:la-eps-nab}
\eps  \|\nabla u \|_{L^2(M)}\|u\|_{L^2(M)} \le C \|f\|_{H^{0,1/2+\sigma}}
\left (\|\nabla u\|_{H^{0,-1/2-\sigma}}
+  \|u\|_{H^{0,-3/2-\sigma}} + \|f\|_{H^{0,1/2+\sigma}}\right).
\end{equation}
as well as the localised estimates
\begin{equation}\label{eq:la-eps-loc}
\begin{split}
\eps^2  \int_{\langle x\rangle\ge 2R} |\nabla u|_g^2 \,  \int_{\langle x\rangle\ge 2R} |u|_g^2
 &\le C \int_{\langle x\rangle\ge R} \langle x\rangle^{1+2\si} |f|^2\\&\times
\int_{\langle x\rangle\ge R} \left (\langle x\rangle^{-1-2\si} |\nabla u|_g^2+
\langle x\rangle^{-3-2\si} |u|^2+\langle x\rangle^{1+2\si} |f|^2\right).
\end{split}
\end{equation}
and
\begin{equation}\label{eq:la-loc}
\la \int_{\langle x\rangle\ge 2R} \langle x\rangle^{-1-2\si}
 |u|^2 \, \le C 
\int_{\langle x\rangle\ge R} \left (\langle x\rangle^{-1-2\si} |\nabla u|_g^2+
\langle x\rangle^{-3-2\si} |u|^2+\langle x\rangle^{1+2\si} |f|^2\right).
\end{equation}
\end{itemize}
\end{lemma}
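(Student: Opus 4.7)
The plan is to derive every bound from a single family of integration-by-parts identities applied to the Helmholtz equation $(-\Delta_M + V - z^2)u = f$. Pairing against $w\bar u$ for a real nonnegative weight $w$ and separating real and imaginary parts gives
\begin{equation*}
\int_M w|\nabla u|_g^2\,dg - \tfrac12\int_M(\Delta_M w)|u|^2\,dg + \int_M wV|u|^2\,dg - \lambda\int_M w|u|^2\,dg = \operatorname{Re}\int_M wf\bar u\,dg
\end{equation*}
together with $\mp\eps\int_M w|u|^2\,dg = \operatorname{Im}\int_M wf\bar u\,dg$. Each of the seven estimates in the lemma will come from a specific choice of $w$, combined with the potential bound \eqref{V-short}, Cauchy--Schwarz, and AM--GM.

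For \eqref{elliptic-0} take $w = \langle x\rangle^{2m}$: since $|\Delta_M w| \lesssim w$ on an asymptotically conic manifold, the commutator term is absorbed into $(1+\lambda)\|u\|_{H^{0,m}}^2$, the contribution of $V$ is handled by \eqref{V-short}, and Young's inequality on the cross term controls $\|\nabla u\|_{H^{0,m}}$. To upgrade to $\|\nabla^2 u\|_{H^{0,m}}$ I would rearrange the equation as $\Delta_M u = (V - z^2)u - f$, bound the right-hand side in $H^{0,m}$, and invoke standard local elliptic regularity (Bochner--Weitzenb\"ock in coordinate charts, assembled by a partition of unity). The annular bounds \eqref{energy-spiral} and \eqref{energy-spiral'} use $w = \chi^2$ for a smooth cutoff supported in $\{R/2 \leq \langle x\rangle \leq 3R\}$ with $|\nabla\chi|\lesssim R^{-1}$; the commutator $\int(\Delta_M\chi^2)|u|^2$ generates the $R^{-2}\|u\|^2$ loss, and the two forms correspond to splitting $\int\chi^2 f\bar u$ by Young with parameter $(\lambda+R^{-2})^{-1}$ or $R^2$, respectively. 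The weighted second-order bound \eqref{elliptic-local} then follows from a dyadic sum of these annular estimates combined with the local elliptic step.

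The energy estimate \eqref{eq:la-nab} uses $w = \langle x\rangle^{-1-2\sigma}$: the commutator contributes a controlled quantity in $\|u\|_{H^{0,-3/2-\sigma}}$, the $V$ term absorbs by \eqref{V-short} (the $\lambda^{1/2}\langle x\rangle^{-1-2\sigma_0}$ piece being split off as a small multiple of the coercive $\lambda\int w|u|^2$ term), and the cross term is dominated by $\|f\|_{H^{0,1/2+\sigma}}\|u\|_{H^{0,-1/2-\sigma}}$; setting $X := \lambda^{1/2}\|u\|_{H^{0,-1/2-\sigma}}$, the resulting inequality $X^2 \leq A^2 + B^2 + CX$ solves to $X \lesssim A+B+C$. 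For \eqref{eq:la-eps-nab} I would combine three identities: the imaginary-part charge bound $\eps\|u\|_{L^2}^2 \leq \|f\|_{H^{0,1/2+\sigma}}\|u\|_{H^{0,-1/2-\sigma}}$ (which is Lemma \ref{charge-est}), the unweighted real-part identity yielding $\|\nabla u\|_{L^2}^2 \lesssim \lambda\|u\|_{L^2}^2 + \|f\|_{H^{0,1/2+\sigma}}\|u\|_{H^{0,-1/2-\sigma}}$, and the energy bound \eqref{eq:la-nab}; under $\eps \leq C\lambda$ these combine via $\eps\|\nabla u\|_{L^2}\|u\|_{L^2} \lesssim \lambda^{1/2}\eps\|u\|_{L^2}^2 + \eps^{1/2}\|f\|_{H^{0,1/2+\sigma}}\|u\|_{H^{0,-1/2-\sigma}}$ to yield the claim. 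The localized estimates \eqref{eq:la-eps-loc}, \eqref{eq:la-loc} are identical in spirit, using $w = \chi^2\langle x\rangle^{-1-2\sigma}$ for a cutoff adapted to $\{\langle x\rangle \geq R\}$.

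The main obstacle is organizational rather than conceptual: each bound requires isolating the coercive term $\lambda\int w|u|^2$ by (i) absorbing $\int wV|u|^2$ with the correct piece of \eqref{V-short}, (ii) controlling $\int(\Delta_M w)|u|^2$ (which always loses two powers of $\langle x\rangle$), and (iii) balancing the indefinite-sign cross term $\int wf\bar u$ by Young with precisely the right parameter. The most delicate step is \eqref{eq:la-eps-nab}, where the $\eps$ factor must be preserved on the left while the right-hand side collapses into the correct product; the hypothesis $\eps \leq C\lambda$ is essential here to prevent loss of the $\eps$ weight when combining the charge identity (linear in $\eps$) with the real-part energy identity (linear in $\lambda$).
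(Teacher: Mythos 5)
Your proposal follows essentially the same route as the paper: every estimate comes from the Lagrangean energy identity \eqref{energy-identity-2} with a well-chosen weight, combined with the charge estimate of Lemma \ref{charge-est}. The weight choices ($\langle x\rangle^{2m}$, annular cutoffs, $\langle x\rangle^{-1-2\sigma}$, $\chi^2\langle x\rangle^{-1-2\sigma}$, $\chi\equiv 1$) and the bookkeeping for \eqref{eq:la-nab}--\eqref{eq:la-loc} match the paper in substance. Two points are worth flagging.

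First, a minor slip in the stated imaginary-part identity: pairing the equation against $w\bar u$ with a non-constant weight $w$ produces, in addition to $\mp\eps\int w|u|^2$ and $\operatorname{Im}\int wf\bar u$, a charge-current term $\int \nabla w\cdot \operatorname{Im}(\bar u\,\nabla u)\,dg$. You recover correctness downstream because you invoke Lemma \ref{charge-est} directly (which is proved exactly by keeping this term), but as written the displayed imaginary-part identity is only exact for constant $w$.

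Second, and more substantively: for the second-order part of \eqref{elliptic-0} you invoke Bochner--Weitzenb\"ock plus local elliptic regularity. That route estimates $\|\nabla^2 u\|$ at the cost of a term involving the Ricci/Riemann curvature, i.e.\ \emph{two} derivatives of the metric. But the quantitative hypothesis \eqref{metric-decay} on the normal-form metric only gives $C^1$ control of the angular perturbation $e_{ab}$. The paper explicitly sidesteps this by rewriting the commutator $[\nabla_k,\nabla_\ell]$ in terms of Christoffel symbols and integrating by parts once more, so that no second derivative of $g$ ever appears and the constant $C(m)$ depends only on the $C^1$ bound in \eqref{metric-decay}. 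Your Bochner argument would produce a constant depending on curvature bounds --- harmless if you are content with qualitative smoothness of $g$, but it loses the effectivity the paper is careful to preserve (recall the point of the whole paper is that the constants be computable from the displayed data). If you keep the Bochner route, you should at least acknowledge that the resulting constant depends on higher-derivative bounds for $g$ than are recorded in \eqref{metric-decay}.

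Everything else (the AM--GM bookkeeping for \eqref{eq:la-eps-nab} via the chain $\eps\|\nabla u\|\|u\|\lesssim \lambda^{1/2}\eps\|u\|^2+\eps^{1/2}\|f\|\,\|u\|_{H^{0,-1/2-\sigma}}$, followed by the charge estimate and \eqref{eq:la-nab} under $\eps\le C\lambda$; the dyadic assembly for \eqref{elliptic-local}) is sound and equivalent to the paper's reasoning.
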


Very roughly speaking, these estimates allow us to exchange derivatives on $u$ with factors of $\lambda^{1/2}$ or $1 + \lambda^{1/2}$ whenever necessary.

Next, by applying the positive commutator method\footnote{One can also view this method as another application of integration by parts.} to a first-order differential operator (a variant of $\partial_r$) we obtain a useful estimate which allows us to control the portion of $u$ in the far region $r \gg 1$ by the portion of $u$ in the intermediate region $r \sim 1$.

\begin{lemma}[Pohozaev-Morawetz type estimate]\label{morawetz-lemma}  Let the notation and assumptions be as in Theorem \ref{main}.
In addition we require that $\eps\le C\la$ for some positive constant $C$.
If $r_0 \geq R_0$ is a sufficiently large number (depending only on $M$ and $A$) then we have
\begin{align*} \int_{\langle x \rangle \geq 2r_0}& \langle x \rangle^{-3-2\sigma} |u|^2
\ dg \leq \\
&C(r_0) \left(\int_{r_0 \leq \langle x \rangle \leq 2r_0} (1 + \lambda) |u|^2\ dg + \| f \|_{H^{0,1/2+\sigma}(M)}^2\right)
\end{align*}
and
\begin{align*}
 \int_{\langle x \rangle \geq 2r_0} \big(\langle x\rangle^{-1-2\si} |\nabla u|_g^2\ dg
&+ \lambda \langle x \rangle^{-1-2\sigma} |u|^2\big)\ dg\\& \leq 
C(r_0) \left(\int_{r_0 \leq \langle x \rangle \leq 2r_0} (1 + \lambda) |u|^2\ dg + \| f \|_{H^{0,1/2+\sigma}(M)}^2\right).
\end{align*}
\end{lemma}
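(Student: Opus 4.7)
\textbf{Proof plan for Lemma \ref{morawetz-lemma}.} The plan is to apply the positive commutator (Morawetz) method with a weighted radial multiplier tuned to produce precisely the weights $\langle x\rangle^{-1-2\sigma}$ and $\langle x\rangle^{-3-2\sigma}$ appearing on the left-hand side. Write $F:=f-Vu$, so that \eqref{helmholtz} holds with $z^2=\lambda\pm i\eps$, and introduce the radial operator $P:=\partial_r+\tfrac{n-1}{2r}$, which effectively absorbs the conic volume factor and makes $-\Delta_M$ take the form $-P^2 - \Delta_\omega/r^2 + (\text{lower order})+ (\text{metric error})$ on $M\setminus K_0$, with the metric error controlled by \eqref{metric-decay}.

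First, fix a smooth cutoff $\chi(r)$ equal to $0$ for $r\le r_0$ and $1$ for $r\ge 2r_0$, and a radial weight $\phi(r)$ with $\phi'(r)\sim r^{-1-2\sigma}$ and $-\phi(r)/r^3\gtrsim r^{-3-2\sigma}$ on $r\ge 2r_0$ (a natural choice is $\phi(r)=-c\,\langle r\rangle^{-2\sigma}$ for $r\ge r_0$, extended smoothly). Then I would multiply \eqref{helmholtz} by $\chi(r)\phi(r)\overline{Pu}$ and take the real part. After integrating by parts, the bulk of the identity produces the positive quantities
\begin{equation*}
\int \chi\phi'\bigl(|Pu|^2+\lambda|u|^2\bigr)\,dg
-\int \chi\Bigl(\tfrac{\phi'}{r^2}+\tfrac{2\phi}{r^3}\Bigr)|\nabla_\omega u|^2\,dg
-\int \chi\bigl(\tfrac{1}{2}\partial_r^2(\phi'+\tfrac{n-1}{r}\phi)\bigr)|u|^2\,dg,
\end{equation*}
together with boundary/commutator contributions supported where $\chi'\ne 0$, i.e.\ in the annulus $r_0\le r\le 2r_0$, and error terms coming from the metric perturbation $e_{ab}$ and from the potential $V$. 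By the choice of $\phi$, all three bulk integrands dominate $\langle x\rangle^{-1-2\sigma}|\partial_r u|^2$, $\langle x\rangle^{-3-2\sigma}|\nabla_\omega u|^2$, $\lambda\langle x\rangle^{-1-2\sigma}|u|^2$, and $\langle x\rangle^{-3-2\sigma}|u|^2$ respectively on $\{r\ge 2r_0\}$, which is precisely the left-hand side of the desired estimate.

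Next, I would bound the right-hand side of the multiplier identity. The source contribution $\operatorname{Re}\int \bar f\,\chi\phi\,Pu\,dg$ is handled by Cauchy--Schwarz with weights $\langle x\rangle^{1/2+\sigma}$ on $f$ and $\langle x\rangle^{-1/2-\sigma}$ on $Pu$, then absorbed into the $|Pu|^2$ bulk term. The potential contribution $\operatorname{Re}\int \bar V\bar u\,\chi\phi\,Pu\,dg$ is estimated using \eqref{V-short}: the size $A(\langle x\rangle^{-2-2\sigma_0}+\lambda^{1/2}\langle x\rangle^{-1-2\sigma_0})$ of $V$ combined with the weight $\phi\sim r^{-2\sigma}$ yields integrands strictly smaller (in power of $r$) than the bulk main terms once $r\ge 2r_0$ with $r_0$ large; for $\sigma<\sigma_0$ this potential error is absorbed. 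The metric error terms from \eqref{metric-decay} similarly carry extra $r^{-2\sigma_0}$ decay and are absorbed by taking $r_0$ sufficiently large depending on $M$ and $A$. The $\eps$-dependent contribution coming from $\operatorname{Im}(z^2)=\pm\eps$ produces a term $\mp\eps\operatorname{Re}\int\chi\phi\,u\,\overline{Pu}\,dg$, which I would control via Cauchy--Schwarz together with the charge-type estimates \eqref{eq:la-eps-loc} and \eqref{eq:la-loc} from Lemma \ref{elliptic-regularity} (recall the hypothesis $\eps\le C\lambda$), and absorb into the bulk. Finally, the $\chi'$-supported commutator terms are confined to the annulus $r_0\le r\le 2r_0$, where they are bounded by $\int_{r_0\le\langle x\rangle\le 2r_0}(|\nabla u|_g^2+(1+\lambda)|u|^2)\,dg$; applying the local elliptic regularity bound \eqref{energy-spiral} to a slightly enlarged annulus then replaces $|\nabla u|^2$ by $(1+\lambda)|u|^2+(1+\lambda)^{-1}|f|^2$, yielding precisely the right-hand side of the claimed inequality.

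The main obstacle is the sign-balancing of the angular term: a naive Morawetz multiplier gives the angular density with coefficient $\phi'/r^2 + 2\phi/r^3$, and one needs both $\phi'>0$ and $\phi<0$ of compatible magnitudes to obtain the weight $r^{-3-2\sigma}$ with the right sign, while simultaneously keeping $\phi'\gtrsim r^{-1-2\sigma}$ for the radial and $\lambda|u|^2$ terms. A weight of the form $\phi(r)=-c_1\langle r\rangle^{-2\sigma}+c_2$ (or an antiderivative of $\langle r\rangle^{-1-2\sigma}$ normalized to be negative and bounded) threads this needle, at the cost of a slightly more delicate computation. A secondary subtlety is that the potential term must be treated uniformly in $\lambda$: the second piece of \eqref{V-short} scales like the main quadratic term in $\lambda$, which forces the argument to exploit the extra $r^{-2\sigma_0}$ decay of $V$ rather than any smallness in $\lambda$.
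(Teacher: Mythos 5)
Your plan uses the same mechanism as the paper (a radial Morawetz/positive‑commutator multiplier, with the potential and metric errors absorbed by $r^{-2\sigma_0}$ decay, the $\eps$ terms handled via \eqref{eq:la-eps-loc}–\eqref{eq:la-loc}, and the commutator terms at the cutoff converted by \eqref{energy-spiral}), so the overall architecture is right. But the multiplier identity you wrote down contains a sign error that in turn makes your choice of weight fail.

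Working with the conformal substitution $v=r^{(n-1)/2}u$, in which $Pu=r^{-(n-1)/2}v_r$ and \eqref{helmholtz} becomes \eqref{V-bessel}, multiplying by $2\chi\phi\overline{v_r}$, taking real parts and integrating, the angular term comes from
$\int \frac{2\chi\phi}{r^{2}}\,\Re(\Delta_{h[r]}v\,\overline{v_r})
=-\int\frac{\chi\phi}{r^{2}}\,\partial_r|\nabla_\omega v|^{2}
=\int\partial_r\!\left(\frac{\chi\phi}{r^{2}}\right)|\nabla_\omega v|^{2},$
so after rearrangement the bulk angular coefficient is
$\bigl(\tfrac{2\phi}{r^3}-\tfrac{\phi'}{r^2}\bigr)$ against $|\nabla_\omega v|^{2}$ (equivalently against $|\nabla_\omega u|^{2}$ after absorbing $r^{n-1}$), not $-\bigl(\tfrac{\phi'}{r^2}+\tfrac{2\phi}{r^3}\bigr)$ as you claim; the sign of the $\phi/r^{3}$ piece is flipped. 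With the correct formula, positivity requires $\phi'<2\phi/r$, which forces $\phi>0$; your choice $\phi=-c\langle r\rangle^{-2\sigma}<0$ makes this term negative, and the argument does not close. The paper's choice $W=|x|-|x|^{1-2\sigma}$ corresponds to $\phi=W_r\to 1>0$ with $W_{rr}=\phi'\sim r^{-1-2\sigma}$, which is exactly what the correct inequality demands.

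There is a second gap: the $\langle x\rangle^{-3-2\sigma}|u|^2$ estimate. Your bulk term $-\tfrac12\int\chi\,\partial_r^2(\phi'+\tfrac{n-1}{r}\phi)|u|^2$ does not arise from the multiplier $\chi\phi\,\overline{Pu}$ alone; that multiplier only produces a mass term with coefficient proportional to $(\tfrac{2\phi}{r^3}-\tfrac{\phi'}{r^2})\tfrac{(n-1)(n-3)}{4}$, which vanishes identically when $n=3$. To get the $\langle x\rangle^{-3-2\sigma}|u|^2$ bound one needs the extra $(\Delta_M W)u$ piece in the multiplier, i.e. $S=i[-\Delta_M,W]$ as in \eqref{eq:pos-com}; it is precisely the resulting $-\Delta_M^2 W$ term that supplies a nonnegative $r^{-3-2\sigma}$ mass weight in all dimensions $n\ge 3$. (Your choice $\phi<0$ would have had the incidental bonus that $-\tfrac12\partial_r^2(\phi'+\tfrac{n-1}{r}\phi)>0$; with the correct $\phi>0$ the leading piece $\partial_r^2(\tfrac{n-1}{r}\phi)\sim 2(n-1)r^{-3}>0$ has the wrong sign, which is another indication that this is not the term that appears.) The remaining parts of the plan — absorbing $V$ via $\sigma<\sigma_0$, absorbing the $\eps$ contribution via \eqref{eq:la-eps-loc}, \eqref{eq:la-loc} under $\eps\le C\lambda$, and converting the $\chi'$ commutator to $(1+\lambda)|u|^2$ via \eqref{energy-spiral} — are correct and match what the paper does.
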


We prove this result in section \ref{pm}.  The positive commutator method, when applied to a suitable pseudodifferential operator, will also give Theorem \ref{nontrap}; see Section \ref{sec:high-energy}.

We also need the following variant of the above estimate, which we prove in Section \ref{improvement-sec}, using a more refined analysis based on spherical energies rather than on the positive commutator method.

\begin{lemma}\label{lem:improvement}
Let $u \in H^2(M)$ be a solution to the resolvent equation
$$ (H - (\lambda \pm i\eps)) u = f,\qquad \la\pm i\eps=z^2=(a\pm i b)^2$$
for some $\lambda, \eps > 0$ with the Hamiltonian $H=-\Delta_M +V$
satisfying the assumptions of Theorem \ref{main}.
Then for any $0<\sigma<\min (1,\si_0)$ 
and a sufficiently large $r_0$,
\begin{align}
\|u\|_{H^{0,-3/2+\si}(M_{2r_0})}\le C(r_0) \Big (\|f\|_{H^{0,1/2+\si}(M_{r_0})}
&+(1+\la^{1/2})\|u\|_{L^2(M_{r_0}\setminus M_{4r_0})}\Big ),\label{eq:u-better}\\
|z|^{1/2}\|u\|_{H^{0,-1/2-\si}(M_{2r_0})}+\|\nabla 
u\|_{H^{0,-1/2-\si}(M_{2r_0})}&\le C(r_0) \|f\|_{H^{0,1/2+\si}(M_{r_0})}\nonumber
\\ &+C(r_0)(1+\la^{1/2})\|u\|_{L^2(M_{r_0}\setminus M_{4r_0})}\label{eq:unab-better}
\end{align}
with $M_{r_0}=\{x\in M:\, \langle x\rangle\ge r_0\}$. Furthermore,
\begin{equation}\label{eq:urad-better}
\begin{split}
\|r^{-1}\nabla_\omega u\|_{H^{0,-1/2+\sigma}(M_{2r_0})}+
 \| u_r\mp i z u\|_{H^{0,-1/2+\sigma}(M_{2r_0})}&\le C(r_0) \|f\|_{H^{0,1/2+\si}(M_{r_0})}\nonumber
\\ &+C(r_0)(1+\la^{1/2})\|u\|_{L^2(M_{r_0}\setminus M_{4r_0})}.
\end{split}
\end{equation}
\end{lemma}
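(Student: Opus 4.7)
My plan is to work in the scattering coordinates $(r,\omega)$ on the asymptotic end $M\setminus K_0$, reduce the Helmholtz equation to a first-order transport identity for the Sommerfeld quantity $w:=(\partial_r\mp iz)u$, and apply a \emph{linear} (rather than quadratic) multiplier argument to that identity. A direct computation, starting from the coordinate expression
\[
-\partial_r^2 u - \tfrac{n-1}{r}\partial_r u - \tfrac{1}{r^2}\Delta_{h[r]} u + (V-z^2)u = f,
\]
yields
\[
\partial_r w + \Bigl(\tfrac{n-1}{r}\pm iz\Bigr)w \;=\; \mp\tfrac{(n-1)iz}{r}u - \tfrac{1}{r^2}\Delta_{\partial M}u + (V+\mathcal E)u - f,
\]
where the sign is chosen to match $\Im z\ge 0$ and the operator $\mathcal E$ encodes the metric perturbation coming from the $r^{-2\sigma_0}e_{ab}$ term in \eqref{hr-def}; by \eqref{metric-decay} these error terms are short-range of order $r^{-2\sigma_0}$ and absorbable since $\sigma<\sigma_0$.

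To prove the Sommerfeld bound \eqref{eq:urad-better}, I multiply the transport identity by $\bar w$ weighted by $\langle r\rangle^{2\sigma}$ (in the volume measure $dg$) and integrate over $M_{2r_0}$ after taking real parts. Integration by parts on $\partial_r|w|^2$, combined with the drift $(n-1)/r$, produces the coercive bulk contribution $\bigl(\tfrac{n-1}{2}-\sigma\bigr)\|w\|_{H^{0,-1/2+\sigma}(M_{2r_0})}^2$ (positive for $\sigma<(n-1)/2$), while the term $\pm iz|w|^2$ contributes $-\Im z\,|w|^2\le 0$ on the left and only reinforces positivity. The angular forcing $\frac{1}{r^2}(\Delta_{\partial M}u)\bar w$ is recast, via integration by parts in $\omega$, as $\frac{1}{r^2}(\nabla_\omega u)\cdot(\nabla_\omega\bar w)$; this simultaneously generates the companion coercive control of $\|r^{-1}\nabla_\omega u\|_{H^{0,-1/2+\sigma}}$ appearing in \eqref{eq:urad-better} and produces only linear error terms. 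The boundary contribution at $r=2r_0$ is dominated via the elliptic regularity estimates \eqref{energy-spiral} by $(1+\lambda^{1/2})\|u\|_{L^2(M_{r_0}\setminus M_{4r_0})}$, the boundary at infinity vanishes (by density using $u\in H^2(M)$ for $\eps>0$), and the remaining source terms involving $f$ and $u$ are absorbed by weighted Cauchy--Schwarz with dual weights $\langle r\rangle^{\pm(1/2+\sigma)}$, together with the $\lambda\|u\|_{H^{0,-1/2-\sigma}}^2$ bound furnished by Lemma \ref{morawetz-lemma}. The estimate \eqref{eq:unab-better} is then obtained by combining \eqref{eq:urad-better} with the identity $u_r=w\pm izu$ and $|\nabla u|^2=|u_r|^2+r^{-2}|\nabla_\omega u|^2$; the $|z|^{1/2}\|u\|_{H^{0,-1/2-\sigma}}$ piece emerges after noting that $\|iz u\|_{H^{0,-1/2-\sigma}}\le\|u_r\|_{H^{0,-1/2-\sigma}}+\|w\|_{H^{0,-1/2-\sigma}}$ and interpolating against the $L^2$ charge bound \eqref{eq:eps-charge-weight}.

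For the refined bound \eqref{eq:u-better}, I exploit $u\in L^2(M)$ (which holds since $\eps>0$) to write the telescoping identity $u(r,\omega)=-\int_r^\infty\partial_r u(r',\omega)\,dr'$, substitute $\partial_r u=w\pm izu$, and apply a weighted Hardy inequality in the radial variable (the exponent shift $-3+2\sigma=(-1+2\sigma)-2$ is Hardy-admissible) to obtain
\[
\|u\|_{H^{0,-3/2+\sigma}(M_{2r_0})}\;\lesssim\;\|w\|_{H^{0,-1/2+\sigma}(M_{2r_0})}+|z|\,\|u\|_{H^{0,-1/2+\sigma}(M_{2r_0})}+(\text{boundary at }r=2r_0).
\]
The first term is bounded by \eqref{eq:urad-better}, while the $|z|\,u$ piece is reabsorbed by splitting the radial region into $|z|\langle r\rangle\gtrsim 1$ (where Lemma \ref{morawetz-lemma} delivers the bound through $\lambda^{1/2}\|u\|$) and $|z|\langle r\rangle\ll 1$ (where the pointwise inequality $|z|^2\langle r\rangle^{-1+2\sigma}\lesssim\langle r\rangle^{-3+2\sigma}$ allows the term to be reabsorbed on the left-hand side). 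The main technical obstacle I anticipate is precisely the low-energy regime $|z|\ll 1$, where the $\pm izw$ term in the transport equation contributes no positivity and coercivity must come entirely from the $(n-1)/r$ drift, reinforced by the angular centrifugal term $\mu_k/r^2$ in the spherical-harmonic decomposition of $u$; in particular, the zero angular mode $u_0$ must be treated separately by a direct Wronskian analysis of the reduced one-dimensional ODE $(-\partial_r^2-z^2)v=\tilde F_0$ for $v=r^{(n-1)/2}u_0$, with the $L^2$-decay of $u_0$ selecting the outgoing (resp.\ incoming) fundamental solution.
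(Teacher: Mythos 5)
The strategy you adopt—conjugating to a first-order transport equation for the Sommerfeld quantity and running a weighted multiplier argument, followed by a Poincar\'e/Hardy step—is in the same spirit as the paper's (which derives an equation of motion for the spherical quantity $\N - \A$ in Lemma \ref{lem:Out} and integrates it against $r^{2\sigma}$, then proves \eqref{eq:poinc-u}). However there is a genuine sign error at the heart of your coercivity claim. With $w=u_r\mp izu$ and $z=a\pm ib$, $b>0$, the transport coefficient is $\frac{n-1}{r}\pm iz$, and $\Re(\pm iz)=-b$. After multiplying by $\bar w\langle r\rangle^{2\sigma}$ in the volume measure $dg=r^{n-1}\sqrt{h[r]}\,dr\,d\omega$, integrating by parts, and taking real parts, one obtains
$$
\tfrac{n-1-2\sigma}{2}\int r^{-1+2\sigma}|w|^2\,dg \;-\; b\int r^{2\sigma}|w|^2\,dg \;+\;\text{bdy}
\;=\;\Re\int[\text{RHS}]\,\bar w\,r^{2\sigma}\,dg.
$$
Rearranged, the $-b$ term appears as $+b\int r^{2\sigma}|w|^2\,dg$ on the right, an \emph{uncontrolled} positive quantity of the same order as what you are trying to bound when $b\langle r\rangle\gtrsim 1$. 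The paper circumvents this by working with the Bessel normalization $v=r^{(n-1)/2}u$, for which the transport operator is $\partial_r+iz$ with \emph{no} $\tfrac{n-1}{r}$ drift; in that gauge the $b$-contribution comes out with the correct sign, as encoded in the cancellation of $2\Re((|z|^2-z^2)\bar\Zflux)$ against $-4\Im z\,\Im(z\bar\Zflux)$ in the proof of Lemma \ref{lem:Out}, giving the positive $+2b(\N+\A)$ term in \eqref{eq:add-motion}. An analogous sign defect afflicts your angular term: integrating $\tfrac{1}{2r^2}\partial_r|\nabla_\omega u|^2$ by parts in $r$ against $r^{2\sigma}\,dg$ produces $-\tfrac{2\sigma+n-3}{2}\int r^{-3+2\sigma}|\nabla_\omega u|^2\,dg\le 0$ for $n\ge 3$, $\sigma>0$, so you do not get the advertised coercive control of $\|r^{-1}\nabla_\omega u\|_{H^{0,-1/2+\sigma}}$. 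In the paper this is precisely what the subtraction of $\A$ from $\N$ accomplishes: $\tfrac{d}{dr}(-\A)$ produces the positive $\tfrac{2}{r}\A$ term and cancels the offending $\tfrac{1}{r^2}\partial_r(\cdot)$ contribution.

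Two further problems. First, the telescoping/Hardy closure of \eqref{eq:u-better} does not go through as stated in the region $|z|\langle r\rangle\gtrsim 1$: Lemma \ref{morawetz-lemma} yields $\lambda\|u\|^2_{H^{0,-1/2-\sigma}}$, while you need $|z|^2\|u\|^2_{H^{0,-1/2+\sigma}}$, and on $M_{2r_0}$ one has $r^{-1+2\sigma}\gtrsim r^{-1-2\sigma}$, so the weight comparison runs the wrong way; moreover Lemma \ref{morawetz-lemma} is proved under $\eps\le C\lambda$, which the present lemma is not permitted to assume. Second, the regime $\eps\gg\lambda$, where the boundary trace $|z|^2\int_{S_{2r_0}}|u|^2$ is of size $\eps\int|u|^2$ and is \emph{not} dominated by $(1+\lambda)\|u\|^2_{L^2(\text{shell})}$, is not addressed; the paper handles it separately via the elementary lower bound $\N\ge\tfrac12\RR-7|z|^2\M$ combined with the charge estimate \eqref{eq:eps-charge-weight}. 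Finally, the remark that the zero angular mode should be treated by a separate Wronskian analysis is not how the paper proceeds—the paper's argument is uniform in angular modes—though it is a reasonable sanity check. Net assessment: the right idea is present, but the proof, as written, fails at the central positivity step and at the two points above; switching to the $v$-gauge and working with $\N-\A$ (or, equivalently, adding the Morawetz-type multiplier $\chi\,i[-\Delta_M,W]$ to your radiation multiplier, as in \eqref{eq:radiat}) is needed to fix the signs.
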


\begin{remark}
The result of Lemma \ref{lem:improvement} should be compared with Lemma 
\ref{morawetz-lemma}. First, the analog of \eqref{eq:u-better} in 
Lemma \ref{morawetz-lemma} provides control of the 
$\la^{1/2} \|u\|_{H^{0,-3/2-\si}(M_{2r_0})}$ norm. This improvement will be important
in our analysis of the low energy regime. Furthermore, in Lemma 
\ref{morawetz-lemma} validity of the estimate \eqref{eq:unab-better} is restricted 
to the region $\eps\le C\la$. Removal of this condition is crucial for establishing a uniform 
(in the domain $\eps,\la$) 
limiting absorption principle. Finally, \eqref{eq:urad-better} is the key ingredient in establishing
the Sommerfeld radiation condition of Proposition \ref{qlap}.
\end{remark}

We will need to complement Lemma \ref{morawetz-lemma} by the following estimate, which controls the portion of $u$ in the near region $r \ll 1$ by the intermediate region $r \sim 1$.

\begin{proposition}[Unique continuation estimate]\label{ucp} Let $K$ be a compact subset of $M$, and let $K'$ be a 
compact set contained in the interior of $K$.  Suppose that the potential $V$ obeys the bound $|V(x)| \leq A$
for $x \in K$.  Then if $u \in H^2(M)$ solves the Helmholtz equation $(H-\lambda\mp i\eps) u = f$ on $K$, then we have the estimate
$$ \int_K |u|^2 + |\nabla u|_g^2\ dg \leq C(K,K',A) e^{C(K,K',A) \sqrt{\lambda}} 
\left( \int_K |f|^2\ dg + \int_{K \backslash K'} |u|^2 + |\nabla u|_g^2\ dg \right).$$
\end{proposition}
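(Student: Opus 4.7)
The plan is to derive the bound from a Carleman estimate with an exponential weight whose parameter $\tau$ is taken of size $1+\sqrt{\lambda}$; this is precisely what produces the factor $e^{C\sqrt{\lambda}}$ in the conclusion. Concretely, I aim to show that there is a smooth real-valued function $\phi$ on a neighborhood of $K$ and constants $\tau_0, C$ (depending only on $K, K', A$) such that
\begin{equation*}
\tau \int_K e^{2\tau\phi}|\nabla w|_g^2\, dg + \tau^3 \int_K e^{2\tau\phi}|w|^2\, dg \leq C\int_K e^{2\tau\phi}|(-\Delta_M-\lambda)w|^2\, dg
\end{equation*}
holds for every $w\in C^\infty_c(\mathrm{int}(K))$ and $\tau \geq \tau_0(1+\sqrt{\lambda})$. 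Granted such an inequality, the proof concludes by applying it to a localized version of $u$.

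To build the Carleman estimate, I would choose $\phi$ of the form $\phi(x)=\psi(\rho(x))$, with $\rho$ the geodesic distance to a point $x_0$ outside $K$ and $\psi$ a strictly convex increasing function (using a partition of unity and a suitable coordinate exhaustion if $K$ is not contained in one patch). With this choice the principal symbol $p(x,\xi)=|\xi|_g^2$ of $-\Delta_M$ satisfies H\"ormander's pseudoconvexity condition relative to $\phi$ on $K$. The estimate itself is obtained by conjugation: writing $P_\tau:=e^{\tau\phi}(-\Delta_M-\lambda)e^{-\tau\phi} = -\Delta_M-\lambda-\tau^2|\nabla\phi|_g^2 + \tau(2\nabla\phi\cdot\nabla + \Delta_M\phi)$, splitting $P_\tau=A+iB$ into its self-adjoint and skew-adjoint parts (after multiplying by $i$), expanding
\begin{equation*}
\|P_\tau v\|_{L^2}^2 = \|Av\|_{L^2}^2+\|Bv\|_{L^2}^2 + ([A,B]v,v),
\end{equation*}
computing the commutator $[A,B]$ via standard manipulations and invoking pseudoconvexity, one obtains a lower bound of the form $\tau\int|\nabla v|_g^2 + (\tau^3 - C\tau\lambda)\int|v|^2$ on the right-hand side after integrating by parts to convert mixed terms. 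For $\tau\geq C(1+\sqrt{\lambda})$ the coefficient of $\int|v|^2$ remains comparable to $\tau^3$. Setting $v=e^{\tau\phi}w$ yields the displayed inequality.

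To deduce the proposition, pick a smooth cutoff $\chi$ that equals $1$ on a small neighborhood of $K'$ with support in $\mathrm{int}(K)$, set $w=\chi u$, and note
\begin{equation*}
(-\Delta_M-\lambda)w = \chi(f-Vu\pm i\eps u) - 2\nabla\chi\cdot\nabla u - (\Delta_M\chi)u,
\end{equation*}
where the commutator terms are supported in $K\setminus K'$. Substituting into the Carleman estimate produces on the right-hand side a term $C(A^2+\eps^2)\int_K e^{2\tau\phi}|u|^2$, which for $\tau$ large relative to $A$ and $\eps$ (which may be absorbed into the hypothesis $\tau\geq \tau_0(1+\sqrt{\lambda})$, after using Lemma \ref{charge-est} or replacing $\lambda$ by $\lambda + \eps$) is dominated by $\tau^3\int e^{2\tau\phi}|\chi u|^2$ on the left. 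Taking $\tau = C(1+\sqrt{\lambda})$, bounding $e^{2\tau\phi}$ below by $e^{2\tau\phi_{\min,K'}}$ on $K'$ and above by $e^{2\tau\phi_{\max,K}}$ elsewhere, and adding the trivial contribution from $K\setminus K'$ to both sides of the resulting inequality yields the asserted bound with exponential constant $e^{C(K,K',A)\sqrt{\lambda}}$, where $C(K,K',A)$ is proportional to the difference $\phi_{\max,K}-\phi_{\min,K'}$.

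The main obstacle is the Carleman estimate itself: choosing $\phi$ so that pseudoconvexity holds uniformly on $K$, and carefully tracking the $\sqrt{\lambda}$ dependence of the threshold $\tau_0$ through the commutator calculation. The cutoff-and-absorb argument at the end is standard once the weighted inequality is in hand, and the curvature and potential terms of $-\Delta_M+V$ contribute only lower-order corrections that are easily dominated for $\tau$ sufficiently large.
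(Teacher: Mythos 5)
Your overall strategy is the right one, and you correctly identify the two essential ingredients: a Carleman inequality with large parameter $\tau$ scaling like $1+\sqrt{\lambda}$, and the cutoff-and-absorb argument at the end (applied to $\chi u$ with the commutator terms supported on $K\setminus K'$). The commutator method $\|P_\tau v\|^2 = \|Av\|^2 + \|Bv\|^2 + \langle i[A,B]v,v\rangle$ is a legitimate alternative to the abc-method identity the paper uses, and the way you absorb the $Vu$ and $\eps u$ terms by taking $\tau$ large relative to $A$ and $\sqrt\lambda$ is also fine.

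There is, however, a genuine gap in the construction of the weight $\phi$, and it is precisely the issue the paper is built around. You propose $\phi=\psi(\rho)$ where $\rho$ is geodesic distance to a point $x_0$ outside $K$ and $\psi$ is strictly convex. This works on a ball in $\R^n$, but $K$ here is an \emph{arbitrary} compact subset of an asymptotically conic manifold, and two obstructions arise. First, the distance function $\rho$ is generally not smooth across the cut locus of $x_0$, which will intersect $K$ whenever $K$ is thick enough (for example, if $K$ wraps around a handle of $M$ or surrounds a region not containing $x_0$); a partition of unity does not repair this, since the exponential weight $e^{\tau\phi}$ couples patches in a way that destroys the gain in $\tau$ at the seams. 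Second, and more fundamentally, if $K$ has nontrivial topology then by Morse theory \emph{every} smooth real-valued function on $K$ has a critical point in the interior of $K$, so no smooth non-stationary weight exists at all; and if $K$ contains a closed geodesic, strict geodesic convexity of $\phi$ along that geodesic is impossible. The paper's Proposition \ref{ucp} makes no topological or non-trapping assumption on $K$, so your weight does not exist in general.

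The paper's fix, following Burq, is a \emph{two-weight} Carleman inequality (Corollary \ref{cor-carleman}). One first produces, via Morse theory, a pair of functions $a_1,a_2$ with finitely many interior critical points such that $a_2>a_1$ at the critical points of $a_1$ and $a_1>a_2$ at those of $a_2$ (the paper does this by solving $\Delta_M a_1=1$ with Dirichlet data and perturbing by a small diffeomorphism). Then one takes $w_j=\exp(Aa_j)$ for large $A$: the exponentiation enforces strict convexity in the gradient direction $n_j=\nabla a_j/|\nabla a_j|$ away from the critical set $B_j$, and a compensating energy identity \eqref{energy-identity-2} handles the possible failure of convexity in the other directions (which is unavoidable near closed geodesics). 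Applying the preliminary single-weight estimate (Lemma \ref{carleman-prelim}) to each $w_j$ gives an error term on the critical balls $B_j$; adding the two estimates, the error on $B_1$ carries the weight $e^{2tw_1}$ which is exponentially smaller than $e^{2tw_2}$ there, and symmetrically for $B_2$, so for $t$ large these errors are absorbed. Your proposal has no mechanism for removing these critical-point error terms, so even granting a smooth weight you would be stuck with an unabsorbable term $\int_{B}(t^3|u|^2+t|\nabla u|^2)e^{2t\phi}$ supported in the interior of $K$. This is the missing idea.
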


One immediate consequence of this proposition is that any solution of the equation $(H-\lambda)u = 0$ which vanishes near the boundary of $K$, also vanishes on the interior of $K$, which explains the terminology ``unique continuation''.  Proposition \ref{ucp} is essentially due to Burq \cite{burq}, but in the interest of self-containedness we provide a proof in Section \ref{carleman-sec} using the Carleman inequality method (which, again, can be viewed as a type of integration by parts).  Notice the exponential dependence on the energy here, which will eventually lead to the exponential factors in Theorem \ref{main}.

To conclude the proof of limiting absorption for general manifolds, we will need to analyse how solution $u$ decays at infinity.  To this end we introduce the spherical masses
\begin{equation}\label{mass-def}
\M[r] := r^{n-1} \int_{\partial M} |u(r,\omega)|^2\ dh[r](\omega)
\end{equation}
for $r > R_0$, where the angular metric $h[r]$ was defined in \eqref{hr-def}.

\begin{remark} In the Euclidean case, at least, we expect $u$ to decay like $O(r^{-(n-1)/2})$, and so we expect $\M[r]$ to stay bounded.
\end{remark}

\begin{lemma}[Dichotomy]\label{dichotomy}  Let $C_1 \gg R_0$ be a large number, and then let $C_2 \gg C_1$ be an even larger number.  Let $M_{C_1/2} := \{ x \in M: r > C_1/2 \}$.
Let $u \in H^2(M_{C_1/2})$ be a solution to the resolvent equation \eqref{resolvent}
for some $\lambda, \eps > 0$ and sign $\pm$, so that $u$ is also a solution to the Helmholtz equation \eqref{helmholtz} 
with $F := f - Vu \pm i \eps u$.  Suppose also that we have the normalization
\begin{equation}\label{uf}
\| u \|_{H^{0,-1/2-\sigma}(M_{C_1/2})} = 1; \quad \| f \|_{H^{0,1/2+\sigma}(M_{C_1/2})} = \delta
\end{equation}
for some $\delta > 0$.   Assume that $\sigma < 1/2$, that
$\de\le 1$ and that $\eps \le \la$.  If $C_1$ is sufficiently large (but not depending on $\lambda$), and $C_2$ is sufficiently large depending on $C_1$ (but not on $\lambda$), then one of the following must be true:
\begin{itemize}
\item (Boundedness) There exists a radius $C_1 \leq r_0 \leq C(C_1,C_2)$ such that
\begin{equation}\label{bounded-mass}
 \M[r] \leq C(C_1,C_2) (\lambda^{-C(C_1,C_2)} + 1) \delta \hbox{ for all } r_0/2 \leq r \leq 4r_0.
\end{equation}
\item (Exponential growth) For all $C_1 \leq r \leq 10C_1$, we have
\begin{equation}\label{mass-growth}
 \frac{d}{dr} \M[r] \leq - C_2 (1 + \lambda^{1/2}) \M[r].
\end{equation}
\end{itemize}
\end{lemma}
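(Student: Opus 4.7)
The plan is to derive a second-order differential inequality for the spherical mass $\M[r]$ from the radial form of the Helmholtz equation, and then use a logarithmic-derivative argument to separate the two alternatives.

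Setting $v := r^{(n-1)/2} u$ absorbs the radial volume factor: a direct computation in the polar coordinates \eqref{g-polar} recasts $(-\Delta_M - z^2)u = F$, with $F := f - V u \pm i\eps u$, as
$$v_{rr} + z^2 v + r^{-2}(\Delta_{h[r]} + c_n) v + \mu(r) v_r = -r^{(n-1)/2} F,$$
where $c_n := (n-1)(n-3)/4$ and $\mu(r) = O(r^{-1-2\sigma_0})$ by \eqref{metric-decay}. In these variables $\M[r] = \int_{\partial M}|v|^2\, dh[r]$. Differentiating $\M$ twice, substituting for $v_{rr}$, and integrating by parts on $\partial M$ (the resulting term $\int |\nabla_\omega v|_{h[r]}^2\, dh[r]$ is non-negative and may be discarded), one obtains the key inequality
$$\M''[r] + 2\lambda \M[r] \geq 2\int_{\partial M}|v_r|^2\, dh[r] - E(r),$$
where $E(r)$ gathers the contributions of the source $F$, the term $Vu$, the imaginary part $\pm i\eps u$, and $r$-derivatives of $h[r]$.

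The error $E$ is manageable. The $f$-piece of $F$ contributes $\lesssim\delta$ after Cauchy--Schwarz against $v$; the potential term $Vu$ is admissible by \eqref{V-short}, as its high-energy part $\lambda^{1/2}\langle x\rangle^{-1-2\sigma_0}|u|^2$ is dominated by a small constant times $\lambda\M[r]$ once $r\geq C_1$ is large; the $\pm i\eps u$ contribution is controlled by Lemma \ref{charge-est} together with $\eps\leq\lambda$; and the metric corrections are of order $r^{-1-2\sigma_0}$. Altogether $\int_{C_1/2}^\infty E(r) r^{-1-2\sigma}\, dr \lesssim \delta + C_1^{-2\sigma_0}$, which is absorbable once $C_1$ is large.

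Applying Cauchy--Schwarz $\M'[r]^2 \leq 4 \M[r] \int|v_r|^2\, dh[r]$ and combining with the key inequality yields $\M\M'' - \tfrac12 (\M')^2 \geq -2\lambda\M^2 - \M E$. Setting $L(r) := \M'[r]/\M[r] = (\log\M[r])'$, this rearranges to
$$L'(r) \geq -\tfrac12 L(r)^2 - 2\lambda - E(r)/\M[r].$$
Split into cases. If $L(r)\leq -C_2(1+\sqrt\lambda)$ throughout $[C_1,10 C_1]$, direct integration produces the decay \eqref{mass-growth}. Otherwise there is $r_*\in[C_1,10 C_1]$ with $L(r_*) > -C_2(1+\sqrt\lambda)$. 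The normalization $\|u\|_{H^{0,-1/2-\sigma}(M_{C_1/2})} = 1$ forces $\int_{C_1/2}^\infty \M[r]r^{-1-2\sigma}\, dr \leq 1$, so on most dyadic scales $[R, 2R]\subset [C_1, C(C_1,C_2)]$ the average of $\M$ is $\lesssim R^{2\sigma}$. Pigeonholing over finitely many such scales and integrating the $L$-inequality outward from $r_*$ (the quadratic term $-L^2/2$ is harmless so long as $L$ stays bounded) produces $r_0$ in the required range at which $\M[r_0] \leq C(C_1,C_2)(\lambda^{-C(C_1,C_2)}+1)\delta$; a final Gronwall propagation on the unit-log-scale block $[r_0/2, 4r_0]$ upgrades this pointwise bound to \eqref{bounded-mass}.

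The main obstacle is the destabilizing quadratic $-L^2/2$ on the right of the $L$-inequality, which prevents naive bootstrap unless $|L|$ is kept comparable to $\sqrt\lambda$; this fixes the threshold separating the two alternatives at exactly the balance scale $C_2(1+\sqrt\lambda)$, with $C_2$ chosen just above the universal constant so that Case 1 loses only polynomially rather than exponentially in $\lambda$. A secondary subtlety is that the high-energy part of $V$ scales like $\lambda^{1/2}\langle x\rangle^{-1-2\sigma_0}$, matching the $\lambda\M$ term on the left of the key inequality; only the smallness of the prefactor $C_1^{-2\sigma_0}$ allows us to absorb it, which is what forces the lower bound on $C_1$ (depending on $M$ and $A$ only).
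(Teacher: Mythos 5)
Your strategy of deriving a second-order inequality for the spherical mass $\M[r]$ alone, dropping the nonnegative angular energy, and passing to the logarithmic derivative $L:=(\log\M)'$ is genuinely different from the paper's approach (which tracks a system $\M,\RR,\A,\F$ and reduces to a dimensionless $3\times 3$ system). However, I believe the proposal as written has a real gap, and moreover the gap is located precisely at what you discarded.

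The structural problem is that you drop the angular energy $\A$ as merely nonnegative, but the heart of the argument is the Pohozaev monotonicity $\frac{d}{dr}(\RR+\lambda\M-\A)=\frac{2}{r}\A+\ldots$ together with the vanishing of $\M,\RR,\A$ at $r=\infty$ (the boundary condition \eqref{boundary}, which you never invoke). Integrating this monotonicity inward gives the one-sided bound $\A\gtrsim\RR+\lambda\M-O(\delta)-O(r^{-2-2\sigma})\M$ (Lemma~\ref{flux-ceiling}). If you keep this in your computation of $\M''$ you find $\M''\geq 4\RR-O(\delta)-\ldots$ rather than your $\M''\geq 2\RR-2\lambda\M-E$; since $\M'^2\leq 4\M\RR+\ldots$, the upgraded version gives a genuine log-convexity $\M\M''-(\M')^2\geq -O(\delta)\M-\ldots$, i.e.\ $L'\geq -O(\delta/\M)-\ldots$, with \emph{no} destabilizing $-\tfrac12 L^2$ term. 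Your version $L'\geq-\tfrac12 L^2-2\lambda-E/\M$ is a strictly weaker consequence, and the quadratic destabilization you flag as "the main obstacle" is an artifact of having thrown away $\A$ before it could cancel.

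Even granting the inequality, the argument does not close on the quantitative level required. The Riccati inequality for $L$ is invariant under rescaling $\M\mapsto c\M$, so it can only control the \emph{shape} of $\M$, not its magnitude. Your pigeonhole from the normalization gives $\M\lesssim R^{2\sigma}$ on most dyadic scales — a bound with no factor of $\delta$ — and no step in your sketch then injects the $\delta$ that must appear in \eqref{bounded-mass}. (Note that $\M\lesssim\delta$ with $\delta$ to the first power is essential: the subsequent application in the proof of Theorem~\ref{main} recovers \eqref{usu} only because the bound scales linearly in $\|f\|$.) The paper builds $\delta$ into the analysis from the start via the dimensionless variable $\mu:=r\delta/\M$; the conclusion $\mu\gtrsim\lambda^C$ in Step~4 of Lemma~\ref{ode} is exactly the desired bound $\M\lesssim\lambda^{-C}r\delta$. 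A scalar analysis of $L=\M'/\M$ has no analogous handle on the magnitude.

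So the recipe to repair the proposal would be: (i) keep $\A$, prove the Pohozaev bound $\A\geq\RR+\lambda\M-O(\delta)-\ldots$ using the third equation of motion and the decay at infinity, and substitute this back into $\M''$ to remove the $-L^2/2$ term; and (ii) keep track of $\delta$ multiplicatively, not just through the residual error $E/\M$, e.g.\ by working with $\mu=r\delta/\M$ as the paper does. At that point you are essentially reconstructing Steps~1, 2 and~4 of Lemma~\ref{ode}, which suggests the four-quantity bookkeeping is not avoidable.
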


This lemma is proven by an ODE analysis of the equations of motion obeyed by the mass $\M[r]$ and several other related quantities; it is the main technical innovation of this paper and is proven in Section \ref{sec:ODE}, after some important preliminaries in Section \ref{bessel-sec}.  Observe that this is a ``black box'' result, in that no assumptions whatsoever are made concerning $u$ in the region $r \leq C_1/2$.  The dichotomy between \eqref{bounded-mass} and \eqref{mass-growth} may seem strange, but one way to motivate it is by considering the free case $M = \R^n$, $V=0$, $f=0$, $\eps = 0$ with an ansatz $u(r\omega) = r^{-(n-1)/2} v(r) Y_l(\omega)$ for some spherical harmonic $Y_l$ of order $l$, in which case \eqref{resolvent} simplifies to the Bessel equation
$$v_{rr} + (\lambda - \frac{L(L-1)}{r^2}) v = 0$$
where $L := l + \frac{n-1}{2}$; the quantity $\M[r]$ is essentially just $|v(r)|^2$ in this case.  The solutions to this ODE can be described in terms of Bessel and Hankel functions.  By analysing the asymptotics of such functions one can observe that all solutions are either bounded or grow exponentially as $r$ decreases from infinity\footnote{Of course, exponential growth as $r$ decreases from infinity is the same thing as exponential decay as $r \to \infty$.  But in our application, it is best to think of $r=\infty$ as the ``initial'' value of $r$ for equations such as the Bessel equation.}, which helps explain the above dichotomy.

\section{Proof of main theorem}\label{sec:lim-abs}

We now show how the estimates in the previous section can quickly be used to deduce the main limiting absorption principles in our paper, namely Theorem \ref{main} and Proposition \ref{qlap}.  (The remaining limiting absorption principles will require slight modifications of the above estimates, which we shall address in later sections.)

The strategy will be as follows.  Proposition \ref{ode} shows us that the spherical mass $\M[r]$ exhibits either boundedness \eqref{bounded-mass} or exponential growth \eqref{mass-growth}.  The exponential growth scenario \eqref{mass-growth} will turn out to be incompatible with the unique continuation estimate in Proposition \ref{ucp}, and so we in fact have boundedness for $u$ in the intermediate region $r \sim 1$.  We will then use Lemma \ref{morawetz-lemma}, Proposition \ref{ucp}, and Lemma \ref{elliptic-regularity} to recover control of $u$ in other regions of space (and with higher derivatives), thus leading to the full limiting absorption principle.

\subsection{Proof of Theorem \ref{main}}\label{main-proof}  

We begin by proving Theorem \ref{main}.  Let the notation and assumptions be as in the theorem.  We allow all constants to depend on $M, A$.  Write
$u := R(\lambda \pm i \eps) f$, thus 
$$ (H - (\lambda \pm i \eps)) u = f$$
and $u \in H^2(M)$ (by \eqref{eq:eps-charge} and \eqref{elliptic-0}).

We begin by proving \eqref{aprop-ls}.  We wish to show that
\begin{equation}\label{usu}
\| u \|_{H^{s,-1/2-\sigma}(M \backslash K)} \leq C (\lambda^{-C} + \lambda^{s/2}) \|f\|_{H^{0,1/2+\sigma}(M)}
\end{equation}
for $s=0,1,2$ and for sufficiently large $K$.  By interpolation it suffices to establish the cases $s=0,2$.  From \eqref{elliptic-local} we see that the $s=2$ case follows from the $s=0$ case, so it suffices to take $s=0$.

Henceforth $s=0$.  We may now assume $\eps < \lambda$, since the claim \eqref{usu} follows from \eqref{eq:eps-charge} otherwise.  

Let $r_0$ be a large constant, and take $M_{r_0/2} := \{ x \in M: \langle x\rangle \ge r_0/2\}$.
We then normalize
\begin{equation}\label{normalization}
 \| u \|_{H^{0,-1/2-\sigma}(M_{r_0/2})} = 1; \quad \| f \|_{H^{0,1/2+\sigma}(M)} = \delta
\end{equation}
We may assume that $\de \le 1$, since the claim \eqref{usu} is immediate otherwise. 

Let $r_0 \ll C_1 \ll C_2$ be large numbers (depending on $M$ and $A$, but independent of $\lambda$) to be chosen later.  If one then applies Lemma \ref{dichotomy}, we conclude that we are either in the boundedness scenario \eqref{bounded-mass} or the exponential growth scenario \eqref{mass-growth}.

Suppose we are not in the boundedness scenario \eqref{bounded-mass}, so that we are necessarily in the exponential growth scenario \eqref{mass-growth}.  Then we see in particular from Gronwall's inequality that
$$ \int_{C_1 \leq \langle x \rangle \leq 2 C_1} |u|^2\ dg \geq
e^{C_2 (1+\sqrt{\lambda})} \int_{3 C_1 \leq \langle x \rangle \leq 10 C_1} |u|^2\ dg.$$
Using the energy inequality \eqref{energy-spiral'} and the normalization \eqref{normalization} we have
$$ \int_{4 C_1 \leq \langle x \rangle \leq 8 C_1} |\nabla u|^2\ dg
\leq C(C_1) \left(\int_{3 C_1 \leq \langle x \rangle \leq 10 C_1} (1+\la)|u|^2\ dg + \delta^2\right).$$
We may assume that the first-term on the right-hand side dominates the second, since we are not in the
boundedness scenario \eqref{bounded-mass}.  We then conclude that
\begin{equation}\label{scoot}
 \int_{C_1 \leq \langle x \rangle \leq 2 C_1} |u|^2\ dg \geq
 e^{\frac 12C_2 (1+\sqrt{\lambda})} \int_{4 C_1 \leq \langle x \rangle \leq 8 C_1} 
 \left (|u|^2 + |\nabla u|_g^2\right)\ dg.
 \end{equation}
The constant $C_2$ is sufficiently large compared to $C_1$.
Then using Proposition \ref{ucp} with $K=\{x: \langle x\rangle \le 8C_1\}$ and 
$K'=\{x: \langle x\rangle \le 4C_1\}$ we obtain 
\begin{align*}
 \int_{\langle x \rangle \leq 8 C_1} 
 \left (|u|^2 + |\nabla u|_g^2\right)\ dg &\le C(C_1) e^{C(C_1) \sqrt{\lambda}} 
 \int_{\langle x \rangle \leq 8 C_1} |f|^2\ dg\\ 
 &\le C(C_1) e^{C(C_1) \sqrt{\lambda}} \de^2.
\end{align*} 
Applying \eqref{scoot} again we conclude (again taking $C_2$ large compared to $C_1$)
$$
 \int_{4C_1 \leq \langle x \rangle \leq 8 C_1} |u|^2\ dg \leq e^{-\frac{1}{4} C_2(1+\sqrt{\lambda})} \delta^2.$$
Applying Lemma \ref{morawetz-lemma} we conclude
$$
 \int_{\langle x \rangle \leq 8 C_1} \lambda \langle x \rangle^{-1-2\sigma} |u|^2\ dg \leq C(C_1) (1+\lambda) e^{-\frac{1}{4} C_2(1+\sqrt{\lambda})} \delta^2$$
and the claim \eqref{usu} follows from \eqref{normalization}. 
 
Now suppose instead that we are in the boundedness half of the dichotomy, thus there exists a radius $C_1 \leq r_0 \leq C(C_1,C_2)$
such that
$$ \int_{r_0/2 \leq \langle x \rangle \leq 4r_0} |u|^2\ dg \leq 
C(C_1,C_2) (\lambda^{-C(C_1,C_2)} + 1) \delta.$$
Using Lemma \ref{morawetz-lemma} as before, we conclude
$$ \int_{\langle x \rangle \geq 4r_0} \lambda \langle x \rangle^{-1-2\sigma} |u|^2\ dg \leq C(r_0,C_1,C_2) (1+\lambda)
(\lambda^{-C(C_1,C_2)} + 1) \delta$$
and \eqref{usu} follows.

Finally, we prove \eqref{prop-ls}. In view of \eqref{elliptic-0}, it suffices to show that
$$
\| u \|_{H^{0,-1/2-\sigma}(M)} \leq C(\lambda^{-C} + e^{C\sqrt{\lambda}}) \|f\|_{H^{0,1/2+\sigma}(M)}.
$$
But this follows from \eqref{aprop-ls} and Proposition \ref{ucp}.  This completes the proof of Theorem \ref{main}.  

\subsection{Proof of Proposition \ref{qlap}}\label{qlap-proof}  

We now turn to the proof of Proposition
\ref{qlap}.  

The Sommerfeld radiation condition \eqref{sommerfeld} follows immediately from 
Theorem \ref{main} and \eqref{eq:urad-better}.
Their combination gives the estimate
$$
\|r^{-1}\nabla_\omega u\|_{H^{0,-1/2+\sigma}(M_{2r_0})}+
 \| u_r - i \sqrt{\lambda} u\|_{H^{0,-1/2+\sigma}(M_{2r_0})}\leq
 C (\lambda^{-C} + e^{C \sqrt{\lambda}}) \| f \|_{H^{0,1/2+\sigma}(M)}.$$
and completes the proof of the first part of Proposition
\ref{qlap}.
 
To show that for fixed $\la>0$ the the functions ${u_{\pm}}_\eps= R(\la\pm i\eps) f$ 
converge in $H^{0,-1/2-\si}$ to a unique solution ${u_\pm}$ of the 
Helmholtz equation $(H-\la) u=f$ selected by the requirement that 
$u\in H^{2,-1/2-\si}(M)$ and 
$(\pa_r\mp i\la^{1/2}) u\in H^{0,-1/2+\si'}(M\setminus K_0)$ for 
some $\si'>0$ we consider the difference 
$$
w(x)= e^{i(z_1-z_2)s(x)} {u_+}_{\eps_2}(x) - {u_+}_{\eps_1}(x) 
$$
 with $z_{1,2}^2 = \la+i\eps_{1,2}$ and 
 $\Im (z_{1,2})\approx \eps_{1,2} \la^{-1/2}>0$ (we assume that $\la\gg\eps_{1,2}$) and 
 $\eps_1>\eps_2$ (convergence for ${u_-}_\eps$ can be treated in a
 similar fashion.) The function $s(x)$ is assumed to be smooth and 
 obeys the requirement that $s(x)=r$ for $x\in M\setminus K_0$.
 The function $w$ verifies the equation
 $$(H-\la-i\eps_1) w=  G$$
 where $G$ is given by the formula
 $$G:=i (z_2-z_1)e^{i(z_1-z_2)s(x)} 
 \Big ( 2( \nabla_k s \nabla^k - i z_2\,|\nabla s|^2 ) 
 +\Delta_M s+ i(z_1+z_2)(|\nabla s|^2-1) \Big ){u_+}_{\eps_2}.
 $$
 For values of $\eps_{2}<\eps_1\ll \la$ we have that
 $$
 |z_1-z_2|\le 4 \Im (z_1-z_2)\approx \la^{-1/2} (\eps_1-\eps_2).
 $$
Therefore the first part of Proposition \ref{qlap} and the choice of function $s(x)$, which 
in particular gives that $|\Delta_M s(x)|\le C \langle x\rangle^{-1}$, imply
that for any $0<\si'<\si$ and any $\alpha > 0$ we have the estimates
\begin{align*}
\|e^{i(z_1-z_2)s(x)} ( \nabla_k s \nabla^k &- i z_2\,|\nabla s|^2 ) {u_+}_{\eps_2}  \|_{H^{0,1/2+\si'}(M)}
\\ &\le C
|z_1-z_2|^{-1+\si-\si'}  (\lambda^{-C} + e^{C \sqrt{\lambda}}) \| f \|_{H^{0,1/2+\sigma}(M)},\\
\|e^{i(z_1-z_2)s(x)}(|\Delta_M s|&+ |z_1+z_2|\,|\nabla s|^2-1)  {u_+}_{\eps_2}\|_{H^{0,1/2+\si'}(M)}
\\&\le C \|e^{i(z_1-z_2)s(x)} \langle x\rangle^{-1} {u_+}_{\eps_2}\|_{H^{0,1/2+\si'}(M)}\\ &\le 
C |z_1-z_2|^{-\si'-\alpha}  (\lambda^{-C} + e^{C \sqrt{\lambda}}) \| f \|_{H^{0,1/2+\sigma}(M)}.
\end{align*}
 Thus for any $0<\si'<\si< 1-\alpha$ we have
 $$
 \|G\|_{H^{0,1/2+\si'}(M)}\le (\eps_1-\eps_2)^{\si-\si'}  C (\lambda^{-C} + e^{C \sqrt{\lambda}}) 
 \| f \|_{H^{0,1/2+\sigma}(M)},
 $$
 By Theorem \ref{main} and the first part of Proposition \ref{qlap}, the
 solution of the problem $(H-\la-i\eps_2) w=G$ obeys 
 \begin{equation}\label{eq:converg}
 \|(\pa_r-\la^{1/2}) w\|_{H^{0,-1/2+\si'}(M\setminus K_0)}+
 \|w\|_{H^{2,-1/2-\si}(M)}\le (\eps_1-\eps_2)^{\si-\si'}  C (\lambda^{-C} + e^{C \sqrt{\lambda}}) 
 \| f \|_{H^{0,1/2+\sigma}(M)}.
 \end{equation}
 Since by Theorem \ref{main},
 \begin{align*}
  \|(e^{i(z_1-z_2)s(x)}-1) {u_+}_{\eps_2}\|_{H^{0,-1/2-\si}(M)}&\le 
C   \|\langle x\rangle^{-\si+\si'} (e^{i(z_1-z_2)s(x)}-1) {u_+}_{\eps_2}\|_{H^{0,-1/2-\si'}(M)}
 \\ &\leq C (\eps_1-\eps_2)^{\si-\si'}    \| {u_+}_{\eps_2}\|_{H^{0,-1/2-si'}(M)}\\ &\le
 (\eps_1-\eps_2)^{\si-\si'} C (\lambda^{-C} + e^{C \sqrt{\lambda}}) 
 \| f \|_{H^{0,1/2+\sigma}(M)}.
 \end{align*}
 we obtain
 \begin{align*}
 \| {u_+}_{\eps_2} - {u_+}_{\eps_1}\|_{H^{0,-1/2-\si}(M)}&\le 
\|w\|_{H^{0,-1/2-\si}(M)}+  \|(e^{i(z_1-z_2)s(x)}-1) {u_+}_{\eps_2}\|_{H^{0,-1/2-\si}(M)}
\\ &\le (\eps_1-\eps_2)^{\si-\si'} C (\lambda^{-C} + e^{C \sqrt{\lambda}}) 
 \| f \|_{H^{0,1/2+\sigma}(M)},
 \end{align*}
 which implies that ${u_+}_\eps$ is a Cauchy sequence in ${H^{2,-1/2-\sigma}(M)}$ 
 converging to the limit ${u_+}\in {H^{2,-1/2-\sigma}(M)}$.  
 Moreover, \eqref{eq:converg} also implies
 $(\pa_r-i\la^{1/2}) {u_+}\in {H^{0,-1/2+\si'}(M\setminus K_0)}$ for any $\si'<\si$.
 
 To prove uniqueness of solutions of the Helmholtz equation 
 $$
 (H-(\la\pm i\eps)) u =f
 $$
 with the property that $u\in H^{2,1/2-\si}(M)$ and 
$(\pa_r\mp i\la^{1/2}) u\in H^{0,-1/2+\si'}(M\setminus K_0)$ for 
some $\si'>0$ we simply repeat the argument above with the functions 
$$
v(x)= e^{\pm i(\sqrt{\la \pm i\eps}-\sqrt\la)s(x)} {u}(x) - {u_{\pm}}_{\eps}(x) 
$$
and conclude that ${u_\pm}_\eps$ converges to $u$.

This concludes the proof of Theorem \ref{main} and Proposition \ref{qlap}, except for the proofs of the estimates in Section \ref{key-est}.  These estimates will be the focus of the next few sections.

\section{Conservation laws}\label{sec:conserv}

All of the results stated in Section \ref{key-est} rely, fundamentally, on integration by parts arguments.  It will be convenient to present these arguments in a unified framework, namely that of exploiting the conservation laws for the Helmholtz equation
\eqref{helmholtz} using the \emph{abc method} of Friedrichs.  While this framework is in practice
too general to use directly, it does provide a convenient way to display the interrelationships
between the more specialized methods we study below.  Thus we shall devote this section to the
general abc method for the Helmholtz equation \eqref{helmholtz}.  As a quick application, we will be able to establish the charge estimate, Lemma \ref{charge-est}.

\subsection{Densities and currents}

Let $u \in H^2(M)$ be a (complex-valued) solution to the Helmholtz equation \eqref{helmholtz} for 
some $\lambda > 0$, so that $F \in L^2(M)$.
We introduce a number of (real) tensor fields on $M$, namely
\begin{itemize}
\item the \emph{charge density} $\q := |u|^2$,
\item the \emph{energy density} 
$\e := |\nabla u|_g^2 := \Re( \overline{\nabla^k u} \nabla_k u )$,
\item the \emph{charge current} $\j^k := \Im( \overline{u}\, \nabla^k u )$,
\item the \emph{charge gradient} $\v^k := \Re( \overline{u}\, \nabla^k u )$, 
\item the \emph{energy-momentum 
tensor} $\Q^{mk} := \Re( \overline{\nabla^m u} \nabla^k u )
- \frac{1}{2} g^{mk} (\e - |z|^2 \q)$.
\end{itemize}

Note that from the assumption $u \in H^2(M)$ that all of these tensor fields 
are absolutely integrable. A direct computation (recalling that the Levi-Civita covariant 
derivative $\nabla$ respects the metric $g$
and is torsion free) yields the \emph{charge gradient identity}
\begin{equation}\label{charge-gradient}
\nabla_k \q = 2 \v_k,
\end{equation}
the \emph{charge conservation law}
\begin{equation}\label{charge-current}
\nabla_k \j^k + \Im (z^2) \q= - \Im( \overline{u} F )
\end{equation}
the identity
\begin{equation}\label{bochner-identity}
\nabla_k \v^k = \frac{1}{2} \Delta_M \q = \e - \Re (z^2) \q - \Re( \overline{u} F )
\end{equation}
and the \emph{conservation of energy-momentum}
\begin{equation}\label{energy-conservation}
\nabla_k \Q^{m k} +\Re \left((\overline z^2-|z|^2) {\nabla^m u}\,\overline u \right)= - 
\Re( \overline{\nabla^m u} F).
\end{equation}
The energy density $\e$ does not directly obey a useful equation of the above type (the 
expression $\nabla_\alpha \q$ contains expressions involving other double derivatives of $u$ than
the Laplacian $\Delta_M u$).  However, this energy density is clearly related to the other tensor fields, for instance
we have the Cauchy-Schwarz inequalities $|\j|_g, |\v|_g \leq \q^{1/2} \e^{1/2}$ and also 
that $\Q = O( \e ) + O( \lambda \q )$.  

The conservation laws \eqref{charge-current}, \eqref{bochner-identity} and \eqref{energy-conservation}
are directly related to the conservation law of the energy-momentum tensor 
$$
\Q^{\a\b} := \Re( \overline{\nabla^\a u} \nabla^\b u )
- \frac{1}{2} {\bf g}^{\a\b} (\overline{\nabla^\mu u}\,\nabla_\mu u)
$$
associated with the wave equation $\square_{\bf g} u=0$ with the metric 
${\bf g}_{\a\b} dx^\a dx^\b=-dt^2 + g_{ij} dx^i dx^j$. Conservation of energy-momentum 
is expressed in the form 
$$
\nabla_m \Q^{km}=0.
$$
The relationship between conservation laws
for the Helmholtz and wave equations becomes apparent under the formal rule 
$$
\frac {d}{dt}=iz
$$ 

\subsection{Proof of Lemma \ref{charge-est}}\label{charge-proof}

As a quick application of the charge conservation law \eqref{charge-current} to the
resolvent equation \eqref{resolvent}, we now prove Lemma \ref{charge-est}.

Let the notation and assumptions be as in that lemma.  Then $u \in L^2(M)$.  From the resolvent equation \eqref{resolvent} (and the boundedness of $V$) we deduce that $\Delta_M u \in L^2(M)$ also. From elliptic regularity we conclude the qualitative fact that $u \in H^2(M)$.  In particular from Cauchy-Schwarz we see that the charge current $\j^\alpha$ is absolutely integrable.  Also if we set $F := f - Vu$
then $F \in L^2(M)$ and so the charge source term $\Im( \overline{u} F )$ is also absolutely integrable.  We may  thus integrate  \eqref{charge-current} (by inserting a large cutoff function $\chi_R$, adapted 
to the ball of radius $R$, and integrating by parts) to obtain the identity
$$ \pm i \eps \int_M \chi_R |u|^2\ dg+ \int_M \chi_{R}
 \Im( \overline{u} F )\ dg = \int_{M} \nabla \chi_R \Im (\nabla u \overline u).$$
Expanding out the definition of $F$ and taking into account that $V$ is real, 
we thus obtain
\begin{equation}\label{charger}
 \pm i \eps \int_M \chi_R |u|^2\ dg = - \int_M \chi_{R} \Im( \overline{u} f) \ dg
+\int_{M} \nabla \chi_R \Im (\nabla u \overline u).
\end{equation}
The first claim now follows after letting $R\to \infty$, applying 
the Cauchy-Schwarz inequality and using the assumption 
on $V$.

Similarly, integrating by parts with the function $\eta_R$, 
where $\eta_R$ is a cut-off function adapted to the shell of radius $R$,
we obtain
$$ \pm i \eps \int_M \eta^2_R |u|^2\ dg = - \int_M \eta^2_{R} \Im( \overline{u} f) \ dg
+2 \int_{M} \eta_R  \nabla \eta_R  \Im (\nabla u \overline u).$$
Therefore,
$$
\eps^2 \int_M \eta^2_R |u|^2\ dg \le C \int_{R\le\langle x\rangle\le 4R} 
(|f|^2 + R^{-2} |\nabla u|_g^2)\ dg. 
$$
The estimate \eqref{eq:eps-charge-weight} now follows 
after multiplying the above   inequality by $R^{2\alpha}$ and summing over all dyadic 
$R\ge R_0$.  This concludes the proof of Lemma \ref{charge-est}.

\subsection{The abc method}

Now we consider how to exploit the other conservation laws \eqref{charge-gradient}, \eqref{bochner-identity},
\eqref{energy-conservation}.  The \emph{abc method} of Friedrichs is a general way to exploit these identities.  
It proceeds by introducing arbitrary tensor fields $a^k$, $b$, $c^k$, and applying Stokes' theorem to 
the (momentum) vector field
\begin{equation}\label{x-form}
 P^k = a^k \q + b \v^k + c_m \Q^{km}
\end{equation}
to obtain (for a compactly supported cutoff function $\chi$)
\begin{equation}\label{stokes}
 \int_M (\div P) \chi\ dg = - \int_M P^k \partial_k \chi\ dg,
\end{equation}
and in particular (if $P$ has sufficient decay)
\begin{equation}\label{stokes-free}
 \int_M \div P \ dg = 0.
\end{equation}
From \eqref{charge-gradient}, \eqref{bochner-identity}, \eqref{energy-conservation} we observe that
the divergence $\div P = \nabla_k P^k$ can be computed explicitly as
\begin{equation}\label{divx-form}
\begin{split}
\div P &=
\nabla_k a^k \q + (2 a^k + \nabla^k b) \v_k
+ (b - \frac{1}{2} \nabla_k c^k) (\e-|z|^2 \q) + (\nabla_k c_m) 
\Re( \overline{\nabla^k u} \nabla^m u)\\
&\quad + b(|z|^2-\la) \q+c_k\Re \left((z^2-|z|^2) \overline{\nabla^k u}\, u \right)- 
b \Re(\overline{u} F) - c_k \Re( \nabla^k u \overline{F}).
\end{split}
\end{equation}
One then hopes to select $a^k$, $b$, $c^k$, $\chi$ so that many of the terms in \eqref{stokes}
or \eqref{stokes-free} carries a useful sign, so that one obtains a non-trivial estimate on $u$.

In practice, the full generality of abc method is difficult to use, as there are too many possibilities
for $a^k$, $b$, $c^k$, $\chi$ which are available, and too many ways in which one could hope to exploit 
positivity.  Instead, one typically specializes to a sub-case of the Friedrichs method which has fewer parameters.
For instance, if one sets $a^\alpha = c^\alpha = 0$ and $b = 1$ one obtains the 
\emph{Lagrangean
 identity}\footnote{The energy-momentum tensor $\Q^{mk}$ is associated with the Lagrangean 
 ${\mathcal L}=\int_M (|\nabla u|_g^2-|z|^2 |u|^2)$.}
\begin{equation}\label{energy-identity}
\int_M \e \chi = \int_M \lambda \q \chi - \int_M \v^k \nabla_k \chi + \int_M \Re( \overline{u} F ) \chi.
\end{equation}
A further integration by parts using \eqref{charge-gradient} yields
\begin{equation}\label{energy-identity-2}
\int_M \e \chi = \int_M \lambda \q \chi + \frac{1}{2} \int_M \q \Delta_M \chi + \int_M \Re( \overline{u} F ) \chi
\end{equation}
(or alternatively one can apply \eqref{stokes-free} with $a^k = -\frac{1}{2} \nabla^k \chi$, 
$b = \chi$, $c^\alpha = 0$).  We exploit this  identity in Section \ref{energy-sec}.

\subsection{Pohozaev-Morawetz type identities}

Another way to reduce the parameters is to introduce the ansatz
$$ c^k := \nabla^k W; \quad b = \frac{1}{2} \Delta_M W$$
for some scalar real-valued function $W$.  Then the formula \eqref{divx-form} simplifies (because
the $(\e-|z|^2 \q)$ term disappears) to yield
\begin{equation}\label{divx-form-2}
\begin{split}
\div P &=
\nabla_k a^k \q + (2 a^k + \frac{1}{4} \nabla^k \Delta_M W) \v_k
+ \Hess_{mk}(W)
\Re( \overline{\nabla^m u} \nabla^k u)\\
&\quad  + \frac 12 \Delta W(|z|^2-\la) \q+\nabla_kW\Re \left((z^2-|z|^2) \overline{\nabla^k u}\, u \right) \\
&\quad - \frac{1}{2} (\Delta_M W) \Re(\overline{u} F) - (\nabla_k W) \Re(\nabla^k u  \overline{F})
\end{split}
\end{equation}
where $\Hess_{mk}(W) :=\nabla^2_{mk}  W$ is the Hessian of $W$.
If one then sets\footnote{To continue our analogy with the wave equation, 
the above choice of $a^k, b, c^k$ is somewhat reminiscent of the construction
of a modified momentum, associated with a conformal Killing vectorfield, for solutions of the 
wave equation.} $a^k := -\frac{1}{4} \nabla^k \Delta_M W$ to cancel the $\v_k$ term,
and introducing the modified inhomogeneous term 
$$
G:= F+\la - |z|^2
$$
one obtains the \emph{Pohozaev-Morawetz identity}\footnote{Particular cases of this identity have been
used by Pohozaev to prove non-existence of solutions of certain nonlinear elliptic equations, \cite{pohozaev}, and by Morawetz in the study of long time behavior of solutions of a wave equation,
\cite{morawetz}.}
\begin{equation}\label{Morawetz-identity}
\begin{split}
&\int_M \Hess_{mk}(W) (\nabla^m u) (\nabla^k u) - \frac{1}{4}
(\Delta_M^2 W) \q + \frac{1}{2} (\Delta_M W) \Re(\overline{u} G) - 
(\nabla^k W) (\Re( \overline{\nabla^k u} G))
\chi\ dg \\
&= \int_M (- \frac{1}{4} (\nabla^k \Delta_M W) \q + \frac{1}{2} (\Delta_M W) \v^k + 
(\nabla_m W) \Q^{mk}) \nabla_k \chi\ dg.
\end{split}
\end{equation}
This identity is particularly\footnote{We should note that this identity is most applicable 
when $\la=|z|^2$, i.e. $\eps=0$, in which case it leads to interesting statements even 
for nonlinear $F=F(u)$.} useful when $W$ is geodesically convex (i.e. the Hessian $\Hess(W)$ is positive
definite) and the quantity $\Delta_M^2 W$ is non-positive, as this implies that the first two terms
on the left-hand side of \eqref{Morawetz-identity} become non-negative.  We will also re-interpret this
identity in terms of the positive commutator method, and give some consequences of this identity,
in Section \ref{pos-sec}.

To account for the the difference between $\la$ and $|z|^2$ we can modify the momentum $P$
as follows
$$
P^k=- \frac{1}{4} (\nabla^k \Delta_M W) \q + \frac{1}{2} (\Delta_M W) \v^k + 
(\nabla_m W) \Q^{mk}+  \Im z\, \v^k - \Re z\, \j^k.
$$
Then 
\begin{align*}
\div P &=
\Hess_{mk}(W)
\Re( \overline{\nabla^m u} \nabla^k u)-\frac 14\Delta_M^2W \q+ \frac 12 \Delta_M W(|z|^2-\la) \q
 \\ &+\Im z (\e-\la \q)+\Re z \Im (z^2) \q+
2 \Im z\, \nabla_kW\Re \left(iz\, \overline{\nabla^k u}\, u \right) \\ &- 
\frac{1}{2} (\Delta_M W) \Re(\overline{u} F) - (\nabla_k W) \Re(\nabla^k u  \overline{F})-
\Im (z u \overline F) 
\end{align*}
After some calculations we obtain
\begin{align*}
\div P &=
\Hess_{mk}(W)
\Re( \overline{\nabla^m u} \nabla^k u)-\frac 14\Delta_M^2 W \q+ \frac 12 \Delta_M W(|z|^2-\la) \q\\
&\quad +\Im z \left(\e-|\nabla^kW\nabla_k u|^2+|\nabla_k W\nabla^k u+iz u|^2\right)\\ &- 
\frac{1}{2} (\Delta W) \Re(\overline{u} F) - (\nabla_k W) \Re(\nabla^k u  \overline{F})-
\Im (z u \overline F), 
\end{align*}
which after integration over $M$ becomes
\begin{equation}\label{eq:radiat}
\begin{split}
\int_M&\left (
\Hess_{mk}(W)
\Re( \overline{\nabla^m u} \nabla^k u)-\frac 14\Delta^2_MW \q+ \frac 12 \Delta_MW(|z|^2-\la) \q
+\Im z \left(\e-|\nabla^kW\nabla_k u|^2\right)\right.\\&\left.+\Im z |\nabla_k W\nabla^k u+iz u|^2- 
\frac{1}{2} (\Delta W) \Re(\overline{u} F) - (\nabla_k W) \Re(\nabla^k u  \overline{F})-
\Im (z u \overline F)\right)\, \chi\\ &= -\int_M
\left(- \frac{1}{4} (\nabla^k \Delta_M W) \q + \frac{1}{2} (\Delta_M W) \v^k + 
(\nabla_m W) \Q^{mk}+  \Im z\, \v^k - \Re z\, \j^k\right) \nabla_k\chi
\end{split}
\end{equation}
This identity is useful if the Hessian of $W$ is positive definite, $\Delta_M^2 W\le 0$, 
$\Delta_M W\ge 0$ and $|\nabla W|\le 1$. For instance in Euclidean space $\R^n$ 
with the standard metric,
the choice of $W=|x|-(1+|x|)^{1-\de}$ satisfies all the 
above requirements and effectively leads to the proof of the limiting absorption principle.
We will use a version of \eqref{eq:radiat} in Section  \ref{pos-sec} and later in the derivation 
of the Sommerfeld radiation condition.

\subsection{Carleman type identities}

Returning to \eqref{divx-form-2},
the choice $a^k := \frac{1}{4} \nabla^k \Delta_M W$ is not the only possible choice for $a^k$.
If we set $W := e^{2w}$ in the above ansatz, and then set
$$ a^k := \frac{1}{2} (\nabla^k w) \Delta_M e^{2w} - \frac{1}{2} (\nabla^k \Delta_M w) e^{2w},$$
then after some computation using \eqref{divx-form-2} we obtain\footnote{Note that no curvature terms appear here, because
the Levi-Civita connection is torsion free, and at no stage do we need to commute the Laplacian $\Delta_M$ with
a derivative.}
\begin{align*}
\div P &= ( (\Delta_M w)^2  + 4 |\nabla w|_g^2 \Delta_M w + 4 \Hess_{mk}(w) \nabla^m w \nabla^k w
+ 4 |\nabla w|_g^4 - \frac{1}{2} \Delta_M^2 w ) e^{2w} \q \\
&\quad + (4 (\nabla_k w) \Delta_M w + 4 \Hess_{mk}(w) \nabla^m w + 8 |\nabla w|_g^2 \nabla_k w)
e^{2w} \v^k \\
&\quad + (2 \Hess_{mk}(w) + 4 \nabla_m w \nabla_k w) e^{2w} \nabla_m u \nabla_k u \\
&\quad - \frac{1}{2} (\Delta e^{2w}) \Re(\overline{u} G) - (\nabla_k e^{2w}) \Re( \overline{\nabla^k u} G)
\end{align*}
which after completion of the square becomes 
\begin{align*}
\div P &= e^{2w} | 2 \nabla_k w \nabla^k u + (2 |\nabla w|_g^2 + \Delta_M w) u |^2 
+ 2 \Hess_{mk}(w) \nabla^m(e^w u) \nabla^k(e^w u)\\ 
&\quad + 2 \Hess_{mk}(w) (\nabla^m w) (\nabla^k w) e^{2w} |u|^2 
- \frac{1}{2} (\Delta^2_M w) e^{2w} |u|^2\\
&\quad - \frac{1}{2} (\Delta e^{2w}) \Re(\overline{u} G) - (\nabla_k e^{2w}) \Re( \overline{\nabla^k u} G).
\end{align*}
This identity may seem like an algebraic miracle, but we can interpret it as another positive commutator
estimate in the more general setting of Carleman inequalities, see Section \ref{carleman-sec}.  If $u$ is
compactly supported, then one can integrate this identity on $M$ (as in \eqref{stokes-free}) to obtain
the \emph{Carleman identity}\footnote{It may seem remarkable at first that the energy $\lambda$ makes
no appearance in this identity.  But note that if one increments $G$ by $\lambda u$ then the resulting changes
in the two terms involving $F$ cancel each other out, thanks to \eqref{charge-gradient} and integration by parts.
In practice, the requirement that $u$ be compactly supported means that we will have to apply a cutoff function to
truncate $u$, and this will cause $\lambda$ to enter the identity in a non-trivial manner.}
\begin{align*}
\int_M | 2 \nabla_k w \nabla^k u + (2 |\nabla w|_g^2 + \Delta_M w) u |^2 e^{2w}\ dg \quad &\\
+ 2 \int_M \Hess_{mk}(w) \nabla^m(e^w u) \nabla^k(e^w u)\ dg \quad & \\
+ 2 \int_M \Hess_{mk}(w) (\nabla^m w) (\nabla^k w) |u|^2 e^{2w}\ dg
&= \int_M \frac{1}{2} (\Delta^2_M w) |u|^2 e^{2w}\ dg \\
+ \int_M \frac{1}{2} (\Delta_M e^{2w}) \Re(\overline{u} G) &+ 
(\nabla_k e^{2w}) \Re( \overline{\nabla^k u} G)\ dg.
\end{align*}
If we write $U := (2 \nabla_k w \nabla^k u + (2 |\nabla w|_g^2 + \Delta_M w) u) e^{w}$
and note that
$$ \frac{1}{2} \Delta_M e^{2w} u + (\nabla_k e^{2w}) \nabla^k u = e^w U$$
we can rewrite the previous expression as
\begin{align*}
\| U \|_{L^2(M)}^2 + 2 \int_M \Hess_{mk}(w) \nabla^m(e^w u) \nabla^k(e^w u)\ dg \quad & \\
+ 2 \int_M \Hess_{mk}(w) (\partial^m w) (\nabla^k w) |u|^2 e^{2w}\ dg&\\
= \int_M \frac{1}{2} (\Delta^2_M w) |u|^2 e^{2w}\ dg &+ \langle e^w F, U \rangle_{L^2(M)}
\end{align*}
and hence by the Cauchy-Schwarz inequality $\langle e^w F, U \rangle_{L^2(M)}
\leq \frac{1}{4} \| e^w F \|_{L^2(M)} + \| U \|_{L^2(M)}$, we conclude
\begin{equation}\label{carleman-identity}
\begin{split}
\int_M \Hess_{mk}(w) \nabla^m(e^w u) \nabla^k(e^w u)\ dg \quad & \\
+ \int_M \Hess_{mk}(w) (\nabla^m w) (\nabla^k w) |u|^2 e^{2w}\ dg
&\leq \int_M \frac{1}{4} (\Delta^2_M w) |u|^2 e^{2w}\ dg + \frac{1}{8} \| e^w F \|_{L^2(M)}^2.
\end{split}
\end{equation}
This identity
is useful when $w$ is geodesically convex (so that both terms on the left-hand side is positive)
and large (so that one can absorb some of the right-hand side terms into the left-hand side).  We shall
exploit it in Section \ref{carleman-sec}.

\section{Elliptic regularity}\label{energy-sec}

In this section we prove the various claims in Lemma \ref{elliptic-regularity}.  One of the key tools here will be the energy identities \eqref{energy-identity}, \eqref{energy-identity-2}.

\subsection{Proof of \eqref{elliptic-0}}

Let the notation and assumptions be as in Lemma \ref{elliptic-regularity}.  We begin with the proof of the elliptic regularity estimate \eqref{elliptic-0}.  By absorbing $z^2 u$ into the $f$ term we may assume that $\lambda = 0$; similarly, by absorbing $Vu$ into the $f$ term we may assume that $V \equiv 0$.  From classical elliptic regularity we know that $u$ is locally
in $H^2$. Our task is thus to show that
$$ \int_M \langle x \rangle^{2m} (|\nabla u|_g^2 + |\nabla^2 u|_g^2)\ dg
\leq C_m \int_M \langle x \rangle^{2m} (|\Delta_M u|^2 + |u|^2)\ dg.$$
Let us first verify this claim for the first derivatives $\nabla u$.  Let $R \gg 1$ be a large number,
and let $\varphi_R$ to the region $\langle x \rangle \leq R$ which equals one for $\langle x \rangle \leq R/2$.
From \eqref{energy-identity-2} we have
$$
\int_M |\nabla u|_g^2 \langle x \rangle^{2m}
\varphi_R = \frac{1}{2} \int_M |u|^2 \Delta_M (\langle x \rangle^{2m} \varphi_R) + 
\int_M \Re( \overline{u} \Delta_M u) \langle x \rangle^{2m} \varphi_R;$$
applying Cauchy-Schwarz (and the very crude estimate $\Delta_M (\langle x \rangle^{2m} \varphi_R) = O_m(\langle x \rangle^{2m})$) and then letting $R \to \infty$ we obtain the estimate for first derivatives.  
Thus it will suffice to show that
$$  \int_M \langle x \rangle^{2m} |\nabla^2 u|_g^2\ dg
\leq C_m \int_M \langle x \rangle^{2m} |\Delta_M u|^2\ dg + C_m \| u \|_{H^{1,m}(M)}^2.$$
The standard Bochner integration by parts argument to give this estimate would require boundedness
of the Riemann curvature tensor.  Let 
us present a slight variant of that argument that only requires boundedness on first derivatives of the metric.  
To illustrate the method let us first work locally, using a compactly supported bump function $\chi$.  We claim the estimate
$$ \| |\nabla^2 u| \chi \|_{L^2(M)} \leq C_m( \| \Delta_M u \|_{H^{0,m}(M)} + \| u \|_{H^{1,m}(M)} ).$$
To see this, we integrate by parts, computing
\begin{align*}
\| |\nabla^2 u| \chi \|_{L^2(M)}^2
&= \langle \nabla_k \nabla^\ell u, \chi^2 \nabla_\ell \nabla^k u \rangle_{L^2(M)}\\
&= - \langle \nabla^\ell u, \chi^2 \nabla_k \nabla_\ell \nabla^k u \rangle_{L^2(M)}
- 2 \langle \nabla^\ell u, \chi (\nabla_k \chi) \nabla_\ell \nabla^k u \rangle_{L^2(M)} \\
&= - \langle \nabla^\ell u, \chi^2 \nabla_\ell \Delta_M u \rangle_{L^2(M)}
- \langle  \nabla^\ell u, \chi^2 [\nabla_k, \nabla_\ell] \nabla^k u \rangle_{L^2(M)}\\
&+ O\left(\| |\nabla \chi|_g |\nabla u|_g \|_{L^2(M)} \| |\nabla^2 u|_g \chi \|_{L^2(M)}\right)  \\
&= O_m(\| \Delta_M u \|_{H^{0,m}(M)}^2) + O( \| u\|_{H^{1,m}(M)} \| \Delta_M u \|_{H^{0,m}(M)} )\\ &
+ O( \| |\nabla^2 u|_g \chi \|_{L^2(M)} \| u \|_{H^{1,m}(M)} )
-
\langle  \nabla^\ell u, \chi^2 [\nabla_k, \nabla_\ell] \nabla^k u \rangle_{L^2(M)}.
\end{align*}
Thus to verify the claim, it will suffice to show that
$$ 
|\langle  \nabla_\ell u, \chi^2 [\nabla_k, \nabla_\ell] \nabla_k u \rangle_{L^2(M)}|
\leq C_m( \| u \|_{H^{1,m}(M)}^2 + \||\nabla^2 u| \chi\|_{L^2(M)} \| u \|_{H^{1,m}(M)} ).$$
If we exploited the boundedness of the Riemann curvature tensor at this point (recalling that $g$ was assumed
to be smooth), then we would be done (and we would not need the second term on the right-hand side).  However, 
one can instead work in local co-ordinates, writing\footnote{Schematically, what we are doing is observing that
any integral expression of the form $\nabla u \nabla^2 g \nabla u$ can be rewritten using integration by parts as $\nabla u \nabla g \nabla^2 u$ plus lower order terms.} 
$$ ([\nabla_k, \nabla_\ell] X)^\gamma = (\partial_k( \Gamma^{s}_{\ell p} X^p )
+ \Gamma^s_{k p} (\nabla_\ell X)^p)
- (\partial_\ell (\Gamma^s_{k p} X^p ) + \Gamma^s_{\ell p} (\nabla_k X)^p )$$
for any vector field $X$, where $\Gamma$ are the Christoffel symbols.  Applying this identity and integrating
by parts as necessary to prevent any derivatives falling on the Christoffel symbols, 
we obtain the above claim, where we only needed the boundedness of the Christoffel symbols $\Gamma$ (which
are essentially one derivative of the metric $g$, in contrast to the Riemann curvature which is two derivatives).

One can perform a similar argument with $\chi$ replaced by $\langle x \rangle^m \varphi_R (1-\chi)$, where $1-\chi$
localizes to the asymptotic region $r \gg R_0$; the point being
that the conditions \eqref{metric-decay} ensures the appropriate boundedness of the Christoffel symbols in this region.  
Indeed, there is some additional decay and vanishing properties of these symbols arising from the normal form
co-ordinate structure, which we shall simply discard; similarly we shall discard any gains of $\langle x \rangle$
when a derivative hits the $\langle x\rangle^{2m}$ factor.  
We omit the standard details.  This concludes the proof of \eqref{elliptic-0}.

\subsection{Proof of \eqref{elliptic-local}}

We now localise \eqref{elliptic-0} to prove \eqref{elliptic-local}.  Let $\chi$ be a smooth cutoff to the region $\{ \langle x \rangle \geq R/2 \}$ which equals $1$ when $\langle x \rangle \geq R$.  Applying \eqref{elliptic-0} to the function $\chi^3 u$ we have
$$ \| \chi^3 u \|_{H^{2,m}(M)} \leq C(m,\chi) ( \| \chi^3 f \|_{H^{0,m}(M)}
+ \| \chi^2 |\nabla u| \|_{H^{0,m}(M)} + (1+\lambda) \| \chi^3 u \|_{H^{0,m}(M)} )$$
which (by the product rule) implies
$$ \| \chi^3 |\nabla^2 u| \|_{H^{0,m}(M)} \leq
C(m,\chi) ( \| \chi^3 f \|_{H^{0,m}(M)}
+ \| \chi^2 |\nabla u| \|_{H^{0,m}(M)} + (1+\lambda) \| \chi^3 u \|_{H^{0,m}(M)} ).$$
By using the interpolating inequality
$$ \| \chi^2 |\nabla u| \|_{H^{0,m}(M)}^2  \leq C(m,\chi) \| \chi^3 |\nabla^2 u| \|_{H^{0,m}(M)} 
\| \chi u \|_{H^{0,m}(M)}$$
(which is easily proven by integration by parts and Cauchy-Schwarz) we conclude that
$$ \| \chi^3 |\nabla^2 u| \|_{H^{0,m}(M)} \leq
C(\chi) ( \| f \|_{H^{0,m}(M)}
+ (1+\lambda) \| \chi u \|_{H^{0,m}(M)} ).$$
The claim \eqref{elliptic-local} follows.

\subsection{Proof of \eqref{energy-spiral}, \eqref{energy-spiral'}, \eqref{eq:la-nab}}

If we apply the identity \eqref{energy-identity-2} with $\chi$ equal to a smooth cutoff
function adapted to the annulus $\frac{1}{2} R \leq \langle x \rangle \leq 3R$ which equals one when
$R \leq \langle x \rangle \leq 2R$, we obtain the estimate
$$
\int_{R \leq \langle x \rangle \leq 2R} |\nabla u|_g^2\ dg
\leq C \int_{R/2 \leq \langle x \rangle \leq 3R} (\lambda + R^{-2}) |u|^2 + |u|^2 |V| + |u| |f|\ dg,$$
and hence by Cauchy-Schwarz (and \eqref{V-short}) we establish \eqref{energy-spiral} and \eqref{energy-spiral'}.
Similarly, applying \eqref{energy-identity-2} with $\chi=\langle x\rangle^{-1-2\si}$ we obtain \eqref{eq:la-nab}.

\subsection{Proof of \eqref{eq:la-eps-nab}, \eqref{eq:la-eps-loc}, \eqref{eq:la-loc}}

Interpolating \eqref{eq:la-nab} $\|u\|_{H^{0,-3/2-\sigma}}$ we have 
$$
\lambda^{\frac 14}  \|u\|_{H^{0,-1-\sigma}} \le C(A)\left (\|\nabla u\|_{H^{0,-1/2-\sigma}}
+ \|u\|_{H^{0,-3/2-\sigma}} + \|f\|_{H^{0,1/2+\sigma}}\right).
$$
Combining \eqref{eq:la-nab} with the charge estimate \eqref{eq:eps-charge} we have that 
$$
\eps\lambda^{\frac 12}  \|u\|^2_{L^2(M)} \le C(A) \|f\|_{H^{0,1/2-\sigma}}
\left (\|\nabla u\|_{H^{0,-1/2-\sigma}}
+  \|u\|_{H^{0,-3/2-\sigma}} + \|f\|_{H^{0,1/2+\sigma}}\right).
$$
At this point we require the condition $\eps\le C \la$ for some constant $C>0$. Then
$$
\eps^{\frac 32}  \|u\|^2_{L^2(M)} \le C(A) \|f\|_{H^{0,1/2+\sigma}}
\left (\|\nabla u\|_{H^{0,-1/2-\sigma}}
+  \|u\|_{H^{0,-3/2-\sigma}} + \|f\|_{H^{0,1/2+\sigma}}\right).
$$
The identity  \eqref{energy-identity-2} (with $\chi=1$) implies that
$$
\|\nabla u \|_{L^2(M)}\le \lambda^{\frac 12}  \|u\|_{L^2(M)} + 
C(A)\left(\la^{-\frac 14} \|f\|_{H^{0,1/2+\sigma}} +  \la^{\frac 14} \|u\|_{H^{0,-1/2-\sigma}}
+ \|u\|_{H^{0,-1-\sigma}}\right).
$$
Combining the above four inequalities and using that $\eps \le C\la$ we conclude \eqref{eq:la-eps-nab}.

By inserting smooth cutoffs localised to the region $\{ x: \langle x \rangle \gtrsim R \}$ to the above argument we also obtain \eqref{eq:la-eps-loc} and \eqref{eq:la-loc}; we omit the standard details.

The proof of Lemma \ref{elliptic-regularity} is now complete.

\section{The positive commutator method}\label{pos-sec}

We continue our study of the Helmholtz equation \eqref{helmholtz}, and recall the well-known \emph{positive commutator method} to analyze this equation.  We let $S$ be an arbitrary pseudo-differential operator, and consider the 
expression $\langle i[-\Delta_M,S] u, u \rangle_{L^2(M)}$, where $i[S,B] := i(SB - BS)$ is the usual 
Lie commutator and $\langle u, v \rangle := \Re \int_M \overline{u} v\ dg$ is
the real inner product.  Then from the self-adjointness of $-\Delta_M - \lambda$ and
\eqref{helmholtz} we have the \emph{commutator identity}\footnote{In applications, $u$ will be in $H^2(M)$, $F$ will
be in $L^2(M)$, and $S$ will be first order, and so there is no difficulty justifying the manipulations below.}
\begin{align*}
\langle i[-\Delta_M,S] u, u \rangle_{L^2(M)} &=
\langle i[-\Delta_M-\lambda,S] u, u \rangle_{L^2(M)}  \\
&= \langle i Su, (-\Delta_M - \lambda) u \rangle_{L^2(M)}
- \langle i (-\Delta_M - \lambda) u, S^* u \rangle_{L^2(M)} \\
&= \langle i Su, F\pm i\eps u\rangle_{L^2(M)} - \langle F\pm i\eps u, (iS)^* u \rangle_{L^2(M)}
\end{align*}
and in particular by Cauchy-Schwarz
\begin{equation}\label{positive-commutator}
\begin{split}
\langle i[-\Delta_M,S] u, u \rangle_{L^2(M)} &\leq \left (\| S u \|_{H^{0,-1/2-\sigma}(M)}+
\| S^* u \|_{H^{0,-1/2-\sigma}(M)}\right)\| F \|_{H^{0,1/2+\sigma}(M)}\\ &
+2 \eps \| S u \|_{L^2(M)} \| u \|_{L^2(M)}.
\end{split}
\end{equation}
This identity is useful when $S$
is chosen so that $i[-\Delta_M,S]$ is a positive operator (at least to top order), which is why the use of this identity is known as the positive commutator method.

There are a number of ways to generate an operator $S$ whose commutator $i[-\Delta_M,S]$ is positive.
One is to make $A$ itself equal to a commutator $S := i[-\Delta_M,W]$, where $W$ is a real-valued scalar weight function,
interpreted as a pointwise multiplication operator $u \mapsto Wu$.  Then $S$ is the self-adjoint first-order
differential operator
$$ S u := - 2i (\nabla^k W) \nabla_k u - i (\Delta_M W) u$$
and $i[-\Delta_M, S]$ can be computed to be the second-order operator\footnote{This can either be seen by
expanding everything out using the Leibnitz rule, and then exploiting the symmetry properties of the Riemann curvature 
tensor, or alternatively noting that both sides of this identity are second-order self-adjoint operators with
real coefficients and agree both at top order and at the constant term, and thus must be identical.
See e.g. \cite{htw:moraw}.}
$$ i[-\Delta_M,S]u = - 4\nabla_k (\Hess^{km}(W) \nabla_m u) - (\Delta_M^2 W) u$$
The positive commutator estimate \eqref{positive-commutator} then becomes
\begin{equation}\label{eq:pos-com}
\begin{split}
& \int_M 4 \Hess^{km}(W) \Re( \overline{\nabla_k u} \nabla_m u)
- (\Delta_M^2 W) |u|^2\ dg \\
&\quad \leq 2 \| 2 (\nabla^k W) \nabla_k u + (\Delta_M W) u \|_{H^{0,-1/2-\sigma}(M)}
\| F\|_{H^{0,1/2+\sigma}(M)}\\ &\quad+
2 \eps\,\| 2 (\nabla^k W) \nabla_k u + (\Delta_M W) u \|_{L^2(M)}  \| u \|_{L^2(M)},
\end{split}
\end{equation}
which should be compared with the $\chi=1$ case of \eqref{Morawetz-identity}.  Indeed \eqref{Morawetz-identity}
can be interpreted as the positive commutator identity applied to the operator $S := \chi i[-\Delta_M,W]$; we omit
the details of this calculation.

\subsection{A cheap proof of limiting absorption in the free case}\label{cheap-sec}

Using this inequality we can easily prove the limiting absorption principle\footnote{This 
``cheap'' proof of the limiting absorption principle applies to the region where $\eps\le C\la$
for some positive constant $C$. Analysis of the remaining region requires the use of 
an additional conservation law \eqref{eq:radiat}. This issue will be addressed in Section
\ref{sec:add-motion} in a more general context.} (or at least the main estimates
\eqref{lap-h0}, \eqref{lap-2}) for the operator $H_0=-\Delta$ in Euclidean space. 
We choose the function 
$$
W(x)=|x|-(1+|x|)^{1-2\si}.
$$
Direct calculation shows that for dimensions $n\ge 3$
$$
|x|\,|\Delta W|+|\nabla W|\le C,\quad -\Delta^2 W\ge c (1+|x|)^{-3-2\si}, \quad
\Hess_{km}(W)\ge (1+|x|)^{-1-2\si} \delta_{km}.
$$
We then obtain from \eqref{eq:pos-com} 
\begin{align*}
 \int_{\R^n} \left(\langle x\rangle^{-1-2\si} |\nabla u|^2 + \langle x\rangle^{-3-2\si} |u|^2\right)
&\leq \left (\| \nabla u \|_{H^{0,-1/2-\sigma}} + \| |x|^{-1} u \|_{H^{0,-1/2-\sigma}}\right)
\| f\|_{H^{0,1/2+\sigma}}\\ &\quad+
2 \eps\,\left(\| \nabla u\|_{L^2}+  \| |x|^{-1}u \|_{L^2}\right)  \| u \|_{L^2},
\end{align*}
Now combining Hardy type inequalities
$$
\| |x|^{-1}u \|_{L^2}\le C \| \nabla u\|_{L^2},\qquad \| |x|^{-1} u \|_{H^{0,-1/2-\sigma}}\le
C\left (\| \nabla u \|_{H^{0,-1/2-\sigma}}+ \|  u \|_{H^{0,-3/2-\sigma}}\right)
$$
with \eqref{eq:la-eps-nab} and \eqref{eq:la-nab} we obtain
$$
\| \nabla u \|_{H^{0,-1/2-\sigma}}+\la^{\frac 12}\|u\|_{H^{0,-1/2-\sigma}} + 
\|  u \|_{H^{0,-3/2-\sigma}}\le C \| f\|_{H^{0,1/2+\sigma}}
$$
with second derivative estimates following from the elliptic regularity estimate \eqref{elliptic-0}.

\begin{remark}
A proof of the Sommerfeld radiation condition and the estimate \eqref{eq:h0} will require the use
of the identity \eqref{eq:radiat} in place of \eqref{Morawetz-identity}. We will return to this point in 
a more general context when we establish Lemma \ref{lem:Out}.
\end{remark}

\subsection{Proof of Lemma \ref{morawetz-lemma}}\label{pm}

Now we can prove the Pohozaev-Morawetz type estimate in Lemma \ref{morawetz-lemma}.  Let the notation and assumptions be as in that Lemma.

For this argument, the function $W$ is chosen
to be 
$$
W=\chi(\langle x\rangle x) (|x|-|x|^{1-2\si}),
$$
where $\chi(r)$ is a smooth cut-off function vanishing for $r\le r_0$ and equal to 1
for $r\ge 2r_0$ for some $r_0\ge R_0$ to be chosen later. 
On the support of $W$
the metric $g$ has the form 
$$
 g=dr^2 + r^2 (h_{ab}(\omega) + r^{-2\sigma_0} e_{ab}(r,\omega)) d\omega^a d\omega^b
$$
The second fundamental form $\Theta_{ab}$ of the hypersurfaces $r=$const for a metric
in this form is equal to the expression
$$
\Theta_{ab}=r h_{ab} + \frac 12 \pa_r\left[ r^{2-2\sigma_0} e_{ab}(r,\omega)\right]=
rh_{ab} + O(r^{1-2\si_0})
$$
and its mean curvature $\Theta$ (i.e. the trace of $\Theta_{ab}$) has the asymptotic
$$
\theta =  r^{-2} (h(\omega) + r^{-2\sigma_0} e(r,\omega))^{ab} \Theta_{ab}=
 \frac {n-1}r + O(r^{-1-2\si_0}).
$$
The Laplacian $\Delta_M$ can be written in this coordinate system as
\begin{equation}\label{deltam}
\Delta_M=\pa_r^2 + \Theta \,\pa_r + \Delta_{\pa M_r},
\end{equation}
where  $\Delta_{\pa M_r}$ is the Laplace-Beltrami operator of the hypersurface $r=$const.
In particular, $\Delta_M$ applied to the function $W(r)$ can be computed as
$$
\Delta_M W = W_{rr}+ \Theta \,W_r= W_{rr}+ \frac{n-1}r \,W_r + O(r^{-1-2\si_0}) W_r.
$$
Furthermore,
$$
\Delta_M^2 W = (\pa_r^2+\frac{n-1}r\pa_r)^2 W + \Delta_M\left
(O(r^{-1-2\si_0}) W_r\right).
$$
Finally the Hessian of $W$,
$$
\Hess_{rr}(W) = W_{rr}, \quad \Hess_{ar}(W)=0,\qquad \Hess_{ab}(W)=\Theta_{ab} W_r.
$$
From this we easily conclude that 
\begin{align*}
r|\Delta_M W|+|\nabla W|_g&\le C,\\
-\Delta_M^2&\ge C \chi r^{-3-2\si} +\Delta_M\left
(O(r^{-1-2\si_0}) W_r\right)+\zeta O(1),\\
\Hess_{mk}(W)&\ge C r^{-1-2\si} + \zeta O(1).
\end{align*}
Here $\zeta$ is a smooth cut-off function with support in the region $r_0\le \langle x\rangle\le 2r_0$
introduced to account for the derivatives falling on $\chi$ in the expression for $W$. Let 
$M_r$ denote the set $\{x\in M:\,\langle x\rangle\ge r\}$. Then
\begin{align*}
 c \int_{M_{2r_0}} 
 &\left(\langle x\rangle^{-1-2\si} |\nabla u|_g^2 + \langle x\rangle^{-3-2\si} |u|^2\right)
\leq  \int_M \Delta_M\left
(O(r^{-1-2\si_0}) W_r\right) |u|^2\\ &+
\int_{M_{r_0}\setminus M_{2r_0} }\left (|\nabla u|_g^2 +|u|^2\right)+
\left (\| \nabla u \|_{H^{0,-1/2-\sigma}(M_{r_0})} + \| u \|_{H^{0,-3/2-\sigma}(M_{r_0})}\right)
\| f\|_{H^{0,1/2+\sigma}(M_{r_0})}\\ &\quad+
 \eps\,\left(\| \nabla u\|_{L^2(M_{r_0})}+  \| |x|^{-1}u \|_{L^2(M_{r_0})}\right)  \| u \|_{L^2(M_{r_0})},
\end{align*}
Arguing as in the proof of the limiting absorption principle in Euclidean space above we can, 
with the help of \eqref{eq:la-eps-loc} and \eqref{eq:la-loc}, reduce the inequality further, 
\begin{align*}
 c \int_{M_{2r_0}} 
 &\left(\langle x\rangle^{-1-2\si} |\nabla u|_g^2 +\langle x\rangle^{-1-2\si} \la |u|^2 + \langle x\rangle^{-3-2\si} |u|^2\right)
\leq  \int_M \Delta_M\left
(O(r^{-1-2\si_0}) W_r\right) |u|^2\\ &+
\int_{M_{{r_0}/2}\setminus M_{2r_0} }\left (|\nabla u|_g^2 +|u|^2\right)+
\| f\|^2_{H^{0,1/2+\sigma}(M_{r_0/2})}.
\end{align*}
It remains to deal with the term $\int_M \Delta_M\left
(O(r^{-1-2\si_0}) W_r\right) |u|^2$. We avoid applying the Laplacian $\Delta_M$ to the 
term $O(r^{-1-2\si_0}) W_r$, as this would require higher differentiability of the metric
$g$ than required in Theorem \ref{main}. Instead we integrate by parts to obtain
\begin{align*}
\int_M \Delta_M\left
(O(r^{-1-2\si_0}) W_r\right) |u|^2&= -2\int_M \left
(O(r^{-1-2\si_0}) W_r\right) \left(\Re(\Delta_M u \overline u)+ |\nabla u|_g^2\right)\\ &=
-2\int_M \left
(O(r^{-1-2\si_0}) W_r\right) \left(\lambda |u|^2+\Re (F\overline u)+ |\nabla u|_g^2\right)
\end{align*}
It then follows that 
\begin{align*}
 c \int_{M_{2r_0}} 
 &\left(\langle x\rangle^{-1-2\si} |\nabla u|_g^2 +\langle x\rangle^{-1-2\si} \la |u|^2 + \langle x\rangle^{-3-2\si} |u|^2\right)\\
\leq &
\int_{M_{{r_0}/2}\setminus M_{2r_0} }\left (|\nabla u|_g^2 +|u|^2\right)+
\| f\|^2_{H^{0,1/2+\sigma}(M_{r_0/2})}.
\end{align*}
provided that $\si<\si_0$ and $r_0$ is sufficiently large. The desired conclusion now 
is a consequence of  \eqref{energy-spiral'}.  This concludes the proof of Lemma \ref{morawetz-lemma}.

\begin{remark}
From the Hessian bound
$$
\Hess_{ab}(W)=\Theta_{ab} W_r= (r h_{ab}+O(r^{1-2\si_0}))W_r 
$$
we observe that a more precise estimate is available on the angular part 
$|\nabla_\omega u|_g^2=r^2(|\nabla u|_g^2- |\pa_r u|^2)$ of the the gradient of $u$:
\begin{align*}
 c \int_{M_{2r_0}} \langle x\rangle^{-3} |\nabla_\omega u|_g^2 
\leq
\int_{M_{{r_0}/2}\setminus M_{2r_0} }\left (|\nabla u|_g^2 +|u|^2\right)+
\| f\|^2_{H^{0,1/2+\sigma}(M_{r_0/2})}.
\end{align*}
We will revisit this analog of the angular Morawetz estimate for solutions of the time-dependent
wave and Schr\"odinger equations in Lemma \ref{lem:Out}, where we will prove an even stronger
result.
\end{remark}

\begin{remark}
As Proposition \ref{pseudo} shows, it is not possible to remove the error term from Lemma \ref{morawetz-lemma} in general.  However, under the non-trapping assumption it is possible. For examples of such results
see \cite{Vainberg}, \cite{lp}, \cite{Vodev}, \cite{b}, \cite{RT}.
\end{remark}

\section{High energy limiting absorption for non-trapping metrics}\label{sec:high-energy}

We are now ready to prove Theorem \ref{nontrap}.  This case resembles
the local-in-time theory of Craig-Kappeler-Strauss \cite{cks} and Doi \cite{doi}, and indeed our main tool here will be the positive commutator method applied to a certain pseudo-differential operator, exploiting the non-trapping hypothesis to ensure that the symbol of the pseudo-differential operator increases along geodesic flow.
As we shall now be working in the high frequency setting, we will not need to take as much care with lower order terms as in previous sections.  
It will be convenient
to use the \emph{scattering pseudo-differential calculus}, which is an extension of the standard pseudo-differential calculus which keeps track of the decay of the symbol at infinity.  We briefly summarize
the relevant features of this calculus here, referring the reader to \cite{cks} for more complete details. (This material will not be used outside of this section.)

For any $m,l \in \R$, we define a \emph{symbol $s: T^* M \to \C$ of order $(m,l)$} to be any
smooth function obeying the bounds
$$ |\nabla_x^\alpha \nabla_\xi^\beta s(x,\xi)| \leq 
C_{\alpha,\beta} \langle \xi \rangle^{m-|\beta|} \langle x \rangle^{-l-|\alpha|};$$
the function $s(x,\xi) = \langle x \rangle^{-l} \langle \xi \rangle^m$ is a typical example of such a symbol.
Note that we assume that each derivative in $x$ gains a power of $\langle x \rangle$,
in contrast to the standard symbol calculus in which no such gain is assumed.  
We let $S^{m,l}(\Mbar)$ denote the space of such symbols.  Given any such symbol $s \in S^{m,l}(\Mbar)$, 
we can define an associated pseudo-differential operator $S = \Op(s)$ by the usual 
Kohn-Nirenberg quantization formula
$$
\Op(a) u(x) := (2\pi)^{-n} \int e^{i\langle x-y, \xi \rangle} s(x,\xi) u(y)\, dy \, d\xi.
$$
We sometimes denote $s$ by $\sigma(S)$ and refer to it as the \emph{symbol} of $S$.  Heuristically speaking,
we have $S = \sigma(s)(x, \frac{1}{i} \nabla_x)$.  We refer to the class of pseudo-differential operators
of order $(m,l)$ as $\Psisc^{m,l}$.
Also, if $h: \R \to \C$ is any spectral symbol of order $m/2$, the corresponding spectral multiplier $h(H)$ is
a pseudo-differential operator of order $(m, 0)$.  In particular, 
$(1+H)^{m/2}$ has order $(m,0)$, and the Littlewood-Paley type 
operators $P_{lo}$, $P_{med}$, $P_{hi}$ have order $(0,0)$.  We caution however that the Schr\"odinger 
propagators $e^{-itH}$ are not pseudo-differential operators.

The composition of an operator $S = \Op(s)$ of order $(m,l)$ with an operator of $B = \Op(b)$ order $(m',l')$ is an
operator $SB$ of order $(m+m', l+l')$, whose symbol $\sigma(SB)$ is equal to $\sigma(S) \sigma(B)$ plus an error
of order $(m+m'-1,l+l'+1)$; note the additional gain of 1 in the decay index $l$, which is not present in the
classical calculus.  
Similarly, the commutator $i[S,B]$ will be an operator of order $(m+m'-1,l+l'-1)$ with
symbol $\sigma(i[S,B])$ equal to the Poisson bracket
$$ \{ \sigma(S), \sigma(B) \} := \nabla_x \sigma(S) \cdot \nabla_\xi \sigma(B) - \nabla_\xi \sigma(S) \cdot \nabla_x \sigma(B),$$
plus an error of order $(m+m'-2, l+l'+2)$. We shall write the above facts schematically as
\begin{align*}
&\sigma(AB) = \sigma(A) \sigma(B) + S^{m+m'-1,l+l'+1};\\
&\sigma( i[A,B] ) = \{ \sigma(A), \sigma(B) \} + S^{m+m'-2,l+l'+2}
\end{align*}
or equivalently as
\begin{align*} 
&\Op(s) \Op(b) = \Op(sb) +  \Psisc^{m+m'-1,l+l'+1},\\
&i[\Op(s), \Op(b)] = \Op( \{s,b\} ) + \Psisc^{m+m'-2,l+l'+2}.
\end{align*}
In particular, since $H$ has order $(2,0)$ and has principal symbol $\frac{1}{2} |\xi|_{g(x)}^2$
plus lower order terms of order $(1,1)$ and $(0,2)$, we see that if $a \in S^{m,l}$, then we have
$$ i[H, \Op(s)] = \Op( X s ) + \Psisc^{m,l+2},$$
where $X a$ denotes the derivative of $s$ along geodesic flow in the cotangent bundle $T^* M$.  

Associated with the scattering calculus are the weighted Sobolev spaces $H^{m,\ell}(M)$ defined (for instance)
by
$$ \| u \|_{H^{m,\ell}(M)} := \| \langle x \rangle^\ell (1 + H)^{m/2} u \|_{L^2(M)}$$
(many other equivalent expressions for this norm exist, of course); when $\ell=0$ this corresponds to the usual Sobolev space $H^m(M)$ and for $m=0,1,2$ the space $H^{m,\ell}(M)$
coincides with the weighted Sobolev spaces previously defined in \eqref{weight-def}.  
It is easy to verify that a
scattering pseudo-differential operator of order $(m,l)$ maps $H^{m',l'}(M)$ to $H^{m'-m,l'+l}(M)$ for any $m', l'$.

In \cite{cks} (see also \cite{doi}) it was shown that the non-trapping hypothesis on $M$ allows one to
construct a real-valued symbol $s \in S^{1,0}$ (depending on $\varphi$) which was non-decreasing along
geodesic flow, $Xs \geq 0$, and in fact obeyed the more quantitative estimate
$$ Xs(x,\xi) = \varphi(x) |\xi|_g^2 + |b|^2$$
for some symbol $b$ of order $(1,1/2+\sigma)$.  The function $\phi(x)$ belongs to the class 
$S^{0,1+2\si}$ and can be chosen to obey a lower bound
$$
\phi(x)\ge \langle x \rangle^{-1-2\si}.
$$
In Euclidean space, an example of such a symbol $a$ is
$C_\varphi \frac{x}{\langle x \rangle-\alpha \langle x \rangle^{1-2\si}} \cdot \xi$ for some sufficiently large constant $C_\varphi$ and a small constant $\alpha$.  Quantizing
this, we obtain
$$ i[H,S] = \nabla^j \varphi(x) \nabla_j + B^* B + \Psisc^{1,2+2\sigma},$$
where $S := \Op(s)$ is a symbol of order $(1,0)$, and $B := \Op(b)$ is a symbol of order $(1,1/2-\sigma)$.
Applying \eqref{positive-commutator}, discarding the positive term $B^* B$ and using
that $\Psisc^{1,2+2\sigma}$ maps $H^{1,-1/2-\sigma}$ to $H^{0,-3/2-\sigma}$ we obtain
\begin{align*}
 \int_M \varphi |\nabla u|_g^2\ dg&\leq \left (\| S u \|_{H^{0,-1/2-\sigma}(M)}+
\| S^* u \|_{H^{0,-1/2-\sigma}(M)}\right)\| F \|_{H^{0,1/2+\sigma}(M)}\\ &
+\eps \| S u \|_{L^2(M)} \| u \|_{L^2(M)} +\| \nabla u \|_{H^{0,-1/2-\sigma}(M)}
\| u \|_{H^{0,-3/2-\sigma}(M)}
\end{align*}
Since $S$ is of order $(1,0)$, it maps $H^{1,-1/2-\sigma}\to H^{0,-1/2-\sigma}$ 
and $H^1(M)\to L^2(M)$. Recalling that 
$F=f+Vu$ with $V$ satisfying the bound 
$|V(x)|\le \Gamma \langle x \rangle^{-1-2\si_0}$ and $\phi(x)\ge \langle x \rangle^{-1-2\si}$,
we have 
$$
\| \nabla u \|^2_{H^{0,-1/2-\sigma}(M)}\le C(A)\left 
(\| f \|^2_{H^{0,1/2+\sigma}(M)}+\| u \|^2_{H^{0,-1/2-\sigma}(M)}+\eps \| u \|_{H^1(M)} \| u \|_{L^2(M)}\right).
$$
The term $\eps \| \nabla u \|_{L^2(M)} \| u \|_{L^2(M)}$ can be 
controlled with the help of \eqref{eq:la-eps-nab},
while the term $\eps \| u \|^2_{L^2(M)}$ can be bounded from the charge estimate
\eqref{eq:eps-charge}. As a consequence,
$$
\| \nabla u \|_{H^{0,-1/2-\sigma}(M)}\le C(A)\left 
(\| f \|_{H^{0,1/2+\sigma}(M)}+\| u \|_{H^{0,-1/2-\sigma}(M)}\right).
$$
To prove the desired result it would be sufficient to show that 
$$
\| u \|_{H^{0,-1/2-\sigma}(M)}\le c \left (\| \nabla u \|_{H^{0,-1/2-\sigma}(M)}+
\| u \|_{H^{0,-3/2-\sigma}(M)}\right) + \| f \|_{H^{0,1/2+\sigma}(M)}
$$
with a sufficiently small constant $c$. This follows immediately from \eqref{eq:la-nab}
with $c=C(\Gamma) \la^{-1/2}$ provided that $\la$ is sufficiently large. We now have that
$$
\| \nabla u \|_{H^{0,-1/2-\sigma}(M)}+\la^{\frac 12} \| u \|_{H^{0,-1/2-\sigma}(M)}
\le C(\Gamma) \| f \|_{H^{0,1/2+\sigma}(M)}.
$$
The remaining estimate for the second derivative follows from the elliptic regularity 
estimate \eqref{elliptic-0}.

The proof of Theorem \ref{nontrap} under an alternative assumption that the potential 
$V$ satisfies $|V(x)|+\langle x\rangle |\nabla V(x)|_g\le A \langle x\rangle^{-2\si_0}$ 
follows  simply requires running the positive commutator argument with $H=-\Delta_M+V$
in place of $-\Delta_M$. We omit the details.

\section{Carleman inequalities}\label{carleman-sec}

In this section we prove the unique continuation estimate in Proposition \ref{ucp}.  Let the notation and assumptions be as in that proposition.

The standard way to prove such unique continuation estimates is via \emph{Carleman inequalities}, which are inequalities of the form
\begin{equation}\label{carleman-1}
 \int_K (t^3 |u|^2 + t |\nabla u|^2) e^{2tw}\ dg \leq C(K,w) \int_K |f|^2 e^{2tw}\ dg
\end{equation}
for some suitable smooth weight function $w: K \to \R$ and some large real parameter $t$ (typically $t \geq C(K,w) (1 + \sqrt{\lambda})$).  However, as observed in Burq \cite{burq}, such inequalities are not available when $K$ has a non-trivial topology, due to the fact that $w$ can be forced to contain stationary points.  However, it is still possible
to obtain a \emph{two-weight} Carleman inequality of the form
\begin{equation}\label{carleman-2}
 \int_K (t^3 |u|^2 + t |\nabla u|^2) (e^{2tw_1} + e^{2tw_2})
\ dg \leq C(K,w_1,w_2) \int_K (|(-\Delta_M - \lambda) u|^2) (e^{2tw_1} + e^{2tw_2})\ dg,
\end{equation}
the point being that we can choose $w_1$ and $w_2$ to have stationary points at different locations (and furthermore
that $w_2 > w_1$ at the stationary points of $w_1$, and $w_1 > w_2$ at the stationary points of $w_2$).  Such an
inequality will allow us to obtain the above proposition.

We now turn to the details.  We begin by reviewing the standard approach to Carleman inequalities
in the literature; for a more detailed survey see \cite{tataru-carleman}.
Let $u$ be a solution to the Helmholtz equation
\eqref{helmholtz} which is supported on the compact set $K$.  We rewrite the equation in the 
form 
$$
(-\Delta_M-\la) u=F:=f\pm i\eps u-V u
$$
If we multiply this equation by a weight
$e^w$ for some smooth real-valued $w$, we obtain
$$ (e^w (-\Delta_M-\lambda) e^{-w}) e^w u = e^w F$$
We now split $e^w \Delta_M e^{-w}$ into real and imaginary parts
$$ e^w (-\Delta_M-\lambda) e^{-w} := \Re(e^w (-\Delta_M-\lambda) e^{-w}) + i \Im(e^w (-\Delta_M-\lambda) e^{-w}),$$
where $\Re(A) := \frac{A+A^*}{2}$ and $\Im(A) := \frac{A-A^*}{2i}$.  Using the
general identity 
$$\| A v \|_{L^2(M)}^2 = \| \Re(A) v \|_{L^2(M)}^2 + \| \Im(A) v \|_{L^2(M)} + 
\langle i[\Re(A), \Im(A)] v, v \rangle_{L^2(M)}$$
we thus obtain
\begin{equation}\label{ci-2}
\begin{split}
 \| \Re(e^w (-\Delta_M-\lambda) e^{-w}) e^w u\|_{L^2(M)}^2 
+ \| \Im(e^w (-\Delta_M-\lambda) e^{-w}) e^w u\|_{L^2(M)}^2 &\\
+ \langle i[\Re(e^w (-\Delta_M-\lambda) e^{-w}), \Im(e^w (-\Delta_M-\lambda) e^{-w})] e^w u, e^w u \rangle_{L^2(M)}
&\\
= \|e^w f \|_{L^2(M)}^2&;
\end{split}
\end{equation}
this identity is closely related to \eqref{carleman-identity}.  In particular we have the inequality
$$
\langle i[\Re(e^w (-\Delta_M-\lambda) e^{-w}), \Im(e^w (-\Delta_M-\lambda) e^{-w})] e^w u, e^w u \rangle_{L^2(M)} \leq
\|e^w f \|_{L^2(M)}^2.$$
Since $w$ was arbitrary, we may replace $w$ by $tw$ for some arbitrary real parameter $t$ (which we shall
think of as being large and positive) to obtain
$$
\langle i[\Re(e^{tw} (-\Delta_M-\lambda) e^{-tw}), \Im(e^{tw} (-\Delta_M-\lambda) e^{-tw})] e^{tw} u, e^{tw} u \rangle_{L^2(M)} \leq
\|e^{tw} f \|_{L^2(M)}^2.$$
The strategy is then to select the weight $w$ so that the commutator 
$i[\Re(e^{tw} (-\Delta_M-\lambda) e^{-tw}), \Im(e^{tw} (-\Delta_M-\lambda) e^{-tw})]$ is positive definite\footnote{Actually,
one only needs this commutator to be positive definite on the region of phase space 
where the operators $\Re(e^{tw} (-\Delta_M-\lambda) e^{-tw})$ and $\Im(e^{tw} (-\Delta_M-\lambda) e^{-tw})$ vanish, 
thanks to the other two terms in the identity \eqref{ci-2}.}, at least to top order
in $t$, to obtain a useful inequality such as \eqref{carleman-1}.  

It is certainly possible to adapt the above scheme to prove a two-weight inequality such as \eqref{carleman-2},
and this was essentially what was done in \cite{burq}.  Let us however pursue a slightly different (though closely
related) approach, based on the inequality \eqref{carleman-identity} as a substitute for \eqref{ci-2}, to emphasize
the fact that certain applications of the Carleman method can be viewed as special cases of the general abc method\footnote{One advantage of doing so is that the abc method does not require more than one degree of regularity on the metric; in particular, the Riemann curvature tensor does not appear here, whereas this tensor will appear
when computing the above commutator.}.  Substituting $w$ by $tw$ as before, the identity \eqref{carleman-identity} becomes
\begin{equation}\label{carleman-identity-t}
\begin{split}
t \int_K \Hess_{\alpha \beta}(w) \partial^\alpha(e^{tw} u) \partial^\beta(e^{tw} u)\ dg \quad & \\
+ t^3 \int_K \Hess_{\alpha \beta}(w) (\partial^\alpha w) (\partial^\beta w) |u|^2 e^{2tw}\ dg
&= t \int_K \frac{1}{4} (\Delta^2_M w) |u|^2 e^{2tw}\ dg + \frac{1}{8} \| e^{tw} F \|_{L^2(M)}^2.
\end{split}
\end{equation}

It is easiest to apply this inequality when $w$ is strictly geodesically convex (i.e. $\Hess(w) > 0$)
and non-stationary (i.e. $\nabla w \neq 0$) on the support $K$ of $u$, as the left-hand side is then positive,
and the first term on the right-hand side can be absorbed into the second term on the left-hand side
if $t$ is large enough.  One would then obtain a \emph{Carleman inequality} of the form
$$ t \| \nabla(e^{tw} u) \|_{L^2(M)}^2 + t^3 \| e^{tw} u \|_{L^2(M)}^2 \leq C(w,K) \| e^{tw} F \|_{L^2(M)}^2$$
for sufficiently large $t$, where the key point is that the constant $C(w,K)$ is independent of $t$.
For instance, in the Euclidean setting $M = \R^n$ one could take $w = \langle x \rangle = (1+|x|^2)^{1/2}$
to obtain an estimate of this form.

For more general manifolds $M$, however, it is not possible to find a weight function $w$ which is both
geodesically convex and non-stationary, for two reasons.  Firstly, if $K$ contains a closed geodesic\footnote{It is natural to conjecture that this implication can be reversed, i.e. that if $K$ was geodesically non-trapping and topologically trivial then there exists a weight function $w$ which is geodesically convex and non-stationary.  This would allow one to replace the pseudo-differential calculus considerations in the previous section by a more elementary integration by parts argument.  Unfortunately, there exist manifolds which are geodesically non-trapping, but for which no such weight function $w$ exists; this can be seen by a minor modification of the example in \cite[Section 6]{ht}.  The point is that geodesic convexity is equivalent to the assertion that the function $\xi_\alpha \nabla^\alpha w$
is increasing with respect to geodesic flow on the cotangent bundle $T^*K = \{ (x, \xi_\alpha): \xi_\alpha \in T^*_x 
M \}$, but this function $\xi_\alpha \nabla^\alpha w$ is necessarily linear in $\xi$, which places additional constraints on solvability beyond
the mere non-existence of trapped geodesics.  If one uses pseudo-differential operators instead then one does
not have this geometrically unnatural linearity constraint.}, then it is clearly impossible for $w$ to be strictly geodesically convex on this geodesic.  Secondly, if $K$ contains a non-trivial topology (e.g. $K$ contains a handle), then from Morse theory we see that $w$ must contain at least one stationary point.  
As mentioned earlier, the latter difficulty will be overcome by considering a pair of weight functions $w_1,w_2$
rather than a single weight $w$.  To overcome the former difficulty, we can replace the weight functions $w_1, w_2$
by $h(w_1)$, $h(w_2)$ for ``sufficiently convex'' functions $h: \R \to \R$, however this only gives us convexity of
the Hessian in the gradient directions $\nabla w_1$, $\nabla w_2$ respectively (note though
that this is the most important direction that we need convexity in, as one can already see from the second term on
the left-hand side of \eqref{carleman-identity-t}).  To deal with the possible failure
of convexity in the non-gradient directions, we will use energy identity \eqref{energy-identity-2}.  More precisely, we have

\begin{lemma}[Preliminary Carleman inequality]\label{carleman-prelim}  Let $w: K \to \R$ be a smooth function and let $B \subset K$ be a open subset of $K$.  Suppose
that $w$ is non-stationary in the set $K \backslash B$, so that the unit vector field $n^\alpha := (\nabla^\alpha w) / |\nabla^\alpha w|_g$ is well defined.  Suppose that $w$ is convex in the direction of $n^\alpha$, and more precisely
$$
\Hess_{nn}(w) := \Hess_{\alpha \beta}(w) n^\alpha n^\beta > 0 \hbox{ on } K \backslash B
$$
for some $c > 0$.  Suppose also that for any unit vector field $X^\alpha$ we have the estimate
$$ \Hess_{\alpha \beta}(w) X^\alpha X^\beta > - \frac{1}{10} \Hess_{nn}(w) |X|_g^2
\hbox{ on } K \backslash B.$$
Then for any solution $u \in H^2(K)$ to the Helmholtz equation \eqref{helmholtz} which is supported in $K$,
and any $t \geq C(K,B,w) (1 + \sqrt{\lambda})$ we have
$$
 \int_K (t^3 |u|^2 + t |\nabla u|^2) e^{2tw}\ dg \leq C(K,B,w) 
\left(\int_K |F|^2 e^{2tw}\ dg + \int_B (t^3 |u|^2 + t |\nabla u|^2) e^{2tw}\ dg\right).$$
\end{lemma}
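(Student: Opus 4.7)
The plan is to combine the Carleman identity \eqref{carleman-identity-t} (applied with weight $tw$) with the energy identity \eqref{energy-identity-2} (applied with cutoff $\chi = e^{2tw}$). The former yields the main positive term of size $t^3 |u|^2 e^{2tw}$ on $K\setminus B$, while the latter converts this into the matching $t|\nabla u|_g^2 e^{2tw}$ bound. The small parameter $\tfrac{1}{10}$ appearing in the near-positivity hypothesis is precisely the slack needed to absorb the potentially negative gradient term in \eqref{carleman-identity-t}.

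\textbf{Step 1 (main term).} Inserting the weight $tw$ into \eqref{carleman-identity-t}, the second left-hand side term equals $t^3 \int_K |\nabla w|_g^2\, \Hess_{nn}(w)\, |u|^2 e^{2tw}\, dg$. On $K\setminus B$ the hypotheses give $|\nabla w|_g \geq c$ and $\Hess_{nn}(w) \geq c$, yielding a positive contribution of at least $c_0 t^3 \int_{K\setminus B} |u|^2 e^{2tw}\, dg$; the missing $B$-portion is controlled by $C t^3 \int_B |u|^2 e^{2tw}\, dg$ and moved to the right-hand side. The $\tfrac{t}{4}\int (\Delta_M^2 w) |u|^2 e^{2tw}$ term on the right of \eqref{carleman-identity-t} is $O(t)\int |u|^2 e^{2tw}$, negligible against the main $t^3$ term once $t \geq C(1+\sqrt\lambda)$.

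\textbf{Step 2 (indefinite gradient term).} The first term on the left of \eqref{carleman-identity-t}, namely $t \int_K \Hess_{\alpha\beta}(w)\, \partial^\alpha v\, \partial^\beta v\, dg$ with $v := e^{tw} u$, need not be positive. Writing $\nabla v = e^{tw}(t u \nabla w + \nabla u)$, expanding, and using $\Hess(X,X) \geq -\tfrac{1}{10}\Hess_{nn}(w)|X|_g^2$ on $K\setminus B$ (together with Cauchy--Schwarz on the $u$--$\nabla u$ cross-term), one obtains a lower bound on $K\setminus B$ of the form
$$ -\tfrac{1}{5} t^3 \int_{K\setminus B}\! \Hess_{nn}(w) |\nabla w|_g^2 |u|^2 e^{2tw}\, dg \;-\; C_1 t\! \int_{K\setminus B}\! |\nabla u|_g^2 e^{2tw}\, dg. $$
The first piece subtracts only one fifth of the Step 1 main term; the second, proportional to $t \int |\nabla u|_g^2 e^{2tw}$, is handled by the energy identity.

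\textbf{Step 3 (closing via the energy identity).} Applying \eqref{energy-identity-2} with $\chi = e^{2tw}$, and noting that $\Delta_M e^{2tw} = (4t^2|\nabla w|_g^2 + 2t\Delta_M w) e^{2tw} = O(t^2) e^{2tw}$, we obtain
$$ \int_K |\nabla u|_g^2 e^{2tw}\, dg \;\leq\; C(\lambda + t^2) \int_K |u|^2 e^{2tw}\, dg \;+\; C \int_K |F|^2 e^{2tw}\, dg. $$
Under $t \geq C(K,B,w)(1+\sqrt\lambda)$ the factor $\lambda + t^2$ is $\leq 2t^2$, so the Step 2 residual is controlled by $C_2 t^3 \int_K |u|^2 e^{2tw} + Ct \int_K |F|^2 e^{2tw}$. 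Splitting over $K\setminus B$ and $B$, the $B$-piece joins the error while the $K\setminus B$-piece absorbs into the Step 1/Step 2 surplus $\tfrac{4c_0}{5} t^3 \int_{K\setminus B} |u|^2 e^{2tw}$, yielding $t^3 \int_{K\setminus B} |u|^2 e^{2tw} \lesssim \int_K |F|^2 e^{2tw} + t^3 \int_B |u|^2 e^{2tw}$. Feeding this back into the energy identity once more produces the matching $t \int_{K\setminus B} |\nabla u|_g^2 e^{2tw}$ bound, and the desired inequality follows after trivially adding $t^3 \int_B |u|^2 + t \int_B |\nabla u|_g^2$ back on the left.

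The \textbf{main obstacle} is the constant bookkeeping in Steps 2--3: one must verify that $C_1 C_2 < \tfrac{4}{5} c_0$, i.e.\ that the combined coefficient of $t^3 \int_{K\setminus B} |u|^2 e^{2tw}$ emerging from the gradient term (via the energy identity) is strictly smaller than the positive Carleman contribution. This is precisely what the margin $\tfrac{1}{10}$ in the hypothesis provides, and the freedom to take $C(K,B,w)$ (and hence $t$) large enough is what eliminates the remaining $O(\lambda + t)$ terms.
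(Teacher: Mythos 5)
Your plan follows the paper up through Step 2, but Step 3 has a genuine gap. Tracking the constants as you set them up: after Step 2 the residual negative term is
\[
\tfrac{1}{5}\, t \int_{K\setminus B} \Hess_{nn}(w)\, |\nabla u|_g^2\, e^{2tw}\, dg,
\]
so in your notation $C_1 = \tfrac{1}{5}\sup_{K\setminus B}\Hess_{nn}(w)$. Applying the energy identity \eqref{energy-identity-2} with the \emph{unweighted} cutoff $\chi = e^{2tw}$ gives $\Delta_M e^{2tw} = (4t^2|\nabla w|_g^2 + 2t\Delta_M w)e^{2tw}$, whose leading piece forces $C_2 \approx 2\sup_{K}|\nabla w|_g^2$. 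Meanwhile the Step~1 surplus has coefficient $c_0 = \inf_{K\setminus B}\bigl(\Hess_{nn}(w)\,|\nabla w|_g^2\bigr)$. Your required inequality $C_1 C_2 < \tfrac{4}{5}c_0$ thus reads
\[
\sup_{K\setminus B}\Hess_{nn}(w)\, \cdot\, \sup_K|\nabla w|_g^2 \;<\; 2\,\inf_{K\setminus B}\bigl(\Hess_{nn}(w)\,|\nabla w|_g^2\bigr),
\]
which fails whenever $\Hess_{nn}(w)$ or $|\nabla w|_g$ varies by a factor of more than about $\sqrt{2}$ on $K\setminus B$ --- and nothing in the hypotheses forbids that. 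Taking $t$ large does not help here: the offending terms all scale as $t^3 \int |u|^2 e^{2tw}$, so the competition is purely between their coefficients.

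The fix, which is what the paper does, is to run the energy identity with the \emph{weighted} cutoff $\chi = \Hess_{nn}(w)\varphi(w)\, e^{2tw}$, where $\varphi$ is a cutoff equal to $1$ on $K\setminus B$ and vanishing near the stationary points of $w$. Then the leading piece of $\tfrac{1}{2}\Delta_M\chi$ is $2t^2\, \Hess_{nn}(w)\varphi(w)\, |\nabla w|_g^2\, e^{2tw}$ --- carrying the very same weight $\Hess_{nn}(w)|\nabla w|_g^2$ as the Step~1 surplus, \emph{pointwise}. The comparison then reduces to the numerical inequality $\tfrac{2}{5} < \tfrac{4}{5}$, independent of how $\Hess_{nn}(w)$ oscillates, with the remaining derivatives of $\Hess_{nn}(w)\varphi(w)$ producing only $O(t^2)\int_K|u|^2 e^{2tw}$ errors that are absorbed once $t \geq C(K,B,w)(1+\sqrt\lambda)$. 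In short: the $\tfrac{1}{10}$ margin does deliver the $\tfrac{2}{5}$ vs. $\tfrac{4}{5}$ gap, but only after the energy identity is weighted so that the $\Hess_{nn}(w)$ factors cancel pointwise rather than via sup/inf bounds.
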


\begin{proof}  Suppose that we are in the region $K \backslash B$.  Then by hypothesis, we have
$$\Hess_{\alpha \beta}(w) (\partial^\alpha w) (\partial^\beta w) |u|^2 e^{2tw}
= \Hess_{nn}(w) |\nabla w|_g^2 |u|^2 e^{2tw} \geq 0$$
and
\begin{align*}
 \Hess_{\alpha \beta}(w) \partial^\alpha(e^{tw} u) \partial^\beta(e^{tw} u)
&\geq - \frac{1}{10} \Hess_{nn}(w) |\nabla(e^{tw} u)|_g^2\\
&\geq - \frac{1}{5} \Hess_{nn}(w) (t^2 |\nabla w|_g^2 e^{2tw} |u|^2 + |\nabla u|_g^2 e^{2tw})
\end{align*}
and hence
\begin{align*}
 t \Hess_{\alpha \beta}(w) \partial^\alpha(e^{tw} u) \partial^\beta(e^{tw} u)
&+ t^3 \Hess_{\alpha \beta}(w) (\partial^\alpha w) (\partial^\beta w) |u|^2 e^{2tw} \\
&
\geq \Hess_{nn}(w) (\frac{4}{5} t^3 |\nabla w|_g^2 e^{2tw} |u|^2 - \frac{1}{8} t|\nabla u|_g^2 e^{2tw}).
\end{align*}
Integrating this on $K \backslash B$ and using \eqref{carleman-identity-t}, we see that
\begin{align*}
\frac{4}{5} t^3 \int_{K \backslash B} \Hess_{nn}(w) |\nabla w|_g^2 e^{2tw} |u|^2\ dg
&\leq C t \int_K (\Delta^2_M w) |u|^2 e^{2tw}\ dg + C \| e^{tw} F \|_{L^2(M)}^2\\
&\quad + \frac{1}{5} t \int_K \Hess_{nn}(w) \varphi(w) |\nabla u|_g^2 e^{2tw}\ dg\\
&\quad + C(K,B,w) \int_B (t^3 |u|^2 + t |\nabla u|^2) e^{2tw}\ dg,
\end{align*}
where $\varphi$ is a cutoff function that equals 1 on $K \backslash B$ and vanishes near the stationary
points of $w$.  But if we apply the energy identity \eqref{energy-identity-2} with
$\chi = \Hess_{nn}(w) \varphi(w) e^{2tw}$, and using the Cauchy-Schwarz inequality
$\Re(\overline{u} F) \leq t |u|^2 + t^{-1} |F|^2$, we obtain
\begin{align*}
\frac{1}{5} t \int_K \Hess_{nn}(w) \varphi(w) |\nabla u|_g^2 e^{2tw}\ dg
&\leq \frac{1}{5} \lambda t \int_K \Hess_{nn}(w) \varphi(w) |u|^2 e^{2tw}\ dg\\
&\quad + \frac{2}{5} t^3 \int_K \Hess_{nn}(w) \varphi(w) |\nabla w|_g^2 |u|^2 e^{2tw}\ dg \\
&\quad + C(K,B,w) t^2 \int_K |u|^2 e^{2tw}\ dg \\
&\quad + \| e^{tw} F \|_{L^2(M)}^2.
\end{align*}
The contribution of the first two terms on $K \backslash B$ can be bounded
by $$\frac{3}{5} t^3 \int_{K \backslash B} \Hess_{nn}(w) |\nabla w|_g^2 e^{2tw} |u|^2\ dg$$
term since $t^2$ is assumed to be large compared with $\lambda$.  Thus we have
\begin{align*}
\frac{1}{5} t^3 \int_{K \backslash B} \Hess_{nn}(w) |\nabla w|_g^2 e^{2tw} |u|^2\ dg
&\leq  C \| e^{tw} F \|_{L^2(M)}^2 \\
&+ C(K,B,w) t^2 \int_K |u|^2 e^{2tw}\ dg \\
& + C(K,B,w) \int_B (t^3 |u|^2 + t |\nabla u|^2) e^{2tw}\ dg.
\end{align*}
(absorbing the $\Delta^2_M w$ term into the $C(K,B,w) t^2 \int_K |u|^2 e^{2tw}\ dg$
error).  But since $\Hess_{nn}(w) |\nabla w|_g^2$ is non-zero on the compact set $K \backslash B$, and
$t$ is assumed large compared with $C(K,B,w)$, we can
absorb the $C(K,B,w) t^2 \int_K |u|^2 e^{2tw}\ dg$ error into the right-hand side (plus the error on $B$)
and conclude that
$$
t^3 \int_K e^{2tw} |u|^2\ dg
\leq C(K,B,w) \|e^{tw} F \|_{L^2(M)}^2
  + C(K,B,w) \int_B (t^3 |u|^2 + t |\nabla u|^2) e^{2tw}\ dg.
$$
Applying \eqref{energy-identity-2} again (with $\chi = e^{2tw}$, and using the assumption that $t$ is large
compared with $\sqrt{\lambda}$) we obtain the result.
\end{proof}

The above lemma has the drawback that there is an error term involving $u$ on the right hand side.  However, it
is localized to a smaller set $B$ than $K$.  We can exploit this localization by using two weights instead of one, whose
critical points are at different locations.
We begin with a Morse theory lemma (first observed by Burq \cite{burq}):

\begin{lemma}[Construction of Morse function pair] There exist smooth functions $a_1: K \to \R$ and $a_2: K \to \R$ 
which each have finitely many critical points on $K$, which are all in the interior of $K$.  Furthermore, 
whenever $x$ is a critical point of $a_1$ we have $a_2(x) > a_1(x)$, and whenever $x$ is a critical 
point of $a_2$ we have $a_1(x) > a_2(x)$.  (In particular, the critical points of $a_1$ are at distinct locations
from the critical points of $a_2$).
\end{lemma}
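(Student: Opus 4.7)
The plan is to produce $a_1$ and $a_2$ by first picking two independent Morse functions with disjoint critical sets, and then adjusting each of them by a smooth bump that is locally constant near its critical points so as to achieve the desired value inequalities without disturbing the Morse structure.

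First I would invoke standard Morse theory to pick a smooth Morse function $f_1 : K \to \R$ with finitely many critical points $p_1,\ldots,p_N$, all in the interior of $K$; Morse functions are $C^\infty$-dense, and an arbitrarily small perturbation can push any boundary critical points off $\partial K$. Then I would pick a second smooth function $f_2$; a generic $C^\infty$-small perturbation of it is again Morse and has its finite critical set disjoint from the finite set $\{p_1,\ldots,p_N\}$ (the set of perturbations creating coincidence is of infinite codimension). So I may assume $f_2$ is Morse with critical points $q_1,\ldots,q_M$ in the interior of $K$, with $\{p_i\}\cap\{q_j\}=\emptyset$.

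Next I would fix small pairwise disjoint closed neighborhoods $U_i$ of $p_i$ and $V_j$ of $q_j$, all contained in the interior of $K$, and construct smooth cutoffs $h,g:K\to\R$ as follows. The bump $h$ is constant, equal to some value $c_i$, on each $U_i$; vanishes on each $V_j$ and near $\partial K$; and satisfies the uniform bound
\[
 |\nabla h|_g \leq \tfrac{1}{2}\inf_{K\setminus\bigcup_i U_i}|\nabla f_1|_g,
\]
which is a positive quantity since $f_1$ is non-critical on the compact set $K\setminus\bigcup_iU_i$. Such an $h$ exists for any prescribed constants $c_i$ because the transition region $K\setminus\bigcup_i(U_i\cup V_j)$ has ample room in which to smoothly interpolate the chosen values. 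The bump $g$ is constructed symmetrically, taking constant values $d_j$ on $V_j$, vanishing on each $U_i$, and obeying $|\nabla g|_g\leq \tfrac{1}{2}\inf_{K\setminus\bigcup_j V_j}|\nabla f_2|_g$. I then set $a_1:=f_1+h$ and $a_2:=f_2+g$. Because $h$ is constant near each $p_i$ we have $\nabla h(p_i)=0$ and $\Hess h(p_i)=0$, so $p_i$ remains a non-degenerate critical point of $a_1$; and because $|\nabla h|_g<|\nabla f_1|_g$ off $\bigcup_iU_i$, the sum $\nabla f_1+\nabla h$ cannot vanish there, so no new critical points appear. The analogous statements hold for $a_2$, and since $h$ vanishes on each $V_j$ and $g$ vanishes on each $U_i$, the critical sets are still the disjoint pair $\{p_i\}$, $\{q_j\}$. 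Finally I would choose the constants $c_i$ negative enough that
\[
 a_2(p_i)=f_2(p_i)>f_1(p_i)+c_i=a_1(p_i),
\]
and the $d_j$ negative enough that $a_1(q_j)=f_1(q_j)>f_2(q_j)+d_j=a_2(q_j)$.

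The main (and only) technical point is arranging the bumps $h,g$ to have small gradient on the transition regions while still achieving arbitrarily prescribed constant values $c_i,d_j$ on the $U_i,V_j$. This is handled by making the transition regions large — they are the whole of $K$ minus small balls — so $|\nabla h|_g$ can be kept uniformly below the positive lower bound of $|\nabla f_1|_g$ on the non-critical part of $K$, regardless of how negative $c_i$ must be taken, and similarly for $g$. Everything else is a direct verification.
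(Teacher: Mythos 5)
You have the right high-level idea — two Morse functions with disjoint critical sets, then adjust values near the critical points — but the bump-function step has a genuine gap. You require $h$ to equal the prescribed constant $c_i$ on $U_i$, to vanish near $\partial K$ and on the $V_j$, and at the same time to satisfy $|\nabla h|_g \le \tfrac12\inf_{K\setminus\bigcup U_i}|\nabla f_1|_g$. These constraints are in general inconsistent. Applying the mean value inequality along any path from $U_i$ to the zero set of $h$ gives $\sup_K |\nabla h|_g \ge |c_i|/\mathrm{diam}(K)$; the transition region being ``most of $K$'' does not help, because what controls the gradient is the \emph{diameter} of the transition region, not its measure, and $\mathrm{diam}(K)$ is a fixed finite number. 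On the other side of your inequality, $\inf_{K\setminus\bigcup U_i}|\nabla f_1|_g$ tends to zero as the $U_i$ shrink to the critical points (since $f_1$ is Morse, $|\nabla f_1|_g \sim \mathrm{dist}(\cdot,p_i)$ near $p_i$), so the allowed ceiling can be made arbitrarily small. Since you also need $c_i < f_2(p_i)-f_1(p_i)$, and nothing forces this threshold to be close to zero, the required bump cannot in general be constructed. (Shifting $f_2$ by a constant just moves the difficulty to the $d_j$'s.)

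The paper circumvents the value-matching problem entirely, using a single-function construction rather than two independent Morse functions. It solves the Dirichlet problem $\Delta_M a_1 = 1$ on $K$ with $a_1|_{\partial K} = 0$. Since $\Delta_M a_1 > 0$, no critical point of $a_1$ is a local maximum, so near each critical point $x_i$ one can pick a nearby point $x'_i$ with $a_1(x'_i) > a_1(x_i)$. Taking a diffeomorphism $\phi$ of $K$, equal to the identity outside small disjoint balls and swapping $x_i \leftrightarrow x'_i$, one sets $a_2 := a_1 \circ \phi$. The critical set of $a_2$ is $\{x'_i\}$, and the desired inequalities come for free: $a_2(x_i) = a_1(x'_i) > a_1(x_i)$, and $a_1(x'_i) > a_1(x_i) = a_2(x'_i)$. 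Composing with a diffeomorphism buys exactly what your additive bump was meant to supply — prescribed values at the critical points — but at no gradient cost and with no tension against the Morse structure.
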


\begin{proof}  By enlarging $K$ if necessary we may assume that the boundary $\partial K$ of $K$ is a sphere 
$\partial K = \{ (r,y): r = R\}$ for some $R \gg 1$; in particular, $K$ is now connected (since $M$ is connected by
hypothesis).  
We construct $a_1$ to be the solution to the Dirichlet problem
$$ \Delta_M a_1 = 1 \hbox{ on } K; \quad a_1 = 0 \hbox{ on } \partial K$$
which can be constructed for instance by a standard variational procedure (or the spectral theory of the Laplacian on a compact manifold with Dirichlet boundary conditions).  By standard elliptic theory, this function
is smooth on $K$, and all critical points lie in the interior of $K$ and are non-degenerate
(since $\Hess(a_1)$ is clearly non-vanishing), so in particular the number of critical points is finite.
Furthermore none of the critical points can be local maxima, since $\Delta_M a_1$ is positive.  In particular
if we enumerate the critical points of $a_1$ as $x_1, \ldots, x_m$, and let $\eps > 0$ be a radius so small
that the closed balls $\overline{B(x_1,\eps)}, \ldots, \overline{B(x_m,\eps)}$ are disjoint from each other
and from the boundary $\partial K$, then we can find $x'_1, \ldots, x'_m$ such that $d_M(x_i,x'_i) < \eps$ and 
$a_1(x'_i) > a_1(x_i)$.

We now let $\phi: K \to K$ be a diffeomorphism which is equal to the identity outside of the balls $\overline{B(x_1,\eps)}, \ldots, \overline{B(x_m,\eps)}$, and which swaps $x_i$ and $x'_i$ for each $i$.
If we then set $a_2 := a_1 \circ \phi$ then it is clear that $a_2$ has critical points precisely at $x'_1,\ldots,x'_m$,
and the claims follow.
\end{proof}

We can now combine these two lemmas to obtain a two-weight Carleman inequality which avoids the $B$ errors.

\begin{corollary}[Two weight Carleman inequality]\label{cor-carleman} There exist smooth functions $w_1: K \to \R$ and $w_2: K \to \R$
with the property that for any solution $u \in H^2(K)$ to the Helmholtz equation \eqref{helmholtz} which is 
supported in $K$, and any $t \geq C(K,w_1,w_2) (1 + \sqrt{\lambda})$ we have
$$
 \int_K (t^3 |u|^2 + t |\nabla u|_g^2) (e^{2tw_1} + e^{2tw_2})\ dg \leq C(K,w_1,w_2) 
\int_K |F|^2 (e^{2tw_1} + e^{2tw_2})\ dg.$$
\end{corollary}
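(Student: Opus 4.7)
The plan is to derive Corollary \ref{cor-carleman} by applying Lemma \ref{carleman-prelim} twice, to two carefully chosen weights $w_1,w_2$ built from the Morse functions $a_1,a_2$, in such a way that the localized error terms can be absorbed using the dominance relation between the weights.

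First I would set $w_i := h(a_i)$ for $i=1,2$, where $h: \R \to \R$ is a smooth, strictly increasing, strictly convex function to be chosen; the canonical choice is $h(s) = e^{\Lambda s}$ with $\Lambda$ a large parameter depending on $K$ and the $C^2$ sizes of $a_1,a_2$. A direct computation gives
\begin{equation*}
\Hess(w_i) = h''(a_i)\, \nabla a_i \otimes \nabla a_i + h'(a_i)\, \Hess(a_i).
\end{equation*}
Let $B_i$ be a small open neighborhood of the (finite) critical set of $a_i$. Outside $B_i$ the quantity $|\nabla a_i|_g$ is bounded below by a positive constant depending only on $K$ and $B_i$, so the rank-one term $h''(a_i)|\nabla a_i|_g^2$ can be made arbitrarily large compared with $\|\Hess(a_i)\|_{C^0(K)}$ by taking $\Lambda$ large. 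This yields on $K\setminus B_i$ both the strict positivity $\Hess_{nn}(w_i)>0$ and the directional bound $\Hess_{\alpha\beta}(w_i) X^\alpha X^\beta > -\tfrac{1}{10}\Hess_{nn}(w_i)|X|_g^2$ required in the hypothesis of Lemma \ref{carleman-prelim}. Since $h'>0$, stationarity of $w_i$ on $K\setminus B_i$ is the same as stationarity of $a_i$, and thus does not occur.

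Next, the critical-point separation property from the preceding lemma ensures $a_2(x) > a_1(x)$ on the critical set of $a_1$ and $a_1(x) > a_2(x)$ on the critical set of $a_2$. By continuity, if $B_1$ and $B_2$ are chosen sufficiently small, there exists $\delta>0$ with
\begin{equation*}
w_2 - w_1 \geq \delta \hbox{ on } B_1, \qquad w_1 - w_2 \geq \delta \hbox{ on } B_2,
\end{equation*}
since $h$ is strictly increasing. Applying Lemma \ref{carleman-prelim} separately to $(w_1,B_1)$ and $(w_2,B_2)$ and summing gives, for $t\geq C(1+\sqrt{\lambda})$,
\begin{align*}
\int_K (t^3|u|^2+t|\nabla u|_g^2)(e^{2tw_1}+e^{2tw_2})\,dg
&\leq C\int_K |F|^2(e^{2tw_1}+e^{2tw_2})\,dg \\
&\quad + C\sum_{i=1}^{2}\int_{B_i}(t^3|u|^2+t|\nabla u|_g^2)\,e^{2tw_i}\,dg.
\end{align*}
On $B_1$ we have $e^{2tw_1} \leq e^{-2t\delta}\,e^{2tw_2}$, and symmetrically on $B_2$, so the error terms are bounded by $Ce^{-2t\delta}\int_K(t^3|u|^2+t|\nabla u|_g^2)(e^{2tw_1}+e^{2tw_2})\,dg$. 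Taking $t$ large enough that $Ce^{-2t\delta}<\tfrac{1}{2}$ (which only enlarges the threshold $C(K,w_1,w_2)(1+\sqrt\lambda)$) allows us to absorb these into the left-hand side, completing the proof.

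The main obstacle is calibrating $\Lambda$, $B_1$, $B_2$ and the implicit constants simultaneously: $\Lambda$ must be chosen large enough (depending on the $C^2$ bounds of $a_1,a_2$ on $K$ and the lower bound of $|\nabla a_i|_g$ off $B_i$) to secure the Hessian hypotheses of Lemma \ref{carleman-prelim}, but shrinking $B_i$ to sharpen those lower bounds may in turn shrink the gap $\delta$ used in the absorption step. However these choices decouple: one first fixes $B_1,B_2$ small enough to produce a uniform positive $\delta$ from the strict inequalities at the critical points, and only then chooses $\Lambda$ large depending on the resulting $B_i$. Everything else (in particular the dependence on $\lambda$ through the threshold on $t$) is inherited directly from Lemma \ref{carleman-prelim}.
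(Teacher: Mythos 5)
Your proposal is correct and follows essentially the same route as the paper: you set $w_i = e^{\Lambda a_i}$ (the paper uses the identical ansatz $w_j = \exp(A a_j)$), verify that the rank-one Hessian term $\Lambda^2 e^{\Lambda a_i} |\nabla a_i|_g^2$ dominates on $K\setminus B_i$ for $\Lambda$ large so that Lemma \ref{carleman-prelim} applies, and then absorb the $B_i$ error terms for large $t$ using the strict gap $w_2 > w_1 + \delta$ on $B_1$ and $w_1 > w_2 + \delta$ on $B_2$. The only small slip is in your discussion of the calibration: shrinking $B_i$ \emph{weakens} (not sharpens) the lower bound on $|\nabla a_i|$ over $K\setminus B_i$ and can only \emph{enlarge} the gap $\delta$, but your ultimate conclusion that one may fix $B_1, B_2$ first and then choose $\Lambda$ large is correct and is exactly the order of choices in the paper.
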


\begin{proof}  Let $a_1, a_2$ be constructed by the previous lemma, and let $x_1, \ldots, x_m$
be the critical points of $a_1$ and $x'_1,\ldots, x'_{m'}$ be the critical points of $a_2$.  If 
$\eps = \eps(K,a_1,a_2) > 0$
is a sufficiently small radius, then we have $a_2 > a_1$ on the set
$$ B_1 := B(x_1,\eps) \cup \ldots \cup B(x_m,\eps) \subset K$$
and $a_1 > a_2$ on the set
$$ B_2 := B(x'_1,\eps) \cup \ldots \cup B(x'_{m'},\eps) \subset K$$ 
for some $c = c(K,a_1,a_2) > 0$.

We now let $A = A(K,a_1,a_2,\eps) \gg 1$ be a large parameter to be chosen later, and set $w_1 := \exp(A a_1)$
and $w_2 := \exp(A a_2)$.  Observe that for $j=1,2$, $w_j$ has no critical points outside of $B_j$.  Furthermore,
on $K \backslash B_j$ we compute
$$ n_j^\alpha := \nabla^\alpha w_j / |\nabla w_j|_g = \nabla^\alpha a / |\nabla a|_g$$
and
$$
\Hess_{\alpha \beta}(w_j) = (A^2 \nabla_\alpha a_j \nabla_\beta a_j + A \Hess_{\alpha \beta}(a)) \exp(A a_j)
$$
so in particular
$$
\Hess_{n_j n_j}(w_j) = (A^2 |\nabla a_j|_g^2 + A \Hess_{n_j n_j}(a_j)) \exp(A a_j)
$$
and
$$
\Hess_{XX}(w_j) = (A^2 |X^\alpha \nabla_\alpha a_j|^2 + A \Hess_{XX}(a_j)) \exp(A a_j).$$
Thus if $A$ is large enough, the hypotheses of Lemma \ref{carleman-prelim} are obeyed, and we have
\begin{align*}
 \int_K (t^3 |u|^2 + t |\nabla u|^2) e^{2tw_j}\ dg &\leq C(K,B_1,B_2,w_1,w_2)\times \\
 & \times 
\left(\int_K |F|^2 e^{2tw_j}\ dg + \int_{B_j} (t^3 |u|^2 + t |\nabla u|^2) e^{2tw_j}\ dg\right)
\end{align*}
for $j=1,2$.  Adding the two inequalities together we obtain
\begin{align*}
 \int_K (t^3 |u|^2 + t |\nabla u|^2) (e^{2tw_1} + e^{2tw_2})\ dg \leq& C(K,B_1,B_2,w_1,w_2) 
\biggl(\int_K |F|^2 (e^{2tw_1} + e^{2tw_2})\ dg \\
&+ 
\int_{B_1} (t^3 |u|^2 + t |\nabla u|^2) e^{2tw_1}\ dg \\
&+
\int_{B_2} (t^3 |u|^2 + t |\nabla u|^2) e^{2tw_2}\ dg\biggr).
\end{align*}
But observe that $w_2 \geq w_1 + c$ on $B_1$ and $w_1 \geq w_2 +c$ on $B_2$ for some
$c = c(K,w_1,w_2,B_1,B_2) > 0$, and thus for $j=1,2$ we have $e^{2tw_j} \leq e^{-2ct} (e^{2tw_1} + e^{2tw_2})$
on $B_j$.  Thus if $t$ is large enough, we can absorb the last two terms on the right-hand
side into the left-hand side, and the claim follows.
\end{proof}

\begin{remark} Morally speaking, the above two-weight Carleman inequality can be viewed heuristically
as a single-weight Carleman inequality on the product manifold $M \times M$, with $u$ replaced by the tensor
product $u \otimes u$ and $w$ replaced by the tensor sum $w_1 \oplus w_2$ (and $\lambda$ replaced by $2\lambda$).  
The point is that the critical points 
$(x_i, x'_j)$ of $w_1 \oplus w_2$ lie off the diagonal, and the contribution of the weights at those points
can be dominated by the contributions of the weights at the diagonal points $(x_i,x_i)$ and $(x'_j,x'_j)$,
for instance by the Cauchy-Schwarz inequality.  Intriguingly, this perspective shares many similarities with the 
philosophy underying the interaction Morawetz inequalities, used for instance in \cite{htw}, where 
the positive commutator method of the previous section was also extended to a product setting.
\end{remark}

We can now quickly prove Proposition \ref{ucp}.

\begin{proof}[Proof of Proposition \ref{ucp}]  Let $\chi$ be a smooth cutoff function which equals $1$
on $K'$ and is supported on $K$.  Then $\chi u$ is supported on $K$ and obeys the equation
$$ (-\Delta_M - \lambda) (\chi u) = - 2 (\nabla_\alpha \chi) \nabla^\alpha u - (\Delta_M \chi) u + \chi 
(f\pm i\eps u-Vu)
+ \chi V u.$$
In particular, $(-\Delta_M - \lambda) (\chi u)$ is less than 
$\chi (|f| + O(A|u|))$ on $K'$, and obeys the bound
$$ (-\Delta_M - \lambda) (\chi u) \leq C(\chi) (A |u| + |\nabla u|_g + |f| )$$
on $K \backslash K'$.
Applying Corollary \ref{cor-carleman} to $\chi u$ we thus obtain
\begin{align*}
 \int_K (t^3 |u|^2 + t |\nabla u|_g^2) (e^{2tw_1} + e^{2tw_2})\ dg \leq&
 C(K,w_1,w_2) 
C(\chi) \int_K |f|^2 (e^{2tw_1} + e^{2tw_2})\ dg \\
&+
A C(\chi) \int_K |u|^2 (e^{2tw_1} + e^{2tw_2})\ dg\\
&+ C(\chi) \int_{K \backslash K'} ( |u|^2 + |\nabla u|_g^2 ) (e^{2tw_1} + e^{2tw_2})\ dg
\end{align*}
for $t \geq C(K,w_1,w_2) (1 + \sqrt{\lambda})$, and the claim follows by choosing
$t$ to be a large multiple of $C(A,K,\chi,w_1,w_2) (1 + \sqrt{\lambda})$ (to absorb the second term on the 
right-hand side) and noting that $w_1$ and $w_2$ are smooth and hence bounded above and below on the 
compact set $K$.
\end{proof}

\begin{remark} Proposition \ref{ucp} gives good control on solutions to the Helmholtz equation on a compact
set $K$.  To obtain a limiting absorption principle, we will have to combine this proposition with more ``global''
estimates, such as the Morawetz estimates of the previous section, or the Bessel ODE analysis in Section \ref{bessel-sec}.
\end{remark}

\section{Conservation laws and differential inequalities of Bessel type}\label{bessel-sec}

The results of Lemma \ref{morawetz-lemma}  and Proposition \ref{ucp}
provide us with the following very useful dichotomy. On one hand Lemma \ref{morawetz-lemma} 
shows that if the limiting absorption principle can be proved for the restriction,  to a certain dyadic 
region $r_0/2\le \langle x\rangle\le 2r_0$, 
of a solution $u$ of the Helmholtz equation, then it also holds on the set 
$\langle x\rangle\ge 2r_0$. Control of the region $r_0/2\le \langle x\rangle\le 2r_0$ together
with the unique continuation principle of Proposition \ref{ucp} also
imply that the limiting absorption principle can be extended to the set 
$\langle x\rangle\le r_0/2$. On the other hand  Proposition \ref{ucp} allows us an alternative
scenario. To prove the limiting absorption principle on a compact set $K$, which can be thought of 
as the region $\langle x\rangle\le r_0/2$, rather than proving unconditional control on 
the solution in the region $r_0/2\le \langle x\rangle\le 2r_0$ it would be sufficient to show 
instead  that the solution varies super-exponentially through that region, i.e., its rate of change 
is given by $e^{-C(1+\sqrt\la)}$ with a sufficiently large constant $C$. 

To show that either of these scenarios must be realized we need to perform analysis near the
 asymptotic end of the manifold $M$; this is the purpose of Lemma \ref{dichotomy}, which we shall prove in this section and in the next. 

We begin with an informal discussion.  For simplicity consider the case when $M$ is asymptotically Euclidean manifold.  Then we can heuristically approximate the Helmholtz equation by its Euclidean version
 $$ \left(\frac{\partial^2}{\partial r^2} + \frac{n-1}{r} \frac{\partial}{\partial r}
+ \frac{1}{r^2} \Delta_{S^{n-1}} - V + \lambda\right) u = -F.$$
Assuming that $V$ is radial, $V = V(r)$,
and applying the ansatz
$$
 u(r,\omega) = r^{-(n-1)/2} v(r) Y_l(\omega), \quad F(r,\omega) = -r^{-(n+1)/2} G(r) Y_l(\omega)
$$
where $Y_l$ is a spherical harmonic of order $l$ on $S^{n-1}$, normalized
to have $L^2(S^{n-1})$ norm equal to 1, the Helmholtz equation becomes the \emph{Bessel ordinary
differential equation}
$$
 v_{rr} - \frac{L(L-1)}{r^2} v - Vv + \lambda v = G,
$$
where $L := l + \frac{n-1}{2}$. As we shall see in Section \ref{bessel-matching-sec}
this equation will play an important role in a counterexample construction of Proposition 
\ref{bessel}. Despite providing good insight into behavior of solutions of the Helmholtz 
equation near infinity, the use of the Bessel equation approximation has several drawbacks. 
At first glance,
it seems that this equation only emerges when the solution $u$ has a specific structure, namely that it
decouples as the product of a radial function and a spherical harmonic.  Of course, one could orthogonally
decompose an arbitrary function $u$ into such products and work on each harmonic separately (as is 
done in a number of places in the literature, e.g. \cite{burq}), but this becomes difficult if the metric 
and potential only decay slowly at infinity (although such analysis well suited for compact perturbations
of the standard Euclidean metric).
Also, such an approach often requires detailed analysis of the asymptotics
of Bessel or Hankel functions.  Here, we present a more ``energy-based'' method to simulate differential equations 
(or differential inequalities) Bessel type for solutions to the Helmholtz equation \eqref{helmholtz}, without
requiring an explicit decomposition into spherical harmonics, and without requiring any knowledge of Bessel or Hankel
functions (although such functions are in some sense lurking in the background in what follows).

In this section we shall work purely in the asymptotic region $r > R_0$; thus the analysis here may be viewed
as a ``black box'' analysis, requiring no knowledge of the manifold, solution, or potential in the
interior region $r \leq R_0$.  Eventually we will 
combine this black box analysis with the Carleman analysis 
in the near region $r = O(1)$ from previous sections to
obtain the full limiting absorption principle.  

We write the metric in the form
$$ g = dr^2 + r^2 h_{jk}[r](\omega) d\omega^j d\omega^k$$
where
$$ h_{jk}[r](\omega) := h_{jk}(\omega) + r^{-2\sigma_0} e_{jk}(r,\omega)$$
and work on the hypersurfaces $S_r := \{ (r,\omega): \omega \in \partial M \}$, which are 
naturally endowed with the metric
$h[r]$ and the corresponding measure $dh[r] := \sqrt{h[r]} d\omega$; note that this differs from the induced
measure $dg|_{S_r}$ by a factor of $r^{n-1}$.  In particular we have the co-area formula
\begin{equation}\label{polar}
\int_{r > R_0} f\ dg = \int_{R_0}^\infty \int_{S_r} f(r,\omega)\ dh[r](\omega) r^{n-1} dr.
\end{equation}
We now rewrite the resolvent equation \eqref{resolvent} in polar co-ordinate form as
$$ \frac{\partial^2}{\partial r} u + \frac{n-1}{r} \frac{\partial}{\partial r} u
+ \frac{\frac{\partial}{\partial r} dh[r]}{dh[r]} \frac{\partial}{\partial r} u
+ \frac{1}{r^2} \Delta_{h[r]} u + \lambda u = Vu \mp i\eps u - f .$$
Note that 
$$
\theta_{jk}(r,\omega) =\frac 12 \pa_r \theta_{jk}(r,\omega)
$$
is the second fundamental form of the surface $S_r$  relative to the renormalized metric
$$
\overline g=dr^2+h_{jk}d\omega^j\,d\omega^k.
$$
with mean curvature
\begin{equation}\label{eq:mean-curv}
\theta=\frac{\frac{\partial}{\partial r} dh[r]}{dh[r]}
\end{equation}
It follows from \eqref{metric-decay} that
\begin{equation}\label{div-formula}
\theta= O( r^{-1-2\sigma_0} ), \quad |\theta|_{h[r]}=O(r^{-1-2\si_0}).
\end{equation}
We expect solutions $u$ of the Helmholtz equation to decay like $r^{-(n-1)/2}$ as $r \to \infty$.  Thus we can
renormalize $u$ by defining 
\begin{equation}\label{ansatz-2}
v := r^{(n-1)/2} u
\end{equation}
(cf. \eqref{ansatz}) and observe that $v$ obeys a Bessel-like equation
\begin{equation}\label{V-bessel}
 v_{rr} +\frac 1{r^2}\big (\Delta_{h[r]}-\frac {(n-1)(n-3)}4\big)v +\la v= - \theta v_{r} 
 + (V + \frac {n-1}r \theta) v \mp i \eps v + r^{(n-1)/2} f 
\end{equation}
with the operator $-\Delta_{h[r]}$ playing the role of the parameter $l(l+n-2)$.

We now define the ``spherical energies''
\begin{align*}
{\text{Mass}}\qquad \M[r] &:= \int_{S_r} |v|^2\ dh[r] \\
{\text{Radial energy}}\qquad \RR[r] &:= \int_{S_r} |v_r|^2\ dh[r] \\
{\text{Angular energy}}\qquad \A[r] &:= \int_{S_r} \frac{1}{r^2} (|\nabla_\omega v|_{h[r]}^2 + 
\frac{(n-1)(n-3)}{4} |v|^2) \ dh[r] \\
{\text{Mass flux}}\qquad\F[r] &:= \int_{S_r} \Re( \overline{v} v_r)\ dh[r],
\end{align*}
where $\nabla_\omega v$ is the angular gradient.  Note that the quantity $\M[r]$ was already introduced in Lemma \ref{dichotomy}.  We also need the ``forcing term''
\begin{equation}\label{g-force}
\G[r] := r^{(n-1)/2} \int_{S_r} (|v| + \frac{|\nabla v|_g}{r^{-1} + \lambda^{1/2}}) (|f| + \eps |u|)
\ dh[r],
\end{equation}
and record the following three ``equations of motion'' for the spherical energies, which are closely related
to the conservation laws \eqref{charge-gradient}, \eqref{bochner-identity}, \eqref{divx-form-2}.

\begin{lemma}[Equations of motion]\label{equations-of-motion}  
We have positivity properties
\begin{equation}\label{mra}
\M[r], \RR[r], \A[r] \geq 0
\end{equation}
the Cauchy-Schwarz estimate
\begin{equation}\label{flux-cauchy}
|\F[r]| \leq \M[r]^{1/2} \RR[r]^{1/2}
\end{equation}
and the equations of motion
\begin{align}
&\frac{d}{dr} \M = 2 \F+ O( r^{-1-2\sigma_0})\M  \label{mass-dynamics}\\
&\frac{d}{dr} \F = \RR +\A - \lambda \M
+
O( r^{-2-2\sigma_0}) \M  + O(r^{-1-\sigma_0})\lambda^{1/2} \M+ O(\G)
\label{flux-dynamics} \\
&\frac{d}{dr} (\RR + \lambda \M- \A) = \frac{2}{r} \A+ O( r^{-1-2\sigma_0})
(\A+\RR+\la\M) \nonumber \\
&\hskip 6pc\quad + O( r^{-3-2\sigma_0})\M+ O(r^{-1} + \lambda^{1/2}) \G.
\label{cons-dynamics}
\end{align}
Here the implicit constants are allowed to depend on $M$ and $A$.
\end{lemma}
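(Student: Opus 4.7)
The positivity \eqref{mra} and the Cauchy--Schwarz bound \eqref{flux-cauchy} are immediate from the definitions, so the content of the lemma lies in the three equations of motion. My plan is to treat all of them as consequences of the single differentiation formula
$$\frac{d}{dr}\int_{S_r}\phi\,dh[r] = \int_{S_r}\bigl(\partial_r\phi + \theta\,\phi\bigr)\,dh[r],$$
which follows directly from the identification \eqref{eq:mean-curv} of $\theta$ as the logarithmic derivative of the measure $dh[r]$, combined with the asymptotic bound $\theta = O(r^{-1-2\sigma_0})$ from \eqref{div-formula}. This way every $r$-derivative of a spherical energy splits into a ``geometric'' piece of size $O(r^{-1-2\sigma_0})$ times the same energy, plus the integral of $\partial_r$ applied to the integrand, which is where \eqref{V-bessel} enters.

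For \eqref{mass-dynamics} one simply writes $\partial_r|v|^2 = 2\Re(\bar v v_r)$ and reads off $\frac{d\M}{dr}=2\F+O(r^{-1-2\sigma_0})\M$. For \eqref{flux-dynamics}, I would differentiate $\F$ to obtain $\RR + \int\Re(\bar v v_{rr})\,dh[r]$ plus a $\theta$-correction, then substitute $v_{rr}$ from \eqref{V-bessel} and integrate by parts in the angular variables using self-adjointness of $\Delta_{h[r]}$ on $(S_r,h[r])$; this produces exactly $\A-\la\M$, while the remaining terms $-\theta v_r$, $(V+\tfrac{n-1}{r}\theta)v$, $\mp i\eps v$, $r^{(n-1)/2}f$ contribute the claimed $O(r^{-2-2\sigma_0})\M$, $O(r^{-1-\sigma_0})\la^{1/2}\M$ and $O(\G)$ errors (using \eqref{V-short} to estimate $V$, noting that $\mp i\eps v$ pairs to a purely imaginary quantity, and comparing $r^{(n-1)/2}\int |v||f|\,dh[r]$ to the definition \eqref{g-force}). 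For \eqref{cons-dynamics}, the natural step is to multiply \eqref{V-bessel} by $\bar v_r$, take the real part, and integrate on $S_r$ with measure $dh[r]$. Writing $A_r := -\Delta_{h[r]}+\frac{(n-1)(n-3)}{4}$ so that $\A = \int\frac{1}{r^2}\Re(\bar v\,A_r v)\,dh[r]$, one has $\int\Re(\bar v_r v_{rr})\,dh[r] = \tfrac12\tfrac{d\RR}{dr}+O(r^{-1-2\sigma_0})\RR$ and $\la\int\Re(\bar v_r v)\,dh[r] = \tfrac{\la}{2}\tfrac{d\M}{dr}+O(r^{-1-2\sigma_0})\la\M$. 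The key identity I would then establish is
$$\int_{S_r}\tfrac{1}{r^2}\Re(\bar v_r A_r v)\,dh[r] = \tfrac12\tfrac{d\A}{dr} + \tfrac{1}{r}\A + O(r^{-1-2\sigma_0})\A,$$
obtained by differentiating the definition of $\A$, using that $A_r$ is self-adjoint with respect to $dh[r]$, and observing that the $-\tfrac{2}{r^3}$ arising from differentiating the $1/r^2$ factor delivers precisely the $\tfrac{1}{r}\A$ term. Assembling the three displays yields $\tfrac{d}{dr}(\RR+\la\M-\A) = \tfrac{2}{r}\A$ plus forcing, and the forcing is bounded by $(r^{-1}+\la^{1/2})\G$ after using the weight built into \eqref{g-force} to convert $\int\eps|v||v_r|$ and $r^{(n-1)/2}\int|v_r||f|$ into that form.

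The main obstacle is the delicate commutator step in \eqref{cons-dynamics}: in differentiating $\A$ one encounters $[\partial_r,A_r] = -\partial_r\Delta_{h[r]}$, which is a second-order angular operator whose coefficients are the radial derivatives of $h[r]^{jk}$. Here I would invoke the decay hypothesis \eqref{metric-decay} to show that these coefficients are $O(r^{-1-2\sigma_0})$ times those of $A_r$ itself, so that after pairing with $\bar v$ and integrating the contribution is precisely of size $O(r^{-1-2\sigma_0})(\A+\RR+\la\M)$ as allowed by the statement. A minor but related bookkeeping point is that the potential $V$ in \eqref{V-short} carries a $\la^{1/2}$-growing piece, which when paired with $\bar v_r$ via Cauchy--Schwarz with weight $r^{-1-2\sigma_0}$ produces exactly the $O(r^{-1-\sigma_0})\la^{1/2}\M$ error in \eqref{flux-dynamics}; keeping this splitting consistent across all three identities is the part of the argument that requires care, though no new idea.
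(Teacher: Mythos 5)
Your proposal is correct and follows essentially the same route as the paper's proof: the paper lists three pointwise $r$-derivative identities for $|v|^2$, $v_r\bar v$, and $|v_r|^2+\lambda|v|^2-\frac{1}{r^2}|\nabla_\omega v|^2-\frac{(n-1)(n-3)}{4r^2}|v|^2$, then integrates them over $S_r$ using the mean-curvature identity \eqref{eq:mean-curv}, the decay bound \eqref{div-formula}, and the potential bound \eqref{V-short}. Your framing (differentiating the integrated quantities directly via a co-area formula, writing the angular part in terms of the self-adjoint operator $A_r$ and absorbing the commutator $[\partial_r,A_r]$ via \eqref{metric-decay}) is an equivalent repackaging of the same computation, including the same cancellation structure that yields the clean $\frac{2}{r}\A$ term in \eqref{cons-dynamics}.
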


\begin{proof}
While in dimensions $n\ge 3$ positivity of the spherical energies and the 
Cauchy-Schwarz estimate are obvious,
 equations \eqref{mass-dynamics}-\eqref{cons-dynamics} follow from the identities:
\begin{align*}
\frac d{dr} |v|^2&=2 \Re (v_r\overline v),\\
\frac {d}{dr}(v_r\overline v)&=|v_r|^2- \frac 1{r^2}\big (\Delta_{h[r]}-\frac {(n-1)(n-3)}4\big)v
\overline v - \la |v|^2- \theta v_r \overline v \\&+  
\Big ((V + \frac {n-1}r \theta) v \mp i \eps v + r^{(n-1)/2} f \Big )\overline v,\\
\frac {d}{dr}(|v_r|^2&+\la |v|^2 -\frac 1{r^2} |\nabla_\omega v|^2_h-\frac{(n-1)(n-3)}{4r^2}|v|^2) =
\frac 2r\left( \frac 1{r^2}|\nabla_\omega v|^2_h+\frac{(n-1)(n-3)}{4r^2}|v|^2\right) \\ &+\frac 2{r^2}\div_\omega
\Re (\nabla_\omega v\, \overline{v_r})
-2 \theta |v_r|^2 -\frac 1{r^2}
\theta_{jk}\nabla_\omega^j v \overline{\nabla_\omega^k v}
\\&+2\Re\Big ((V + \frac {n-1}r \theta) v \mp i \eps v + r^{(n-1)/2} f \Big )\overline {v_r},
\end{align*}
identity \eqref{eq:mean-curv} and the assumptions \eqref{div-formula}
and $|V(x)|\le A(r^{-2-2\si_0}+\la^{1/2} r^{-1-2\si_0})$.
\end{proof}

\begin{remark} It is helpful to keep in mind the model case
(see \eqref{ansatz}), where  $v$ solves the Bessel differential equation
$$  v_{rr} - \frac{L(L-1)}{r^2} v + \lambda v = 0,$$
in which case
\begin{align*}
 &\M[r] = |v(r)|^2; \quad \RR[r] = |v_r(r)|^2; \quad \A[r] = \frac{L(L-1)}{r^2} |v(r)|^2; 
\\&\F(r) = \Re(\overline{v(r)} v_r(r)); \quad \G[r] = 0.
\end{align*}
The reader may wish to verify the above equations of motion (with all error terms set to zero)
in this special case.  It may also be useful to keep in mind the dimensional analysis
$$ r \sim \operatorname{length}^1; \quad \M \sim \operatorname{length}^0; \quad \F \sim \operatorname{length}^{-1}; \quad 
\RR, \A, \lambda, \G \sim \operatorname{length}^{-2},$$
noting that the above equations then become dimensionally consistent up to errors involving $\sigma_0$.
\end{remark}

\begin{remark}  Note that while we have four energies, we only have three equations of motion; we do not
control the evolution of $\RR$ and $\A$ separately, but only have an equation for a certain
combination $\RR - \A$ of these two.  However, we can obtain a lower bound on $\RR$ from
\eqref{flux-cauchy}.  This system of three equations and one inequality is thus still underdetermined, but
we will still be able to extract enough control out of this system to establish all the estimates we need.
\end{remark}

\begin{remark} Of course, the three equations of motion can also be interpreted in terms of
the Friedrichs abc method, where the cutoff $\chi$ is now the surface measure on a sphere $S_r$.  We omit
the details.
\end{remark}

Now we obtain some preliminary estimates on the above energies, in the setting of Lemma \ref{dichotomy}.

\begin{proposition}[Preliminary estimates]\label{resolvent-boundary} Let the notation and assumptions be as in Lemma \ref{dichotomy}.  
 Then we have the integral estimate
\begin{equation}\label{delta}
 \int_{R_0}^\infty \G[r]\ dr = O(\delta).
\end{equation}
and the boundary condition
\begin{equation}\label{boundary}
 \lim_{r \to \infty} \frac{1}{r} \int_r^{2r} \M[s] + \RR[s] + \A[s]\ ds = 0.
\end{equation}
\end{proposition}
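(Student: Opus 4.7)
The plan is to convert both quantities into volume integrals over $M$ via the co-area formula, and then apply the charge estimate and elliptic regularity results of Section \ref{key-est} that have already been established.

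For the integral estimate \eqref{delta}, I would start by unfolding the definition of $\G[r]$. Writing $v = r^{(n-1)/2}u$ and computing $|\nabla v|_g \leq C r^{(n-1)/2}(|\nabla u|_g + r^{-1}|u|)$, the co-area formula \eqref{polar} would yield
\begin{equation*}
\int_{R_0}^\infty \G[r]\, dr \leq C\int_{M_{R_0}} \Bigl(|u| + \tfrac{|\nabla u|_g}{\langle x\rangle^{-1}+\lambda^{1/2}}\Bigr)(|f|+\eps|u|)\, dg,
\end{equation*}
after absorbing the $r^{-1}|u|/(r^{-1}+\lambda^{1/2}) \leq |u|$ contribution into the leading term.

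Next, I would expand the product and handle the four resulting terms separately. The two terms involving the plain $|u|$ factor are immediate: $\int |u||f|\, dg \leq \|u\|_{H^{0,-1/2-\sigma}} \|f\|_{H^{0,1/2+\sigma}} = \delta$ by weighted Cauchy-Schwarz, while $\eps \int |u|^2\, dg \leq \delta$ is precisely the charge estimate \eqref{eq:eps-charge}. For the two gradient terms, I would apply Cauchy-Schwarz with weights $\langle x\rangle^{\pm(1/2+\sigma)}$ and reduce matters to bounding the weighted quantity $I := \int \tfrac{|\nabla u|_g^2\, \langle x\rangle^{-1-2\sigma}}{(\langle x\rangle^{-1}+\lambda^{1/2})^2}\, dg$. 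Applying \eqref{energy-spiral} on each dyadic shell $R \leq \langle x\rangle \leq 2R$, using $(R^{-1}+\lambda^{1/2})^2 \asymp \lambda + R^{-2}$, and then summing dyadically, $I$ would be controlled by $\|u\|_{H^{0,-1/2-\sigma}}^2$ together with a lower-order $\|f\|_{H^{0,1/2+\sigma}}^2$ contribution, i.e.\ $O(1)$ under our normalization. The remaining term $\int \tfrac{|\nabla u|_g\, \eps |u|}{\langle x\rangle^{-1}+\lambda^{1/2}}\, dg$ is the one where the hypothesis $\eps \leq \lambda$ is essential: combining the preceding $I$-bound with the charge-type inequality \eqref{eq:la-eps-nab} (or its localised version \eqref{eq:la-eps-loc}) gives the desired $O(\delta)$ bound.

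For the boundary condition \eqref{boundary}, I would use the polar measure $dg = r^{n-1}\, dr\, dh[r]$ and the relation $v = r^{(n-1)/2}u$ to rewrite the spherical energies as volume integrals:
\begin{equation*}
\int_r^{2r}\M[s]\, ds = \int_{r\leq \langle x\rangle\leq 2r}|u|^2\, dg, \qquad \int_r^{2r}(\RR+\A)[s]\, ds \leq C\int_{r\leq \langle x\rangle\leq 2r}\bigl(|\nabla u|_g^2 + \langle x\rangle^{-2}|u|^2\bigr)\, dg,
\end{equation*}
where the auxiliary $\langle x\rangle^{-2}|u|^2$ term comes from the discrepancy between $\nabla v$ and $r^{(n-1)/2}\nabla u$ and from the zero-order piece in the definition of $\A$. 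Dividing by $r$ then produces $\tfrac{C}{r}\bigl(\|u\|_{L^2(M_r)}^2 + \|\nabla u\|_{L^2(M_r)}^2\bigr)$, which tends to zero as $r \to \infty$ since $u \in H^2(M_{C_1/2})$ (qualitatively, by Lemma \ref{elliptic-regularity}) forces both $u$ and $\nabla u$ to have convergent $L^2$ tails.

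The main obstacle is the first part: obtaining the weighted gradient bound on $I$ with a constant genuinely uniform in $\lambda$ and $\eps$. This requires a careful dyadic decomposition split at the scale $R \sim \lambda^{-1/2}$, where the two regimes $R^{-1} \gg \lambda^{1/2}$ and $R^{-1} \ll \lambda^{1/2}$ contribute comparably, and one must exploit $\eps \leq \lambda$ precisely to control the $\eps|u|$ piece via \eqref{eq:la-eps-nab}, so as to avoid any spurious low-energy blowup in the constants.
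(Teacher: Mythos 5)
Your overall architecture agrees with the paper's: convert $\int_{R_0}^\infty \G[r]\,dr$ to a volume integral via the co-area formula, estimate shell-by-shell using the normalization \eqref{uf}, the charge estimate \eqref{eq:eps-charge}, and the elliptic shell estimate \eqref{energy-spiral}. Your treatment of the boundary condition \eqref{boundary} (finite $L^2$ and $H^1$ tails forcing the averaged spherical energies to zero) is also essentially what the paper does.

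The gap is in your treatment of the term $\int \eps|\nabla u|_g|u| / (\langle x\rangle^{-1}+\lambda^{1/2})\,dg$. You propose to close it with \eqref{eq:la-eps-nab}, but that inequality carries $\|\nabla u\|_{H^{0,-1/2-\sigma}}$ on its right-hand side, a quantity which at this stage of the argument is \emph{not} controlled by the normalization (controlling it is essentially the content of Lemma \ref{morawetz-lemma}, which is logically downstream). Even if you supplement \eqref{eq:la-eps-nab} with \eqref{energy-spiral} to bound $\|\nabla u\|_{H^{0,-1/2-\sigma}}^2 \lesssim (1+\lambda)$, the bound $1/(\langle x\rangle^{-1}+\lambda^{1/2}) \leq \lambda^{-1/2}$ produces $O(\delta\,\lambda^{-1/2})$ at best, not $O(\delta)$. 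The paper's mechanism is different and does not pass through \eqref{eq:la-eps-nab} at all: it keeps the Cauchy–Schwarz pairing shell-by-shell and records, for each shell, \emph{two} bounds on $\int_{\text{shell}} \eps^2|u|^2$ — one from the global charge estimate ($\leq c_R\eps\delta$) and one from the local normalization ($\leq c_R\eps^2 R^{1+2\sigma}$) — then picks whichever minimum makes the product with the $\nabla u$-factor work. The critical cross-term becomes $\frac{\delta^2 R^{-1-2\sigma}}{(\lambda+R^{-2})^2}\cdot\eps^2 R^{1+2\sigma} \leq \eps^2\delta^2/\lambda^2 \leq \delta^2$, and this is precisely where $\eps\le\lambda$ enters. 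Your single global Cauchy–Schwarz discards this flexibility. Relatedly, your assertion that the $f$-contribution to $I$ is ``lower order'' is also not automatic: near the transition scale $R\sim\lambda^{-1/2}$ that piece is of size $\sim\delta^2\lambda^{2\sigma-1}$, which is not $O(1)$ when $\lambda$ is small; the paper sidesteps this by never isolating $I$ as a standalone quantity, instead keeping all four cross-products paired under the dyadic sup before taking suprema.
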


\begin{proof}
The boundary condition \eqref{boundary} follows from the normalization 
$\|u\|_{H^{0,-1/2-\si}(M)}=1$ and  \eqref{energy-spiral}.
To prove \eqref{delta}, we first see from \eqref{g-force}, \eqref{ansatz} that
$$
\G[r] \leq C r^{n-1} \int_{S_r} (|u| + \frac{|\nabla u|_g}{r^{-1} + \lambda^{1/2}}) (|f| + \eps |u|)
\ dh[r]
$$
and so by dyadic decomposition it would suffice to show that
\begin{equation}\label{sumr}
 \sum_{R \geq R_0} 
\int_{R \leq \langle x \rangle \leq 2R}
\left(|u| + \frac{|\nabla u|_g}{R^{-1} + \lambda^{1/2}}\right) (|f| + \eps |u|)\,dg = O(\delta).
\end{equation}
From \eqref{uf} we have the bounds
$$ \int_{R \leq \langle x \rangle \leq 2R} |u|^2\,dg \leq c_R R^{1+2\sigma}$$
and
$$ \int_{R \leq \langle x \rangle \leq 2R} |f|^2\,dg \leq \delta^2 c_R R^{-1-2\sigma}$$
where $c_R > 0$ are numbers such that $\sum_R c_R = O(1)$.  From the charge estimate (Lemma \ref{charge-est}) we also have
$$ \eps \int_M |u|^2\ dg \leq \| f \|_{H^{0,1/2+\sigma}(M)}  \| u \|_{H^{0,-1/2-\sigma}(M)}\le \de$$
and thus
$$ \int_{R \leq \langle x \rangle \leq 2R} |u|^2\,dg \leq c_R \de / \eps$$
(after adjusting $c_R$ if necessary).  Finally, from \eqref{energy-spiral} we have
$$
\int_{R\le\langle x\rangle\le 2R} \left(|u| + \frac{|\nabla u|_g}{R^{-1} + \lambda^{1/2}}\right)^2 \,dg
 \le C \int_{R/2\le \langle x\rangle\le 4R}
\Big (|u|^2 + \frac {f^2}{(\la+R^{-2})^2}\Big ) dg 
$$
and so after adjusting $c_R$ a bit more we obtain
$$
\int_{R\le\langle x\rangle\le 2R} (|u| + \frac{|\nabla u|_g}{R^{-1} + \lambda^{1/2}})^2\,dg
\leq C c_R \left( \min( R^{1+2\sigma},\de/\eps ) + \frac{\delta^2 R^{-1-2\sigma}}{(\la + R^{-2})^{2}} \right).$$
On the other hand, we have
$$ \int_{R\le\langle x\rangle\le 2R} (|f| + \eps |u|)^2\,dg \leq C c_R (\delta^2 R^{-1-2\sigma} + \min( \eps \delta,k \eps^2 R^{1+2\delta} ) ).$$
Applying Cauchy-Schwarz, we will obtain \eqref{sumr} if we can show that
$$ \left( \min( R^{1+2\sigma},\de/\eps ) + \frac{\delta^2 R^{-1-2\sigma}}{(\la + R^{-2})^{2}} \right)
(\delta^2 R^{-1-2\sigma} + \min( \eps \delta,k \eps^2 R^{1+2\delta} ) ) \leq C \delta^2$$
for all $R \geq R_0$.  This is clear for the terms involving $\min( R^{1+2\sigma},\de/\eps )$.  For the term
$$ \frac{\delta^2 R^{-1-2\sigma}}{(\la + R^{-2})^{2}} \delta^2 R^{-1-2\sigma} $$
we bound $\la + R^{-2}$ from below by $R^{-2}$ to obtain a bound of $\delta^4 R^{2-4\sigma}$, which is acceptable since $\sigma < 1/2$ and $\delta \leq C$.  Finally, for the term
$$ \frac{\delta^2 R^{-1-2\sigma}}{(\la + R^{-2})^{2}} \min( \eps \delta,k \eps^2 R^{1+2\delta} )$$
we bound $\la + R^{-2}$ from below by $\la$, and use the second term in the minimum, to obtain a bound of $\eps^2 \delta^2 / \la^2$, which is acceptable since $\eps < \la$.
\end{proof}

\section{An ODE lemma}\label{sec:ODE}

In view of Lemma \ref{equations-of-motion} and Proposition \ref{resolvent-boundary}, we see that Lemma \ref{dichotomy} will follow immediately from the following ODE lemma.

\begin{lemma}[ODE Lemma]\label{ode}  Let $C_1 \gg R_0$ be a large number, and then let $C_2 \gg C_1$ be an even larger number.
For all $r \geq R_0$, let $\M$, $\RR$, $\A$,
$\F$, $\G$ be real-valued functions obeying the differential inequalities in Lemma \ref{equations-of-motion}
as well as the properties \eqref{delta}, \eqref{boundary}.  Then if $C_1$ is sufficiently large (but not depending on $\lambda$), and $C_2$ is sufficiently large depending on $C_1$ (but not on $\lambda$), then one of the following must be true:
\begin{itemize}
\item (Boundedness) There exists a radius $C_1 \leq r_0 \leq C(C_1,C_2)$ such that \eqref{bounded-mass} holds.
\item (Exponential growth) For all $C_1 \leq r \leq 10C_1$, we have \eqref{mass-growth}.
\end{itemize}
Of course, the magnitude of $C_1$, $C_2$, $C(C_1,C_2)$ will depend on the implicit constants in Lemma \ref{equations-of-motion} and \eqref{delta} (which in practice will depend on $M$ and $A$).
\end{lemma}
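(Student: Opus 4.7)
The plan is to simulate the well-known dichotomy for the Bessel ODE $v'' + (\lambda - L(L-1)/r^2)v = 0$: solutions either remain bounded (in the classically allowed regime $r > \sqrt{L(L-1)/\lambda}$) or grow/decay exponentially (in the forbidden regime $r < \sqrt{L(L-1)/\lambda}$). In our perturbed system the role of the ``effective angular momentum'' is played by $\A/\M$, and the dichotomy in the Lemma reflects whether, on $[C_1, 10 C_1]$, we are deep in the forbidden regime or not. Throughout, the central players are the mass $\M$, the flux $\F$, and the near-conserved energy $\E(r) := \RR(r) + \lambda \M(r) - \A(r)$.

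The first step is to propagate $\E$ from infinity inward. The equation \eqref{cons-dynamics} reads $\E' = \tfrac{2}{r}\A + \text{(small errors)}$, with error terms that are integrable thanks to $\int \G = O(\delta)$ from \eqref{delta} and the pointwise decay of the coefficients $r^{-1-2\sigma_0}$, $r^{-3-2\sigma_0}$, $r^{-1-\sigma_0}\lambda^{1/2}$ when combined with the averaged control of $\M+\RR+\A$ coming from \eqref{boundary}. These inputs give $\E(r) \geq -\eta(r)$ with $\eta(r)\to 0$ as $r\to\infty$, and indeed that $\E$ has a finite nonnegative limit at infinity up to $O(\delta)$.

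The second step is a case analysis on the relative sizes of $\A$ and $\lambda\M$ inside $[C_1, 10C_1]$. Suppose first that $\A(r) \geq c\, C_2^2 (1+\lambda) \M(r)$ throughout $[C_1, 10 C_1]$ for a suitable $c$; then \eqref{flux-dynamics} gives $\F' \geq \tfrac{1}{2}\A$ (after absorbing the error terms using $\int \G = O(\delta)$ and the bound $|\F|\leq \M^{1/2}\RR^{1/2}$ from \eqref{flux-cauchy}). Integrating yields a sharp negative lower bound on $\F/\M$ of size $-C_2(1+\sqrt\lambda)$, which through $\M' = 2\F + O(r^{-1-2\sigma_0})\M$ produces the exponential-decay conclusion \eqref{mass-growth}. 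In the opposite case, there is some $r_1 \in [C_1, 10C_1]$ with $\A(r_1) \leq c\, C_2^2 (1+\lambda)\M(r_1)$; then the lower bound on $\E$ from Step~1 gives $\RR(r_1) + \lambda \M(r_1) \leq C(C_1,C_2)(1+\lambda)\M(r_1) + O(\delta)$, so that all three energies are comparable at $r_1$. Iterating the mass and flux equations forward and backward over a controlled interval $[r_0/2, 4 r_0]$ around a well-chosen $r_0 \in [C_1, C(C_1,C_2)]$, using \eqref{flux-cauchy} to bound $|\F|$ by $\M$ and $\RR$, propagates this into the pointwise estimate \eqref{bounded-mass}.

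The main obstacle is the polynomial-in-$\lambda$ dependence $\lambda^{-C(C_1,C_2)}\delta$ in the boundedness scenario \eqref{bounded-mass}: a crude Gronwall on the system \eqref{mass-dynamics}--\eqref{flux-dynamics} would only yield exponential dependence in $\sqrt\lambda$ over an interval of length $C(C_1,C_2)$. Avoiding this requires using the near-monotonicity of $\E$ as a quantitative device, not just a sign constraint, so that the loss in $\M$ as we propagate between radii $r$ and $r'$ is polynomial in $r'/r$ and $\lambda$ rather than exponential in $\sqrt\lambda(r'-r)$. A secondary but genuine difficulty is that $\G$ is only globally integrable, not locally small, so the error terms in the equations of motion must be absorbed by dyadic decomposition and Cauchy--Schwarz, carefully paired with the $\delta$ on the right-hand side of \eqref{bounded-mass} to close the estimates.
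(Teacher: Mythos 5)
You correctly identify the near-conserved quantity $\E := \RR + \lambda\M - \A$ (the paper's Pohozaev flux $\P$), and your heuristic — the dichotomy tracks whether one sits in the classically forbidden regime $\A \gg \lambda\M$ — matches the paper's. You also flag the right technical obstacle (getting $\lambda^{-C}$ rather than $e^{C\sqrt\lambda}$ in the boundedness constant). However there are two substantial gaps, and one direction slip. The slip: Step 1 claims a lower bound $\E(r) \geq -\eta(r)$, but the useful consequence of $\E' \geq -\text{errors}$ together with $\E(\infty)=0$ is an \emph{upper} bound on $\P$ (this is Lemma~\ref{flux-ceiling}); and indeed it is the upper bound that your Case 2 silently uses when deducing $\RR + \lambda\M \leq \A + O(\delta)$. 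The first genuine gap is in Case~1: to obtain \eqref{mass-growth} you need $\F/\M \leq -C_2(1+\sqrt\lambda)$ \emph{throughout} $[C_1, 10C_1]$, and integrating $\F' \geq \tfrac12\A$ backwards is useless without control of $\F$ at the outer end. Your hypothesis $\A \gtrsim C_2^2(1+\lambda)\M$ is assumed only on $[C_1, 10C_1]$, and Cauchy--Schwarz plus the Pohozaev bound permit $\F(10C_1)/\M(10C_1)$ to be positive, of size up to roughly $C_2\sqrt{1+\lambda}$; nothing forces the integrated drop in $\F/\M$ to overcome that near the outer end of the interval. In the paper, the negativity of $\F$ is propagated all the way from $r=\infty$ (where $\F\to 0$ by \eqref{boundary}) through the dimensionless variables $\mu = r\delta/\M$, $\alpha = -r\F^*/\M$, $\beta = -r^2\P^*/\M$; Lemma~\ref{exp-growth} shows that once the ignition condition $\mu \leq 1/C_1$, $\alpha \geq C_2^2$ holds at any $r \geq C_3$, it is self-sustaining down to $C_1$, and that is what guarantees \eqref{mass-growth} on all of $[C_1, 10C_1]$.

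The second gap is sharper. In Case~2 you conclude that at some $r_1$ all of $\RR$, $\A$ are comparable to $(1+\lambda)\M$ up to $O(\delta)$, and say this propagates to \eqref{bounded-mass}. But \eqref{bounded-mass} demands $\M[r] \leq C(\lambda^{-C}+1)\delta$, an \emph{absolute} smallness of $\M$ measured against $\delta$, and comparability among $\RR$, $\A$, $\lambda\M$ carries no information about how large $\M(r_1)$ actually is. Your sketch never introduces a quantity that measures $\M$ against $\delta$ and couples to the decay at infinity. The paper does: $\mu = r\delta/\M$ satisfies $\mu\to\infty$ as $r\to\infty$ (this is essentially \eqref{boundary}), Lemma~\ref{mbl} shows that under the no-ignition hypothesis \eqref{mass-bound} one has $\mu(r) > 1/C_4^2$ for $r \geq C_4(1+\lambda^{-1/2})$, and a Gronwall argument using $\alpha \leq C_2^2$ then propagates this to \eqref{chump-change}, $\mu(r) \geq C^{-1}\lambda^C$ on $[C_3, C_4(1+\lambda^{1/2})]$, which unwinds to $\M \lesssim \lambda^{-C} r\delta$, i.e.\ \eqref{bounded-mass}. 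This is the mechanism that produces the polynomial $\lambda$-dependence you correctly identify as the main difficulty; the phrase ``using the near-monotonicity of $\E$ as a quantitative device'' does not by itself supply it, because $\E$ alone does not carry the $\delta$-normalization that $\mu$ does.
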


The proof of this Lemma is lengthy and will occupy the remainder of this section.  

\subsection{Heuristics}

We first describe in informal terms why one would expect there to be a dichotomy of the type asserted in
Lemma \ref{ode}.  We first observe that \eqref{cons-dynamics} is an approximate monotonicity formula
for the quantity $\RR+ \lambda \M - \A$.  Since this quantity is zero at infinity
by \eqref{boundary}, we expect it to be negative (up to errors of size of $O(\delta)$, thanks to \eqref{delta})
at other values of $r$.  Thus we have a lower bound on $\A$, heuristically of the form
\begin{equation}\label{ang-grow-heuristic}
 \A \geq \RR + \lambda \M- O(\delta).
\end{equation}
This converts \eqref{flux-dynamics} into a monotonicity formula as well, roughly of the form
\begin{equation}\label{flux-grow-heuristic}
 \frac{d}{dr} \F\geq 2 \RR- O(\delta).
\end{equation}
Now the dichotomy in Lemma \ref{ode} rests on whether the forcing terms such as $\RR$ or
$\frac{2}{r}\A$ in the monotonicity formulae are large enough to dominate the error terms
such as $O(\delta)$.  If this domination never occurs (or only occurs when $r$ is relatively small),
then one ends up in the ``boundedness'' scenario \eqref{bounded-mass}.  On the other hand, if at least one of the forcing
terms becomes large, one expects that this will eventually force the other forcing term to be large as well 
(as $r$ decreases towards $C_1$), causing a positive feedback loop which will eventually lead to the
``exponential growth'' scenario \eqref{mass-growth}.  For instance, if $\RR$ gets large (compared to $\M$
and $\delta$), this should force $\M$ to be similarly large thanks to \eqref{flux-grow-heuristic}; 
from \eqref{mass-dynamics} one then expects $\M$ to grow exponentially (but slightly less fast than
$\F$) as $r$ decreases; from \eqref{flux-cauchy} one then expects $\RR$ to stay large, thus creating a 
self-sustaining feedback loop.  Similarly, if $\A$ gets large, then from \eqref{cons-dynamics}
we expect the quantity $\RR+ \lambda \M - \A$ to get large and negative, which adds an additional
positive term to the right-hand side of \eqref{flux-grow-heuristic}, which as mentioned earlier should cause
$\F$, $\M$, and $\RR$ to grow; using \eqref{ang-grow-heuristic}, this should eventually sustain
the growth of $\A$, thus creating another self-sustaining feedback loop.  If these loops start far enough
away from the origin (e.g. at $r > C(C_1,C_2)$) then one might hope to expect the growth to become exponential
with growth rate $C_2$ by the time $r$ reaches $C_1$, which is the ``exponential growth'' half of the dichotomy.

\begin{remark} An oversimplified
model of this dichotomy can be seen by considering plane wave solutions to the Helmholtz equation $\Delta u = \lambda u$ in 
the (flat) cylinder $\R/2\pi\Z \times \R^+ := \{ (\theta,r): \theta \in \R/2\pi\Z, r \in \R^+ \}$, which should be thought of
as a caricature of polar co-ordinates; we assume some boundedness on $u$ at $r=+\infty$ (e.g. $u(r) = O(\delta)$ for
sufficiently large $r$) but not when $r$ is small.  One has 
``bounded'' solutions of the form $u(\theta,r) = C e^{ia\theta} e^{ibr}$ where $a$ is an integer such that
$|a|^2 \leq \lambda$ and $a^2 + b^2 = \lambda$ and $C = O(\delta)$.  Then there are ``exponential growth'' solutions of the form 
$u(x,y) = C e^{ia\theta} e^{-br}$ where $|a|^2 > \lambda$ and $a^2 - b^2 = \lambda$ and $C$ is arbitrarily large.
Thus one expects the solution to stay bounded if the ``angular energy'' $a^2$ stays smaller than $\lambda$,
and to grow exponentially otherwise.  This dichotomy roughly corresponds in our setting to
the case when $\A$ stays controlled by $\RR + \lambda \M$ (which will basically
ensure the bounded scenario) or is larger than this quantity (which will ensure the exponential growth
scenario).  See \cite{bz2} for some rigorous formulations of these heuristics, where the cylinder has now been
replaced by a stadium.
\end{remark}

\subsection{Step 1: a Pohozaev bound}

We now begin the rigorous proof of Lemma \ref{ode}.  
We begin by giving a rigorous version of \eqref{ang-grow-heuristic}, based primarily on \eqref{cons-dynamics} and the positivity of $\A$.

\begin{lemma}[Pohozaev bound]\label{flux-ceiling}  Let $\P$ denote the ``Pohozaev flux''
$$ \P[r] := \lambda \M[r] + \RR[r] - \A[r].$$
Then for all $r \geq C_1$, we have
$$\P[r] \leq O((r^{-1} + \lambda^{1/2})) \delta + O(r^{-2-2\sigma})\M[r].$$
\end{lemma}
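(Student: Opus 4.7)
Plan: The strategy is to treat \eqref{cons-dynamics} as a quasi-monotonicity formula for $\P$. The leading right-hand side $\tfrac{2}{s}\A$ is non-negative, so $\P$ is (up to controlled errors) non-decreasing in $s$. Combined with \eqref{boundary}, which supplies a sequence $R_k\to\infty$ along which $\M,\RR,\A\to 0$ and hence $\P[R_k]\to 0$, this pins $\P$ at $0$ at infinity and produces an upper bound for $\P[r]$ in terms of integrated errors from $r$ out to infinity. Those errors are in turn controlled by \eqref{delta} and by the $H^{0,-1/2-\sigma}$-normalization of $u$ translated into a bound on $\int s^{-1-2\sigma}\M(s)\,ds$ via the co-area formula \eqref{polar}.

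First I would reorganize the error terms in \eqref{cons-dynamics} for $r\ge C_1$ with $C_1=C_1(M,A,\sigma_0)$ sufficiently large. The error $O(r^{-1-2\sigma_0})\A$ is bounded by a small fraction of the main term $\tfrac{2}{r}\A$ and absorbed. Using $\RR+\lambda\M = \P+\A$, the error $O(r^{-1-2\sigma_0})(\RR+\lambda\M)$ splits into a further absorbable $\A$-piece plus an $O(r^{-1-2\sigma_0})\P$ piece; the latter is eliminated by multiplying through by the Grönwall-type integrating factor $\psi(r):=\exp\!\bigl(-\!\int_r^\infty Cs^{-1-2\sigma_0}\,ds\bigr)$, which lies in a fixed interval $[c_0,1]$ throughout $r\ge C_1$. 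Integrating the resulting clean inequality
\begin{equation*}
\tfrac{d}{dr}(\psi\P)\ \ge\ \tfrac{\psi}{r}\A\ -\ C\psi\,r^{-3-2\sigma_0}\M\ -\ C\psi\,(r^{-1}+\lambda^{1/2})\G
\end{equation*}
from $r$ to $R_k$, dropping the non-negative $\A$-term, and sending $k\to\infty$ gives
\begin{equation*}
\P[r]\ \le\ C(r^{-1}+\lambda^{1/2})\,\delta\ +\ C\int_r^\infty s^{-3-2\sigma_0}\M(s)\,ds,
\end{equation*}
where the first term follows from \eqref{delta} via $\int_r^\infty(s^{-1}+\lambda^{1/2})\G(s)\,ds\le(r^{-1}+\lambda^{1/2})\int_{R_0}^\infty\G\le O((r^{-1}+\lambda^{1/2})\delta)$.

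The hard part is converting the tail integral $\int_r^\infty s^{-3-2\sigma_0}\M(s)\,ds$ into the asserted form $O(r^{-2-2\sigma})\M[r]$. I would split into two regimes on dyadic annuli $[2^jr,2^{j+1}r]$. On scales where $\M$ remains comparable to $\M[r]$ (the generic case, governed by \eqref{mass-dynamics} and \eqref{flux-cauchy}, which together give $|(\log\M)'|\lesssim(\RR/\M)^{1/2}+O(s^{-1-2\sigma_0})$), the contribution from such an annulus is at most $C(2^jr)^{-2-2\sigma_0}\M[r]$ and the geometric sum in $j$ yields $O(r^{-2-2\sigma_0})\M[r]$, which is majorized by $O(r^{-2-2\sigma})\M[r]$ since $\sigma$ may be taken $\le\sigma_0$. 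On scales where $\M$ instead at least doubles across a dyadic interval, \eqref{flux-cauchy} forces $\RR\gtrsim \M/s^2$ and, after tracing this back via the same quasi-monotonicity argument, gives $\P[r]\gtrsim\M[r]/r^2$, so that the claimed upper bound $O(r^{-2-2\sigma})\M[r]$ already dominates $\P[r]$ trivially. The main obstacle is ensuring uniform constants across the patching between these two regimes; this follows because the cross-over occurs at scales where $\psi$ changes by only a bounded factor and where the inputs $\int s^{-1-2\sigma}\M\lesssim 1$ and $\int\G\lesssim\delta$ remain finite, so no unbounded amplification enters.
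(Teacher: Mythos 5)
Your opening steps (absorbing the $O(r^{-1-2\sigma_0})\A$-error into $\tfrac{2}{r}\A$, trading the $O(r^{-1-2\sigma_0})\P$-error for a bounded Grönwall integrating factor, integrating from $r$ to infinity against the boundary condition \eqref{boundary} and using \eqref{delta}) are sound, and produce the intermediate estimate
$$\P[r]\ \le\ O\bigl((r^{-1}+\lambda^{1/2})\bigr)\delta\ +\ C\int_r^\infty s^{-3-2\sigma_0}\M(s)\,ds.$$
The gap is in the conversion of the tail integral to $O(r^{-2-2\sigma})\M[r]$, and it is a genuine one: at this point in the argument nothing prevents $\M(s)$ for $s>r$ from being much larger than $\M[r]$ (the global normalization only controls $\int s^{-1-2\sigma}\M(s)\,ds$, not pointwise values relative to $\M[r]$), so the tail integral is not in general comparable to $\M[r]$ times a power. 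Your dyadic case distinction does not patch this. The ``generic'' case silently assumes $\M(s)\sim\M[r]$ on all dyadic scales simultaneously, which is not a per-annulus property. In the ``doubling'' case, the deduction from $\RR[s^*]\gtrsim\M[s^*]/(s^*)^2$ to $\P[r]\gtrsim\M[r]/r^2$ fails on two counts: $\P=\lambda\M+\RR-\A$ contains $-\A$, so large $\RR$ does not give large $\P$; and the quasi-monotonicity of $\P$ in $r$ only yields $\P[r]\le\P[s^*]+\text{errors}$, not a lower bound on $\P[r]$. Moreover, even if you did obtain a lower bound on $\P[r]$, it would not ``trivialize'' the desired upper bound --- lower bounds on $\P$ say nothing about upper bounds.

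The paper avoids the tail integral altogether by modifying the quantity being propagated \emph{before} integrating. Set $\P^* := \P - C_0\, r^{-2-2\sigma_0}\M$ for a large constant $C_0$. When you differentiate $\P^*$, the $\tfrac{d}{dr}(C_0 r^{-2-2\sigma_0}\M)$ term --- evaluated using \eqref{mass-dynamics} and then controlling $\F$ by the Cauchy--Schwarz inequality \eqref{flux-cauchy} in the form $|\F|\le r^{-1-\sigma_0}\M + r^{1+\sigma_0}\RR$ and the substitution $\RR=\P^*+C_0 r^{-2-2\sigma_0}\M-\lambda\M+\A$ --- produces a term $\sim C_0\, r^{-3-2\sigma_0}\M$ with a \emph{favorable} sign. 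For $C_0$ and $C_1$ large this dominates all the $O(r^{-3-2\sigma_0})\M$ errors, and the net $\A$-coefficient also becomes nonnegative, leaving only
$$\frac{d}{dr}\P^*\ \ge\ O(C_0 r^{-1-\sigma_0})\P^*\ +\ O(r^{-1}+\lambda^{1/2})\,\G.$$
Grönwall plus \eqref{boundary} and \eqref{delta} then give $\P^*[r]\le O\bigl((r^{-1}+\lambda^{1/2})\bigr)\delta$ directly, and the asserted bound on $\P[r]$ follows immediately from $\P=\P^*+C_0 r^{-2-2\sigma_0}\M$, using $\sigma\le\sigma_0$. The $O(r^{-2-2\sigma})\M[r]$ term in the lemma is thus purely an artifact of the algebraic modification, not the value of a tail integral --- this is the device you needed and did not find.
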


\begin{proof}
We rewrite \eqref{cons-dynamics} in terms of the Pohozaev flux as
$$
\frac{d}{dr}\P= \frac{2}{r} \A + O( r^{-1-2\sigma_0})
(2\A + \P) + O( r^{-3-2\sigma_0}) \M + O( (r^{-1} + \lambda^{1/2})) \G,
$$
To eliminate the $O( r^{-3-2\sigma_0}) \M$ error\footnote{In dimensions $n \geq 4$ we can use the 
$\A$
term to control the $\M$ error since $(n-1)(n-3)/4 > 0$ in that case; this leads to some minor simplifications in the proof of Lemma \ref{ode}.} we shall consider the modified Pohozaev flux
$$ \P^*:= \P- C_0 r^{-2-2\sigma_0}\M$$
for constant $C_0 \gg 1$, and observe using \eqref{mass-dynamics} that
\begin{align*}
\frac{d}{dr} \P^* &= \frac{2}{r} \A + O( r^{-1-2\sigma_0})
(2\A + \P^* + C_0 r^{-2-2\sigma_0} \M) \\
&\quad + C_0 r^{-2-2\sigma_0} (\F + O(r^{-1-2\sigma_0}) \M )\\
&\quad + (2+2\sigma) C_0 r^{-3-2\sigma_0} \M + O( r^{-3-2\sigma_0}) \M
 + O( r^{-1} + \lambda^{1/2}) \G.
\end{align*}
We can use \eqref{flux-cauchy} to bound
$$ |\F| \leq r^{-1-\sigma_0} \M + r^{1+\sigma_0} \RR
=r^{-1-\sigma_0}(1+C_0) \M + 2r^{1+\sigma_0} \A + r^{1+\sigma_0} \P^*.$$
If $C_0$ is suitably large (and $C_1 \leq r$ is also suitably large) then the net $\M$ term
on the right-hand side is positive, as is the net $\A$ term.  Thus we have
$$ \frac{d}{dr} \P^*\geq O( C_0 r^{-1-\sigma_0})\P^* +
 O(r^{-1} + \lambda^{1/2}) \G.$$
On the other hand, from \eqref{boundary} we know that $\P^*[r] \to 0$ as $r \to \infty$.  The claim
then follows from \eqref{delta} and Gronwall's inequality.
\end{proof}

\subsection{Step 2: Dimensionless formulation}

In order to analyze our system further it is convenient to make a number of changes of variable to a more scale-invariant or ``dimensionless'' formulation.
At present we have three equations of motion
\eqref{mass-dynamics}, \eqref{flux-dynamics}, \eqref{cons-dynamics} and one inequality
\eqref{flux-cauchy} (as well as the positivity properties \eqref{mra}) for four unknowns.  We shall now use
the inequality \eqref{flux-cauchy} to replace two of the unknowns $\RR$, $\A$ by a single unknown
$\P$, at the cost of replacing the equalities in \eqref{flux-dynamics}, \eqref{cons-dynamics} 
by inequalities.  Indeed, we can rewrite \eqref{flux-dynamics} using \eqref{flux-cauchy} as
\begin{align*}
\frac{d}{dr} \F &= 2 \RR- \P +
O( r^{-2-2\sigma_0}) \M + O( \lambda^{1/2} r^{-1-2\sigma_0}) \M 
+ O( \G) \\
&\geq 2 \frac{\F^2}{\M} - \P 
+
O( r^{-2-2\sigma_0})\M + O( \lambda^{1/2} r^{-1-2\sigma_0}) \M 
+O( \G).
\end{align*}
As for \eqref{cons-dynamics}, we rewrite it (for $C_1$ sufficiently large) as
\begin{align*}
\frac{d}{dr} \P &= \frac{2}{r} \A + O( r^{-1-2\sigma_0})
(2\A+ \P) + O( r^{-3-2\sigma_0})\M  + O( r^{-1} + \lambda^{1/2}) \G \\ 
&= \left(\frac{2}{r} + O( r^{-1-2\sigma_0} ) \right) \A +
O( r^{-1-2\sigma_0}) \P + O( r^{-3-2\sigma_0}) \P + O(r^{-1} + \lambda^{1/2}) \G 
\end{align*}
and then write
$$ \A= \RR+ \lambda \M - \P
\geq \frac{\F^2}{\M} + \lambda \M - \P.$$
We thus have the new equations of motion
\begin{align*}
\frac{d}{dr} \M &= 2 \F+ O( r^{-1-2\sigma_0})\M \\
\frac{d}{dr} \F &\geq 2 \frac{\F^2}{\M} - \P +
O( r^{-2-2\sigma_0})\M + O( \lambda^{1/2} r^{-1-2\sigma_0})\M  + O( \G)\\
\frac{d}{dr} \P &\geq \left(\frac{2}{r} + O(r^{-1-2\sigma_0})\right)
(\frac{\F^2}{\M} + \lambda \M - \P)
+ O( r^{-1-2\sigma_0}) \P\\&\quad + O( r^{-3-2\sigma_0}) \M + O(r^{-1} + \lambda^{1/2}) \G.
\end{align*}
To analyze these equations, we now adjust the quantities $\F$, $\P$ slightly to handle the forcing
terms involving $\G$.  Define
$$ \F^*[r] := \F[r] - C \int_r^\infty |\G[s]|\ ds = \F[r] + O(\delta)$$
and
$$ \P^*[r] := \P[r] - C \int_r^\infty (s^{-1} + \lambda^{1/2}) |\G[s]|\ ds
= \P[r] + O(r^{-1} + \lambda^{1/2}) \delta,$$
where we have used \eqref{delta}.
If the constant $C$ appearing above is large enough, then we can dominate the $\G[r]$ 
forcing terms on the right-hand sides of the above equations to obtain
\begin{align*}
\frac{d}{dr} \M &= 2 \F + O( r^{-1-2\sigma_0})\M \\
\frac{d}{dr} \F^* &\geq 2 \frac{\F^2}{\M} - \P +
O( r^{-2-2\sigma_0})\M + O( \lambda^{1/2} r^{-1-2\sigma_0})\M\\
\frac{d}{dr} \P^* &\geq \left(\frac{2}{r} + O(r^{-1-2\sigma_0})\right)
(\frac{\F^2}{\M} + \lambda \M - \P)
+ O( r^{-1-2\sigma_0}) \P + O( r^{-3-2\sigma_0}) \M.
\end{align*}
Writing $\F = \F^* + O( \delta )$ and so $\F^2 \geq (\F^*)^2
+ O( \delta)\F^*$, and similarly writing $\P =\P^*
+ O(r^{-1} + \lambda^{1/2})\delta$, we thus obtain
\begin{align}
\frac{d}{dr} \M &= 2 \F^* + O( r^{-1-2\sigma_0})\M +O(\delta)\label{mass-new}\\
\frac{d}{dr} \F^* &\geq 2 \frac{(\F^*)^2+O(\de)\F^*}{\M} - \P^* +
O( 1+\la^{1/2} r) r^{-2-2\sigma_0}\M + O(r^{-1} + \lambda^{1/2})\delta
\label{flux-new}\\
\frac{d}{dr} \P^* &\geq \left(\frac{2}{r} + O(r^{-1-2\sigma_0})\right)
\left(\frac{(\F^*)^2+O(\de)\F^*}{\M} +\lambda \M - \P^*\right)
+ O( r^{-1-2\sigma_0}) \P^* \\&+ O( r^{-3-2\sigma_0}) \M+O(r^{-2}+\la^{1/2}r^{-1})\de.
\label{moraw-new}
\end{align}
To analyze this system of equations, it is convenient to work in the ``dimensionless'' co-ordinates\footnote{Admittedly,
these co-ordinates have a singularity when $\M[r] = 0$, but this will not be relevant for us as we shall only perform
the remainder of the analysis in the case when $\mu$ is small (and hence $\M$ is large).}
\begin{align*}
\mu(r) &:= r \frac{\delta}{\M[r]} \\
\alpha(r) &:= - r \frac{\F^*[r]}{\M[r]} \\
\beta(r) &:= - r^2 \frac{\P^*[r]}{\M[r]}
\end{align*}
and to introduce the ``dimensionless'' derivative $D := -r \frac{d}{dr}$.  We then have
\begin{align*}
D\mu &= - \mu + r^2 \frac{\delta}{\M^2} \frac{d}{dr} \M \\
&= - \mu - 2r^2 \frac{\delta}{\M^2} \F + O( r^{1-2\sigma_0}) \frac{\delta}{\M} 
+ O( r^2\frac{\delta^2}{\M^2} ) \\
&= -\mu - 2 \alpha \mu + O( r^{-2\sigma_0}) \mu + O( \mu^2)
\end{align*}
and
\begin{align*}
D\alpha &= -\alpha + \frac{r^2}{\M^2} (\M \frac{d}{dr} \F^*
- \F^* \frac{d}{dr} \M ) \\
&\geq - \alpha + \frac{r^2}{\M^2}
\left(2(\F^*)^2 + O( \delta) \F^* - \M \P^* 
+ O( 1 + r \lambda^{1/2}) r^{-2-2\sigma_0} \M^2 \right.\\&\quad\left. + O(r^{-1} + \lambda^{1/2}) \de \M
- 2 (\F^*)^2 + O( r^{-1-2\sigma_0})\M\F^*
+ O( \delta) \F^*\right) \\
&= \beta-\alpha + O( \mu) \alpha + O(1 + r \lambda^{1/2}) r^{-2\sigma_0}  + O(1 + \lambda^{1/2} r )\mu 
+ O( r^{-2\sigma_0}) \alpha
\end{align*}
and
\begin{align*}
D\beta &= -2\beta + \frac{r^3}{\M^2} (\M \frac{d}{dr} \P^*
- \P^*\frac{d}{dr} \M ) \\
&\geq -2\beta + \frac{r^3}{\M^2} \left( \frac{2}{r} + O( r^{-1-2\sigma_0} )
 \left((\F^*)^2 + O( \delta) \F^*
+ \lambda\M^2 -\M \P^*\right)\right. \\
&\quad \left.+ O( r^{-1-2\sigma_0})\M\P^*+ O( r^{-3-2\sigma_0})\M^2 
+ O(r^{-2} + \lambda^{1/2} r^{-1}) \de \M\right. \\
&\quad\left. - 2\P^*\F^* + O( r^{-1-2\sigma_0})\M\P^* 
+ O( \delta)\P^*\right)\\
&= -2\beta + (2 + O( r^{-2\sigma_0})) ( \alpha^2 + O( \mu) \alpha 
+ \lambda r^2 + \beta) + O( r^{-2\sigma_0}) \beta + O( r^{-2\sigma_0} )\\& \quad
+ O(1 + \lambda^{1/2} r) \mu 
 - 2 \alpha \beta + O( r^{-2\sigma_0}) \beta + O( \mu) \beta \\
&\geq 2\alpha(\alpha-\beta) + \lambda r^2 
+ O( \mu (1 + \lambda^{1/2} r + |\alpha| + |\beta|) ) + 
O( r^{-2\sigma_0} (1 + |\beta| + \alpha^2 ) ).
\end{align*}

Meanwhile, from \eqref{flux-ceiling} we have
$$ \P^* \leq O(r^{-1} + \lambda^{1/2}) \delta + O(r^{-2-2\sigma})\M $$
and hence
$$ \beta \geq - O(1 + \lambda^{1/2}r) \mu - O(r^{-2\sigma_0}).$$
To summarize, the functions $\alpha(r), \beta(r), \mu(r)$ obey the differential inequalities
\begin{align}
D\alpha &\geq \beta-\alpha - O( (\mu + r^{-2\sigma_0}) (1 + \lambda^{1/2}r + |\alpha| ) ) 
\label{alpha-eq}\\
D\beta  &\geq 2\alpha(\alpha-\beta) + \lambda r^2 - O( \mu (1 + \lambda^{1/2} r + |\alpha| + |\beta|) ) -
O( r^{-2\sigma_0} (1 + |\beta| + \alpha^2 ) ) \label{beta-eq}\\
D\mu &= -\mu - 2 \alpha \mu + O( r^{-2\sigma_0} \mu ) + O( \mu^2 )\label{mu-eq}\\
\beta &\geq - O(1 + \lambda^{1/2}r) \mu - O(r^{-2\sigma_0}).\label{beta-floor}
\end{align}

\subsection{More heuristics} 

Recall that we are trying to establish a dichotomy between boundedness and exponential growth.
In our new co-ordinates, boundedness roughly corresponds to $\mu$ being bounded away from zero for
small values of $r$, while exponential growth corresponds to $\alpha$ being large (as can be seen either from
\eqref{mu-eq} or \eqref{mass-dynamics}).  So, heuristically speaking, we have to rule out
the scenario in which $\mu$ is small and $\alpha$ is also small.  If $\mu$ is very small, however, then we
expect to be able to ignore most of the error terms in \eqref{alpha-eq}, \eqref{beta-eq}, and thus we reduce
(heuristically) to the model equations
\begin{equation}\label{model-eqs}
 D\alpha \geq \beta - \alpha; \quad D\beta \geq 2\alpha(\alpha-\beta) + \lambda r^2; \quad \beta \geq 0.
\end{equation}
The intuition here is that as $r$ moves backwards from $\infty$ to $C_1$, the first inequality will lift up
$\alpha$ if $\alpha$ is below $\beta$, while the second inequality will lift up $\beta$ if $\beta$ is below
$\alpha$ (note that this forces both $\alpha$ and $\beta$ to be non-negative, by the third equation); indeed,
when $\lambda r^2$ is large there some additional lift applied to $\beta$ (and thus indirectly to $\alpha$,
by the first equation).  These equations then suggest that if $\alpha$ and $\beta$ are both large at some
radius $r$, then they will stay large for all smaller radii also; this is what will lead to the exponential growth
scenario.  On the other hand, if $\alpha$ and $\beta$ stay small for all time then we can hope to obtain the boundedness
scenario.

\subsection{Step 3: A condition for igniting exponential growth}

We now make the above heuristics rigorous.  

\begin{lemma}[Exponential growth is self-sustaining]\label{exp-growth}  Suppose that $r \geq C_1$ is such that
$$ \mu(r) \leq 1/C_1 \hbox{ and } \alpha(r) \geq C_2^2.$$
Then for all $C_1 \leq s \leq r$ we have
\begin{equation}\label{mush}
 \mu(s) \leq 1/C_1 \hbox{ and } \alpha(s) \ge 2C_2.
\end{equation}
\end{lemma}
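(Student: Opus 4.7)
The plan is to run a continuity/bootstrap argument in the backward variable $t := -\log s$ (so $D = d/dt$), propagating the conclusions down from $s = r$ to $s = C_1$. I consider the closed, non-empty set
$$ J := \{ s_* \in [C_1, r] : \mu(s) \leq 2/C_1 \text{ and } \alpha(s) \geq C_2 \text{ for all } s \in [s_*, r] \}, $$
and will show that the strengthened conclusions \eqref{mush} actually hold throughout $J$; this strictly improves the bootstrap and so forces $J = [C_1, r]$ by continuity. Control of $\mu$ is immediate: under the bootstrap, \eqref{mu-eq} gives
$$ D\mu \leq -(1 + 2C_2)\mu + O\bigl((1/C_1 + C_1^{-2\sigma_0})\mu\bigr) \leq -2C_2\, \mu $$
provided $C_1$ is chosen sufficiently large depending on $C_2$, and integrating backward from $r$ yields $\mu(s) \leq \mu(r)(s/r)^{2C_2} \leq 1/C_1$.

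The real work is propagating the lower bound on $\alpha$, for which the main device is the almost-conserved quantity $H := \alpha^2 + \beta$. The key algebraic observation is that the leading terms in \eqref{alpha-eq}--\eqref{beta-eq} cancel exactly in $DH$: multiplying \eqref{alpha-eq} by $2\alpha \geq 0$ (valid since $\alpha \geq C_2 > 0$ on $J$) and adding to \eqref{beta-eq} gives
$$ DH \geq 2\alpha(\beta - \alpha) + 2\alpha(\alpha - \beta) + \lambda s^2 - O(\mathcal E) = \lambda s^2 - O(\mathcal E). $$
Using \eqref{beta-floor} to bound the negative part of $\beta$ and the bootstrap to control $\mu$, the combined error reduces to $\mathcal E \leq (1/C_1 + C_1^{-2\sigma_0})(1 + \max(H,0) + \lambda s^2)$, leaving a Gronwall-type inequality
$$ DH \geq \tfrac{1}{2}\lambda s^2 - (1/C_1 + C_1^{-2\sigma_0})(1 + \max(H,0)). $$
Integrating backward from $r$ and using the initial bound $H(r) \geq C_2^4 - O((1+\lambda^{1/2}r)/C_1)$ that comes from applying \eqref{beta-floor} at $s = r$, this will yield $H(s) \geq \tfrac{1}{16}\max(C_2^4, \lambda r^2) - O(1) \geq C_2^4/32$ throughout $J$, provided $C_1$ is chosen sufficiently large.

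Given this lower bound on $H$, the conclusion $\alpha(s) \geq 2C_2$ follows by a simple contradiction. If $s_0 \in [C_1, r)$ were the largest radius with $\alpha(s_0) = 2C_2$, then by construction $\alpha$ would be non-increasing in $t$ at $t_0 := -\log s_0$, giving $D\alpha(s_0) \leq 0$. But $\alpha(s_0)^2 = 4C_2^2$ is negligible compared to $H(s_0) \geq C_2^4/32$, so $\beta(s_0) = H(s_0) - \alpha(s_0)^2 \geq C_2^4/64$, and \eqref{alpha-eq} then yields
$$ D\alpha(s_0) \geq \beta(s_0) - \alpha(s_0) - O\bigl((1/C_1 + C_1^{-2\sigma_0})(1 + \lambda^{1/2}s_0 + C_2)\bigr) \geq C_2^4/128 > 0, $$
the desired contradiction. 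The principal technical obstacle lies in the $H$-estimate of Step 2 when $\lambda^{1/2}r$ is very large: then \eqref{beta-floor} permits $\beta(r)$ to be as negative as $-O(\lambda^{1/2}r/C_1)$, potentially overwhelming the contribution $\alpha(r)^2 \geq C_2^4$, so that $H(r)$ itself can be extremely negative. What saves us is that the accumulated gain $\int_{t_0}^t \lambda s^2\,d\tau \sim \lambda r^2 = (\lambda^{1/2}r)^2$ is quadratic in $\lambda^{1/2}r$ and rapidly dominates the linear Pohozaev deficit; the hierarchy $C_2 \ll C_1$ must be tuned carefully so that the Gronwall coefficient $(1/C_1 + C_1^{-2\sigma_0})$ multiplying $\max(H,0)$ is small enough not to let $H$ decay faster than the source term builds it up.
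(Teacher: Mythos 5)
Your overall strategy tracks the paper's: a bootstrap/continuity argument in the backward variable, control of $\mu$ via \eqref{mu-eq}, and the same key almost-monotone quantity $H=\alpha^2+\beta$ (the paper's $\kappa$) with the same cancellation of top-order terms in $DH$. The $\mu$-control and the Gronwall inequality for $H$ are fine. The gap is the claimed uniform lower bound $H(s)\geq\tfrac1{16}\max(C_2^4,\lambda r^2)-O(1)$ on all of $J$. Already at $s=r$ this is unjustified: from the hypotheses together with \eqref{beta-floor} you only have $H(r)\geq C_2^4-O\bigl((1+\lambda^{1/2}r)/C_1\bigr)$, which is very negative when $\lambda^{1/2}r\gg C_1C_2^4$, and in particular nowhere near $\tfrac1{16}\lambda r^2$. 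You invoke the accumulated source $\sim\lambda r^2$ as the saving grace, but the gain accumulated between $s$ and $r$ is only $\sim\lambda r(r-s)$, which vanishes as $s\to r$; the quadratic-beats-linear heuristic only kicks in once $r-s\gtrsim\lambda^{-1/2}$. The crossing radius $s_0$ where $\alpha$ first falls to $2C_2$ can lie inside exactly this initial window: when $\beta(r)$ is very negative, $D\alpha$ near $r$ is large and negative, so $\alpha$ can drop from $C_2^2$ to $2C_2$ over a radial window of size $O(\lambda^{-1/2})$. For such $s_0$ you have no useful lower bound on $\beta(s_0)$, and $D\alpha(s_0)$ can genuinely be $\leq 0$, so the pointwise contradiction does not close.

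The paper deliberately avoids any uniform positivity claim for $\kappa$. It restricts to the crossing interval $[s_*,r_*]$ on which $\alpha\in[2C_2,3C_2]$ (so $\alpha^2=O(C_2^2)$ is controlled), derives from \eqref{kc} and \eqref{das} that $D\alpha(s)\geq C^{-1}C_2^2+C^{-1}\lambda r(r-s)-C\lambda^{1/2}r$, observes that this lower bound is negative only on a window of length $O(\lambda^{-1/2})$ adjacent to $r$ and is at worst $-O(\lambda^{1/2}r)$ there, and then \emph{integrates} $D\alpha\,\tfrac{ds}{s}$ over $[s_*,r_*]$: the total possible decrease in $\alpha$ contributed by the bad window is $O(1)$, contradicting the required drop of $C_2$. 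The cancellation is intrinsically integrated rather than pointwise, which is precisely what your single-radius sign argument at $s_0$ cannot capture. Relatedly, your hierarchy "$C_2\ll C_1$" and "$C_1$ chosen sufficiently large" (after $C_2$) invert the paper's ordering $C_1\ll C_2$ with $C_1$ fixed first; this inversion is a symptom of the same difficulty, since no admissible choice of $C_1$ in the paper's hierarchy can make the pointwise version close.
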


\begin{proof}  
From \eqref{mu-eq} we have the somewhat crude estimate
\begin{equation}\label{mu-eq2}
D\mu(s) \leq -\frac{1}{2} \mu(s) \hbox{ whenever } \alpha(s) \geq 0 \hbox{ and } \mu(s) \leq 1/C_1.
\end{equation}
From \eqref{mu-eq2} and the continuity method it will thus suffice to show that
$$ \alpha(s) > 2C_2 \hbox{ for all } C_1< s \leq r.$$
Suppose this claim is false, then there exists $C_1 < s_* < r$ such that
\begin{equation}\label{alpha-sod}
 \alpha(s_*) = 2C_2 
\end{equation}
and
\begin{equation}\label{alpha-large}
 \alpha(s) \ge 2C_2 \hbox{ for all } s_* \leq s \leq r.
\end{equation}
From \eqref{mu-eq2} and Gronwall's inequality (and the continuity method) we conclude in particular that
\begin{equation}\label{mu-decay}
 \mu(s) \leq (s/r)^{1/2} \mu(r) \leq (s/r)^{1/2} / C_1 \leq 1/C_1 \hbox{ for all } s_* \leq s \leq r.
\end{equation}
It is convenient to introduce the quantity
$$ \kappa(s) := \alpha^2(s) + \beta(s).$$
From \eqref{alpha-eq}, \eqref{beta-eq}  we have
\begin{align*}
D\kappa(s) &= 2\alpha(s) D\alpha(s) + D\beta(s) \\
&\geq -O( (\mu(s) + s^{-2\sigma_0}) (\alpha(s) + \lambda^{1/2} s \alpha(s) + \alpha^2(s)) ) \\
&\quad + \lambda s^2 - 
O( \mu(s) (1 + \lambda^{1/2} s + \alpha(s) + |\beta(s)|) ) - O( s^{-2\sigma_0} (1 + |\beta(s)| + \alpha^2(s) ) ) \\
&\geq \lambda s^2 + O( (\mu(s) + s^{-2\sigma_0}) (1 + \alpha(s) + \alpha^2(s) + \lambda^{1/2} s+\lambda^{1/2} s \alpha(s) + |\beta(s)| ) )\\
&\geq \lambda s^2 - O( (\mu(s) + s^{-2\sigma_0}) (1 + \alpha^2(s) + \lambda s^2 + |\beta(s)| ) )\\
&\geq (1 - O(\mu(s) + s^{-2\sigma_0})) \lambda s^2 
- O( (\mu(s) + s^{-2\sigma_0}) (1 + \kappa(s) + 2\min(-\beta(s),0) ) ).
\end{align*}
But from \eqref{beta-floor} we have
$$ \min(-\beta(s),0) \leq C \mu  ( 1 + \lambda s^2 ) + C s^{-2\si_0}$$
and thus
$$D \kappa(s) \geq (1 - O((\mu + s^{-2\sigma_0})(1+\mu))) \lambda s^2 
- O( (\mu(s) + s^{-2\sigma_0}) (1 + \kappa(s) + \mu(s)) ).$$
Using \eqref{mu-decay}, we conclude
\begin{equation}\label{kappa-eq}
D\kappa(s) \geq \frac{1}{2} \lambda s^2 - O( (C_1^{-1}(s/r)^{1/2} + s^{-2\sigma_0}) (1 + \kappa(s)))
\hbox{ for all } s_* \leq s \leq r.
\end{equation}
On the other hand, from \eqref{beta-floor} we have the crude estimate
$$ \beta(r) \geq - O(1 + \lambda^{1/2}r)$$
and hence
$$ \kappa(r) \geq  C^{-1} C_2^4- C\la^{1/2} r.$$
From this, \eqref{kappa-eq}, and Gronwall's inequality we see that
\begin{equation}\label{kc}
 \kappa(s) \geq C^{-1} (\lambda r(r-s) + C_2^4- C\la^{1/2}r)
\hbox{ for all } s_* \leq s \leq r.
\end{equation}
On the other hand, from \eqref{alpha-eq} and \eqref{alpha-large}, and writing $\beta = \kappa - \alpha^2$, we
have the rather crude estimate
\begin{equation}\label{das}
D\alpha(s) \geq \kappa(s) - O(C_1^{-1}(1 +\lambda^{1/2} s)) - O(\alpha^2(s)) \hbox{ for all } s_* \leq s \leq r.
\end{equation}
From \eqref{alpha-sod} we have $\alpha(s_*) < 3C_2 < \alpha(r)$.  Thus we can find $s_* < r_* < s$ such that
$\alpha(r_*) = 3C_2$ and $2C_2 < \alpha(s) < 3C_2$ for all $s_* < s < r_*$.  Then by \eqref{kc}, \eqref{das}
we have
$$ D\alpha(s) \geq C^{-1} C_2^2 + C^{-1} \lambda r(r-s) - C \lambda^{1/2} r$$
Observe that the expression on the right is positive unless $r \geq C_2 \lambda^{-1/2}$ and
$s = r + O( C_1\lambda^{-1/2} )$, in which case it is bounded below by $- C \lambda^{1/2} r$.  From this and
the fundamental theorem of calculus we see that
$$ -C_2 = \alpha(r_*) - \alpha(s_*) \geq \int_{r_*}^{s_*} D\alpha(s)\ \frac{ds}{s} \geq - C_1,$$
a contradiction.  The claim follows.
\end{proof}

\begin{corollary}  There exists $C_3 = C(C_1,C_2) \gg C_2$ such that if 
there exists $r \geq C_3$ for which $\mu(r) \leq 1/C_1$ and $\alpha(r) \geq C_2^2$
are both true, then
we are in the exponential growth scenario \eqref{mass-growth}.
\end{corollary}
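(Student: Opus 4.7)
The plan is to apply Lemma~\ref{exp-growth} and extract, from its proof, a strong enough lower bound on $\alpha$ throughout $[C_1,10C_1]$ to imply \eqref{mass-growth}. Via \eqref{mass-dynamics} together with $\F = -\alpha\M/r + O(\delta)$, the conclusion \eqref{mass-growth} is equivalent to
$$\alpha(r) \geq \tfrac{1}{2}C_2(1+\lambda^{1/2})\,r + O(\mu(r)) + O(r^{1-2\sigma_0}) \quad\text{for all } r \in [C_1, 10 C_1].$$
Lemma~\ref{exp-growth} directly supplies $\mu(s)\leq 1/C_1$ on $[C_1,r]$, so the error terms are negligible once $C_1$ is large, and it suffices to establish $\alpha(s) \geq c\,(C_2^2+\lambda^{1/2}r)$ on $[C_1,10C_1]$ for some absolute $c>0$. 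Taking $C_3 \geq CC_1C_2/c$ with $C$ large enough that $cC_2^2 \geq 5C_1C_2$ (which is possible since $C_2 \gg C_1$), the two pieces of this bound then jointly exceed $\tfrac{1}{2}C_2(1+\lambda^{1/2})r \leq 5C_1C_2(1+\lambda^{1/2})$, as required.

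The main ingredient is the bound \eqref{kc} derived inside the proof of Lemma~\ref{exp-growth}, namely $\kappa(s) = \alpha^2(s) + \beta(s) \geq C^{-1}(\lambda r(r-s) + C_2^4 - C\lambda^{1/2}r)$ for all $s \in [C_1,r]$. For $s \in [C_1, 10C_1]$ and $r \geq C_3 \geq 20 C_1$ one has $r-s \geq r/2$; splitting into the cases $\lambda^{1/2}r \leq C_2^2$ (where $C\lambda^{1/2}r$ is absorbed into a fraction of $C^{-1}C_2^4$ once $C_2$ is large) and $\lambda^{1/2}r \geq 4C^2$ (where it is absorbed into $C^{-1}\lambda r^2/4$), and noting that the two ranges jointly cover all admissible $\lambda, r$ once $C_2 \geq 2C$, one obtains the uniform lower bound $\kappa(s) \geq c'(C_2^4 + \lambda r^2)$ on $[C_1, 10 C_1]$ for some absolute $c'>0$.

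The principal obstacle is converting this lower bound on $\kappa$ into one on $\alpha$, since $\beta$ could a priori be positive and large. I handle this by a case split at each $s \in [C_1,10C_1]$. If $\beta(s) \leq \alpha^2(s)$, then $\alpha^2(s) \geq \kappa(s)/2$ and $\alpha(s) \gtrsim C_2^2 + \lambda^{1/2}r$ directly. In the contrary case $\beta(s) > \alpha^2(s) \geq \kappa(s)/2$, so $\beta(s)$ itself is large, and \eqref{alpha-eq} rearranges (using $\mu \leq 1/C_1$ and $s^{-2\sigma_0} \leq C_1^{-2\sigma_0}$ to neutralize the errors) to $D\alpha(s') \geq \beta(s')/2$ on any sub-interval of $[s, r]$ where $\alpha(s') \leq \beta(s')/2$. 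Combining this with the monotonicity $D\kappa \geq \lambda s^2/2 - \text{errors}$ derived in the proof of Lemma~\ref{exp-growth} (which ensures that $\beta(s')$ cannot drop substantially as $s'$ decreases from $r$ down to $s$), and integrating the resulting differential inequality over the logarithmic interval $\log(r/s) \geq \log(C_3/(10C_1))$---which can be made arbitrarily large by enlarging $C_3$---forces $\alpha(s)$ to become at least comparable to $\sqrt{\beta(s)} \gtrsim \sqrt{\kappa(s)} \gtrsim C_2^2 + \lambda^{1/2} r$. In either scenario we obtain the required $\alpha(s) \geq c(C_2^2 + \lambda^{1/2}r)$ on $[C_1,10C_1]$; combining with the first paragraph yields \eqref{mass-growth} throughout $[C_1, 10C_1]$, placing us in the exponential growth scenario.
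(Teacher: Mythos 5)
Your first two paragraphs track the paper's argument closely: invoking Lemma \ref{exp-growth} for the preliminary bounds $\mu \leq 1/C_1$ and $\alpha \geq 2C_2$ on $[C_1,r]$, and then extracting the Pohozaev-type bound \eqref{kc} to conclude $\kappa(s) \gtrsim C_2^4 + \lambda r^2$ on $[C_1,10C_1]$ (with the case analysis on $\lambda^{1/2}r$ vs.\ $C_2^2$ correctly absorbing the error term $C\lambda^{1/2}r$). The reduction in your first paragraph to showing $\alpha(s)$ dominates $\tfrac12 C_2(1+\lambda^{1/2})s$, and the budgeting of $c(C_2^2+\lambda^{1/2}r)$ against this by using $C_2 \gg C_1$ and $r \geq C_3 \gg C_1 C_2$, is also sound (modulo the typo $O(r^{1-2\sigma_0})$ for $O(r^{-2\sigma_0})$, which is immaterial on the bounded interval).

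However, your third paragraph contains a genuine gap. First, the parenthetical claim that the monotonicity $D\kappa \geq \lambda s^2/2 - \text{errors}$ ``ensures that $\beta(s')$ cannot drop substantially'' is false: $\kappa=\alpha^2+\beta$ being approximately monotone does \emph{not} prevent $\beta$ from falling as long as $\alpha^2$ rises---and $\alpha^2$ rising is exactly the effect you are trying to demonstrate. Second, the final chain ``$\alpha(s)$ comparable to $\sqrt{\beta(s)} \gtrsim \sqrt{\kappa(s)}$'' is circular: the inequality $\sqrt{\beta(s)}\gtrsim\sqrt{\kappa(s)}$ requires $\alpha^2(s)$ to be a small fraction of $\kappa(s)$, which is the opposite of the conclusion $\alpha(s)\gtrsim\sqrt{\kappa(s)}$ you are heading for. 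Third, the differential inequality $D\alpha \geq \beta/2$ only holds at points where $\alpha\leq\beta/2$, so a continuity/comparison argument is needed to track whether $\alpha$ stays below the threshold all the way up from $s$ to $r/2$ (giving a contradiction by integration) or crosses it (which is then prevented by the strict positivity of $D\alpha$ at the crossing)---you gesture at the integral of $D\alpha$ but do not carry out this two-case argument. The paper avoids these pitfalls by working directly with the inequality \eqref{das}, $D\alpha(s) \geq \kappa(s) - O(\alpha^2(s)) - \text{error}$, and a \emph{fixed} threshold $\Theta_0 = C_2^2\lambda^{1/2}$: below $\Theta_0$ the term $O(\alpha^2)$ is dominated by $\kappa$ (using $C_3 \gg C_2^2$), giving a uniformly large $D\alpha$, and then a clean comparison argument closes the loop; the low-energy case $\lambda\leq 1$ is dispatched separately using only $\alpha \geq 2C_2$ from Lemma \ref{exp-growth}, avoiding the $\kappa$ machinery entirely.
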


\begin{proof}  Suppose first that we are in the low energy case $\lambda \leq 1$.  Then
from the above Lemma we have $\alpha(r) \geq 2C_2$ for all $C_1 \leq r \leq 10C_1$.
From \eqref{mu-eq} we then have
$$ D\mu(s) \leq - 2C_2 \mu(s) \hbox{ for all } C_1 \leq s \leq 10C_1$$
which by definition of $\mu(s)$, yields the mass growth estimate \eqref{mass-growth}.

Now suppose that we are in the high energy case $\lambda > 1$.  In this case we observe from \eqref{das}, \eqref{kc}
(with $r$ now being replaced by $C_3$) that
$$
D\alpha(s) \geq C^{-1} (\lambda C_3(C_3-s) + C_2^4) - C C_3\lambda^{1/2}  - O(\alpha^2(s)) 
\hbox{ for all } C_1 \leq s \leq C_3$$
and in particular
$$
D\alpha(s) \geq C^{-1} \lambda C_3^2 \hbox{ for all } C_1 \leq s \leq C_3/2 
\hbox{ such that } \alpha(s) \leq C_2^2 \lambda^{1/2}.$$
We also have $\alpha(s) \geq 2C_2$ in this region.  We thus conclude (if $C_3$ is large) that
$$ \alpha(s) \geq C_2^2 \lambda^{1/2} \hbox{ for all } C_1 \leq s \leq 10C_1$$
and then by arguing as before we obtain \eqref{mass-growth}.
\end{proof}

\subsection{Step 4: The case of no exponential growth}

In light of the above corollary, we may assume without loss of generality that for any $r \geq C_3$, at least one of
\begin{equation}\label{mass-bound}
\mu(r) \geq 1/C_1 \hbox{ or } \alpha(r) \leq C_2^2
\end{equation}
is true.

We can now remove the second half of of the dichotomy \eqref{mass-bound} at large distances.

\begin{lemma}\label{mbl} Suppose that \eqref{mass-bound} holds.  Let $C_4 = C(C_1,C_2,C_3) \gg C_3$ be a sufficiently large
constant depending on $C_1$, $C_2$ and $C_3$.  Then we have $\mu(r) > 1/C^2_4$ for all
$r \ge C_4 (1 + \lambda^{-1/2})$.
\end{lemma}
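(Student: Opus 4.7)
The plan is to argue by contradiction. Suppose there exists $r_0 \geq C_4(1+\lambda^{-1/2})$ with $\mu(r_0) \leq 1/C_4^2$. Since $C_4 \gg C_1$, this forces $\mu(r_0) < 1/C_1$, so the dichotomy \eqref{mass-bound} gives $\alpha(r_0) \leq C_2^2$. The condition $r_0 \geq C_4(1+\lambda^{-1/2})$ places us in the regime $\lambda^{1/2}r_0 \geq C_4$, where the $\lambda s^2$ forcing in the equations of motion dominates the error terms of the form $O((\mu + s^{-2\sigma_0})(1+\kappa))$ once $C_4$ is chosen large.

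First I would exhibit a long ``bad'' interval. Let $[r_0, r_1]$ be the maximal interval on which $\mu \leq 1/C_1$; the boundary condition \eqref{boundary} rules out $r_1 = \infty$ (otherwise $\frac{1}{r}\int_r^{2r}\M\,ds \geq \frac{3}{2} C_1\delta r \to \infty$), so $r_1 < \infty$ and $\mu(r_1) = 1/C_1$. On $[r_0, r_1]$ the dichotomy yields $\alpha \leq C_2^2$, and integrating \eqref{mu-eq} forward gives
\[
\log(C_4^2/C_1) = \log\mu(r_1) - \log\mu(r_0) \leq (1+2C_2^2)\log(r_1/r_0) + O(1),
\]
so $r_1/r_0 \geq (C_4^2/C_1)^{1/(1+2C_2^2)}/C$, a quantity that is arbitrarily large once $C_4$ is taken large.

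Second, I would invoke the $\kappa := \alpha^2 + \beta$ monotonicity used in the proof of Lemma \ref{exp-growth}: on the bad set, $D\kappa \geq \frac{1}{2}\lambda s^2 - O((\mu + s^{-2\sigma_0})(1+\kappa))$. Integrating backward from $r_1$ to $r_0$ with a Gr\"onwall-type multiplier (which stays $O(1)$ provided $\log(r_1/r_0) = O(C_1)$, a condition compatible with step 1 if $C_4$ is at most of order $e^{O(C_1 C_2^2)}$), and using $\kappa(r_1) \geq -O(1)$ from \eqref{beta-floor}, one obtains $\kappa(r_0) \geq c\lambda r_1^2$. Since $\alpha(r_0)^2 \leq C_2^4$ is negligible by step 1, this forces $\beta(r_0) \geq c\lambda r_1^2/2$.

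The third and decisive step uses this enormous $\kappa(r_0)$ to force the exponential-growth scenario and thereby contradict \eqref{mass-bound}. Rewriting $\beta = \kappa - \alpha^2$ in \eqref{alpha-eq} gives $D\alpha \geq \kappa - \alpha^2 - \alpha - O(1)$, so on a short backward interval from $r_0$ (on which $\kappa$ stays $\geq \kappa(r_0)/2$ by monotonicity, and the tiny change in $\log\mu$ coming from \eqref{mu-eq} keeps $\mu \leq 1/C_1$), one has $D\alpha \geq c\,\kappa(r_0)$. Integrating backward one reaches a radius $r' < r_0$ with $\log(r_0/r') = O(C_2^2/\kappa(r_0))$ at which $\alpha(r') > C_2^2$; because $\log(r_0/r')$ is minuscule, one still has $\mu(r') < 1/C_1$, and $r' \geq C_3$ since $r'$ is arbitrarily close to $r_0 \gg C_3$. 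But then the pair $(\mu(r'), \alpha(r'))$ violates the dichotomy \eqref{mass-bound}, giving the sought contradiction. The main obstacle is the simultaneous calibration of $C_4$: it must be taken large enough relative to $C_1, C_2, C_3$ and the implicit constants of Lemma \ref{equations-of-motion} for steps 2 and 3 to produce useful lower bounds, yet the induced ratio $r_1/r_0$ must stay subexponential in $C_1$ so that the Gr\"onwall multiplier in step 2 remains $O(1)$; both constraints can be satisfied simultaneously by a choice of order $C_4 \sim e^{O(C_1 C_2^2)}$.
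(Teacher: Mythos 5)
Your plan diverges from the paper's and, as written, has a genuine gap in the second step. You propose to invoke the monotonicity of $\kappa := \alpha^2+\beta$ ``as used in the proof of Lemma \ref{exp-growth}''. But the derivation of the differential inequality
$$
D\kappa \geq \lambda s^2 - O\bigl((\mu+s^{-2\sigma_0})(1+\kappa)\bigr)
$$
proceeds by writing $D\kappa = 2\alpha\, D\alpha + D\beta$ and then substituting the \emph{lower} bound \eqref{alpha-eq} for $D\alpha$ into the term $2\alpha\, D\alpha$; this substitution preserves the direction of the inequality only when $\alpha\geq 0$. In Lemma \ref{exp-growth} one has the standing hypothesis $\alpha(s)\geq 2C_2>0$, so the step is justified there. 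In the setting of Lemma \ref{mbl}, however, the dichotomy \eqref{mass-bound} gives only the one-sided information $\alpha\leq C_2^2$ and provides no lower bound on $\alpha$; indeed the whole point of the lemma is that we do not yet know $\alpha$ is controlled from below. Since \eqref{alpha-eq} and \eqref{beta-eq} are differential \emph{inequalities} (not equalities — the inequality direction was locked in when $\RR$, $\A$ were replaced by Cauchy--Schwarz lower bounds in the dimensionless formulation), there is no matching upper bound on $D\alpha$ to fall back on when $\alpha<0$, and the key cancellation in $D\kappa$ fails. The paper's proof sidesteps this exactly: it integrates the $\beta$-equation \eqref{beta-eq} directly, using the algebraic bound $2\alpha(\alpha-\beta)\geq C^{-1}\alpha^2 - O(|\beta|^2)$ (valid for all signs of $\alpha$) plus the floor \eqref{beta-floor} to obtain a threshold inequality $\beta(r)\geq C^{-1}\lambda^{1/2}r$ on most of $[r_0,r_1]$, and only \emph{then} feeds this into \eqref{alpha-eq} to drive $\alpha(r_0)$ above $C_2^2$.

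A secondary problem is the ``Gr\"onwall-multiplier stays $O(1)$'' step. You require $\log(r_1/r_0)=O(C_1)$ and then try to enforce this by bounding $C_4$. But $r_1/r_0$ is not a parameter you choose: it is determined by the solution (your step 1 gives only a \emph{lower} bound on $r_1/r_0$, which does nothing to cap it). The paper avoids exponential-in-length degradation by never Gr\"onwalling a ``$-c\kappa$'' error over the full interval; its threshold inequality for $\beta$ is self-reinforcing (the floor and the forcing together pin $\beta$ between two linear-in-$r$ barriers), so no multiplier accumulates. Finally, a minor point: the paper locates the contradiction at $r_0$ itself (showing $\alpha(r_0)>C_2^2$), rather than pushing to a smaller radius $r'<r_0$; your variant would additionally require control of $\mu$ and the error terms to the left of $r_0$, which is doable but adds work. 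I would recommend reworking step 2 to bound $\beta$ directly (mirroring the paper's threshold argument), after which the rest of your outline — using the endpoint $D\mu(r_1)\leq 0$ to seed $\alpha(r_1)\geq -1$ and then integrating \eqref{alpha-eq} — goes through cleanly at $r_0$ without needing step 3 at all.
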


\begin{proof} By \eqref{boundary} we have $\mu(r) \to \infty$ as $r \to \infty$.  Thus if the claim is false,
then we can find $r_1 \geq r_0 \geq C_4  (1 + \lambda^{-1/2})$ such that
\begin{align}
\mu(r_0) &= C_4^{-2}\label{r0-bound}\\
\mu(r_1) &= C_4^{-1} \label{r1-bound}\\
C_4^{-2} \leq \mu(r) &\leq C_4^{-1} \hbox{ for all } r_0 \leq r \leq r_1 \label{r-inter}.
\end{align}
In particular from \eqref{mass-bound} we have
\begin{equation}\label{mb-2}
\alpha(r) \leq C_2^2 \hbox{ for all } r_0 \leq r \leq r_1 
\end{equation}
From \eqref{r1-bound} we have
$$ D \mu(r_1) \leq 0$$
which by \eqref{mu-eq}, \eqref{r1-bound} forces
\begin{equation}\label{aro} \alpha(r_1) \geq -1
\end{equation}
(for instance).  Also, from \eqref{mu-eq}, \eqref{r-inter}, \eqref{mb-2} we have
$$
D \mu(r)  \ge - 2 C_2^2  \mu(r)
$$
which by Gronwall's inequality and \eqref{r0-bound}, \eqref{r1-bound} forces a certain largeness bound
in the interval $[r_0,r_1]$:
$$ \int_{r_0}^{r_1} \frac{dr}{r} \geq C^{-1} C_2^{-2} \log C_4,$$
which implies that $ r_1 \geq r_0 \, C_4^{C^{-1} C_2^{-2}}$.
In particular, if $C_4$ is large enough then we have
\begin{equation}\label{r-large} 
r_1 - r_0 \,\ge \,  C_4 (1+\la^{-1/2}),
\end{equation}
(for instance).
Now we control $\beta$.  If $r_0 \leq r \leq r_1$, then $r \geq C_4 \lambda^{-1/2}$ and
$\mu(r) \leq C_4^{-1}$, which by \eqref{beta-eq} implies the crude bound
$$
D\beta  \geq C^{-1} \alpha^2 - O( (1 + |\alpha|) (1 + |\beta|) ) + C^{-1} \lambda r^2;$$
using the crude bound $O( (1 + |\alpha|) (1 + |\beta|) ) = O( 1 + C^{-1} |\alpha|^2 + C |\beta|^2)$
we conclude
$$
D\beta  \geq C^{-1} \lambda r^2 \hbox{ whenever } |\beta(r)| \leq C^{-1} r \lambda^{1/2}.$$
On the other hand, from \eqref{beta-floor} we have
$$ \beta(r) \geq - C \lambda^{1/2}r \hbox{ whenever } r_0 \leq r \leq r_1.$$
Combining these two equations we see that
$$ \beta(r) \geq C^{-1} \lambda^{1/2} r \hbox{ whenever } r_0 \leq r \leq r_1 - C \lambda^{-1/2}.$$
On the other hand, from \eqref{alpha-eq} and the estimates $r \geq C_4 \lambda^{-1/2}$,
$\mu \leq C_4^{-1}$ we have
$$
D\alpha \geq \beta- O(\alpha) - O( C_4^{-2\sigma_0} \lambda^{1/2} r ),$$
and thus we have
$$ D\alpha(r) \geq C^{-1} r \lambda^{1/2} - O(\alpha(r)) \hbox{ whenever } r_0 \leq r \leq r_1 - C \lambda^{-1/2} $$
and
$$ D\alpha(r) \geq - O( C \lambda^{1/2} r ) - O(\alpha(r))
\hbox{ whenever } r_1 - C \lambda^{-1/2}  \leq r \leq r_1.$$
Moreover, since $r\ge C_4\la^{-1/2}$ and $\alpha(r)\le C_2^2$, we can replace the first
estimate by
$$ D\alpha(r) \geq C^{-1} r \lambda^{1/2}\hbox{ whenever } r_0 \leq r \leq r_1 - C \lambda^{-1/2} $$
From these estimates and the initial condition \eqref{aro}, 
applying Gronwall's inequality, we see  that
$$ \alpha(r) \geq - C \hbox{ whenever } r_1 - C \lambda^{-1/2} \leq r \leq r_1$$
and then by a further application of Gronwall we see that
$$  \alpha(r) \geq C^{-1} (r_1-r) \lambda^{1/2} - O(1) \hbox{ whenever } r_0 \leq r \leq r_1 - C \lambda^{-1/2}.$$
Since $r_1-r_0\ge C_4 \la^{-1/2}$ 
this contradicts \eqref{r-large} and \eqref{mb-2}, and the claim follows.
\end{proof}

In the high-energy case $\lambda \geq 1$ this lemma immediately gives the boundedness half \eqref{bounded-mass}
of the dichotomy, by choosing $r := 4C_4$ (for instance) and using the definition of $\mu$.  In the low-energy case one observes from 
\eqref{mass-bound} and \eqref{mu-eq} that
$$ D\mu(r) \ge -2C_2^2 \mu(r) $$
whenever $C_3 \leq r \leq C_4(1 + \lambda^{-1/2})$ and $\mu(r) \leq 1/C_1$.  From this, Lemma \ref{mbl}, and Gronwall's
inequality we see that
\begin{equation}\label{chump-change}
 \mu(r) \geq C(C_1,C_2,C_3,C_4)^{-1} \lambda^{C(C_1,C_2)}
\end{equation}
for all $C_3 \leq r \leq C_4(1 + \lambda^{1/2})$, and by definition of $\mu$ we are again in the boundedness
half \eqref{bounded-mass} of the dichotomy.  The claim follows.  This completes the proof of
Lemma \ref{ode}, and thus Lemma \ref{dichotomy}.
\endprf

\section{Additional equations of motion}\label{sec:add-motion}

As we have already seen in Section \ref{main-proof},
the combined results of Lemma \ref{charge-est}, Lemma \ref{elliptic-regularity}, Lemma \ref{morawetz-lemma},  Proposition \ref{ucp}, and Lemma \ref{dichotomy} are already sufficient to prove Theorem \ref{main}.  But to prove the Sommerfeld radiation conditions of Proposition \ref{qlap}, it turns out that we must study further spherical energies in addition to the energies $\M,\RR,\A,\F$ introduced in  Section \ref{bessel-sec}. We take a quick detour to define these energies, study 
additional equations of motions and derive their consequences, in particular establishing Lemma \ref{lem:improvement}. The results of this
section  will also be useful in the proof of Proposition \ref{lowenergy-pos}.

Using the notations of Section \ref{bessel-sec}, we define the following additional energies:
\begin{align*}
{\text{Outgoing null energy}}\qquad &\N[r] := \int_{S_r} |v_r \mp i \k v|^2\ dh[r],\\
{\text{Complex flux}}\qquad & \Zflux[r] := \int_{S_r} v_r \overline v \, dh[r],
\end{align*}
where $\k=a\pm ib$ is the complex number such that
 $\k^2 = \lambda\pm i\eps$ and $\Im(\k)=\pm b > 0$. 
 
\begin{lemma}[Additional equation of motion] \label{lem:Out}
Let $u \in H^2(M)$ be a solution to the resolvent equation
$$ (H - (\lambda \pm i\eps)) u = f$$
for some $\lambda, \eps > 0$ with the Hamiltonian $H=-\Delta_M +V$,
satisfying assumptions of Proposition \ref{qlap}.
Then (with the notation of the previous two sections)
\begin{equation}\label{eq:add-motion}
\begin{split}
\frac{d}{dr} (\N - \A) &= \frac{2}{r} \A+ 
2 b \,\,(\N+\A) +O(r^{-2-2\si_0}) b\M\\ &+  O\Big (r^{-1-2\si_0} 
(\RR+\A+|z|^2\M)\Big ) + O(F[r]^{1/2} \N^{1/2})\\
&+O\big(r^{-2-2\si_0}+\la^{1/2} r^{-1-2\si_0}\big)\M^{1/2} \N^{1/2}
\end{split}
\end{equation}
where $F[r] := r^{n-1} \int_{\pa M} |f|^2 dh[r]$.
\end{lemma}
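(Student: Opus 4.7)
The plan is to derive \eqref{eq:add-motion} by a direct computation of $\frac{d}{dr}\N$ combined with a parallel computation of $\frac{d}{dr}\A$, exploiting an exact cancellation of a ``cross term'' between them. I present only the top $+$ sign case; the $-$ sign case is identical after complex conjugation.

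Introduce the outgoing one-form $w := v_r - izv$, so $\N = \int_{S_r}|w|^2\,dh[r]$. The Helmholtz-style rearrangement of \eqref{V-bessel} (moving $\mp i\eps v$ onto the left side and writing $\la\pm i\eps = z^2$) reads
\[ v_{rr} + \tfrac{1}{r^2}\bigl(\Delta_{h[r]} - \tfrac{(n-1)(n-3)}{4}\bigr)v + z^2 v = -\theta v_r + (V + \tfrac{n-1}{r}\theta)v + r^{(n-1)/2}f. \]
Combined with the algebraic identity $-z^2 v - izv_r = -izw$, this yields the first-order equation
\[ w_r = -izw - \tfrac{1}{r^2}\bigl(\Delta_{h[r]} - \tfrac{(n-1)(n-3)}{4}\bigr)v - \theta v_r + (V + \tfrac{n-1}{r}\theta)v + r^{(n-1)/2}f. \]
The two key algebraic facts I would use are $\Re(-iz) = b$ and $\Re(i\bar z) = b$; together they will generate the $2b\N$ and $2b\A$ terms in the statement.

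First I would compute $\frac{d}{dr}\N = \int(\partial_r|w|^2)\,dh[r] + \int|w|^2\theta\,dh[r]$. The principal contribution $2\Re(w_r\overline w) = 2\Re(-iz)|w|^2 + \cdots = 2b|w|^2 + \cdots$ produces $2b\N$. The angular Laplacian term, integrated by parts using $\overline{\nabla_\omega w} = \overline{\nabla_\omega v_r} + i\bar z\,\overline{\nabla_\omega v}$ and $\Re(i\bar z)=b$, contributes
\[ 2\!\int\!\tfrac{1}{r^2}\Re(\nabla_\omega v\cdot\overline{\nabla_\omega v_r})\,dh[r] \;+\; 2b\!\int\!\tfrac{1}{r^2}|\nabla_\omega v|^2\,dh[r] + \tfrac{(n-1)(n-3)}{2r^2}\F + \tfrac{(n-1)(n-3)}{2r^2}b\M, \]
whose second and fourth pieces combine to $2b\A$. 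The remaining $-\theta v_r$, $(V + \frac{n-1}{r}\theta)v$ and $r^{(n-1)/2}f$ contributions are handled by Cauchy--Schwarz (using $|v_r| \le |w| + |z||v|$ and the assumed bounds on $V$, $\theta$) and produce, respectively, the error terms $O(r^{-1-2\si_0})(\RR + |z|^2\M)$, $O(r^{-2-2\si_0} + \la^{1/2}r^{-1-2\si_0})\M^{1/2}\N^{1/2}$, and $O(F^{1/2}\N^{1/2})$ in the statement; the $O(r^{-2-2\si_0})b\M$ piece arises from the $b$-proportional component of $\Re(izv\overline w) = -b\Re(v\overline w) - a\Im(v\overline w)$ that shows up in the $\theta v_r\overline w$ integration.

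Second I would compute $\frac{d}{dr}\A$ directly, using $\partial_r h[r]^{ab} = O(r^{-1-2\si_0})h[r]^{ab}$ on the angular-gradient piece and \eqref{mass-dynamics} on the $\frac{(n-1)(n-3)}{4r^2}\M$ piece, to obtain
\[ \tfrac{d}{dr}\A = -\tfrac{2}{r}\A + 2\!\int\!\tfrac{1}{r^2}\Re(\nabla_\omega v\cdot\overline{\nabla_\omega v_r})\,dh[r] + \tfrac{(n-1)(n-3)}{2r^2}\F + O(r^{-1-2\si_0})\A + O(r^{-3-2\si_0})\M. \]
Subtracting, the cross term and the lower-order $\F$ contribution cancel exactly between $\frac{d}{dr}\N$ and $\frac{d}{dr}\A$; the $-\frac{2}{r}\A$ appears with a favorable sign in $\frac{d}{dr}(\N-\A)$; and all remaining pieces are collected, after one further application of $\N \le 2\RR + 2|z|^2\M$, into the form \eqref{eq:add-motion}.

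The main obstacle is precisely this double cancellation: the cross term $2\int r^{-2}\Re(\nabla_\omega v\cdot\overline{\nabla_\omega v_r})\,dh[r]$ and the boundary contribution $\frac{(n-1)(n-3)}{2r^2}\F$ arise from very different operations (integration by parts of an angular Laplacian on the $\N$-side, direct differentiation of $|\nabla_\omega v|^2_{h[r]}$ and \eqref{mass-dynamics} on the $\A$-side) and coincide only because of the particular pairing of the weight $(n-1)(n-3)/4$ with $|v|^2$ in the definition of $\A$. A secondary subtlety that the direct approach via $w$ sidesteps is that the naive decomposition $\N = \RR + |z|^2\M + 2b\F - 2a\F'$ would, via the charge-conservation identity $\frac{d}{dr}\F' = \mp\eps\M + \text{forcing}$ for $\F' = \int\Im(v_r\overline v)\,dh[r]$, introduce a residual $4ab\F' = 2\eps\F'$ term that is absent from \eqref{eq:add-motion}; working with $w_r = -izw + \cdots$ avoids having to exhibit this as an error at all.
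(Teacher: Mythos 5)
Your approach is correct and genuinely different from the paper's.  The paper expands $\N$ algebraically as $\N = \RR + |z|^2\M + 2\,\Im(z\overline{\Zflux})$ (where $\Zflux[r] = \int_{S_r} v_r\overline{v}\,dh[r]$ is the complex flux), then writes $\N - \A = (\RR-\A) + |z|^2\M + 2\,\Im(z\overline{\Zflux})$ and differentiates term by term using the already-established equations of motion for $\M$, $\Zflux$, and $\RR - \A$, after which the key algebraic simplification $2\,\Re((|z|^2 - z^2)\overline{\Zflux}) = 4\,\Im(z)\,\Im(z\overline{\Zflux})$ eliminates the flux.  You instead derive a first-order ODE for the outgoing quantity $w := v_r - izv$ directly from the Bessel-type equation and differentiate $\N = \int_{S_r}|w|^2\,dh[r]$ and $\A$ separately, exhibiting the cancellation of the cross term $\tfrac{2}{r^2}\int\Re(\nabla_\omega v\cdot\overline{\nabla_\omega v_r})$ and the lower-order $\tfrac{(n-1)(n-3)}{2r^2}\F$ contribution between the two.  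What the paper's route buys is compactness (it leans on Lemma \ref{equations-of-motion}) and a tidy packaging via the complex flux; what yours buys is transparency — the mechanism $\Re(-iz) = \Re(i\bar z) = b$ generating $2b\N$ and $2b\A$ is visible at once, and as you note the real-valued decomposition $\N = \RR + |z|^2\M + 2b\F - 2a\F'$ (and hence the charge-conservation bookkeeping for $\F'$) never has to enter.  Two small remarks: (i) the same ``double cancellation'' you single out does also occur in the paper's argument, but at the level of $\tfrac{d}{dr}(\RR - \A)$ (their \eqref{eq:motion}) rather than $\tfrac{d}{dr}(\N-\A)$; and (ii) your attribution of the $O(r^{-2-2\si_0})b\M$ term to the $-b\,\Re(v\overline w)$ piece of $\Re(iz\,v\,\overline w)$ is not quite right — in your computation that contribution is absorbed into $O(r^{-1-2\si_0})(\RR + |z|^2\M)$ via $\Re(v\overline w)=\Re(v\bar v_r) + b|v|^2$ and Cauchy--Schwarz, and in fact your derivation produces a slightly smaller error set than the statement (the separate $O(r^{-2-2\si_0})b\M$ term is an artifact of the paper's route, coming from $2\,\Im\!\bigl(z\,\overline{O(r^{-2-2\si_0})\M}\bigr)$ in their $\tfrac{d}{dr}\Zflux$), which of course still proves the lemma.
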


\begin{remark}
A similar equation of motion appeared in \cite{iu} for the proof of a qualitative
limiting absorption principle for a Sch\"odinger operator with magnetic potential in
$\R^n$. This equation can be interpreted as a special 
case of the identity \eqref{eq:radiat}.
\end{remark}

\begin{proof}  
For simplicity assume that $(H-\la-i\eps) u =f$. Note that $\F=\Re\, \Zflux$. 
We have as in Lemma \ref{equations-of-motion}
\begin{align*}
&\frac d{dr}\M = 2 \Re \Zflux+ O(r^{-1-2\si_0} \M),\\
&\frac d{dr}\Zflux = \RR + \A-
z^2\M +O(r^{-2-2\si_0}) \M + 
 r^{\frac {n-1}2}\int_{S_r} (f+V v) \overline v dh[r]
\end{align*}
Note that 
\begin{align*}
|v_r-izv|^2 &= |v_r|^2+|z|^2 |v|^2 + 2\Im (z v \overline {v_r})\\
&=
|v_r|^2+z^2 |v|^2+ (|z|^2 -z^2)|v|^2 + 2\Im (z v \overline {v_r}).
\end{align*}
Thus,
\begin{align*}
&\N = \RR + |z|^2 \M + 2\Im (z \overline \Zflux).
\end{align*}
Since 
\begin{align}
\frac d{dr}\Big (\RR-\A\Big ) &= \frac 2r \A - 2\Re (z^2\overline \Zflux)  + 
O(r^{-1-2\si_0})(\RR+\A) \nonumber\\ 
&\quad + 2\int_{S_r} 
\Re \big((f+Vv)\overline {v_r}\big)  dh[r]\label{eq:motion}
\end{align}
we derive the following equation for $\N-\A$:
\begin{align*}
\frac d{dr}\Big (\N-\A\Big )  &
= \frac 2r \A + 2\Re \left( (|z|^2 - z^2)\overline \N \right)+ 
2\Im (z) (\RR+\A) \\ 
&\quad + 2|z|^2\, \Im (z) \M  + 
O(r^{-1-2\si_0})(\RR+\A+|z|^2\M) + O(F[r]^{1/2} \N^{1/2})\\
&\quad +O(r^{-2-2\si_0}) \Im (z)\M+O\big(r^{-2-2\si_0}+\la^{1/2} r^{-1-2\si_0}\big)\M^{1/2} \N^{1/2}\\
&= \frac 2r \A+ 2\Re \left((|z|^2-z^2) \overline \Zflux\right)- 4\Im (z)\, \Im (z\overline \Zflux)\\ 
&\quad + 
2\Im (z)\, (\N+\A) 
+O(r^{-1-2\si_0}) (\RR+\A+|z|^2\M)+O(r^{-2-2\si_0}) \Im (z)\M \\ 
&\quad + 
O(F[r]^{1/2} \N^{1/2})
+O\big(r^{-2-2\si_0}+\la^{1/2} r^{-1-2\si_0}\big)\M^{1/2} \N^{1/2}
\end{align*}
Since
$$
2\Re \left((|z|^2-z^2)\overline \Zflux\right)=4\Im (z)\,\Re (-iz\overline \Zflux) =
4\Im (z)\,\Im (z\overline \Zflux), 
$$
we have
\begin{align*}
\frac d{dr}(\N-\A) &= \frac 2r \A+ 
2 b \,\,(\N+\A) +  O(r^{-1-2\si_0}) (\RR+\A+|z|^2\M)+O(r^{-2-2\si_0}) b\M\\& + O(F[r]^{1/2} \N^{1/2})
+O\big(r^{-2-2\si_0}+\la^{1/2} r^{-1-2\si_0}\big)\M^{1/2} \N^{1/2}.
\end{align*}
\end{proof}

\subsection{Proof of Lemma \ref{lem:improvement}}\label{improvement-sec}

We are now ready to establish Lemma \ref{lem:improvement}.  Let the notation and assumptions be as in that lemma.

Multiplying equation \eqref{eq:add-motion} for 
$(\N-\A)$ by $r^{2\si}$ we obtain 
\begin{align*}
\frac d{dr} r^{2\si}(\N-\A) &= \frac 1{r^{1-2\si}} 
\big ( (2-2\si) 
\A+\N \big )+
2 b \, r^{2\si}\,\,(\N+\A) \\ &+  O(r^{-1-2\si_0+2\si}) (\RR+\A+|z|^2\M) + 
r^{2\si} O(F[r]^{1/2} \N^{1/2})\\ 
&+O(r^{-2-2\si_0+2\si}) b\M+
O\big(r^{2\si}(r^{-2-2\si_0}+\la^{1/2} r^{-1-2\si_0})\big)\M^{1/2} \N^{1/2}.
\end{align*}
The presence of the $2 b \, r^{2\si}\,\,(\N+\A)$ term will allow us to ignore 
the boundary term at $r=\infty$ arising after integration in $r$. Therefore,
for some universal positive constant $c$.
\begin{align}
c\int_{2r_0}^\infty (r^{-1+2\si}&+b\, r^{2\si})\,\,(\N +\A)\, dr  
\le   \int_{r_0}^\infty r^{-1-2\si_0+2\si} 
 \big (\RR+|z|^2 \M \big )\, dr \nonumber\\ &
+ r_0^{-1+2\si} \int_{r_0}^{4r_0} \A\,dr+
 \int_{r_0}^\infty r^{-3-2\si_0+2\si} \M\, dr +
  \int_{r_0}^\infty r^{1+2\si} F[r]\, dr. \label{eq:out-energy}
\end{align}
Note that in view of Lemma \ref{morawetz-lemma}, the bound \eqref{eq:out-energy}
already gives the estimate \eqref{eq:urad-better} in the region $\eps\le C\la$.

We now claim that $u$ verifies the Poincar\'e type inequality
\begin{equation}\label{eq:poinc-u}
\|u\|^2_{H^{0,-3/2+\si}(M_{2r_0})}+b \|u\|^2_{H^{0,-1+\si}(M_{2r_0})}\le C(r_0)
\left (\int_{2r_0}^\infty \N[r] + \|u\|^2_{L^2(M_{2r_0}\setminus M_{4r_0})}\right).
\end{equation}
Note that \eqref{eq:poinc-u} together with \eqref{eq:out-energy} and 
Lemma \ref{morawetz-lemma} imply all the statements of Lemma \ref{lem:improvement}
for $\eps\le C\la$.

To establish \eqref{eq:poinc-u}, observe that 
$$
\N[r]=\int_{S_r}|v_r-iz v|^2 dh[r]= \int_{S_r}e^{-2br} |\pa_r(e^{-izr}v)|^2 dh[r]
$$
Our assumptions on the metric $g$ imply that the area form 
$dh[r]$, which corresponds to the metric 
$(h_{ab}(\omega)+r^{-2\si_0} e_{ab}(r,\omega))d\omega^a\,d\omega^b$, is 
equivalent to the area form $dh$ of the metric $h_{ab}(\omega)d\omega^a\,d\omega^b$.
Integrating by parts the expression
\begin{align*}
\int_{2r_0}^\infty dr \,\,r^{-2+2\si}((1-\si)r^{-1}&+b)
\int_{S_r}|v|^2 dh \\
&= \int_{2r_0}^\infty dr \,\,r^{-1+2\si}e^{-2br}
\int_{S_r} \Re \left (\pa_r(e^{-izr}v) (\overline{e^{-izr}v})\right) dh\\ 
&\quad + 
\frac 12 (2r_0)^{-2+2\si}((1-\si)(2r_0)^{-1}+b)\int_{S_{2r_0}}|v|^2 dh\\
&\le 2(1-\si)^{-1}\int_{2r_0}^\infty dr \,\,r^{-1+2\si}e^{-2br}
\int_{S_r} |\pa_r(e^{-izr}v)|^2 dh \\ 
&\quad +\frac 12(1-\si)
\int_{2r_0}^\infty dr \,\,r^{-3+2\si}
\int_{S_r}|v|^2 dh \\ 
&\quad + \frac 12 (2r_0)^{-2+2\si}((1-\si)(2r_0)^{-1}+b)\int_{S_{2r_0}}|v|^2 dh
\end{align*}
and averaging over $r_0$ we immediately obtain \eqref{eq:poinc-u}.

It remains to consider the case $\eps\ge C\la$. Under this assumption we have 
$|z|\le 2\eps$ (for instance). In view of \eqref{eq:poinc-u} and a trivial inequality 
$$
\N\ge \frac 12\RR-7 |z|^2\M
$$
the bound \eqref{eq:out-energy} can be reduced to the estimate
\begin{align*}
\|\nabla u\|_{H^{0,-1/2+\si}(M_{2r_0})}&+ \|u\|_{H^{0,-3/2+\si}(M_{2r_0})}
\le C |z|  \|u\|_{H^{0,-1/2-\si_0+\si}(M_{2r_0})} \\&+ C(r_0)\left( \|u\|_{L^2(M_{r_0}\setminus M_{4r_0})}
+  \|\nabla u\|_{L^2(M_{r_0}\setminus M_{4r_0})}+\|f\|_{H^{0,1/2+\si}(M_{r_0})}\right).
\end{align*}
According to \eqref{eq:eps-charge-weight}
$$
\eps  \|u\|_{H^{0,-1/2-\si_0+\si}(M_{2r_0})}
 \leq   C  \|f\|_{H^{0,-1/2-\si_0+\si}(M_{r_0})} + C
\|\nabla u \|_{H^{0,-3/2-\si_0+\si}(M_{r_0})}, 
$$
which implies that 
\begin{align*}
\|\nabla u\|_{H^{0,-1/2+\si}(M_{2r_0})}&+ \|u\|_{H^{0,-3/2+\si}(M_{2r_0})}
\leq C |z|  \|u\|_{H^{0,-1/2-\si_0+\si}(M_{2r_0})} \\&+ C(r_0)\left( \|u\|_{L^2(M_{r_0}\setminus M_{4r_0})}
+  \|\nabla u\|_{L^2(M_{r_0}\setminus M_{4r_0})}+\|f\|_{H^{0,1/2+\si}(M_{r_0})}\right).
\end{align*}
provided that $r_0$ is sufficiently large. The bounds \eqref{eq:u-better}, \eqref{eq:unab-better}
follow immediately, while \eqref{eq:urad-better} can be recovered from \eqref{eq:out-energy}.
The proof of Lemma \ref{lem:improvement} is now complete.

\section{The low energy regime}\label{sec:low-energy}

We now prove Proposition \ref{lowenergy-pos}. We assume that the 
potential $V$ obeys the bounds 
$$
|V(x)|\le A \langle x\rangle^{-2-2\si_0},\qquad 
\int_{M} |V_-(x)|^{\frac n2} \le \beta
$$
with a small constant $\beta=\beta(M)$
and show that for all sufficiently small $\eps, |\la|<\la_0(M)$
and $\si<\min(1,\si_0)$,
\begin{align}
&\|R(\la\pm i\eps) f\|_{H^{2,-1/2-\si}(M)}\le C(M,A) \la^{-1/2} \|f\|_{H^{0,1/2+\si}(M)},
\label{eq:low-quant}\\
&\|R(\la\pm i\eps) f\|_{H^{2,-3/2+\si}(M)}\le C(M,A)  \|f\|_{H^{0,1/2+\si}(M)}.
\label{eq:low-quant2}
\end{align}
We start by recalling the \emph{Friedrichs inequality}:
\begin{equation}\label{fried}
\|u\|_{L^2(M)}\le  C_s |{\text{supp}} u|^{\frac 1n} \||\nabla u\|_{L^2(M)},
\end{equation}
which holds for any smooth function of compact support on $M$.
The Friedrichs inequality is a direct consequence (by H\"older's inequality)
of the Sobolev inequality (with the same constant $C_s$):
\begin{equation}\label{eq:Sobolev}
\|u\|_{L^{\frac {2n}{n-2}}(M)}\le C_s \|\nabla u\|_{L^2(M)},
\end{equation}
which holds for all smooth functions of compact support on $M$ with 
the constant $C_s=C_s(M)$ is related to the the isoperimetric constant ${\mathcal I}(M)$
for $M$:
$$
{\mathcal I}(M) = \inf_N \frac {A(\partial N)^n}{|N|^{n-1}},\qquad
C_s^{-1}= {\mathcal I}(M)^{\frac 2n} \Big (\frac {n-2}{2(n-1)}\Big )^2.
$$
The infimum above ranges over all open submanifolds $N$ with compact closure 
and smooth boundary 
$\pa N$, and $A(\pa N)$ denotes the area of $\pa N$.  A proof of this Sobolev inequality can be found for instance in \cite{chavel}.

Let $u:=R(\la \pm i\eps) f$, thus
$$
(-\Delta_M + V(x) -\la\mp i\eps) u =f.
$$
Let $v=\Re \, u$ then 
\begin{equation}\label{eq:for-u}
(-\Delta_M + V(x))v =F = \Re (f+(\la\pm i\eps) u).
\end{equation}
We normalize 
\begin{equation}\label{eq:normalization}
\|u\|_{H^{0,-3/2+\si}(M)}=1,\qquad 
\|f\|_{H^{0,1/2+\si}(M)}=\de
\end{equation}
with a small constant $\de$.  We will then show that a sufficiently small $\la_0$ and 
all $|\la|, \eps<\la_0$
\begin{equation}\label{eq:sufficient}
\|u\|_{H^{0,-3/2+\si}(M)}\le \frac12 + C(M,A) \de,
\qquad \|u\|_{H^{0,-1/2-\si}(M)}\le C(M,A)\la^{-1/2}  \de.
\end{equation}
By Lemma \ref{lem:improvement} it will suffice to show that for a sufficiently 
large\footnote{In what follows the set $K$ will be fixed and the small constants $\la_0, \de$ will
be allowed to depend on $K$.} compact set $K$
$$
\|\nabla u\|_{L^2(K)}+ \|u\|_{L^2(K)}\le (2C_K)^{-1}+ C(M,A) \de.
$$
with the constant $C_K$ from \eqref{eq:u-better}.
Let $\mu>0$ be a small constant to be determined later and 
set
$$
\mu_\ell=\mu+\frac 12 (1+\ga)^{-\ell}\mu  
$$
Define a sequence of increasing nested sets  
$$
K_{\ell}:=B(2^\ell R_0)\cap \{x:\,\,v(x)\ge \mu_\ell\}
$$
Let $\chi_\ell$ be smooth cut-off functions on $M$ adapted to $B(2^\ell R_0)$, i.e.,
$\chi_\ell(x)=1$ for $x\in B(2^\ell R_0)$ and $\chi_\ell(x)=0$ for $x\in B^c(2^{\ell+1}R_0)$. 

Multiplying \eqref{eq:for-u} by\footnote{Here of course we use the notation $x_+ := \max(x,0)$.} $\chi_\ell^2 (v-\mu_\ell)_+$ and integrating by parts 
we obtain
\begin{align*}
\int_M \chi_\ell^2 \big (|\nabla (v-\mu_\ell)_+|^2 + V(x) u (v-\mu_\ell)_+\big )dg &= 
-2 \int_M \chi_\ell \nabla\chi_\ell  \cdot \nabla (v-\mu_\ell)_+ (v-\mu_\ell)_+\ dg \\ &+ 
 \int_M \chi_\ell^2  F (v-\mu_\ell)_+\ dg
\end{align*}
By Cauchy-Schwarz and a simple rearrangement, 
\begin{align*}
\int_M \chi_\ell^2 \big (|\nabla (v-\mu_\ell)_+|^2 &+ V_+(x) v (v-\mu_\ell)_+\big )dg \le 
4\int_M |\nabla\chi_\ell|^2  (v-\mu_\ell)^2_+\ dg \\ &+ 
2 \int_M \chi_\ell^2 |F| (v-\mu_\ell)_+\ dg
 \\&+ 2\mu_\ell \int_M \chi_\ell^2  V_-(x) (v-\mu_\ell)_+ \ dg + 
 2\int_M \chi_\ell^2 V_-(x) (v-\mu_\ell)_+^2 dg
\end{align*}
Using the smallness assumption on the negative part of the potential $V$ we have 
\begin{align*}
 \int_M \chi_\ell^2 V_-(x) (v-\mu_\ell)_+^2 dg &\le \beta
\Big ( \int_M  (\chi_\ell  (v-\mu_\ell)_+^2)^{\frac{2n}{n-2}}\Big )^{\frac {n-2}n}\\&\le
\beta C \Big (\int_M\bigl( \chi_\ell^2 |\nabla (v-\mu_\ell)_+|^2 + |\nabla\chi_\ell|^2  (v-\mu_\ell)^2_+\bigr) \ dg\Big ),
\end{align*}
where the last line follows from the Sobolev inequality \eqref{eq:Sobolev}.
Also, choosing a sufficiently small (universal) constant $a$,
\begin{align*}
\mu_\ell \int_M& \chi_\ell^2 V_-(x) (v-\mu_\ell)_+ dg \le \mu_\ell\beta |K_{\ell+1}|^{\frac {n-2}{2n}}\
\Big ( \int_M  (\chi_\ell  (v-\mu_\ell)_+^2)^{\frac{2n}{n-2}}\Big )^{\frac {n-2}{2n}}\\&\les
a^{-1}C \mu_\ell^2 \beta^2 |K_{\ell+1}|^{\frac {n-2}{n}}+a
 \Big( \int_M (\chi_\ell^2 |\nabla (v-\mu_\ell)_+|^2 + |\nabla\chi_\ell|^2  (v-\mu_\ell)^2_+)\ dg\Big ).
\end{align*}
Similarly, by Cauchy-Schwarz,
$$
 \int_M \chi_\ell^2 |F| (v-\mu_\ell)_+ dg \le 
 a^{-1} |K_{l+1}|^{\frac 2n} \int_M \chi_\ell^2 |F|^2 dg +
a |K_{l+1}|^{-\frac 2n} \int_M \chi_\ell^2 (v-\mu_\ell)_+^2 dg
$$
Thus, assuming that $\beta, a$ are sufficiently small,
\begin{equation}\label{eq:iterate}
\begin{split}
c\int_M \chi_\ell^2 |\nabla (v-\mu_\ell)_+|^2  dg &\le 
\int_M |\nabla\chi_\ell|^2  (v-\mu_\ell)^2_+\ dg+
a |K_{l+1}|^{-\frac 2n} \int_M \chi_\ell^2 (v-\mu_\ell)_+^2 dg \\ &+ 
 a^{-1} |K_{l+1}|^{\frac 2n} \int_M \chi_\ell^2 |F|^2 dg 
 +a^{-1}\mu_\ell^2 \beta^2 |K_{\ell+1}|^{\frac {n-2}{n}}.
\end{split}
\end{equation}
for some universal constant $c>0$, which depends only on the Sobolev constant $C_s$.

By Friedrichs' inequality \eqref{fried}, we have
$$
\int_M \chi_{\ell}^2  (v-\mu_{\ell})^2_+ \les
|K_{\ell+1}|^{\frac 2n}\left(
\int_M \chi_\ell^2 |\nabla (v-\mu_\ell)_+|^2 +
\int_M  |\nabla\chi_\ell |^2 (v-\mu_\ell)_+^2\right).
$$
Therefore, substituting the bound for $\int_M \chi_\ell^2 |\nabla (v-\mu_\ell)_+|^2  dg $
from \eqref{eq:iterate} and using smallness of $a$, we obtain
\begin{align*}
c \int_M \chi_\ell^2 (v-\mu_\ell)_+^2  dg &\le 
 |K_{l+1}|^{\frac 2n} \int_M |\nabla\chi_\ell|^2  (v-\mu_\ell)^2_+\ dg \\ &+ 
  |K_{l+1}|^{\frac 4n} \int_M \chi_\ell^2 |F|^2 dg 
 +\mu_\ell^2 \beta^2 |K_{\ell+1}|.
\end{align*}
We can further simplify this by using the bound $|K_\ell |\le C R_0^{n} 2^{n\ell}$.
\begin{align*}
c(R_0) \int_M \chi_\ell^2 (v-\mu_\ell)_+^2  dg &\le
 |K_{l+1}|^{\frac 2n} \int_M |\nabla\chi_\ell|^2  (v-\mu_\ell)^2_+\ dg \\ &+ 
  2^{4\ell} \int_M \chi_\ell^2 |F|^2 dg 
 +\mu_\ell^2 \beta^2 2^{n\ell}.
\end{align*}
It is important however to keep the factor $ |K_{l+1}|^{\frac 2n}$ in front of the 
first term!

By the Chebyshev inequality we have
$$
|K_{\ell+1}|\le \frac 1{(\mu_{\ell+1}-\mu_{\ell+2})^2}
 \int_M  \chi_{\ell+2}^2 (v-\mu_{\ell+2})_+^2.
$$
Observe that 
$$
|\nabla\chi_\ell|\le 2^{-\ell} R_0^{-1} \chi_{\ell+2},\qquad 
(\mu_{\ell+1}-\mu_{\ell+2})= \frac 12 \mu \ga (1+\ga)^{-\ell-2}, 
\qquad (v-\mu_{\ell+2})_+\ge (v-\mu_\ell)_+.
$$
We then obtain
\begin{align*}
c(R_0)\int_M \chi_\ell^2 (v-\mu_\ell)_+^2  dg&\le C(\mu,\ga)
(1+\ga)^{\frac {4\ell }n} {2^{-2\ell}} 
\Big (\int_M \chi_{\ell+2}^2 (v-\mu_{\ell+2})_+^2  dg \Big )^{1+\frac 2n}\\ &+
  2^{4\ell} \int_M \chi_\ell^2 |F|^2 dg 
 +\mu^2 \beta^2 2^{n\ell}.
\end{align*}
The constant $C(\mu,\ga)$ is essentially $(\mu\ga)^{-4/n}$. In particular, $C(\mu,\ga)$
becomes large as $\mu,\ga\to 0$.

We now define 
$$
A_\ell= 2^{-(3-2\si)\ell} \int_M \chi_\ell^2 (v-\mu_\ell)_+^2  dg
$$
so that, with the normalization \eqref{eq:normalization} 
$$
A_\ell\le c(R_0)\|u\|_{H^{0,-3/2+\si}(M)}^2=c(R_0)\le 1.
$$
Then
$$
A_\ell\leq C(\mu,\ga,R_0)\left (
(1+\ga)^{\frac {4\ell }n} {2^{(-2+\frac 2n(3-2\si))\ell}} A_{\ell+2}^{1+\frac 2n}+
  2^{(1+2\si)\ell} \int_M \chi_\ell^2 |F|^2 dg 
 +\mu^2 \beta^2 2^{ (n-3+2\si)\ell}\right).
$$
Observe that for $n\ge 3$
$$
-2+\frac 2n(3-2\si)\le -\frac {2\si}{3n},
$$
which
allows us to conclude that with an appropriate choice of a sufficiently small 
$\ga=\ga(\si)$ (explicitly, we need 
$6\ln(1+\ga)<\si \ln 2$) there exists a positive constant $\omega>0$ such that 
$$
A_\ell\le C(\mu,\ga,R_0)\left (
2^{-2\omega \ell} A_{\ell+2}^{1+\frac 2n}+
 2^{(1+2\si)\ell} \int_M \chi_\ell^2 |F|^2 dg 
 +\mu^2 \beta^2 2^{ (n-3+2\si)\ell}.\right)
$$
We now recall our normalizations \eqref{eq:normalization}. 
In particular,
$$
 \int_M \chi_\ell^2 |F|^2 dg \le  \int_M \chi_\ell^2 (|f|^2+ (\la^2+\eps^2) |u|^2) dg
 \leq C \de^2+ 2^{(3-2\si)\ell}C(R_0) (\la^2+\eps^2)
$$
and $A_\ell\le 1$. Thus
\begin{equation}\label{eq:Al}
A_\ell\le C(\mu,\ga,R_0)\left(
2^{-2\omega \ell} A_{\ell+2} +
 2^{(1+2\si)\ell}  \left( \de^2+ 2^{(3-2\si)\ell}(\la^2+\eps^2)\right)
 +\mu^2 \beta^2 2^{ (n-3+2\si)\ell}\right).
\end{equation}
Iterating \eqref{eq:Al} we obtain that for any $k \ll L$
$$
A_k\le C(\mu,\ga,R_0,k)^L(2^{-\omega L^2} +  \de^2+\la^2+\eps^2+\beta^2).
$$
The constants $\ga$ and $R_0$ have been fixed, independently
of $\la,\beta,\de$. Constant $\mu$ will be chosen to depend only on
the compact set $K$. Therefore, using the smallness of $\la,\de$ and $\beta$ 
we can find a large integer 
$L_0=L_0(\mu,\ga,R_0)$ 
such that for all $k\le k_0 \ll L_0$
$$
(\eps^2+\la^2+\beta^2) \le C(\mu,\ga,R_0,k)^{L_0} 2^{-100 n L_0}, \qquad
C(\mu,\ga,R_0,k)^{L_0}2^{-\omega L_0^2}\le 2^{-100 n L_0}.
$$
Thus, for all $k\le k_0$
$$
A_k\le 2^{-100 n L_0} + C(L_0) \de^2.
$$
We can assume that the set $K$ is contained in the ball $B(2^{k_0} R)$. 
Therefore,
$$
\int_{K} (v-\mu_{k_0})_+^2\le 2^{(3-2\si)k_0}(2^{-100 n L_0} + C(L_0) \de^2).
$$
The above inequality implies that 
$$
\int_{K} v_+^2\le 4 \mu^2 |K|+ 2^{(3-2\si)k_0}(2^{-100 n L_0} + C(L_0) \de^2)
\le (8C_K)^{-2}+ C \de^2,
$$
provided that $\mu$ is chosen to be sufficiently small relative to the size of $K$.
Repeating the above arguments for the function $-v=-\Re u$ and similarly for 
$\Im u$ we conclude that 
$$
\int_K |u|^2\le (2C_K)^{-2} + C \de^2.
$$
as desired. The remaining estimate for $\nabla u$ is straightforward and follows
immediately by integrating the equation $(-\Delta_M+V(x)-\la\mp i\eps) u =f$
against $\bar u$ over the set $K$. We omit the details.

\section{The quasimode counterexample}\label{quasimode-sec}

We now present the (standard) counterexample in Proposition \ref{pseudo} which shows that
the losses of $\exp( C \sqrt{\lambda} )$ which arise for instance in the unique continuation estimates
in Section \ref{carleman-sec} are in fact sharp.  For more sophisticated versions of this type of construction in the more general context of an elliptic trapped geodesic, see \cite{quasivodev}.

We begin by reviewing some basic facts about spherical harmonics.
Let $(S^n,g_n) \subset \R^{n+1}$ be the unit sphere with the standard metric, which we parameterize in Euler polar co-ordinates
as 
$$ S^n := \{ ((\sin \theta) \omega, \cos \theta): 0 \leq \theta \leq \pi; \omega \in S^{n-1} \}$$
Observe that the Laplace-Beltrami operators on $S^n$ and on $S^{n-1}$ are related by the formula
$$ \Delta_{S^n} = \frac{\partial^2}{\partial \theta^2} + \frac{(n-1) \cos \theta}{\sin \theta}
\frac{\partial}{\partial \theta} +  \frac{1}{\sin^2 \theta} \Delta_{S^{n-1}},$$
where $\Delta_{S^{n-1}}$ is of course applied to the $\omega$ variable.  Thus if $l \geq 0$ is an integer, and
$Y_l(\omega)$ is a spherical harmonic of order $l$ on $S^{n-1}$, i.e.
$$ \Delta_{S^{n-1}} Y_l(\omega) = -l(l+n-2) Y_l(\omega),$$
then the \emph{sectorial harmonic}
$$ U_l(\theta,\omega) := \sin^l(\theta) \cos(\theta) Y_l(\omega)$$
is a spherical harmonic on $S^n$, in fact we have
$$ (-\Delta_{S^n}-\lambda) U_l = 0, \hbox{ where } \lambda := (l+1)(l+n).$$
We shall think of being $l$ as being much larger than $n$, so $\lambda \sim l^2$.  Because of the
$\sin^l(\theta)$ factor, $U_l$ will be highly concentrated near the equator $\theta = \pi/2$ (which is a stable
trapped geodesic of the sphere $S^n$).  Indeed,
if we normalize $Y_l$ to have $L^2$ norm equal to 1 on $S^{n-1}$, then a simple computation shows that
$$ \int_{S^n: |\theta - \pi/2| < \pi/4} |U_l|^2 \sim l^{-1} \sim \lambda^{-1/2},$$
while
$$ \int_{S^n: |\theta - \pi/2| > \pi/4} |U_l|^2 + |\nabla U_l|^2 = O( e^{-cl} ) = O( e^{-c\sqrt{\lambda}})$$
for some constant $c = c_n > 0$ depending only on the dimension.

We now transfer this phenomenon to the setting in Proposition \ref{pseudo}.  Let $M \subset \R^{n+1}$ be any smooth
$n$-dimensional manifold isometrically embedded in $\R^{n+1}$, which is equal to the plane $\R^n \times \{0\} \subset
\R^{n+1}$ outside of a compact set, and contains the portion of the sphere
$$ S^n_{\geq \pi/8} := \{ ((\sin \theta) \omega, \cos \theta): \theta \geq \pi/8; \omega \in S^{n-1} \} \subset S^n \subset \R^{n+1}.$$
In other words, $M$ is formed by gluing a large portion of the sphere (which contains the equator $\theta = \pi/2$)
to the Euclidean space $\R^n$.  Note that $M$ thus inherits the equator of $S^n$ as a stable trapped geodesic.
Now let $l$ be a large integer parameter, set $\lambda = \lambda_l := (l+1)(l+n)$, and
consider the ``quasimode'' $u: M \to \C$ defined by $u := U_l \chi$, where $\chi$ is a smooth cutoff supported
on the set $S^n_{\geq \pi/8}$ which equals one on $S^n_{\geq \pi/4}$.  Then from the previous computations we see
that
$$ \| u \|_{H^{0,-1/2-\sigma}(M)} \sim \lambda^{-1/2}$$
and
$$ \|  (-\Delta_M - \lambda) u \|_{H^{0,1/2+\sigma}(M)} = O( e^{-c \sqrt{\lambda}} ),$$
and thus for $\eps = \eps_l$ sufficiently small
$$ \|  (-\Delta_M - (\lambda \pm i\eps)) u \|_{H^{0,1/2+\sigma}(M)} = O( e^{-c \sqrt{\lambda}} ).$$
Proposition \ref{pseudo} follows (with a small value of $C_0$) by 
setting $f_l := (-\Delta_M - (\lambda \pm i\eps)) u$.  To obtain a larger value of $C_0$, we simply scale this example,
replacing the unit sphere $S^n$ by a sphere of larger radius; we omit the standard details.
\endprf

\section{The Bessel matching counterexample}\label{bessel-matching-sec}

We now present the proof of Proposition \ref{bessel}.  We begin by considering the Helmholtz 
equation $(H-\lambda)u = f$ in polar co-ordinates $(r,\omega)$ on $\R^n$, thus
$$ (\frac{\partial^2}{\partial r^2} + \frac{n-1}{r} \frac{\partial}{\partial r}
+ \frac{1}{r^2} \Delta_{S^{n-1}} - V + \lambda) u = -F,$$
where we ignore the singularity at $r=0$ for now.  If we now assume $V$ to be radial, $V = V(r)$,
and apply the ansatz
\begin{equation}\label{ansatz}
 u(r,\omega) = r^{-(n-1)/2} v(r) Y_l(\omega), \quad F(r,\omega) = -r^{-(n+1)/2} G(r) Y_l(\omega)
\end{equation}
where $l \geq 0$ is a large even integer parameter and $Y_l$ is a spherical harmonic of order $l$ on $S^{n-1}$, normalized
to have $L^2(S^{n-1})$ norm equal to 1, then the Helmholtz equation becomes the \emph{Bessel ordinary
differential equation}
\begin{equation}\label{bessel-ode}
 v_{rr} - \frac{L(L-1)}{r^2} v - Vv + \lambda v = G,
\end{equation}
where $L := l + \frac{n-1}{2}$.  Also, we observe that
$$ \| u \|_{H^{0,-1/2-\sigma}(M)} \sim \int_0^\infty r^{-2\sigma} |v(r)|^2\ dr$$
and
$$ \| F \|_{H^{0,1/2+\sigma}(M)} \sim \int_0^\infty r^{2\sigma} |G(r)|^2\ dr.$$
Let us temporarily ignore the contributions of the $\lambda$, $V$, and $G$ factors, and consider the ODE
\begin{equation}\label{approx-ode}
 v_{rr} - \frac{L(L-1)}{r^2} v = 0.
\end{equation}
This equation has two linearly independent solutions, $r^L$ and $r^{-L+1}$.  The former function decays
quickly as $r \to 0$, and the latter decays quickly as $r \to \infty$.  To exploit this, let us define $v$
by \emph{fiat} to be a smooth function on $(0,\infty)$ which equals $r^L$ when $r < 1/2$, equals
$r^{-L+1}$ when $r > 1$, and is smooth and positive in between (of course, the smoothness bounds on $v$ will
depend on $L$).  Then the function $v_{rr} - \frac{L(L-1)}{r^2} v$ is smooth and supported on the annulus
$\{1/2 \leq r \leq 1\}$.  Note also that $u$ is then equal to $r^l Y_l(\omega)$ near the origin, which is a smooth function
(indeed, it is a harmonic polynomial of degree $l$).

Now let us define the spherically symmetric potential $V = V_l$ by
$$ V := \frac{v_{rr} - \frac{L(L-1)}{r^2} v}{v}.$$
By construction, $V$ is smooth and supported on the annulus $\{1/2 \leq r \leq 1\}$ (though the bounds on this potential
will get worse as $l$ increases), and we have
$$ v_{rr} - \frac{L(L-1)}{r^2} v - Vv = 0.$$
Reversing the above steps, this means that the corresponding function $u$ is an eigenfunction of the Schr\"odinger
operator $H := -\Delta + V$, with eigenvalue zero, thus $\Delta u = Vu$.  Also, $u$ decays at infinity like
$|u(r,\omega)| = O(r^{-(n-1)/2 - L+1}) = O(r^{-l-n})$, and in particular will be bounded in
the space $H^{0,1/2+\sigma}(M)$ for $l$ large enough.  

Next, we let $m \geq 1$ be an arbitrary integer and set $u_m := u \chi_m$, where $\chi_m$ 
is a smooth cutoff to the region $\{ r \leq 2m\}$ which equals one when $\{ r \leq m\}$.
We also set $\lambda= \lambda_m := m^{-l/10}$, and
set $V_m := V + \lambda \chi_m$.  Then we see that
$$ (-\Delta-V_m - \lambda) u_m$$
vanishes outside of the annulus $m \leq r \leq 2m$, and has magnitude $O_l( m^{-l + O(1)} )$ on this annulus, thus
$$ \| (-\Delta + V_m - \lambda_m) u_m \|_{H^{0,1/2+\sigma}(M)} \leq O_l( m^{-l + O(1)} ).$$
In particular for $\eps_m$ sufficiently small
$$ \| (-\Delta + V_m - (\lambda_m \pm i\eps_m)) u_m \|_{H^{0,1/2+\sigma}(M)} \leq O_l( m^{-l + O(1)} ).$$
Also, by construction of $u_m$ and $V_m$ we see that (if $l$ is sufficiently large)
$$ \| u_m \|_{H^{0,-1/2-\sigma}(M)} \geq c_l > 0.$$
and
$$ \sup_m \| \langle x \rangle^{1+\sigma_0} \nabla_x^\alpha V_m \|_{L^\infty(\R^n)} < \infty \hbox{ for all }
\alpha \geq 0,$$
and the claim follows (by taking $l$ large enough depending on $C_0$).  To complete the construction
we need to show that the hamiltonian $H_m=-\Delta + V_m$ does not contain a small negative 
eigenvalue or an  eigenfunction or resonance at zero. (Of course, the potentials
$V_m$ are very close to a fixed potential $V$ which \emph{does} have an eigenfunction at zero, 
which is indeed the cause
of the bad behavior of low frequency limiting absorption for the perturbed potentials.)
By the Cwikel-Rozenblum bound, 
the Hamiltonian $H=-\Delta +V$ has only finitely many negative eigenvalues labeled 
$\lambda_1\le \ldots \le\lambda_k<c<0$. Their number and location is independent of $m$. 
The operator norm of the perturbation $H_m-H$ is bounded by 
$m^{-l/10}$, which implies, by perturbation theory, that for sufficiently large values of $m$ the 
hamiltonian $H_m$ has at least $k$ negative eigenvalues $\lambda'_1\le \ldots \le\lambda'_k<
c+m^{-l/10}<0$. Moreover, denoting the linear span the associated eigenfunctions of $H_m$ 
by $P_m$, we have 
$$
\int_{\R^n} H\phi\, \bar\phi \ge 0
$$ 
for any $\phi\in H^2(\R^n)$ in the orthogonal complement of $P_m$. 
Recall that the potential $V$ is supported in the annulus $1/2\le r\le 1$. We can also assume 
without loss of generality that $|V(x)|\le 2$.
As a consequence, for any $\phi\in H^2(\R^n)$ in the orthogonal complement of $P_m$,
\begin{align}
\int_{\R^n} H_m\phi\, \bar\phi &= \frac \lambda 4 \int_{\R^n} H_m\phi\, \bar\phi+
(1-\frac \lambda 4)
\int_{\R^n} (H+\lambda\chi_m)\phi\, \bar\phi\nonumber\\ & \ge  
\frac \lambda 4 \int_{\R^n} \left (|\nabla\phi|^2+
V|\phi|^2 +\lambda\chi_m |\phi|^2\right)+ (1-\frac \lambda 4)
\int_{\R^n} \lambda\chi_m |\phi|^2\nonumber\\ &\ge  
\int_{\R^n} \left (\frac \lambda 4 |\nabla\phi|^2+\frac \lambda 2 \chi_m |\phi|^2\right)\ge 
C^{-1} \lambda \int_{\R^n} \frac {|\phi(x)|^2}{|x|^2}.\label{eq:res-eig}
\end{align}
This estimate already implies that $H_m$ has precisely $k$ negative eigenvalues 
$\lambda'_1\le \ldots \le\lambda'_k<
c+m^{-l/10}<0$ and that $0$ is not an eigenvalue. On the other hand if 
$\phi\in \cap_{\alpha>0} H^{2,-\frac 12-\alpha}(\R^n)$ is a zero resonance it is formally 
(the corresponding eigenfunctions are exponentially decaying) orthogonal to the subspace 
$P_m$. Moreover,  $\nabla\phi\in \cap_{\alpha>0} H^{1,\frac 12-\alpha}(\R^n)$,
$\Delta\phi\in \cap_{\alpha>0} H^{0,\frac 32-\alpha}(\R^n)$, which implies 
that \eqref{eq:res-eig} holds and thus $\phi$ vanishes identically. 
\endprf

\begin{remark} It is possible to sharpen this example a little bit by making the potentials $V_m$
compactly supported in the annulus $1/2 \leq r \leq 1$.  
This is achieved by replacing the approximating ODE \eqref{approx-ode} by
the minor variant $v_{rr} - \frac{L(L-1)}{r^2} v + (\lambda_m \pm i\eps_m) v = 0$, and replacing
$r^L$ and $r^{-L-1}$ by Hankel functions (which have similar decay behavior at zero and at $r \sim m$
respectively).  We omit the details.
\end{remark}

\section{Applications}

\subsection{Spectral applications}\label{spectral-sec}

We now prove Proposition \ref{absent} and Proposition \ref{singular}.  

\begin{proof} (Proof of Proposition \ref{absent})
Suppose first that we have an outgoing resonance $u$ for some $\lambda = \kappa^2$ for some $\kappa > 0$, thus
$u\in H^{0,-1/2-\sigma}(M)$, $H u=\la u$, and $(\pa_r u-i\kappa \, u)\in H^{0,-1/2+\sigma'}(M \backslash K_0)$ for some $\sigma' > 0$ and all $\sigma > 0$.  By shrinking $\sigma, \sigma'$ if necessary we can take $0 < \sigma < \sigma' < 1/2$.  By elliptic regularity, $u\in H^{2,-1/2-\sigma}(M)$.

Write $\kappa := \sqrt{\la}$, let $0<\de\ll \min(1,\kappa)$, and let $z := \kappa + i \delta$.
Consider the function 
$$
u_\de(x) :=e^{-\de \langle x\rangle} u
$$
where $\langle r \rangle := (1+r^2)^{1/2}$.

Since $Hu = \lambda u$, one has
$$ (H-z^2) u = - 2 i \delta \kappa u + \delta^2 u,$$
while from \eqref{deltam} one has
$$ \Delta_M e^{-\de \langle x \rangle} = -\delta^2 e^{-\de \langle x \rangle} + O( \frac{1}{\langle x \rangle} \delta e^{-\de \langle x \rangle} )$$
in the exterior of $K_0$. A straightforward application of the product rule then gives the equation
$$ (H-z^2)u_\delta = \delta e^{-\de \langle x \rangle} f_\delta$$
where $f_\delta$ takes the form
$$ f_\delta = 2 (\pa_r u - i \kappa u) + O\left( \frac{1}{\langle x \rangle} |u| \right) + O\left( \frac{1}{\langle x \rangle} |\partial_r u| \right)$$
in the exterior region $M \backslash K_0$, and takes the form
$$ f_\delta = O( u ) + O( |\nabla u| )$$
in the compact region $K_0$; here all implied constants are allowed to depend on $M$ and $\kappa$.
Applying the limiting absorption principle (Theorem \ref{main}) we conclude that
\begin{align*}
 \| u_\de \|_{H^{0,-1/2-\sigma}(M)} &\leq C(\lambda) \delta
( \| e^{-\de \langle x \rangle} (\pa_r u - i \kappa u) \|_{H^{0,1/2+\sigma}(M \backslash K_0)}\\
&\quad\quad + \| e^{-\de \langle x \rangle} u \|_{H^{1,-1/2+\sigma}(M)} ).
\end{align*}
Now observe that
$$ \| e^{-\de \langle x \rangle} u \|_{H^{1,-1/2+\sigma}(M)} \leq C \delta^{-2\sigma} \| u \|_{H^{1,-1/2-\sigma}(M)}$$
and
$$ \| e^{-\de \langle x \rangle} (\pa_r u - i \kappa u) \|_{H^{0,1/2+\sigma}(M \backslash K_0)}
\leq C \delta^{\sigma'-\sigma-1} \| \pa_r u - i \kappa u \|_{H^{0,-1/2+\sigma'}(M \backslash K_0)}.$$
We thus have
$$
 \| u_\de \|_{H^{0,-1/2-\sigma}(\R^n)} \leq C(\lambda) 
 ( \delta^{1-2\sigma} \| u \|_{H^{1,-1/2-\sigma}(M)} + \delta^{\sigma'-\sigma} \| \pa_r u - i \kappa u \|_{H^{0,-1/2+\sigma'}(M \backslash K_0)} ).$$
Taking limits as $\delta \to 0$ and using monotone convergence we conclude that $u$ is identically zero, which is absurd.
 
The same argument rules out incoming resonances and eigenfunctions at any positive energy $\lambda > 0$.
\end{proof}

\begin{remark}  If the Hamiltonian $H$ obeyed the limiting absorption principle \eqref{cmvl2} (note in particular that the constant here does not blow up as $\lambda \to 0$), a modification of above argument would also rule out an eigenvalue or resonance at zero; we omit the details\footnote{In fact, by taking $z$ to be a number such as $2\delta+i\delta$, it suffices to have the limiting absorption principle \eqref{cmvl2} in a sector such as $\{ z = \kappa + i\eps: 0 < \eps < \kappa \}$.}.  This provides a converse to Proposition \ref{lowenergy-jk}.  
\end{remark}

\begin{proof}[Proof of Proposition \ref{singular}]
The absence of singular continuous spectrum in an interval $(a,b)$ is guaranteed by the condition that 
$$
\sup_{0<\epsilon<1} \int_a^b |\Im \langle R(\lambda+i\epsilon)\phi, \phi\rangle_{L^2(M)}|^2\, d\lambda<\infty
$$
for any function $\phi\in C_0^\infty(M)$, see \cite[Theorem XIII.20]{Rsimon2}.  (Indeed, the spectral measure associated to $\phi$ will have an $L^2$ density with respect to Lebesgue measure.)  The result now easily follows from the
resolvent estimates established in Proposition \ref{main}.
\end{proof} 

\subsection{Local smoothing estimates and integrated local energy  decay}\label{ls-sec}

We now prove Propositions \ref{glos} and \ref{prop:int-decay}.  As the arguments are standard (dating back to \cite{kato1}; see also \cite{burq-2004}, \cite{c08}, \cite{c08b}), we shall be somewhat brief in our discussion.  In particular, we will work formally (assuming that all exchanges of integrals, etc. are justified); one can make these formal computations rigorous by standard approximation arguments which we omit here.

We begin with the proof of \eqref{glos-1} from Proposition \ref{glos}.  Henceforth we fix $M,V,A$ and allow all constants to depend on these quantities.

From Duhamel's formula we have
$$ H^{1/2} P_H u(t) = H^{1/2} P_H e^{itH} u(0) - i \int_0^t H^{1/2} P_H e^{i(t-t')H} \nabla^* F(t')\ dt'.$$
If we can establish the retarded estimate
\begin{equation}\label{retard} \| \int_{t' < t} H^{1/2} P_H e^{i(t-t')H} F(t') \|_{L^2_t H^{0,-1/2-\sigma}(\R \times M)} \leq C 
\|F\|_{L^2_t H^{0,1/2+\sigma}(\R \times M)}
\end{equation}
then by time reversal we also have the corresponding advanced estimate in which the constraint $t'<t$ in \eqref{retard} is replaced by $t'>t$.  Summing, we obtain
$$ \| \int_\R H^{1/2} P_H e^{i(t-t')H} F(t') \|_{L^2_t H^{0,-1/2-\sigma}(\R \times M)} \leq C 
\|F\|_{L^2_t H^{0,1/2+\sigma}(\R \times M)};$$
taking the inner product of the expression inside the left-hand norm with $F$ and rearranging, we obtain the inhomogeneous estimate
$$ \| \int_\R H^{1/4} P_H e^{-it' H} F(t')\ dt' \|_{L^2(M)} \leq C \|F\|_{L^2_t H^{0,1/2+\sigma}(\R \times M)}.$$
which by duality gives
$$ \| \nabla H^{1/2} P_H e^{itH} u_0 \|_{L^2_t H^{0,-1/2-\sigma}(\R \times M)} \leq C \|H^{1/4} P_H u_0\|_{L^2(M)}$$
for any test function $u_0$.  From all these estimates we see that to prove \eqref{glos-1} it suffices to establish the retarded estimate \eqref{retard} and the standard elliptic estimate
$$ \| H^{1/4} P_H u_0 \|_{L^2(M)} \leq C \|u_0\|_{H^{1/2}(M)}.$$
To prove the latter estimate, we observe from the $TT^*$ method that it is equivalent to $H^{1/2} P_H$ mapping $H^{1/2}(M)$ to $H^{-1/2}(M)$, which by interpolation and duality follows from $H^{1/2} P_H$ mapping $H^1(M)$ to $L^2(M)$, which by another application of the $TT^*$ method (and the boundedness of $P_H$ on $L^2(M)$) follows from the Dirichlet form $\langle Hu, v \rangle$ being bounded in $H^1(M)$.

Now we turn to \eqref{retard}.
By a limiting argument, it suffices to prove the damped retarded estimate
\begin{equation}\label{retard-damp} 
\liminf_{\eps \to 0^+} \| \int_{t' < t} H^{1/2} P_H e^{i(t-t')(H+i\eps)} F(t') \|_{L^2_t H^{0,-1/2-\sigma}(\R \times M)} \leq C 
\|F\|_{L^2_t H^{0,1/2+\sigma}(\R \times M)}.
\end{equation}

Following Kato \cite{kato1}, we perform a Fourier transform in the time variable
$$ F(t) = \int_\R e^{i \lambda t} \hat F(\lambda)\ d\lambda.$$
The expression inside the left-hand norm of \eqref{retard-damp} can then be expressed as\footnote{One can also shift the contour, taking advantage of the projection away from any negative eigenvalues to work with resolvents in a neighbourhood of the positive real axis, if desired.}
$$ \int_\R e^{i\lambda t} H^{1/2} P_H R(\lambda-i\eps) \hat F(\lambda)\ d\lambda.$$
Applying Plancherel's theorem (and Fatou's lemma) to both sides, we see that it suffices to establish the estimates
$$ \limsup_{\eps \to 0^+} \| H^{1/2} P_H R(\lambda-i\eps) f \|_{H^{0,-1/2-\sigma}(M)} \leq C \|f\|_{H^{0,1/2+\sigma}(M)}
$$
for all $\lambda \in \R$ and all test functions $f$.  If $\lambda$ is negative and bounded away from the origin, then $H^{1/2} P_H R(\lambda-i\eps) $ is bounded on $L^2(M)$ (thanks to the spectral theorem), so we may assume that $\lambda$ is either positive, or close to the origin.

Using the elliptic estimate
$$ \| H^{1/2} u \|_{H^{0,-1/2-\sigma}(M)} \leq C \| u \|_{H^{1,-1/2-\sigma}(M)}$$
it suffices to show that
$$ \| P_H R(\lambda-i\eps) f \|_{H^{1,-1/2-\sigma}(M)} \leq C \|f\|_{H^{0,1/2+\sigma}(M)}
$$
whenever $\lambda \in \R$ and $\eps$ is sufficiently small.  

For $\lambda$ sufficiently close to the origin, this follows from Proposition \ref{lowenergy-jk}.  For $\lambda$ positive and bounded away from the origin, the claim follows instead from Theorem \ref{nontrap}.  This completes the proof of Proposition \ref{glos}.

\begin{remark}
It is clear from the above argument that the global-in-time local smoothing estimates are in fact \emph{equivalent} to the limiting absorption principle (with the appropriate decay in $\lambda$ in the constants).
\end{remark}

\begin{proof}[Proof of Proposition \ref{prop:int-decay}]
As before, we allow all constants to depend on $C,M,A$.

We rewrite the wave equation in the form 
$$
\Phi_t - \A\Phi = \tilde F,
$$
where $\Phi:= \begin{pmatrix} u \\ u_t \end{pmatrix}$, $\tilde F := \begin{pmatrix} 0 \\ F \end{pmatrix}$, and $\A$ is the matrix operator
$$\A:=\left(
     \begin{array}{ccc}
        0& 1&\\
         -H&0
     \end{array}\hskip -.7pc\right)$$
We begin with the proof of \eqref{int-decay-1}.
As in the proof of Proposition \ref{glos}, it suffices to consider the solution of the retarded inhomogeneous problem 
 $$
 \Phi(t)=\int_{s<t} e^{(t-s)A} \tilde F(s)\,ds.
 $$    
Applying Plancherel as before, we see that bounds on the retarded solution are equivalent to bounds on the 
resolvent $(i\A-\mu-i0^-)^{-1}$. More precisely, if $\Psi=(\psi_1,\psi_2)$ is a solution 
of the equation
$$
(i\A-\mu+i\eps)\Psi=(0,g)
$$
then we need to show that uniformly in $\eps>0$ and $\mu \in \R$ that
$$
\|\nabla\psi_1\|_{H^{0,-1/2-\sigma}(\R^n)}+\|\psi_2\|_{H^{0,-1/2-\sigma}(\R^n)}\le 
C \| g\|_{H^{0,1/2+\sigma}(\R^n)}.
$$
Since
\begin{equation}\label{oo} 
(i\A-\mu+i\eps)^{-1} =
R((\mu-i\eps)^2)
\left(
     \begin{array}{ccc}
        \mu-i\eps& i&\\
        -iH& \mu-i\eps
     \end{array}\hskip -.7pc\right)
\end{equation}
we have
$$ \psi_1 = i R((\mu-i\eps)^2) g; \quad \psi_2 = (\mu-i\eps) R((\mu-i\eps)^2) g$$
and the desired estimate \eqref{int-decay-1} follows immediately from Theorem \ref{main}.

The proof of \eqref{int-decay-2} follows from a similar argument using Lemma \ref{lem:improvement}; we omit the details.
\end{proof}

\subsection{The RAGE theorem}\label{rage-sec}

We now prove Proposition \ref{rage}.  We establish the claim just for the Schr\"odinger equation; the claim for the wave equation is analogous and is left to the reader.

By $L^2$ stability of the Schr\"odinger equation 
it suffices to establish \eqref{eq:rage} on a dense subset 
of $L^2(M)$. Thus we may assume that $u(0)\in H^2(M)$. Moreover, Proposition \ref{absent} 
together with the assumption that $u(0)=f$ is orthogonal to all eigenfunction of $H$ implies continuity 
of the measure $d\mu_f(\la)=(dE_\la(H)f,f)$ defined from the spectral measure $dE_\lambda(H)$ associated to $H$.
As a consequence, by a density argument, we can assume that $u(0)$ is spectrally supported on an interval $\lambda\in (a,b)$ 
with $0<a<b<\infty$ of energies $\lambda$. For such functions we have both the limiting absorption principle and 
the local smoothing estimate
$$
\int_0^\infty \|u(t)\|_{H^{2,-1/2-\si}(M)}^2\ dt < \infty
$$
for some $\si > 0$.

Let $\eps>0$.  Applying  the monotone convergence theorem, we can find $T_0 > 0$ such that
$$
\int_{T_0}^\infty \|u(t)\|_{H^{2,-1/2-\si}(M)}^2\ dt < \eps.
$$
Now for any $T > T_0+1$ and
 $g \in H^{0,1/2+\si}(M)$ we have 
\begin{align*}
\int_{\R^n} \overline {u(T,x)} g(x) &= \int_{T-1}^T \int_{M} \frac {d}{dt}  
\big (\overline{u(t,x)} (t-T+1) g(x) \big )\ dx dt \\ & 
=\int_{T-1}^T \int_{\R^n} 
\big (i H \overline{u(t,x)} (t-T+1) g(x) + \overline{u(t,x)} g(x)\big )\ dx dt 
\\ &\le C \|g\|_{H^{0,1/2+\si}(M)}\,\left(\int_{T-1}^T \|u(t)\|_{H^{2,-1/2-\si}(M)}^2\ dt \right)^{1/2} 
\\ &\le C \eps \|g_0\|_{H^{0,1/2+\si}(M)}.
\end{align*}
This implies that $u(t)\to 0$ in $H^{0,-1/2-\si}(M)$ for some $\sigma >1/2$, and
the result follows.

\subsection{The limiting amplitude principle}\label{lamp-sec}

We now prove Proposition \ref{lamp}.   We will follow an approach of Eidus \cite{eidus} and deduce the limiting amplitude principle from the limiting absorption principle, combined with a H\"older continuity property of the resolvent.  More precisely, we will use the following fact:

\begin{proposition}[H\"older continuity]\label{holder}  Let the assumptions be as in Theorem \ref{main}.  Let $K$ be a compact subset of $M$, and let $F$ be a compact subset of the right half-plane $\{ z: \Re z > 0 \}$ that avoids $0$.  Then there exists $C > 0$ and $\sigma > 0$ such that one has the H\"older continuity bound
$$ \| R(z) f - R(z') f \|_{H^1(K)} \leq C |z-z'|^\sigma \|f\|_{L^2(K)}$$
for all $z,z' \in F$ with $\Im z, \Im z' > 0$, and all $f \in L^2(K)$.
\end{proposition}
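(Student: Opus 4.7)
The plan is to establish H\"older continuity by adapting the convergence argument from the proof of Proposition \ref{qlap} (Section \ref{qlap-proof}), which already implicitly produces a H\"older rate when comparing resolvents at different values of $\eps$ along a fixed spectral ray.

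First, I would show the uniform boundedness $\|R(z)\|_{L^2(K) \to H^1(K)} \le C(F,K)$ for $z \in F$ with $\Im z > 0$. Since $F$ is compact in the right half-plane and bounded away from $0$, the constants $\lambda^{-C(M,A)} + e^{C(M,A)\sqrt\lambda}$ appearing in Theorem \ref{main} are uniformly controlled on $F$. Any $f \in L^2(K)$ is compactly supported, so $\|f\|_{H^{0,1/2+\sigma}(M)} \le C(K) \|f\|_{L^2(K)}$, and LAP yields $\|R(z) f\|_{H^{0,-1/2-\sigma}(M)} \le C \|f\|_{L^2(K)}$. Interior elliptic regularity (Lemma \ref{elliptic-regularity}) applied to $(H-z) R(z) f = f$ on a slightly enlarged compact set $K' \supset K$ then upgrades this to the claimed $H^1(K)$ bound (indeed an $H^2(K)$ bound).

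Next, for $z_1, z_2 \in F$ with $\Im z_j > 0$, I would adapt the construction in Section \ref{qlap-proof} to obtain a H\"older estimate in a weighted $L^2$ norm. Setting $\kappa_j := z_j^{1/2}$ with $\Im \kappa_j > 0$ and letting $s$ be smooth with $s(x) = r$ outside $K_0$, consider
\[ w := e^{i(\kappa_1 - \kappa_2) s(x)} R(z_2) f - R(z_1) f. \]
The same computation as in Section \ref{qlap-proof} shows that $(H - z_1) w = (e^{i(\kappa_1 - \kappa_2) s} - 1) f + G$ where the dominant part of $G$ outside $K_0$ is a multiple of $(\kappa_1 - \kappa_2) e^{i(\kappa_1 - \kappa_2) s(x)} (\partial_r R(z_2) f - i\kappa_2 R(z_2) f)$. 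The Sommerfeld radiation condition from Proposition \ref{qlap} controls the factor $\partial_r R(z_2) f - i\kappa_2 R(z_2) f$ in $H^{0,-1/2+\sigma}(M \setminus K_0)$ uniformly in $z_2$, and interpolating via the $L^\infty$ bound $\|\langle x \rangle^{1-\sigma+\sigma'} e^{i(\kappa_1-\kappa_2)s}\|_{L^\infty} \le C(\Im(\kappa_1-\kappa_2))^{-1+\sigma-\sigma'}$ yields
\[ \|G\|_{H^{0,1/2+\sigma'}(M)} \le C(F) |z_1 - z_2|^{\sigma - \sigma'} \|f\|_{L^2(K)} \]
for any $0 < \sigma' < \sigma < \min(1, \sigma_0)$, provided $|\kappa_1 - \kappa_2| \lesssim \Im(\kappa_1 - \kappa_2)$. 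Combined with a parallel H\"older bound for $(e^{i(\kappa_1-\kappa_2)s} - 1) R(z_2) f$ in $H^{0,-1/2-\sigma}(M)$ (using $|e^{ias} - 1| \le C (|a| s)^{\sigma - \sigma'}$ and the uniform $H^{0,-1/2-\sigma'}$ bound on $R(z_2) f$) together with LAP applied to the equation for $w$, this yields $\|R(z_1) f - R(z_2) f\|_{H^{0,-1/2-\sigma}(M)} \le C |z_1 - z_2|^{\sigma - \sigma'} \|f\|_{L^2(K)}$. The corresponding $H^1(K)$ estimate then follows by interior elliptic regularity applied to $(H - z_1)(R(z_1)f - R(z_2)f) = (z_1 - z_2) R(z_2) f$, whose right-hand side is bounded in $L^2(K')$ by $C |z_1 - z_2| \|f\|_{L^2(K)}$ by the first step.

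The main obstacle is that the $L^\infty$ interpolation above requires $\Im(\kappa_1 - \kappa_2) \gtrsim |\kappa_1 - \kappa_2|$, a condition that holds automatically in the setting of Proposition \ref{qlap} (where $z_1, z_2$ lie on a curve of constant $\Re z$) but may fail for arbitrary pairs in $F$, in particular when $\Im z_1 \approx \Im z_2$ while $\Re z_1 \ne \Re z_2$. To close this gap one can either modify the phase by introducing an additional exponential damping factor $e^{-\mu s(x)}$ for a carefully chosen $\mu = \mu(|z_1 - z_2|) > 0$, absorbing the extra commutator term via LAP, or alternatively first establish H\"older continuity along the real axis using the Sommerfeld asymptotic expansion $R(\lambda + i0) f \sim e^{i\sqrt\lambda r} A(\omega, \lambda) / r^{(n-1)/2}$ (the rate $|\sqrt{\lambda_1} - \sqrt{\lambda_2}|^\sigma$ arising from controlling $|e^{i\sqrt{\lambda_1} r} - e^{i\sqrt{\lambda_2} r}|$ truncated at $r \sim |\sqrt{\lambda_1} - \sqrt{\lambda_2}|^{-1}$) and then extending the estimate to the upper half-plane via standard Stieltjes transform / Hilbert transform mapping properties.
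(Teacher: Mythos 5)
Your first paragraph (uniform $L^2(K)\to H^2(K)$ bound from the limiting absorption principle plus elliptic regularity) is correct and matches the paper's opening step. After that, your route diverges substantially from the paper's: you adapt the phase--conjugation trick from the proof of Proposition \ref{qlap}, whereas the paper proceeds via the resolvent identity $R(z)-R(z')=(z-z')R(z)R(z')$, then a duality reduction to estimating $\langle R(z')f, R(\overline{z})h\rangle_{L^2(M)}$, a finite-speed-of-propagation localization of $R(z')f$ to $\langle x\rangle \lesssim 1/\Im z'$ (obtained by writing the resolvent as $\frac{1}{\omega}\int_0^\infty e^{-\omega t}\cos(t\sqrt{-H})\,dt$), and finally an integration by parts exploiting the \emph{opposing} Sommerfeld radiation conditions of $R(z')f$ and $R(\overline z)h$ together with the non-cancellation $1 + \overline z/z'\neq 0$. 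Both arguments ultimately hinge on the outgoing/incoming radiation conditions, but the mechanics are genuinely different.

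The gap you flag in the last paragraph is real, and it is the crux. The $L^\infty$ weight estimate on $\langle x\rangle^{1-\sigma+\sigma'} e^{i(\kappa_1-\kappa_2)s}$ requires $\Im(\kappa_1-\kappa_2)\gtrsim |\kappa_1-\kappa_2|$, and this genuinely fails in the relevant regime: with $z_j=a_j+i\eta$, $a_1\neq a_2$ fixed and $\eta\to 0^+$, one has $\Im(\kappa_1-\kappa_2)=O(\eta|\sqrt{a_1}-\sqrt{a_2}|)\ll |\kappa_1-\kappa_2|\sim |\sqrt{a_1}-\sqrt{a_2}|$, so the weight estimate degenerates like $\eta^{-(1-\sigma+\sigma')}$ and gives no uniform H\"older bound. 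Since the case $|z_1-z_2|\gtrsim 1$ is trivially handled by step one, this is exactly the case that matters. Your two suggested repairs are not worked out and neither is obviously routine. For fix (a), merely multiplying by $e^{-\mu s}$ produces a cross term $2\mu e^{-\mu s}\partial_r(e^{i(\kappa_1-\kappa_2)s}u_2)$ whose $H^{0,1/2+\sigma'}$ norm is of size $\mu^{-\sigma-\sigma'}$ if one only uses $\nabla u_2\in H^{0,-1/2-\sigma}$; to close the estimate one must in addition shift the reference energy to $(\kappa_1+i\mu)^2$ so that the dominant part $2i\kappa_1\mu e^{-\mu s}v$ is absorbed into the operator, and then use the Sommerfeld bound on $\partial_r u_2-i\kappa_2 u_2$ rather than on $\partial_r u_2$ alone. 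That can plausibly be made to work, but it requires (i) reproving the limiting absorption estimate uniformly at the shifted energies $(\kappa_1+i\mu)^2$ and (ii) a further comparison between $R((\kappa_1+i\mu)^2)$ and $R(z_1)$, neither of which you address. Fix (b) leans on a Sommerfeld asymptotic expansion with amplitude $A(\omega,\lambda)$ that the paper nowhere establishes (the proven radiation condition is an $L^2$-weighted statement, not a pointwise asymptotic), and the extension from the real axis to the upper half-plane via Cauchy-transform mapping properties would itself need justification in the relevant weighted spaces. So as written there is a genuine gap in the argument; the plan is salvageable, but closing it requires more than what you have sketched.
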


Let us assume this proposition for the moment and prove Proposition \ref{lamp}.  For simplicity of notation we will assume that $H$ has no eigenfunctions; the general case is analogous and proceeds by inserting a projection $P_H$ to the absolutely continuous portion of the spectrum of $H$ (which commutes with all spectral multipliers of $H$, or of related operators such as the matrix operator $\A$ introduced below) throughout the argument.
By Proposition \ref{rage} and linearity, it suffices to handle the homogeneous case $u_0=u_1=0$.  
As in the proof of Proposition \ref{prop:int-decay}, we introduce the vector $\Phi=\begin{pmatrix} u \\ u_t\end{pmatrix}$ and rewrite 
the wave equation in the form 
$$
\Phi_t - \A\Phi = e^{i\mu t} F,
$$
where $F :=\begin{pmatrix} 0 \\ f \end{pmatrix}$ and $\A$ is the matrix operator 
$$\A:=\left(
     \begin{array}{ccc}
        0& 1&\\
         -H&0
     \end{array}\hskip -.7pc\right).$$
It will suffice to show that
$$ 
e^{-i\mu t} \Phi(t) \to \begin{pmatrix} v \\ i\mu v \end{pmatrix}
$$
in $H_1(K) \times L^2(K)$ as $t \to +\infty$.
 
By the Duhamel formula, we have
\begin{align*}
e^{-i\mu t} \Phi(t) & =\int_0^t e^{(t-s)(\A-i\mu)} ds F \\
&= \lim_{\eps \to 0^+} \int_0^t e^{(t-s)(\A-i\mu-\eps)} ds F \\
&= e^{t(\A - i\mu)} (\A-i\mu-0^+)^{-1} F - (\A-i\mu-0^+)^{-1} F,
\end{align*}
writing $(\A-i\mu-0^+)^{-1}$ as the weak limit of the $(\A-i\mu-\eps)^{-1}$ as $\eps \to 0^+$.
From \eqref{oo} we have
$$ - (\A-i\mu-0^-)^{-1} F = R((\mu-i0^+)^2) \begin{pmatrix} u \\ i \mu u \end{pmatrix}$$
and so from Proposition \ref{qlap} (and elliptic regularity) one has
$$
(\A-i\mu-0^+)^{-1} F \to \begin{pmatrix} v \\ i\mu v \end{pmatrix}$$
in $H^1(K) \times L^2(K)$.  Discarding the bounded phase $e^{-it\mu}$, it thus remains to establish that
$$ e^{t\A} (\A-i\mu-0^+)^{-1} F $$
converges strongly in $H^1(K) \times L^2(K)$ to zero as $t \to +\infty$.  

Let $C$ be a semicircular contour of the form
$$ \{ i y: |y-\mu| \leq r \} \cup \{ \mu + r e^{i\theta}: \pi/2 \leq \theta \leq 3\pi/2 \}$$
for some radius $0 < r < \mu$ (e.g. one can select $r := \mu/2$).  From the Cauchy integral formula and the spectral theorem (using the fact that the spectrum of $\A$ is the imaginary axis) we have
\begin{align*}
 e^{t\A} (\A-i\mu-0^+)^{-1} F &= \lim_{\eps \to 0^+} \lim_{\eps' \to 0^+} \frac{1}{2\pi i} \int_C \frac{e^{tz}}{z-i\mu-\eps} (\A - z - \eps')^{-1} f\ dz \\
 &\quad + e^{t\A} 1_{|\A-i\mu| \geq r} (\A-i\mu-0^+)^{-1} F
\end{align*}
strongly in $H^1(K) \times L^2(K)$.  From the spectral theorem, $1_{|\A-i\mu| \geq r} (\A-i\mu-0^+)^{-1} F$ lies in $H^1(M) \times L^2(M)$, and so by the RAGE theorem, the second term on the right-hand side goes to zero strongly in $H^1(K) \times L^2(K)$ as $t \to \pm \infty$.  It thus suffices to show that
$$ \lim_{t \to +\infty} \lim_{\eps \to 0^+} \lim_{\eps' \to 0^+} \int_C \frac{e^{tz}}{z-i\mu-\eps} (\A - z - \eps')^{-1} f\ dz = 0$$
strongly in $H^1(K) \times L^2(K)$.

From the Cauchy integral formula we have
$$ \int_C \frac{e^{tz}}{z-i\mu+\eps}\ dz = 0$$
for $\eps$ small enough, and so it suffices to show that
\begin{equation}\label{aloe}
\lim_{t \to +\infty} \lim_{\eps \to 0^+} \lim_{\eps' \to 0^+} \int_C \frac{e^{tz}}{z-i\mu-\eps} G_{\eps'}(z)\ dz = 0
\end{equation}
strongly in $H^1(K) \times L^2(K)$, where
$$ G_{\eps'}(z) := (\A - z - \eps')^{-1} F - (\A - i\mu - \eps')^{-1} F.$$
From \eqref{oo} and the H\"older continuity property in Proposition \ref{holder}, $G_{\eps'}$ is H\"older continuous in $H^1(K) \times L^2(K)$, uniformly in $\eps'$; in particular, as $G_\eps'$ vanishes at $i\mu$, we have the bound
\begin{equation}\label{zmu}
\| G_{\eps'}(z) \|_{H^1(K) \times L^2(K)} = O( |z-i\mu|^\delta )
\end{equation}
for some $\delta>0$.  This is already enough, when combined with Minkowski's integral inequality and the dominated convergence theorem, to control the semicircular portion of the contour $C$; it remains to demonstrate that
$$
\lim_{t \to +\infty} \lim_{\eps \to 0^+} \lim_{\eps' \to 0^+} \int_{-r}^r \frac{e^{ity}}{iy-\eps} G_{\eps'}(i\mu+iy)\ dy = 0$$
strongly in $H^1(K) \times L^2(K)$.  From \eqref{zmu} and Minkowski's integral inequality we have
$$
\left\| \int_{-s}^s \frac{e^{ity}}{iy-\eps} G_{\eps'}(i\mu+iy)\ dy \right\|_{H^1(K) \times L^2(K)} = O( s^\delta )$$
for any $0 < s < r$, so it suffices to show that
$$
\lim_{t \to +\infty} \lim_{\eps \to 0^+} \lim_{\eps' \to 0^+} \int_I \frac{e^{ity}}{iy-\eps} G_{\eps'}(i\mu+iy)\ dy = 0$$
for any compact interval $I$ in $[-r,r]$ avoiding the origin.  However, from \eqref{zmu} we see that for any unit vector $w$ in $H^1(K) \times L^2(K)$, the scalar function
$$ y \mapsto \left\langle \frac{1}{iy-\eps} G_{\eps'}(i\mu+iy), w \right\rangle_{H^1(K) \times L^2(K)}$$
is uniformly H\"older continuous in $\eps, \eps'$ on $I$, and in particular (by the Arzela-Ascoli theorem) is precompact in the uniform norm.  From the Riemann-Lebesgue lemma we thus have
$$
\lim_{t \to +\infty} \lim_{\eps \to 0^+} \lim_{\eps' \to 0^+} \left\langle \int_I \frac{e^{ity}}{iy-\eps} G_{\eps'}(i\mu+iy)\ dy, w \right\rangle_{H^1(K) \times L^2(K)} = 0$$
uniformly in $w$, and the claim follows by duality.

It remains to prove Proposition \ref{holder}.  We first observe from the limiting absorption principle that
$$ \| R(z) f \|_{H^2(K)} \leq C \|f\|_{L^2(K)}$$
uniformly for all $z$ in a compact set avoiding the origin.  Thus, by interpolation, it suffices to obtain an $L^2(K) \to L^2(K)$ H\"older continuity bound, thus we need to show that
$$ \| R(z) f - R(z') f \|_{L^2(K)} \leq C |z-z'|^\sigma \|f\|_{L^2(K)}$$
for all $z, z'$ in $F$ with positive imaginary part, and some sufficiently small $\sigma > 0$ (which may be different from the one in Proposition \ref{holder}).  Clearly, we may assume that $|z-z'|$ is smaller than any given absolute constant.

Let $\sigma > 0$ be a small exponent to be chosen later.
By the triangle inequality, it suffices to prove this claim under the additional assumption that $\Im(z') \geq 0.1 |z-z'|$ (say).  Write $\eta := \Im(z')$, then $\eta > 0$ is small and $|z-z'| = O(\eta)$, with $|z|, |z'|$ comparable to $1$, and with $\Re(z), \Re(z')$ positive and bounded away from the origin, and our task is to now show that
$$ \| R(z) f - R(z') f \|_{L^2(K)} \leq C \eta^\sigma \|f\|_{L^2(K)}.$$
Using the resolvent identity
$$ R(z) - R(z') = (z-z') R(z) R(z')$$
it thus suffices to show that
$$ \| R(z) R(z') f \|_{L^2(K)} \leq C \eta^{-1+\sigma} \|f\|_{L^2(K)},$$
so by duality it suffices to show that
$$ |\langle R(z') f, R(\overline{z}) h \rangle_{L^2(M)}| \leq C \eta^{-1+\sigma}$$
whenever $f, h \in L^2(K)$ have unit norm.

Fix $f, h$.
Recall that $z'$ has imaginary part $\eta$.  This effectively localises $R(z') f$ to the region $\{ x: \langle x \rangle = O( 1/\eta ) \}$.  Indeed, if we write $z' := -\omega^2$, where $\Im \omega > 0$, then $|\omega|$ is comparable to $1$ and $\Im \omega$ is comparable to $\eta$ (with constants depending on the compact region $F$).  Using the elementary identity
$$ \frac{1}{x^2 + \omega^2} = \frac{1}{\omega} \int_0^\infty e^{-\omega t} \cos tx\ dt$$
we see that
$$ R(z') f = \frac{1}{\omega} \int_0^\infty e^{-\omega t} \cos(t \sqrt{-H}) f\ dt.$$
The wave propagators $\cos(t \sqrt{-H})$ are contractions on $L^2(M)$, and $\cos(t \sqrt{-H}) f$ is supported in the region where $\langle x \rangle \leq t + O(1)$.  From the exponential decay in the $e^{-\omega t}$ factor, we thus see that
\begin{equation}\label{xo}
\int_{|z| \geq \eta^{-1+\sigma}} \langle x \rangle^{100} |R(z') f(x)|^2\ dg \leq C \eta^{100} 
\end{equation}
(say), for any fixed $\sigma > 0$ (allowing $C$ to depend on $\sigma$).  Thus, it will suffice to show that
\begin{equation}\label{psi}
 |\int_M \psi (R(z') f) \overline{R(\overline{z}) h}\ dg| \leq C \eta^{-1+\sigma}
\end{equation}
for a smooth cutoff $\psi$ to the region $\langle x \rangle \leq 2\eta^{1-\sigma}$ that equals $1$ when $\langle x \rangle \leq \eta^{1-\sigma}$, as the error term caused by $1-\psi$ can be estimated by \eqref{xo} and the limiting absorption principle (with plenty of powers of $\eta$ to spare).

Write $u := R(z') f$ and $v := R(\overline{z} h)$.  From the limiting absorption principle we have
\begin{equation}\label{uvlap}
\| u\|_{H^{0,-1/2-\sigma}(M)}, \| v\|_{H^{0,-1/2-\sigma}(M)} \leq C.
\end{equation}
If we apply these bounds and Cauchy-Schwarz to estimate \eqref{xo} directly, we obtain a bound of $O( \eta^{-1-2\sigma})$, which barely fails to be adequate for our purposes.  To obtain the additional powers of $\eta$ needed to close the argument, we take advantage of the fact that $z'$ and $\overline{z}$ lie on different sides of the real axis, and so $R(z') f$ and $R(\overline{z}) h$ obey opposing radiation conditions.    Indeed, if we write $u := R(z') f$ and $v := R(\overline{z} h)$, then from Proposition \ref{qlap} we have
\begin{equation}\label{uv-1}
\| (\partial_r - i z') u\|_{H^{0,-1/2+3\sigma}(M \backslash K_0)} \leq C
\end{equation}
and
\begin{equation}\label{uv-2}
 \| (\partial_r + i \overline{z}) v\|_{H^{0,-1/2+3\sigma}(M \backslash K_0)} \leq C
\end{equation}
(say), if $\sigma$ is small enough.
To use these facts, denote the integral in \eqref{psi} by $I$.  From \eqref{uvlap} and Cauchy-Schwarz, we can write
$$ I = \int_M (\psi-\psi_0)  u \overline{v}\ dg + O(1),$$
where $\psi_0$ is a cutoff to a fixed compact region that equals $1$ on $K_0$.  Applying \eqref{uv-1} (and using \eqref{uvlap} to estimate the error), we can then write
$$ I = \frac{1}{iz'} \int_M (\psi-\psi_0) (\partial_r u) \overline{v}\ dg + O(\eta^{-1+\sigma}).$$
We integrate by parts to then obtain
\begin{align*}
I &= -\frac{1}{iz'} \int_M (\psi-\psi_0) u \overline{(\partial_r v)}\ dg + O(\eta^{-1+\sigma}) \\
&\quad + O( \int_M \frac{1}{\langle x \rangle} \psi |u| |v|\ dg ).
\end{align*}
Using \eqref{uvlap} and Cauchy-Schwarz, the final error term is $O(\eta^{-1+\sigma})$ for $\sigma$ small enough.  For the main term, we use \eqref{uv-2}, estimating the error again using \eqref{uvlap} and Cauchy-Schwarz, to obtain
$$
I = -\frac{1}{iz'} \int_M (\psi-\psi_0) u \overline{(-i\overline{z}v)}\ dg + O(\eta^{-1+\sigma}) 
$$
which simplifies (using \eqref{uvlap} and Cauchy-Schwarz one last time to remove the $\psi_0$ cutoff) to
$$ I = - \frac{\overline{z}}{z'} I + O( \eta^{-1+\sigma} ).$$
As $z, z'$ are both in the upper half-plane, have magnitude comparable to $1$, and are within $\eta$ of each other, we see that $-\frac{\overline{z}}{z'}$ is a bounded distance away from $1$ for $\eta$ small enough.  We thus conclude that $I = O( \eta^{-1+\sigma} )$ as required.  This proves Proposition \ref{holder} and thus Proposition \ref{lamp}.

\begin{remark}  It should be clear from the above argument that one also has a similar limiting absorption principle for the Schr\"odinger evolution (replacing $H^1(K) \times L^2(K)$ by $L^2(K)$).  We leave the details to the interested reader.  It should also be clear from the argument that one can strengthen the convergence in $H^1(K) \times L^2(K)$ to convergence in $H^{1,-s}(M) \times H^{0,-s}(M)$ for some sufficiently large $s$, and dually one can also relax the hypothesis $f \in L^2(K)$ to $f \in H^{0,s}(M)$.
\end{remark}

\begin{remark} The hypothesis in Proposition \ref{holder} that $F$ lie in the right half-plane and avoid zero was needed in order to invoke the limiting absorption principle.  If one assumes that $H$ has no eigenfunctions (or if $f$ is assumed to be orthogonal to such eigenfunctions), then one can extend this proposition to the left half-plane as well (and indeed the claim follows easily from the spectral theorem in that caase), and if $H$ has no eigenfunction or resonance at zero, then one no longer needs to avoid the origin (thanks to Proposition \ref{lowenergy-jk}).  It is likely that one can upgrade the H\"older continuity bound to a stronger bound, such as differentiability, under further regularity and decay hypotheses on the metric $g$ and potential $V$, but we will not pursue this matter here.
\end{remark}


\begin{remark}  We sketch here an alternate approach to the limiting amplitude principle that does uses wave equation energy estimates instead of H\"older continuity properties of the resolvent, but requires the $\sigma_0$ parameter to be large (in particular, potential needs to be strongly short-range), and also requires $H$ to have no bound states.  The main task, as noted above, is to establish decay of $e^{t\A} (\A-i\mu-0^+)^{-1} F$ in $H^1(K) \times L^2(K)$ as $t \to +\infty$.  If the data $\begin{pmatrix} v \\ i\mu v \end{pmatrix} = (\A-i\mu-0^+)^{-1} F$ had finite total energy, then this would follow from the RAGE theorem; however, from the spectral theorem we see that we do not expect this data to have finite energy.  However, one can use Lemma \ref{lem:improvement} (or more precisely, a variant of this lemma) to show (if $\sigma_0$ is large enough) that $\begin{pmatrix} v \\ i\mu v \end{pmatrix}$ has finite \emph{incoming} energy, in the sense that $r^{-1} \nabla_\omega v$ and $v_r + i \mu v$ lie in $L^2(M)$.  It turns out that these types of estimates, together with wave equation energy estimates formed by contracting the stress-energy tensor against a well-chosen vector field (essentially outgoing vector field $\partial_t+\partial_r$), shows that the energy of $e^{t\A} (\A-i\mu-0^+)^{-1} F$ on a forward light cone is bounded, which by energy estimates implies that $e^{t\A} (\A-i\mu-0^+)^{-1} F$ stays bounded in $H^1(K) \times L^2(K)$.  By truncating away a compactly supported component of $(\A-i\mu-0^+)^{-1} F$ (whose contribution decays by the RAGE theorem), one can upgrade this boundedness to decay.
\end{remark}

\section{Decay estimates for the time-dependent Schr\"odinger equation}\label{schro-decay}

In this section we prove Proposition \ref{prop:decay-S}.  We may assume that $t \geq 1$, since the case $0 \leq t \leq 1$ follows from Sobolev embedding and $H^2(M)$ energy estimates.  Our initial analysis will be valid in all dimensions three or greater, but we will eventually specialize to the three-dimensional case for sake of concreteness.

Let $\psi$ be a solution of the time-dependent Schr\"odinger equation
\begin{align*}
&i\pa_t\psi +\Delta_M \psi=0,\\
&\psi|_{t=0}=\psi_0.
\end{align*}
Consider the following second order self-adjoint operator
$$
P:=t^2\Delta_M -it\chi(r\pa_r+\frac n2 +\frac r2\theta) - \frac 14 \chi^2 r^2
$$
with a smooth cut-off function $\chi(r)$ supported in the region $\langle x\rangle\ge r_0$ 
and equal to $1$ for $r\ge 2r_0$ for some sufficiently large $r_0$.  As we shall see later, this operator can essentially be viewed as a conjugate of $t^2 \Delta_M$, via the heuristic\footnote{In the Euclidean case $H = \Delta_{\R^n}$, setting $\chi$ equal to $1$, $P$ is the Laplacian conjugated by the pseudoconformal transformation $u(t,x) \mapsto \frac{1}{t^{n/2}} e^{i r^2/4t} \overline{u(\frac{1}{t},\frac{x}{t})}$, which explains why we expect $P$ to approximately commute with the Schr\"odinger operator $i \pa_t + \Delta_M$.}
\begin{equation}\label{Heuristic}
P \approx e^{i \chi r^2/4t} t^2 \Delta_M e^{-i \chi r^2/4t}.
\end{equation}
We will establish the decay estimate by commuting the Schr\"odinger equation with $P$.

Recall that in the region $\langle x\rangle\ge r_0$ the metric $g$ on $M$ takes the 
form 
$$
g=dr^2+ r^2 h[r]_{ab} d\omega^a\, d\omega^b,\qquad h[r]_{ab}=h_{ab}(\omega) + r^{-2\si_0}
e_{ab}(r,\omega)
$$
and $\theta=\frac 12 h[r]^{ab}\pa_r h[r]_{ab}$.
By hypothesis, we are assuming that 
$$
|(r\pa_r)^k (\nabla_\omega^\alpha) h|\le C_{k\alpha},\qquad k\le 3, \,\,|\alpha|\le 2.  
$$
Let $\phi:=P\psi$, then $\phi$ solves the forced Schr\"odinger equation
$$
i\pa_t\phi +\Delta_M \phi=F
$$
where
$$ F:= (i\pa_t P +[\Delta_M,P])\psi.
$$
To obtain bounds for $F$, we compute
$$
i\pa_t P= 2it\Delta_M +\chi (r\pa_r+\frac n2+\frac r2\theta)$$
and
\begin{align*}
[\Delta_M,P]&= -it [\Delta_M, \chi(r\pa_r+\frac n2+\frac r2\theta)] - \frac 14 [\Delta_M,\chi^2 r^2]=
-it (\Delta_M \chi) (r\pa_r+\frac n2+\frac r2\theta)\\& - 2\chi' it  \pa_r -2\chi'it r\pa_r^2-
\chi'it \pa_r(r\theta)-2\chi' it (r\pa_r+\frac n2+\frac r2\theta)\pa_r
-\chi it  [\Delta_M, r\pa_r+\frac r2\theta] \\ &- 
\frac 14 (\Delta_M \chi^2) r^2 - 2r \chi \chi' -\chi\chi' r^2\pa_r-\frac 14\chi^2[\Delta_M, r^2]
\end{align*}
with the understanding that all expressions involving $\chi$ or its derivatives vanish on $K_0$.

In polar coordinates (outside of $K_0$), the Laplace-Beltrami operator has the following representation
$$
\Delta_M =\pa_r^2 + \left(\frac {n-1} r+\theta\right) \pa_r+\frac 1{r^2} \Delta_{h[r]}.
$$
As a consequence, we may compute commutators:
\begin{align*}
[\Delta_M, r\pa_r] := 2 \Delta_M -(r \pa_r\theta+\theta)\pa_r +\frac 1{r} \pa_r (  \Delta_{h[r]}),\\
[\Delta_M, r^2]:= 4 r \pa_r + 2 + 2(n-1) + 2\theta r,\\
[\Delta_M,r\theta]:=2(\theta+r\pa_r\theta)\pa_r+\frac 2r \nabla_\omega\theta \nabla_\omega+
\left(\frac{n-1}r+\theta\right)\theta +2\pa_r\theta +r(\Delta_M\theta).
\end{align*}
Putting all these estimates together, we obtain the pointwise bound
\begin{align*}
|F |&\le C \left ((1+t)\,\zeta\, |\pa_r\psi| + t \,\zeta |\pa^2_r\psi|\right.\\
&\quad+\left.
r^{-2-2\si_0} t \,(|\nabla_\omega^2 \psi|+|\nabla_\omega\psi|)
+(1+t) r^{-2-2\si_0}\, |\psi|\right ),
\end{align*}
where $\zeta$ is a smooth cut-off 
supported in the region $r_0\le \langle x\rangle\le 2r_0$.

Fix $T>0$ and let $\eta_T$ be the characteristic function of the interval $[1,T]$. Then 
$$
(i\pa_t +\Delta_M) \eta_T \phi = \eta_T F + i (\phi(1) \de(t-1)-\phi(T)\de(t-T))
$$
By the global in time local smoothing estimate for $\si>1/2$,
$$
\|\eta_T \phi\|_{L^2_t H^{0,-1/2-\si}(M)}\le C \left (\| \eta_T F\|_{L^2_t H^{0,1/2+\si}(M)} +
\|\phi(1)\|_{L^2(M)} + \|\phi(T)\|_{L^2(M)}\right),
$$
which means that 
$$
\|\phi\|_{L^2_{[1,T]} H^{0,-1/2-\si}(M)}\le C \left (\|F\|_{L^2_{[1,T]} H^{0,1/2+\si}(M)} +
\|\phi(1)\|_{L^2(M)} + \|\phi(T)\|_{L^2(M)}\right).
$$
We also have the standard $L^2$ estimate, which implies that for any 
$t\in [1,T]$,
$$
\|\phi(t)\|_{L^2(M)}\le C \left (\|F\|_{L^2_{[0,T]} H^{0,1/2+\si}(M)} +
\|\phi(1)\|_{L^2(M)}\right).
$$
Adding the two estimates above we obtain the standard bound
\begin{equation}\label{eq:smooth-est}
\|\phi(t)\|_{L^2(M)}+\|\phi\|_{L^2_{[1,T]} H^{0,-1/2-\si}(M)}
\le C \left (\|F\|_{L^2_{[1,T]} H^{0,1/2+\si}(M)} +
\|\phi(1)\|_{L^2(M)}\right).
\end{equation}

To relate $\phi=P\psi$ back to $\psi$, we now work on developing the heuristic \eqref{Heuristic}.  We compute
$$
\Delta_M \left(e^{-i\chi \frac{r^2}{4t}}\psi\right)= e^{-i\chi \frac{r^2}{4t}}\left(\Delta_M 
-\frac i{2t}(2\chi r +\chi' r^2)\pa_r\right)\psi + \psi \Delta_M \left (e^{-i\chi \frac{r^2}{4t}}\right).$$
We expand the final term $\Delta_M \left (e^{-i\chi \frac{r^2}{4t}}\right)$:
\begin{align*}
 \Delta_M \left (e^{-i\chi \frac{r^2}{4t}}\right)&=-\frac i{4t} \pa_r\left 
(e^{-i\chi \frac{r^2}{4t}} ({2r}\chi+\chi' {r^2})\right)
-\frac i{4t} (\frac {n-1}r+\theta) ({2r}\chi+\chi' {r^2}) e^{-i\chi \frac{r^2}{4t}}\\ &
=-\frac i{4t}  e^{-i\chi \frac{r^2}{4t}} \left (-\frac{i}{4t} (2r\chi+\chi' r^2)^2 + 4\chi'r + 2\chi+\chi''r^2 \right. \\
&\quad\quad \left.+
 (\frac {n-1}r+\theta) ({2r}\chi+\chi' {r^2})\right)
\end{align*}
We thus obtain the pointwise estimate
$$
\left|\phi - e^{i\chi \frac{r^2}{4t}}\, t^2 \Delta_M \left (e^{-i\chi \frac{r^2}{4t}}\psi\right)\right|\le 
C(t\zeta |\pa_r\psi| + (1+t) \zeta |\psi|).
$$
Inserting this bound into \eqref{eq:smooth-est} and  letting 
$$
u:= e^{-i\chi \frac{r^2}{4t}}\psi
$$
we obtain
\begin{align*}
\|t^2 \Delta_M u \|_{L^2(M)} &+\|t^2 \Delta_M u \|_{L^2_{[1,T]} H^{0,-1/2-\si}(M)}\le C \left 
(\|F\|_{L^2_{[1,T]} H^{0,1/2+\si}(M)}\right. \\ &+ \left.t\, \|\zeta |\pa_r\psi| + (1+t)  
\zeta |\psi|\, \|_{L^2_{[1,T]} H^{0,-1/2-\si}(M)}+\|\phi(1)\|_{L^2(M)}\right)
\end{align*}
We now observe that
\begin{align*}
&|\psi|=|u|,\quad |\nabla_\omega\psi|=|\nabla u|,\quad 
|\pa_r\psi|\le |\pa_r u| +\frac r{t} |u|,\\
&|\nabla_\omega^2\psi|=|\nabla_\omega^2 u|,\quad |\pa_r^2\psi|\le 
|\pa_r^2 u|+ \frac rt |\pa_r u| + \frac {r^2}{t^2} |u|^2.
\end{align*}
Therefore,
\begin{align*}
|F|+t\, \zeta |\pa_r\psi|+ t 
\zeta |\psi|&\le C \,t\, \left (\zeta |\pa^2_r u| + r^{-2-2\si_0}  |\nabla_\omega^2u|\right.\\&+\left. \zeta |\pa_r u|+ 
r^{-2-2\si_0} 
|\nabla_\omega u| + r^{-2-2\si_0} |u|\right) 
\end{align*}
We thus have 
\begin{align*}
\|t^2 \Delta_M u &\|_{L^2(M)}+\|t^2 \Delta_M u \|_{L^2_{[1,T]} H^{0,-1/2-\si}(M)}\\
 &\le C \Big
(\|t\pa^2_r u\|_{L^2_{[1,T]} H^{0,-1/2-\si}(M)}+ \|t\pa_r u\|_{L^2_{[1,T]} H^{0,-1/2-\si}(M)} \\ 
&\quad +
\|t r^{-1}\nabla_\omega u\|_{L^2_{[1,T]} H^{0,-1/2-2\si_0+\si}(M)} +
\|t r^{-2}\nabla^2_\omega u\|_{L^2_{[1,T]} H^{0,1/2-2\si_0+\si}(M)} 
\\ 
&\quad +\|t u\|_{L^2_{[1,T]} H^{0,-3/2-2\si_0+\si}(M)}+
\|\phi(1)\|_{L^2(M)} \Big).
\end{align*}
We can couple this bound with the local smoothing estimate for the original solution $\psi$, which implies (as $u$ has the same magnitude as $\psi$) that
$$
\|u \|_{L^2(M)}+\|u \|_{L^2_{[1,T]} H^{0,-1/2-\si}(M)}\le C \|u(1)\|_{L^2(M)}.
$$
By combining these two estimates, we arrive at the estimate 
\begin{equation}\label{eq:tron}
\begin{split}
\|t^2 \Delta_M u \|_{L^2(M)}&+\|t^2 \Delta_M u \|_{L^2_{[1,T]} H^{0,-1/2-\si}(M)}\\ &\le C \Big
(\|t r^{-2}\nabla^2_\omega u\|_{L^2_{[1,T]} H^{0,1/2-2\si_0+\si}(M)} +
\|t u\|_{L^2_{[1,T]} H^{0,-3/2-2\si_0+\si}(M)}\\ &+
 \|u(1)\|_{L^2(M)}+\|\phi(1)\|_{L^2(M)} \Big)
\end{split}
\end{equation}
The first two terms on the right hand side require special care. We introduce a radius parameter $R$ and split
\begin{align*}
\|t r^{-2}\nabla^2_\omega u\|_{L^2_{[1,T]} H^{0,1/2-2\si_0+\si}(M)}& \le 
\|t r^{-2}\nabla^2_\omega u\|_{L^2_{[1,T]} H^{0,1/2-2\si_0+\si}(M_R)}\\ &+
\|t r^{-2}\nabla^2_\omega u\|_{L^2_{[1,T]} H^{0,1/2-2\si_0+\si}(M_R^c)}\\&\le 
C(R) \|t r^{-2}\nabla^2_\omega u\|_{L^2_{[1,T]} H^{0,-1/2-\si}(M)}\\&+
C R^{1/2-2\si_0+\si}\,  \sup_{t\in [1,T]}  \|t^2 \,r^{-2}\nabla^2_\omega u\|_{L^2(M)}
\end{align*}
For $\si_0>\si>1/2$ the second term can be absorbed by the $\|t^2 \Delta_M u \|_{L^2(M)}$
in the left-hand side of \eqref{eq:tron}, while the first term, via Bochner identities, can be partially absorbed
by the second term\footnote{Note that Bochner identity and our assumptions on the metric
$g$ imply that 
$$
\|\nabla^2 u\|_{L^2(M)} + \|\langle x\rangle^{-1} |\nabla u|_g\|_{L^2(M)}\le C\|\Delta_M u\|_{L^2} .
$$ The dependence of constant $C$ on the manifold $M$ is implicit as this inequality proved 
with the help of compactness arguments. This bound allows us to prove Sobolev multiplicative 
inequalities of the form 
$$
\|u\|_{L^p(M)}\le C \|\Delta_M u\|^\alpha_{L^2(M)} \|u\|^{1-\alpha}_{L^2(M)},  \quad
2\alpha =(\frac n2 -\frac np), \,\, 0\le\alpha\le 1, \,\, (\alpha,p)\ne (1,\infty).
$$
} on the left hand side of \eqref{eq:tron} with the remaining residual term
of the same form as  $\|t u\|_{L^2_{[1,T]} H^{0,-3/2-2\si_0+\si}(M)}$.

Therefore, we have that 
\begin{equation}\label{eq:stul}
\begin{split}
\|t^2 \Delta_M u \|_{L^\infty_{[1,T]} L^2(M)}&+\|t^2 \Delta_M u \|_{L^2_{[1,T]} H^{0,-1/2-\si}(M)}\le C \Big
(\|t u\|_{L^2_{[1,T]} H^{0,-3/2-2\si_0+\si}(M)}\\ &+
 \|u(1)\|_{L^2(M)}+\|\phi(1)\|_{L^2(M)} \Big)
\end{split}
\end{equation}
In dimension $n=3$ we now proceed as follows. Using a local smoothing estimate for $u$
we get the bound 
$$
\|t^2 \Delta_M u(t) \|_{L^2(M)}\le C t( \|u(1)\|_{L^2(M)}+\|\phi(1)\|_{L^2(M)} ),
$$
which gives the preliminary bound
\begin{equation}\label{preli}
\|\Delta_M u(t) \|_{L^2(M)}\le \frac {C} t( \|u(1)\|_{L^2(M)}+\|\phi(1)\|_{L^2(M)} )
\end{equation}
for all $t\geq 1$.
By the Sobolev estimate, 
$$
\|u(t)\|_{L^\infty(M)} \le C \|\Delta_M u(t)\|^{\frac 34}_{L^2(M)} \|u(t)\|^{\frac 14}_{L^2(M)}\le
 \frac {C}{t^{\frac 34}} ( \|u(1)\|_{L^2(M)}+\|\phi(1)\|_{L^2(M)} )
$$
Therefore,
$$
\|t u\|_{L^2_{[1,T]} H^{0,-3/2-2\si_0+\si}(M)}\le T^{\frac 12} \|t u\|_{L^\infty_{[1,T]} L^\infty(M)}
\le T^{\frac 14}  ( \|u(1)\|_{L^2(M)}+\|\phi(1)\|_{L^2(M)} ).
$$
We can thus bootstrap the preliminary bound \eqref{preli} to the improved bound 
$$
\|\Delta_M u(t) \|_{L^2(M)}\le \frac {C}{t^{\frac 54}} ( \|u(1)\|_{L^2(M)}+\|\phi(1)\|_{L^2(M)} )
$$
for any $t \geq 1$.

We now iterate this process.  If on the $n^{\operatorname{th}}$ step the decay rate of  $\|\Delta_M u \|_{L^2(M)}$
is $t^{-\alpha_n}$, then 
$$
\alpha_{n+1}=\frac 34\alpha_n+\frac 12
$$
and thus $\alpha_n\to 2$. It then follows that 
$$
\|\psi(t)\|_{L^\infty(M)} \le C_\eps t^{-\frac 32+\eps}  ( \|\psi(1)\|_{L^2(M)}+\|\phi(1)\|_{L^2(M)} )
$$
for any $\epsilon>0$ and $t \geq 1$. 

In higher dimensions the norm $\|t u(t)\|_{H^{0,-3/2-2\si_0+\si}(M)}$ is $L^2$ integrable and 
can be dealt with directly in one step by interpolating between $\|\Delta_M u \|_{L^2(M)}$
and $\|u \|_{L^2(M)}$. We omit the details.

\section{Decay estimates for the wave equation}\label{wave-decay}
\newcommand\g{{\bf g}}

In this section we prove\footnote{The argument below is an optimal but has been retained for illustrative purposes.
The following decay rates can be improved via an alternative approach as  in  \cite{dr3}.}
Proposition \ref{prop:wave-S}.  For $t \leq 2$ the claim is easily established from $H^2 \times H^1$ energy estimates and Sobolev embedding, so we shall limit ourselves to the case when $t>2$.

Let $u$ be a solution of the wave equation
\begin{align*}
&\Box_Mu: =u_{tt}-\Delta_M u=0,\\
&u|_{t=0}=u_0,\,\,\,u_{t}|_{t=0}=u_1.
\end{align*}
In Section \ref{sec:conserv} we defined the energy-momentum tensor $\Q_{mk}$
associated with the Helmholtz equation $(H-z^2)u=0$. Here we use its spacetime counterpart\footnote{Greek
indices $\a,\b=0,\ldots,n$ with index $0$ corresponding to the $t$ coordinate. Operations of raising and 
lowering of indices are done with respect to the space-time metric $\g=-dt^2+g$. Finally, $D$
will denote the Levi-Civita connection of metric $\g$.} 
$$
\Q_{\a\b} := \pa_\a u \pa_\b u
- \frac{1}{2} \g_{\a\b}\,  \g^{\mu\nu} \pa_\mu u\,\pa_\nu u.
$$
As is well known, the energy-momentum tensor $\Q_{\a\b}$ is divergence free:
$$
D^\b \Q_{\a\b}=0.
$$
Let $K=K^\a \pa_\a$ be an arbitrary (smooth) vectorfield. We form the quantity 
$$
P_\a :=  \Q_{\a\b} K^\b
$$  
with the property that 
$$
D^\a P_\a = \Q_{\a\b} \pi^{\a\b}, \quad \pi_{\a\b} = \frac 12 (D_\a K_\b + D_\b K_\a),
$$
where $\pi_{\a\b} := \frac 12 {\mathcal L}_K g_{\a\b}$ is the deformation tensor of $K$. 
We note two more identities:
\begin{align*}
&\pa_\a (u^2) =2 \pa_\a u\, u,\\
&D^\a ( \pa_\a u\, u)= \pa^\a u\, \pa_\a u.
\end{align*}
Then choosing additional smooth function $a^\a$ and $b$, we have that 
$$
D^\a \left  (a_\a u^2 + b\, \pa_\a u\, u+ \Q_{\a\b} K^\b\right) =
D^\a a_\a u^2 + (2 a_\a + \pa_\a b) + b  \pa^\a u\, \pa_\a u +
 \Q_{\a\b} \pi^{\a\b}
$$
or more explicitly
\begin{align*}
D^\a \left  (a_\a u^2 + b\, \pa_\a u\, u+ \Q_{\a\b} K^\b\right) &=
D^\a a_\a u^2 + (2 a_\a + \pa_\a b)  \pa^\a u\, u  \\ &+
\left(\pi^{\a\b}+(b-\frac 12 {\text{tr}} \pi) \g^{\a\b}\right) \pa_\a u\, \pa_\b u,
\end{align*}
where $ {\text{tr}} \pi=\g^{\mu\nu} \pi_{\mu\nu}$.

It is clear that one of the choices of $a^\a, b$ to simplify the right hand side is 
$$
a_\a=-\frac 12 \pa_\a b
$$
whence
$$
D^\a \left  (-\frac 12\pa_\a b\, u^2 + b\, \pa_\a u\, u+ \Q_{\a\b} K^\b\right) =
\square_M b\, u^2 +
\left(\pi^{\a\b}+(b-\frac 12 {\text{tr}} \pi) \g^{\a\b}\right) \pa_\a u\, \pa_\b u
$$
Integrating this expression in the space-time slab $[t,0]\times M$ we obtain
\begin{equation}\label{eq:stokes}
\begin{split}
\int_{M\times \{0\}}&\left(-\frac 12 \pa_t b\, u^2 + b\, \pa_t u\, u+ \Q_{0\b} K^\b\right)\, dg-
\int_{M\times \{t\}}\left(-\frac 12 \pa_t b\, u^2 + b\, \pa_t u\, u+ \Q_{0\b} K^\b\right)\, dg\\ &=
\int_{0}^{t} \int_M \left (\square_M b\, u^2 +
\left(\pi^{\a\b}+(b-\frac 12 {\text{tr}} \pi) \g^{\a\b}\right) \pa_\a u\, \pa_\b u\right)\ dg\, dt
\end{split}
\end{equation}

We now make the remaining choices of the vector field $K$ and function $b$. 
For $K$ we choose a modification of the Morawetz vectorfield $(t^2+r^2) \pa_t +2tr \pa_r$,
known to play an important role in the study of the decay properties of solutions of the
wave equation in Minkowski space and obstacle problems (see \cite{morawetz}).
Define
$$
K:=(t^2+\chi r^2) \pa_t +2tr \chi \pa_r
$$
with a cut-off function $\chi$ of the previous section supported in $\langle x\rangle \le r_0$
and equal to 1 for $\langle x\rangle \ge 2r_0$. The deformation tensor of $K$ can be computed 
as follows:
\begin{align*}
&\pi_{00}=-2t,\quad \pi_{rr}=2t \chi + 2t r\chi',\\
&\pi_{ab}= 2t \chi (h[r]_{ab}+r \theta_{ab}),\quad \pi_{0r}=-\chi'r^2,\\
&{\text{tr}}\pi= 2t + 2t\chi (n-1) + 2t\chi r \theta+2tr \chi' 
\end{align*}
We then choose 
$$
b=(n-1) t + 2t\chi r \theta
$$
so that 
\begin{equation}\label{eq:pi}
\left(\pi^{\a\b}+(b-\frac 12 {\text{tr}} \pi) \g^{\a\b}\right) \nabla_\a u\, \nabla_\b u=
2tr^{-1}\chi \theta_{ab} \nabla^a_\omega u\, \nabla_\omega^b u + O(\zeta) (1+t) |\nabla u|^2,
\end{equation}
where $\zeta$ is a smooth cut-off supported in $r_0\le \langle x\rangle \le 2r_0$. Furthermore,
\begin{equation}\label{eq:squareb}
\square_M b = 2t \Delta_g (\chi r\theta)=t \,\,O(r^{-2-2\si_0}) 
\end{equation}
We now analyze the expression $\left(-\frac 12 \pa_t b\, u^2 + b\, \pa_t u\, u+ \Q_{0\b} K^\b\right)$.
\begin{lemma}\label{lem:AB}
Let $n\ge 3$. Then for sufficiently large $r_0$ and all $t\gg r_0$
\begin{equation}\label{eq:QM}
\begin{split}
\int_{M}&\left(-\frac 12 \pa_t b\, u^2 + b\, \pa_t u\, u+ \Q_{0\b} K^\b\right)\, dg \ge 
C(r_0)\,\, t^2\,\,\int_{M_{4r_0}^c} (|\pa_t u|^2+|\nabla u|^2 + u^2)\ dg \\&+ C(r_0) \int_{M_{4r_0}} 
\left ((t+r)^2|(\pa_t+\pa_r) u|^2+(t-r)^2|(\pa_t-\pa_r) u|^2\right)\ dg\\ &+ C(r_0) \int_{M_{4r_0}} 
\left((t^2+r^2)
|r^{-1}\nabla_\omega u|^2 + (1+ t^2 r^{-2}) u^2\right)\ dg\\ &-
C(r_0)\,\, \,\,\int_{M_{2r_0}} O(r^{-2\si_0}) (|\pa_t u|^2+|\nabla u|^2)\ dg.
\end{split}
\end{equation}
\end{lemma}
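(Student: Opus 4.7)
The plan is to establish the lower bound by a pointwise analysis of the integrand on three regions: the near region $\{r \le r_0\}$ where $\chi \equiv 0$, the transition shell $\{r_0 \le r \le 2r_0\}$ where $\chi' \ne 0$, and the far region $\{r \ge 2r_0\}$ where $\chi \equiv 1$ and $K$ reduces to the classical conformal Morawetz vectorfield $(t^2+r^2)\pa_t + 2tr\pa_r$. Using the algebraic identity $\tfrac{1}{2}(t^2+r^2)(u_t^2+u_r^2) + 2tr\, u_t u_r = \tfrac{(t+r)^2}{4}(u_t+u_r)^2 + \tfrac{(t-r)^2}{4}(u_t-u_r)^2$ together with $|\nabla u|_g^2 = u_r^2 + r^{-2}|\nabla_\omega u|^2_{h[r]}$, the far-region contribution of $\Q_{0\b}K^\b$ already produces three of the positive pieces appearing on the right-hand side of \eqref{eq:QM}, namely $\tfrac{(t+r)^2}{4}(u_t+u_r)^2$, $\tfrac{(t-r)^2}{4}(u_t-u_r)^2$ and $\tfrac{t^2+r^2}{2r^2}|\nabla_\omega u|^2_{h[r]}$.

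Next, I would handle the cross term $b\,u_t u$ by decomposing $2tu_t = (t+r)L_+u + (t-r)L_-u - 2r u_r$ with $L_\pm := \pa_t \pm \pa_r$, and spatially integrating by parts the residual $-(n-1)r u_r u$ against the measure $dg = r^{n-1}\sqrt{h[r]}\,dr\,d\omega$; this produces the bulk contribution $+\tfrac{n(n-1)}{2}u^2$ plus $O(r^{-1-2\si_0})u^2$ errors from $\pa_r\sqrt{h[r]}$. Combined with $-\tfrac12 \pa_t b\, u^2 = -\tfrac{n-1}{2}u^2 + O(r^{-2\si_0})u^2$ and the $2t\chi r\theta$ piece of $b$ (which contributes the same cross structure up to the perturbation $r\theta - (n-1) = O(r^{-2\si_0})$), the net coefficient of $u^2$ in the far region becomes $\tfrac{(n-1)^2}{2}+O(r^{-2\si_0})$. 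Distributing this evenly between the two null directions allows the completion of squares
\begin{equation*}
\tfrac{(t\pm r)^2}{4}(L_\pm u)^2 + \tfrac{n-1}{2}(t\pm r)\,u\, L_\pm u + \tfrac{(n-1)^2}{4}u^2 = \tfrac{1}{4}[(t\pm r)L_\pm u + (n-1)u]^2 \ge 0.
\end{equation*}
To produce the desired lower bound containing strictly positive multiples of both $(t\pm r)^2|L_\pm u|^2$ and $(1+t^2 r^{-2})u^2$, I would retain a small fraction of each null square $(t\pm r)^2(L_\pm u)^2$ outside the completion, use Young's inequality to absorb the remaining cross term with a controlled negative $u^2$ contribution, and then recover the positive $u^2$ lower bound via the spherical Poincar\'e inequality $\int_{S_r}|\nabla_\omega u|^2_{h[r]}\,d\omega \ge c_n \int_{S_r}(u-\bar u(r))^2\,d\omega$ applied to the angular term, together with a weighted Hardy-type estimate in $r$ for the spherical average $\bar u(r)$ that uses the retained null contributions.

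In the near region $\{r \le r_0\}$ one has $\chi = 0$, $K = t^2\pa_t$, $b = (n-1)t$, and the integrand reduces to $\tfrac{t^2}{2}(u_t^2+|\nabla u|_g^2) + (n-1)t u_t u - \tfrac{n-1}{2}u^2$; completing the square in $u_t$ gives $\tfrac{t^2}{2}(u_t + \tfrac{n-1}{t}u)^2 + \tfrac{t^2}{2}|\nabla u|_g^2 - \tfrac{n^2-1}{2}u^2$, and since $t \gg r_0$ the $O(1)u^2$ deficit is easily defeated by a small fraction of $\tfrac{t^2}{2}|\nabla u|_g^2$ combined with a Poincar\'e inequality on the compact set $M_{4r_0}^c$ modulo boundary data on $\{r = 4r_0\}$, which itself is controlled by trace absorption into the positive $(1 + t^2 r^{-2})u^2$ contribution established in the far region. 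The transition shell $\{r_0 \le r \le 2r_0\}$ is treated in the same spirit, with all $\chi'$-dependent corrections producing uniformly bounded terms that are absorbed for $r_0$ sufficiently large.

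The main obstacle is the careful bookkeeping of error terms, arising from three distinct sources: the metric deviation $r\theta - (n-1) = O(r^{-2\si_0})$ in the far region, the radial derivative of $\sqrt{h[r]}$ appearing in the $r$-integration by parts, and the derivatives of $\chi$ supported on the transition shell. Each is schematically of the form $O(r^{-2\si_0})(|u_t|^2 + |\nabla u|_g^2 + u^2)$ on $\{r \ge 2r_0\}$; the $u^2$ component is dominated by the positive $(1 + t^2 r^{-2})u^2$ contribution in the far region once $r_0$ is chosen large enough that $Cr^{-2\si_0} \le 1/2$ there, while the derivative components are collected into the single error integral $-C(r_0)\int_{M_{2r_0}} O(r^{-2\si_0})(|u_t|^2 + |\nabla u|_g^2)\,dg$ on the right-hand side of \eqref{eq:QM}. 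Maintaining the quantitative constants $C(r_0)$ uniformly in $t$ through the Poincar\'e and Hardy steps, and correctly calibrating the fraction of null-derivative energy that is ceded in order to produce the $u^2$ positivity without destroying the principal $(t\pm r)^2|L_\pm u|^2$ contribution, is the delicate part of the argument.
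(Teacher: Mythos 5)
The overall architecture of your proposal (near region / transition shell / far region; use the algebraic identity for $\Q_{0\b}K^\b$; integrate by parts the cross term; complete squares) matches the paper, but in the far region you and the paper part ways at exactly the point you flag as "delicate," and I believe the gap there is genuine rather than just a matter of bookkeeping. The paper's mechanism for producing the $(1+t^2r^{-2})u^2$ contribution is to use the two scaling vector fields $Su = t\pa_t u + r\pa_r u$ and $\underline S u = r\pa_t u + t\pa_r u$, together with the two distinct identities $t u_t u = Su\,u - \tfrac12 r\pa_r(u^2)$ and $t u_t u = \tfrac{t}{r}\underline S u\,u - \tfrac{t^2}{2r}\pa_r(u^2)$. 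Splitting the coefficient $n-1$ of the cross term as $A+B$ and completing the squares $\tfrac12(Su+Au)^2$ and $\tfrac12(\underline S u + \tfrac{t}{r}Bu)^2$, one finds the leftover $u^2$ coefficient is $\tfrac12 B(A-1)\bigl(1+\tfrac{t^2}{r^2}\bigr)$, which is strictly positive \emph{pointwise} once $1<A<n-1$ (e.g.\ $A=3/2$, $B=1/2$ when $n=3$). Your null-frame decomposition $2tu_t = (t+r)L_+u + (t-r)L_-u - 2ru_r$ is algebraically rigid and, once you integrate by parts only the radial piece, is equivalent to the degenerate choice $A=n-1$, $B=0$, for which the leftover coefficient is identically zero. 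This is why the completions in your proposal "exactly" consume the $u^2$ budget.

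Your fallback (retain a small $\eps$-fraction of the null squares, then recover $u^2$ by spherical Poincar\'e plus a Hardy estimate for $\bar u(r)$) would have to prove something like
$\int_{4r_0}^\infty (t^2+r^2)r^{n-3}\bar u^2\,dr \lesssim \int_{4r_0}^\infty \bigl[(t+r)^2\overline{L_+u}^2 + (t-r)^2\overline{L_-u}^2\bigr]r^{n-1}\,dr + \text{(boundary)}$,
and I do not see how to close this. Writing $\bar u_r = \tfrac12(\overline{L_+u}-\overline{L_-u})$, the natural weighted Hardy step gives $\int (t^2+r^2)r^{n-3}\bar u^2 \lesssim \int (t^2+r^2)r^{n-1}\bar u_r^2$, but the available outgoing null energy carries the weight $(t-r)^2$, which degenerates near the light cone $r\approx t$ exactly where $(t^2+r^2)\approx 2t^2$ is large; the pointwise comparison $(t^2+r^2)\overline{L_-u}^2 \lesssim (t-r)^2\overline{L_-u}^2$ fails on a fat annulus $r \in (c\,t, C\,t)$. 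One-dimensional Hardy centered at $r=t$ rescues some test functions, but it introduces a boundary contribution at $r=4r_0$ of size $t\,\bar u(4r_0)^2$ that must be re-absorbed, and a general, weight-correct proof is not supplied. These are not calibration details: the asymmetric $(A,B)$ split is precisely what makes the $u^2$ term positive pointwise and lets the paper avoid any Poincar\'e/Hardy step at all. I would replace the null-frame completion by the $S$, $\underline S$ completion with an asymmetric split $A+B=n-1$, $1<A<n-1$, which is a small structural change to your argument.
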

\begin{remark}
Similar arguments can be found in \cite{kl} for the wave equation in Minkowski
space and \cite{dr1} for the wave equation on Schwarzschild background.
\end{remark}

\begin{proof}
First observe that with our choices of $b, K^\a$
\begin{align*}
Q: =-\frac 12 \pa_t b\, u^2 + b\, \pa_t u\, u+ \Q_{0\b} K^\b&=\frac 12 (t^2+\chi r^2) ( |\pa_t u|^2 +|\nabla u|_g^2)
+2tr \chi \pa_t u\, \pa_r u \\ &+ t(n-1+2\chi r\theta) \pa_t u \, u -\frac 12 (n-1+2\chi r\theta ) u^2.
\end{align*}
On the complement of the set $M_{2r_0}$ we can thus bound $Q$ as follows.
\begin{equation}\label{eq:Qcom}
Q\ge \, \frac {t^2}4  ( |\pa_t u|^2 +|\nabla u|_g^2) - C(r_0) u^2,\qquad{\text{on}}\,\, M_{4r_0}^c
\end{equation}
provided that $t\gg r_0$. Let $\tilde \chi$ be a cut-off function supported in $M_{2r_0}$ and 
equal to 1 on $M_{4r_0}$. To prove the desired result it will be sufficient to show that
\begin{align*}
\int_M \tilde\chi &\left (\frac 12 (t^2+r^2) ( |\pa_t u|^2 +|\nabla u|_g^2)
+2tr \pa_t u\, \pa_r u + t(n-1+2 r\theta) \pa_t u \, u -\frac 12 (n-1+2 r\theta ) u^2\right)\\ &\ge 
 C(r_0) \int_{M_{4r_0}} 
\left ((t+r)^2|(\pa_t+\pa_r) u|^2+(t-r)^2|(\pa_t-\pa_r) u|^2\right)\ dg\\ &+ C(r_0) \int_{M_{4r_0}} 
\left((t^2+r^2)
|r^{-1}\nabla_\omega u|^2 + (1+ t^2 r^{-2}) u^2\right)\ dg\\ &-
C(r_0)\,\, \,\,\int_{M_{2r_0}} O(r^{-2\si_0}) (|\pa_t u|^2+|\nabla u|^2)\ dg
\end{align*}
Note that the $t^2 u^2$ term on $M_{4r_0}^c$ can be obtained via a Poincar\'e inequality 
from the estimate above and \eqref{eq:Qcom}.

Define 
$$
S u = (t\pa_t+r\pa_r) u,\qquad \underline S = (t \pa_r +r\pa_t u).
$$
Then 
$$
\frac 12 (t^2+r^2) ( |\pa_t u|^2 +|\nabla u|_g^2)
+2tr \pa_t u\, \pa_r u =\frac 12 \left((Su)^2+(\underline S u)^2\right)+\frac 12 (t^2+r^2) 
|r^{-1}\nabla_\omega u|_g^2.
$$
Furthermore,
$$
t\pa_t u \, u= Su u - \frac 12 r \pa_r (u)^2,\quad t\pa_t u \, u= \frac tr \underline S u u-\frac {t^2}{2r}
\pa_r(u^2).  
$$
As a consequence,
\begin{align*}
&t(n-1) \int_M \tilde\chi \pa_t u \, u=(n-1) \int_M \tilde\chi S u \, u + \frac {n-1}2
\int_M \left (\tilde \chi(n+r\theta) + r\tilde\chi'\right) u^2,\\
&t(n-1) \int_M \tilde\chi \pa_t u \, u=(n-1) \frac tr\int_M \tilde\chi \underline S u \, u + \frac {n-1}2\, t^2
\int_M \left (\tilde \chi(n-2+r\theta) + r\tilde\chi'\right) \frac {u^2}{r^2}.
\end{align*}
It is important to note that $\tilde\chi'\ge 0$ and $(n-2)\gg r\theta$ on the support of $\tilde\chi$
provided that $r_0$ is sufficiently large. It is not difficult to show that for $n\ge 3$ one can find
constants $A, B$ such that 
$$
A+B=n-1,\quad A^2+B^2< (n-\frac 52)(n-3) + \frac 54(n-1) A.
$$
(in dimension $n=3$ the choice of $A=3/2, B=1/2$ is sufficient).
As a consequence,
\begin{align*}
\int_M \tilde\chi &\left (\frac 12 (t^2+r^2) ( |\pa_t u|^2 +|\nabla u|_g^2)
+2tr \pa_t u\, \pa_r u + t(n-1+2 r\theta) \pa_t u \, u -\frac 12 (n-1+2 r\theta ) u^2\right)\\ &\ge 
\frac 12\int_{M}\tilde\chi  \left ((Su+Au)^2+(\underline S u + \frac tr B u)^2
+(t^2+r^2) 
|r^{-1}\nabla_\omega u|_g^2\right)\ dg \\ &+
\frac 12\int_{M}\tilde\chi  \left(\left((n-2)(n-3) + (n-1)A-A^2-B^2+ \frac {n-1}{2r^2} t^2 B\right)
u^2\right)\ dg \\ &- \int_{M}\tilde\chi  O(r^{-2\si_0} )(u^2+ t |\pa_t u|\, |u|)\ dg.
\end{align*}
Running the argument again we obtain
that
\begin{align*}
\int_M \tilde\chi &\left (\frac 12 (t^2+r^2) ( |\pa_t u|^2 +|\nabla u|_g^2)
+2tr \pa_t u\, \pa_r u + t(n-1+2 r\theta) \pa_t u \, u -\frac 12 (n-1+2 r\theta ) u^2\right)\\ &\ge 
c \int_{M}\tilde\chi  \left ((Su)^2+(\underline S u)^2+(t^2+r^2) 
|r^{-1}\nabla_\omega u|_g^2+
(1+ \frac {t^2}{r^2})u^2\right)\ dg - 
\int_{M}\tilde\chi  O(r^{-2\si_0}) (\pa_t u)^2\ dg.
\end{align*}
The result now follows immediately from the identity
$$
(t+r)^2 ((\pa_r+\pa_r)u)^2 +(t-r)^2((\pa_t-\pa_r)u)^2 =  \left((Su)^2+(\underline S u)^2\right)
$$
\end{proof}
We introduce notations for the energy associated with the vectorfield $K$ and 
a conserved energy associated with the vectorfield $\pa/\pa t$:
\begin{align}
E_K(t) = E_{K,u}(T)&:= t^2\,\,\int_{M_{4r_0}^c} (|\pa_t u|^2+|\nabla u|^2 + u^2)\ dg \\&+\int_{M_{4r_0}} 
\left ((t+r)^2|(\pa_t+\pa_r) u|^2+(t-r)^2|(\pa_t-\pa_r) u|^2\right)\ dg\\ &+  \int_{M_{4r_0}} 
\left((t^2+r^2)
|r^{-1}\nabla_\omega u|^2 + (1+ t^2 r^{-2}) u^2\right)\ dg\label{eq:conformal}\\ 
E(t) = E_u(T) :=&\int_{M} (|\pa_t u|^2+|\nabla u|^2)\ dg.\nonumber
\end{align}
Combining Lemma \ref{lem:AB} and \eqref{eq:stokes}, we derive the following 
\begin{proposition}\label{prop:first}
Let $u$ be a solution of $\square_g u=0$ and let $T$ be sufficiently large. Then there
exists a sufficiently large $R\ll T$ and a 
smooth cut-off function $\zeta$ supported in the region 
$M_{r_0}\setminus M_{R}=\{x:\,r_0\le \langle x\rangle \le R\}$ such that
$$
E_K(T)\le C(r_0)\left (E_K(0) +
\int_0^T (1+t) \int_{M} \tilde \zeta |\nabla u|^2\ dg\, dt\right).
$$ 
\end{proposition}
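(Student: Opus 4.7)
The strategy is to apply the spacetime divergence identity \eqref{eq:stokes} on the slab $[0,T] \times M$ with the choices of $K$ and $b$ that have already been made, and then use Lemma \ref{lem:AB} to convert the resulting boundary flux at $t=T$ into a lower bound on $E_K(T)$. After this is done, the only task is to show that every other piece of the identity (the initial boundary term, the bulk term, and the error in Lemma \ref{lem:AB} at $t=T$) is bounded by the right-hand side of the claimed inequality, namely $C(r_0)(E_K(0) + \int_0^T (1+t) \int_M \tilde\zeta |\nabla u|^2 \, dg \, dt)$ for a cutoff $\tilde\zeta$ supported in $\{r_0 \le \langle x \rangle \le R\}$.

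First I would apply \eqref{eq:stokes}, which gives
\[
\int_M Q(T)\,dg = \int_M Q(0)\,dg + \int_0^T \int_M \bigl(\square_M b\, u^2 + (\pi^{\alpha\beta} + (b-\tfrac12\operatorname{tr}\pi)\g^{\alpha\beta})\pa_\alpha u\, \pa_\beta u\bigr)\,dg\,dt,
\]
where $Q := -\tfrac12 \pa_t b\, u^2 + b\, \pa_t u\, u + \Q_{0\beta}K^\beta$. By Lemma \ref{lem:AB}, the left side is bounded below by $c\,E_K(T)$ minus the term $C(r_0)\int_{M_{2r_0}} O(r^{-2\sigma_0})(|\pa_t u|^2 + |\nabla u|^2)\,dg$ evaluated at $t=T$. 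This error splits into an inner piece $2r_0 \le \langle x\rangle \le R$ (for any fixed large $R$) that contributes to the $\tilde\zeta$ integrand once we invoke the fundamental theorem of calculus in $t$ to trade an instantaneous norm for a spacetime integral plus initial data, and an outer piece $\langle x\rangle \ge R$ on which $r^{-2\sigma_0} \le R^{-2\sigma_0}$ and the $(t+r)^2$ and $(t^2+r^2)$ weights in $E_K(T)$ dominate a small constant times the error, so that it can be absorbed back into the $cE_K(T)$ on the left. At $t=0$, the expression $\int_M Q(0)\,dg$ is explicitly computable and directly dominated by $E_K(0)$ up to a Hardy-controlled $\int u(0)^2$ contribution from $\frac12\pa_t b\, u^2$.

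For the bulk integrand, the identities \eqref{eq:pi} and \eqref{eq:squareb} furnish the decomposition $2t r^{-1}\chi\, \theta_{ab}\nabla_\omega^a u\,\nabla_\omega^b u + O(\zeta)(1+t)|\nabla u|^2 + tO(r^{-2-2\sigma_0})u^2$, where $\zeta$ is supported in $r_0 \le \langle x\rangle \le 2r_0$. The first piece expands as $2t\chi\, h_{ab}\nabla_\omega^a u\,\nabla_\omega^b u + 2tO(r^{-2\sigma_0})|\nabla_\omega u|_h^2$; the leading part has a \emph{good} sign and can simply be discarded on the right-hand side, while the $O(r^{-2\sigma_0})$ correction is handled by the same large-$R$ split as before (absorbed into the good angular entry of $E_K(T)$ after integration in $t$, via an averaging identity, or more directly by requiring $R$ large enough depending on $\sigma_0 > 1/2$). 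The middle piece $O(\zeta)(1+t)|\nabla u|^2$ lives exactly in the region where $\tilde\zeta \equiv 1$, so it is precisely the term displayed on the right-hand side of the proposition. For the last piece, apply a Hardy inequality in the spatial variables to convert $\int_M r^{-2-2\sigma_0} u^2$ into something controlled by $\int_M r^{-2\sigma_0}|\nabla u|^2 + \int_{M_{r_0}\setminus M_R} |\nabla u|^2$, whose former half can again be absorbed for $R$ large and whose latter half is of the $\tilde\zeta$ type; the extra factor of $t$ is harmless since it matches the $(1+t)$ weight on the right-hand side.

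The main obstacle is the interplay between the spacetime bulk error $\int_0^T\int t r^{-2\sigma_0}|\nabla_\omega u|^2\,dg\,dt$ and the instantaneous error at $t=T$ from Lemma \ref{lem:AB}: both must be absorbed into $E_K(T)$ on the left rather than into the $\tilde\zeta$ integral (which only lives in the bounded region $\langle x\rangle \le R$). The key is to choose $R$ large enough that $R^{-2\sigma_0}$ is small relative to the implicit constant in the lower bound of Lemma \ref{lem:AB}, while simultaneously keeping $R \ll T$ so that the weights $(t+r)^2$, $(t-r)^2$, $t^2+r^2$ genuinely control the unweighted energies in the outer region; the assumption $\sigma_0 > 1/2$ and the hypothesis that $T$ is sufficiently large (much larger than $R$, which in turn is chosen depending only on $r_0$) make this compatible. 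Once $R$ is fixed this way, the absorption goes through and one arrives at the stated inequality with $\tilde\zeta$ supported in $M_{r_0}\setminus M_R$.
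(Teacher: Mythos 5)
Your overall strategy --- apply the divergence identity \eqref{eq:stokes} on $[0,T]\times M$, invoke Lemma \ref{lem:AB} to convert the $t=T$ boundary flux into a lower bound on $E_K(T)$, bound the $t=0$ boundary term by $E_K(0)$, and split the bulk error via \eqref{eq:pi}, \eqref{eq:squareb} into a compact $\tilde\zeta$ piece plus a far piece absorbed for $R$ large --- is the same as the paper's.  However, two of your detailed steps do not go through as stated.

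First, your expansion of the angular bulk term from \eqref{eq:pi} as $2t\chi\, h_{ab}\nabla_\omega^a u\,\nabla_\omega^b u + 2t\,O(r^{-2\sigma_0})|\nabla_\omega u|_h^2$, with a large positive leading part that can be discarded, misidentifies $\theta_{ab}$.  In the computation of the deformation tensor $\pi_{ab}$, the quantity $\theta_{ab}$ is $\tfrac12\partial_r h[r]_{ab}=O(r^{-1-2\sigma_0})$, not the full second fundamental form $\Theta_{ab}\approx r\,h_{ab}$ from Section~2.  Consequently $2tr^{-1}\chi\,\theta_{ab}\nabla_\omega^a u\,\nabla_\omega^b u$ is \emph{already} of size $t\,O(r^{-2\sigma_0})|r^{-1}\nabla_\omega u|_g^2$ with no definite sign: the choice of $b$ was made precisely so that the $O(t)$-size angular piece cancels against $(b-\tfrac12\operatorname{tr}\pi)\g^{\alpha\beta}\pa_\alpha u\pa_\beta u$.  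There is no good-signed leading term to discard.

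Second, your absorption of the instantaneous error term at $t=T$ coming from Lemma \ref{lem:AB}, namely $\int_{M_{2r_0}}O(r^{-2\sigma_0})(|\pa_t u|^2 + |\nabla u|_g^2)\,dg$, into $E_K(T)$ via the outer null weights fails near the light cone $r\approx T$.  Writing $|\pa_t u|^2+|\pa_r u|^2 = \tfrac12(|(\pa_t+\pa_r)u|^2+|(\pa_t-\pa_r)u|^2)$, the weight $(T-r)^2$ on the ingoing null derivative degenerates there, so $E_K(T)$ does \emph{not} control $\int_{\{r\approx T\}}|(\pa_t-\pa_r)u|^2$, and the small factor $R^{-2\sigma_0}$ does not rescue this.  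The paper sidesteps this entirely: it bounds the instantaneous error by the conserved energy, $\int_{M_{2r_0}}O(r^{-2\sigma_0})(|\pa_t u|^2+|\nabla u|_g^2)\leq C\,E(T)=C\,E(0)\leq C\,E_K(0)$, which requires no outer-region absorption and no interplay with the null weights.  You should replace your fundamental-theorem-of-calculus plus outer-absorption treatment of this term with this direct energy-conservation bound; the large-$R$ splitting is then needed only for the genuine spacetime bulk error, as in the paper.
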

\begin{proof}
 Lemma \ref{lem:AB} together with \eqref{eq:stokes}, \eqref{eq:pi} and \eqref{eq:squareb} imply
 that all $T\gg r_0\gg 1$ 
 \begin{align*}
E_K(T)&\le C(r_0) (E_K(0) + E(T))\\ &+ 
C(r_0)
\int_0^T \int_{M_{r_0}} \left (O(r^{-2\si_0})\,t\, \left (|r^{-1}\nabla_\omega u|_g^2+ r^{-2} u^2\right)+
O(\zeta) (1+t) |\nabla u|^2\right)\ dg\, dt.
\end{align*}
 Since the expression for the energy $E_K(t)$ contains both $t^2 |r^{-1}\nabla_\omega u|^2$ and
 $t^2 r^{-2} u^2$ we can find a sufficiently large $R$ such that  the above inequality can be 
simplified to 
$$
E_K(T)\le C(r_0) (E_K(0) + E(0))+ 
C(r_0)
\int_0^T (1+t) \int_{M} \tilde \zeta |\nabla u|^2\ dg\, dt
$$ 
with $\tilde \zeta$ supported in $M_{r_0}\setminus M_{R}$. Note that to get to the last inequality 
we also used a Poincar\'e inequality to convert the $u^2$ term into $|\nabla u|_g^2$, and 
conservation of the energy $E(t)$. To conclude the proof of the proposition it remains to observe
that the energy $E_K(0)$ easily dominates $E(0)$.
\end{proof}

Henceforth we specialize to the three-dimensional case $n=3$.

The integrated local energy decay estimate of Proposition \ref{prop:int-decay} gives the bound
$$
\int_0^T (1+t) \int_{M} \tilde \zeta |\nabla u|^2\ dg\, dt\le (1+T) \int_0^T 
\int_{M} \tilde \zeta |\nabla u|^2\ dg\, dt \le C (1+T) E(0).
$$
This immediately implies that 
$$
E_{K}(T)\le C(1+T) E_K(0).
$$
In particular, as can be seen from \eqref{eq:conformal}, 
inequality above leads to the $1/t$ decay of the local energy. Precisely, 
for any compact set $B\subset M$ 
\begin{equation}\label{eq:loc1-decay}
\int_B ((\pa_t u)^2 + |\nabla u|_g^2)\ dg \le \frac {C(B)}t E_K(0).
\end{equation}
Applying in addition the same argument to the function $v=\pa_t u$ we then have
$$
\int_B ((\pa_t^2 u)^2 + |\pa_t \nabla u|_g^2)\ dg \le \frac {C(B)}t E_{K, \pa_t u}(0).
$$
From the Gagliardo-Nirenberg inequality we thus obtain uniform decay for $u$:
$$
\|u(t)\|_{L^\infty_x} \le C_\eps t^{-\frac 12+\eps} \left (E_{K,u}(0)+ E_{K,\pa_t u} (0)\right).
$$
for any $\eps>0$. 

This however is not an optimal result (in particular, as suggested by our experience with 
the wave equation in Minkowski space in dimension $n=3$). To improve on it one observes
that Proposition \ref{prop:first} can be iterated, \cite{dr1}. Heuristically (for more details
see \cite{dr1}), the argument is as follows.
First partition the time interval $[0,T]$ dyadically into subintervals of the form 
$[t,\Gamma t]$ with some $\Gamma>1$ sufficiently close to $1$. By finite speed of 
propagation, on the set $[t,\Gamma t]\times M_{R}^c$, solution $u$ is completely determined
by its values at time $t$ on the set $M_{R+C(\Gamma-1)t}^c$ for some universal constant $C$
dependent on the manifold $M$. In particular, $\Gamma$ can be chosen in such a way that 
$R+C(\Gamma-1)t<1/2 t$. Now, the estimate $E_K(t)\le C(1+t) E_K(0)$ in fact gives more than
just the local energy decay. One can easily show that \eqref{eq:loc1-decay} holds for any
set $B$ such that $B\subset M_{1/2t}^c$ with a constant $C(B)$ independent of $B$
(in particular of $t$). In view of the above discussion the integrated local energy decay estimate
for $u$ on the interval $[t,\Gamma t]$ should take the form 
$$
\int_{t}^{\Gamma t} \int \tilde \zeta ((\pa_t u)^2+|\nabla u|_g^2)\ dg\, dt \le 
C \int_{\{t\}\times M_{R+C(\Gamma-1)t}^c} ((\pa_t u)^2+|\nabla u|_g^2)\ dg\le 
\frac Ct E_K(0).
$$
Adding these estimates over all such time subintervals we obtain that 
$$
\int_0^T (1+t) \int \tilde \zeta ((\pa_t u)^2+|\nabla u|_g^2)\ dg\, dt \le \log T E_K(0).
$$
Using Proposition \ref{prop:first} now implies that 
$$
E_K(T)\le C(r_0) \log T E_K(0)
$$
and 
$$
\int_B ((\pa_t u(t))^2 + |\nabla u(t)|_g^2)\ dg \le \frac {C(B)\log t}{t^2} E_K(0)
$$
for all $t \leq T$.  Repeating the whole argument one more time eliminates the remaining $\log t$ term and 
shows that 
$$
E_K(T)\le C(r_0) E_K(0)
$$
Finally, combining the bound for $u$ with the similar bound for $v=\pa_t u$ and using Sobolev
inequalities we arrive at the desired estimate
$$
\|u(t)\|_{L^\infty_x} \le C_\eps {t}^{-1+\eps} \left (E_{K,u}(0)+ E_{K,\pa_t u}(0)\right).
$$

\end{document}